\newtheorem{theorem}{Theorem}[section]
\newtheorem{lemma}[theorem]{Lemma}
\newtheorem{proposition}[theorem]{Proposition}
\newtheorem{corollary}[theorem]{Corollary}
\theoremstyle{definition}
\newtheorem*{definition}{Definition}
\theoremstyle{remark}
\newtheorem*{remark}{Remark}
\newtheorem*{example}{Example}
\renewcommand{\intercal}{T}
\newif\iftbd
\newif\ifgfx
\newcommand{\ba}{a}
\newcommand{\bb}{b}
\newcommand{\bc}{c}
\newcommand{\be}{e}
\newcommand{\bg}{g}
\newcommand{\bt}{t}
\newcommand{\bu}{u}
\newcommand{\bv}{v}
\newcommand{\bw}{w}
\newcommand{\bx}{x}
\newcommand{\by}{y}
\newcommand{\bz}{z}
\newcommand{\bone}{1}
\newcommand{\bzero}{0}
\newcommand{\mc}{M_{\K^{\star},\C}}
\newcommand{\pk}{P_{\K^{\star}}}
\newcommand{\pnk}{P_{n,\K^{\star}}}
\newcommand{\pc}{\Phi_{\C}}
\newcommand{\aut}{\mathrm{Aut}}
\newcommand{\orb}{\cdot \mathrm{Aut}(\C)}
\newcommand{\hh}{H}
\newcommand{\lqd}{L(\R^q,\R^d)}
\newcommand{\E}{\mathbb{E}}
\newcommand{\prob}{\mathbb{P}}
\newcommand{\C}{C}
\newcommand{\K}{K}
\newcommand{\KS}{\K^{\star}}
\newcommand{\KH}{\hat{\K}}
\newcommand{\KHN}{\hat{\K}_{n}^{\C}}
\newcommand{\R}{\mathbb{R}}
\newcommand{\Sym}{\mathbb{S}}
\newcommand{\rem}{\lambda_{\C}}
\newcommand{\nspace}{T^{\perp}}
\newcommand{\nc}{N}
\newcommand{\entr}{\eta}
\newcommand{\aff}{\mathrm{aff}}
\newcommand{\spn}{\mathrm{Span}}
\newcommand{\simp}{\Delta}
\newcommand{\fs}{\mathcal{O}}
\newcommand{\sph}{S}
\newcommand{\grv}{G_{r,\bv}}
\newcommand{\grhv}{G_{\hat{r},\bv}}
\newcommand{\lmax}{h_{\C}}
\newcommand{\emax}{\be_{\C}}
\title{Fitting Tractable Convex Sets to Support Function Evaluations}
\author{Yong Sheng Soh\thanks{Institute of High Performance Computing, 1 Fusionopolis Way, \# 16-16 Connexis, Singapore 138632. \texttt{soh\_yong\_sheng@ihpc.a-star.edu.sg}} \hspace{0.25in} Venkat Chandrasekaran\thanks{Department of Computing and Mathematical Sciences and Department of Electrical Engineering, California Institute of Technology, Pasadena, CA 91125, USA. \texttt{venkatc@caltech.edu} \newline The authors were supported in part by NSF grants CCF-1350590 and CCF-1637598, by Air Force Office of Scientific Research Grant FA9550-16-1-0210, by a Sloan research fellowship, and an A*STAR (Agency for Science, Technology and Research, Singapore) fellowship.}}
\date{March 24, 2019; revised October 23, 2019}
\begin{document}

\maketitle

\begin{abstract}
The geometric problem of estimating an unknown compact convex set from evaluations of its support function arises in a range of scientific and engineering applications.  Traditional approaches typically rely on estimators that minimize the error over all possible compact convex sets; in particular, these methods allow for limited incorporation of prior structural information about the underlying set and the resulting estimates become increasingly more complicated to describe as the number of measurements available grows.  We address both of these shortcomings by describing a framework for estimating \emph{tractably specified} convex sets from support function evaluations.  Building on the literature in convex optimization, our approach is based on estimators that minimize the error over structured families of convex sets that are specified as linear images of concisely described sets -- such as the simplex or the spectraplex -- in a higher-dimensional space that is not much larger than the ambient space.  Convex sets parametrized in this manner are significant from a computational perspective as one can optimize linear functionals over such sets efficiently; they serve a different purpose in the inferential context of the present paper, namely, that of incorporating \emph{regularization} in the reconstruction while still offering considerable expressive power.  We provide a geometric characterization of the asymptotic behavior of our estimators, and our analysis relies on the property that certain sets which admit semialgebraic descriptions are Vapnik-Chervonenkis (VC) classes.  Our numerical experiments highlight the utility of our framework over previous approaches in settings in which the measurements available are noisy or small in number as well as those in which the underlying set to be reconstructed is non-polyhedral.

\vspace{0.025in}

\noindent \emph{Keywords}: constrained shape regression, convex regression, entropy of semialgebraic sets, $K$-means clustering, simplicial polytopes, stochastic equicontinuity.
\end{abstract}


\section{Introduction}
We consider the problem of estimating a compact convex set given (possibly noisy) evaluations of its support function.  Formally, let $\K^{\star} \subset \R^d$ be a set that is compact and convex.  The support function $h_{\K^{\star}}(\bu)$ of the set $\K^{\star}$ evaluated in the direction $\bu \in \sph^{d-1}$ is defined as:
\begin{equation*}
	h_{\K^{\star}} (\bu) := \sup_{\bx \in \K^{\star} } \langle \bx, \bu \rangle.
\end{equation*}
Here $\sph^{d-1} := \{ \bx ~|~ \| \bx \|_2 = 1 \} \subset \R^d$ denotes the $(d-1)$-dimensional unit sphere.  In words, the quantity $h_{\K^{\star}} (\bu)$ measures the maximum displacement in the direction $\bu$ intersecting $\K^{\star}$.  Given noisy support function evaluations $\left\{ (\bu^{(i)}, y^{(i)} ) ~|~ \allowbreak y^{(i)} \allowbreak = h_{\K^{\star}} (\bu^{(i)}) \allowbreak + \varepsilon^{(i)}, 1 \leq i \leq n \right\}$, where each $\varepsilon^{(i)}$ denotes additive noise, our goal is to reconstruct a convex set $\hat{\K}$ that is close to $\K^{\star}$.

The problem of estimating a convex set from support function evaluations arises in a wide range of problems such as computed tomography \cite{PriWil:90}, target reconstruction from laser-radar measurements \cite{LKW:92}, and projection magnetic resonance imaging \cite{GreRan:02}.  For example, in tomography the extent of the absorption of parallel rays projected onto an object provides support information \cite{PriWil:90,StaPen:88}, while in robotics applications support information can be obtained from an arm clamping onto an object in different orientations \cite{PriWil:90}.  A natural approach to fit a compact convex set to support function data is the following least-squares estimator (LSE):
\begin{equation} \label{eq:sc_intro_lse}
\hat{\K}^{\mathrm{LSE}}_n \in \underset{\K \subset \R^d : \K \text { is compact, convex}}{\mathrm{argmin}} ~~~ \frac{1}{n} \sum_{i=1}^{n} \left( y^{(i)} - h_{\K} (\bu^{(i)}) \right)^{2}.
\end{equation}
An LSE always exists and it is not defined uniquely, although it is always possible to select a polytope that is an LSE; this is the choice that is most commonly employed and analyzed in prior work.  For example, the algorithm proposed by Prince and Willsky \cite{PriWil:90} for planar convex sets reconstructs a polygonal LSE described in terms of its facets, while the algorithm proposed by Gardner and Kiderlen \cite{GarKid:09} for convex sets in any dimension provides a polytopal LSE reconstruction described in terms of extreme points.  The least-squares estimator $\hat{\K}^{\mathrm{LSE}}_n$ is a consistent estimator of $\K^\star$, but it has a number of significant drawbacks.  In particular, as the formulation \eqref{eq:sc_intro_lse} does not incorporate any additional structural information about $\K^\star$ beyond convexity, the estimator $\hat{\K}^{\mathrm{LSE}}_n$ can provide poor
reconstructions when the measurements available are noisy or small in number.  The situation is problematic even when the number of measurements available is large, as the complexity of the resulting estimate grows with the number of measurements in the absence of any regularization due to the regression problem \eqref{eq:sc_intro_lse} being nonparametric (the collection of all compact convex sets in $\R^d$ is not finitely parametrized); consequently, the facial structure of the reconstruction provides little information about the geometry of the underlying set.\footnote{We note that this is the case even though the estimator $\hat{\K}^{\mathrm{LSE}}_n$ is consistent; in particular, consistency simply refers to the convergence as $n \rightarrow \infty$ of $\hat{\K}^{\mathrm{LSE}}_n$ to $\K^\star$ in a topological sense (e.g., in Hausdorff distance) and it does not provide any information about the facial structure of $\hat{\K}^{\mathrm{LSE}}_n$ relative to that of $\K^\star$.} Finally, if the underlying set $\K^\star$ is not polyhedral, a polyhedral choice for the solution $\hat{\K}^{\mathrm{LSE}}_n$ (as is the case with much of the literature on this topic) can provide poor reconstructions.  Indeed, even for intrinsically ``simple'' convex bodies such as the Euclidean ball, one necessarily requires many vertices or facets to obtain accurate polyhedral approximations.  Figure \ref{fig:intro_comparison} provides an illustration of these points.

\subsection{Our Contributions}

To address the drawbacks underlying the least-squares estimator, we seek a framework that \emph{regularizes} for the complexity of the reconstruction in the formulation \eqref{eq:sc_intro_lse}.  A natural approach to developing such a framework is to design an estimator with the same objective as in \eqref{eq:sc_intro_lse} but in which the decision variable $\K$ is constrained to lie in a subclass $\mathcal{F}$ of the collection of all compact, convex sets.  For such a method to be useful, the subclass $\mathcal{F}$ must balance several considerations.  First, $\mathcal{F}$ should be sufficiently expressive in order to faithfully model various attributes of convex sets that arise in applications (for example, sets consisting of both smooth and singular features in their boundary).  Second, the elements of $\mathcal{F}$ should be suitably structured so that incorporating the constraint $\K \in \mathcal{F}$ leads to estimates that are more robust to noise; further, the type of structure underlying the sets in $\mathcal{F}$ also informs the analysis of the statistical properties of the constrained analog of \eqref{eq:sc_intro_lse} as well as the development of efficient algorithms for computing the associated estimate.  Building on the literature on lift-and-project methods in convex optimization \cite{Yan:91,GPT:13}, we consider families $\mathcal{F}$ in which the elements are specified as images under linear maps of a fixed `concisely specified' compact convex set; the choice of this set governs the expressivity of the family $\mathcal{F}$ and we discuss this in greater detail in the sequel.  Due to the availability of computationally tractable procedures for optimization over linear images of concisely described convex sets \cite{NesNem:94}, the study of such descriptions constitutes a significant topic in optimization.  We employ these ideas in a conceptually different context in the setting of the present paper, namely that of incorporating regularization in the reconstruction, which addresses many of the drawbacks with the LSE outlined previously.  Formally, we consider the following regularized convex set regression problem:
\begin{equation} \label{eq:sc_intro_constrainedlse}
\hat{\K}_n^{\C} \in \underset{\K : \K = A (\C), A \in L(\R^{q},\R^d)}{\mathrm{argmin}} \frac{1}{n} \sum_{i=1}^{n} \left( y^{(i)} - h_{\K} (\bu^{(i)}) \right)^2.
\end{equation}
Here $\C \subset \R^q$ is a user-specified compact convex set and $L(\R^{q},\R^d)$ denotes the set of linear maps from $\R^q$ to $\R^d$.  The set $\C$ governs the expressive power of the family $\{A(\C) ~|~ A \in L(\R^{q},\R^d)\}$.  In addition to this consideration, our choices for $\C$ are also driven by statistical and computational aspects of the estimator \eqref{eq:sc_intro_constrainedlse}. Some of our analysis of the statistical properties of the estimator \eqref{eq:sc_intro_constrainedlse} relies on the observation that sets $\mathcal{F}$ that admit certain semialgebraic descriptions form VC classes; this fact serves as the foundation for our characterization based on stochastic equicontinuity of the asymptotic properties of the estimator $\hat{\K}_n^{\C}$ as $n \rightarrow \infty$.  On the computational front, the algorithms we propose for \eqref{eq:sc_intro_constrainedlse} require that the support function associated to $\C$ as well as its derivatives (when they exist) can be computed efficiently.  Motivated by these issues, the choices of $\C$ that we primarily discuss in our examples and numerical illustrations are the simplex and the spectraplex:

\begin{example}
	The \emph{simplex} in $\R^q$ is the set:
	\begin{equation*}
		\simp^{q} := \{ \bx ~|~ \bx \in \R^q, \bx \geq 0, \langle \bx, \bone \rangle = 1 \} \quad \text{where} \quad \bone = (1,\ldots,1)^{\intercal}.
	\end{equation*}
	Convex sets expressed as projections of $\simp^{q}$ are precisely polytopes with at most $q$ extreme points.
\end{example}

\begin{example}
	Let $\mathbb{S}^p \cong \R^{p+1 \choose 2}$ denote the space of $p \times p$ real symmetric matrices.  The \emph{spectraplex} $\fs^{p} \subset \mathbb{S}^p$ (also called the \emph{free spectrahedron}) is the set:
	\begin{equation*}
		\fs^{p} := \{ X ~|~ X \in \mathbb{S}^p, X \succeq 0, \langle X, I \rangle = 1 \}, \quad \text{where} \quad I \in \mathbb{S}^p \text{ is the identity matrix}.
	\end{equation*}
	The spectraplex is a semidefinite analog of the simplex, and it is especially useful if we seek non-polyhedral reconstructions, as can be seen in Figure \ref{fig:intro_comparison} and in Section \ref{sec:numexp}; in particular, linear images of the spectraplex exhibit both smooth and singular features in their boundaries.
\end{example}

The specific selection of $\C$ from the families $\{\simp^{q}\}_{q=1}^\infty$ and $\{\fs^p\}_{p=1}^\infty$ is governed by the complexity of the reconstruction one seeks, which is typically based on prior information about $\K^\star$.  Our analysis in Section \ref{sec:stats} of the statistical properties of the estimator \eqref{eq:sc_intro_constrainedlse} relies on the availability of such additional knowledge about the complexity of $\K^\star$.  In practice in the absence of such information, cross-validation may be employed to obtain a suitable reconstruction; see Section \ref{sec:conc}.

In Section \ref{sec:prelims} we discuss preliminary aspects of our technical setup such as properties of the set of minimizers of the problem \eqref{eq:sc_intro_constrainedlse} as well as a stylized probabilistic model for noisy support function evaluations.  These serve as a basis for the subsequent development in the paper.  In Section \ref{sec:stats} we provide the main theoretical guarantees of our approach.  In our first result, we show that the sequence of estimates $\{\hat{\K}_n^{\C}\}_{n=1}^{\infty}$ converges almost surely (as $n \rightarrow \infty$) in the Hausdorff metric to that linear image of $\C$ which is closest to the underlying set $\K^{\star}$ (see Theorem \ref{thm:stats_setconvergence}).  Under additional conditions, we also characterize certain asymptotic distributional aspects of the sequence $\{\hat{\K}_n^{\C}\}_{n=1}^{\infty}$ (see Theorem \ref{thm:stats_norm}); this result is based on a functional central limit theorem, which requires the computation of appropriate entropy bounds for Vapnik-Chervonenkis (VC) classes of sets that admit semialgebraic descriptions, and it is here that our choice of $\C$ as either a simplex or a spectraplex plays a prominent role.  Our third result describes the facial structure of $\{\hat{\K}_n^{\C}\}_{n=1}^{\infty}$ in relation to the underlying set $\K^\star$.  We prove under appropriate conditions that if $\K^\star$ is a polytope our approach provides a reconstruction that recovers all the simplicial faces (for sufficiently large $n$); if $\K^\star$ is a simplicial polytope, we recover a polytope that is combinatorially equivalent to $\K^\star$.  This result also applies more generally to `rigid' faces for non-polyhedral $\KS$ (see Theorem \ref{thm:preserveface}).

In the sequel, we relate our formulation \eqref{eq:sc_intro_constrainedlse} (when $\C$ is a simplex) to the task of fitting piecewise affine convex functions to data (known as max-affine regression) as well as $K$-means clustering.  Accordingly, the algorithm we propose in Section~\ref{sec:algo} for computing $\hat{\K}_n^{\C}$ bears significant similarities with methods for max-affine regression \cite{MagBoy:09} as well as Lloyd's algorithm for clustering problems.

A restriction in the development in this paper is that the simplex and the spectraplex represent particular affine slices of the nonnegative orthant and the cone of positive semidefinite matrices.  In principle, one can further optimize these slices (both their orientation and their dimension) in \eqref{eq:sc_intro_constrainedlse} to obtain improved reconstructions.  However, this additional degree of flexibility in \eqref{eq:sc_intro_constrainedlse} leads to technical complications in establishing asymptotic normality in Section \ref{sec:stats_asymnormal} as well as to challenges in developing algorithms for solving \eqref{eq:sc_intro_constrainedlse} (even to obtain a local optimum).  The root of these difficulties lies in the fact that it is hard to characterize the variation in the support function with respect to small changes in the slice.  We remark on these challenges in greater detail in Section \ref{sec:conc}, and for the remainder of the paper we proceed with investigating the estimator \eqref{eq:sc_intro_constrainedlse}.

\begin{figure}
	\centering
	\begin{subfigure}{.45\textwidth}
		\centering
		\begin{subfigure}{.5\textwidth}
			\centering
			\includegraphics[width=0.8\textwidth]{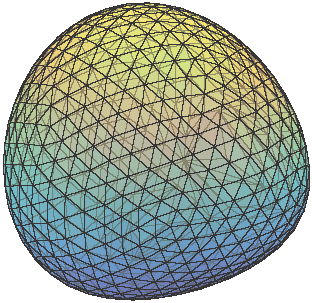}
		\end{subfigure}%
		\begin{subfigure}{.5\textwidth}
			\centering
			\includegraphics[width=0.8\textwidth]{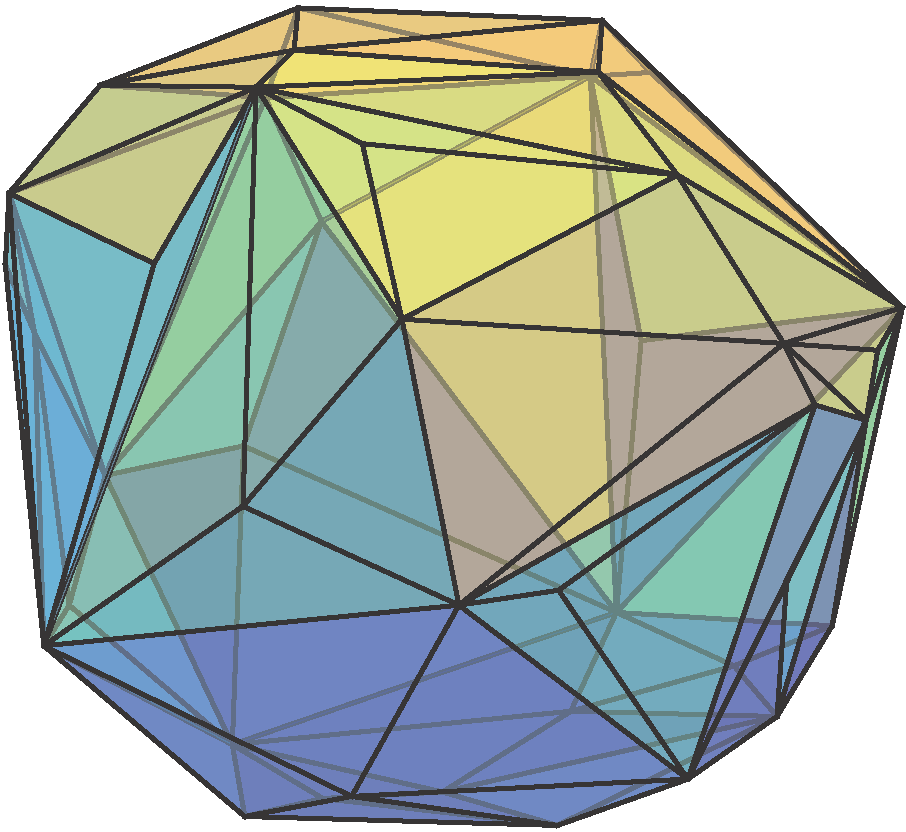}
		\end{subfigure}%
		\caption{Reconstructions of the unit $\ell_2$-ball from $50$ noisy support function measurements as the projection of $\fs^{3}$ (our approach, left), and the LSE (right).}
	\end{subfigure}%
	\hspace{0.5cm}
	\begin{subfigure}{.45\textwidth}
		\centering
		\begin{subfigure}{.5\textwidth}
			\centering
			\includegraphics[width=0.5\textwidth]{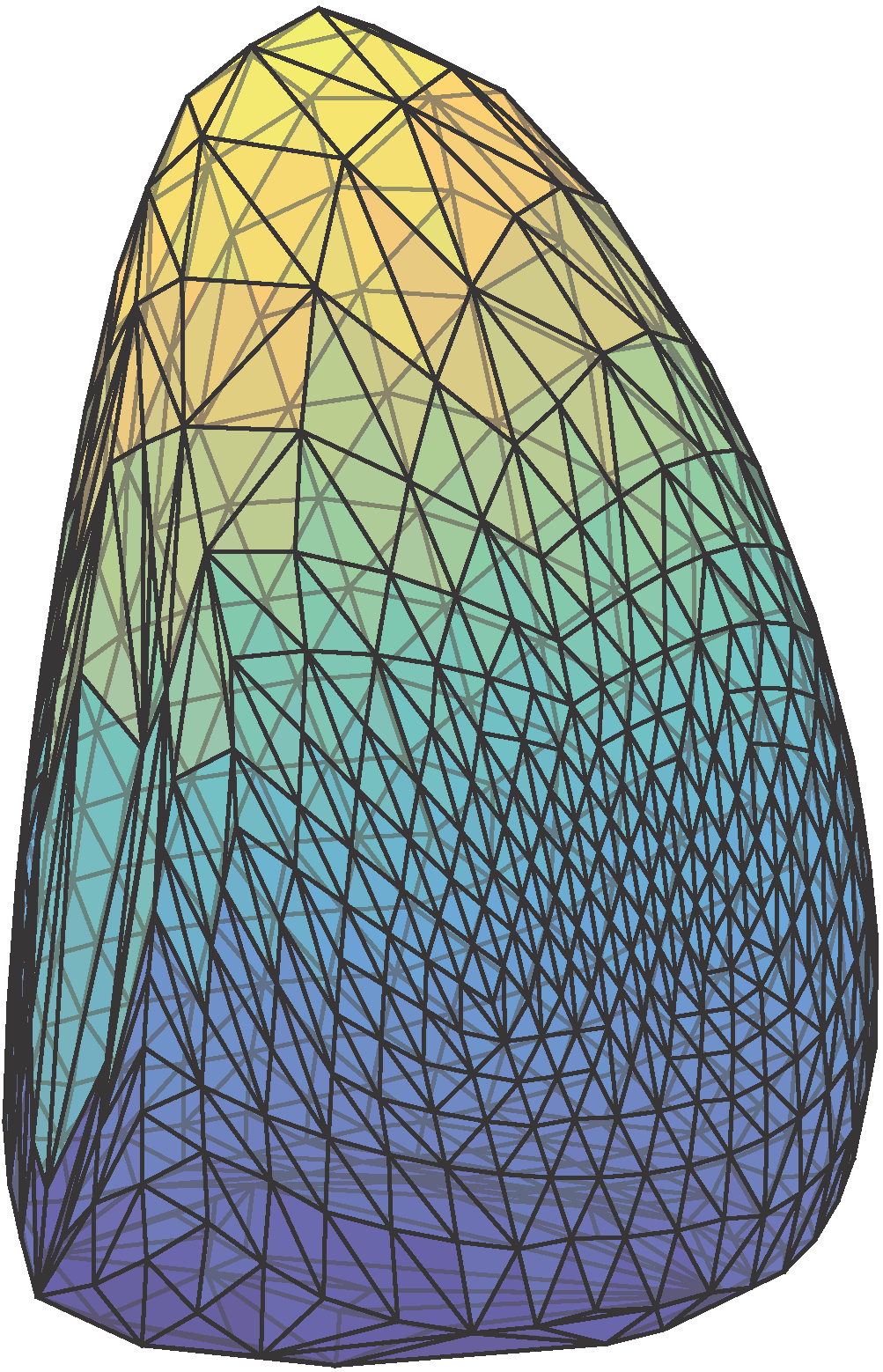}
		\end{subfigure}%
		\begin{subfigure}{.5\textwidth}
			\centering
			\includegraphics[width=0.5\textwidth]{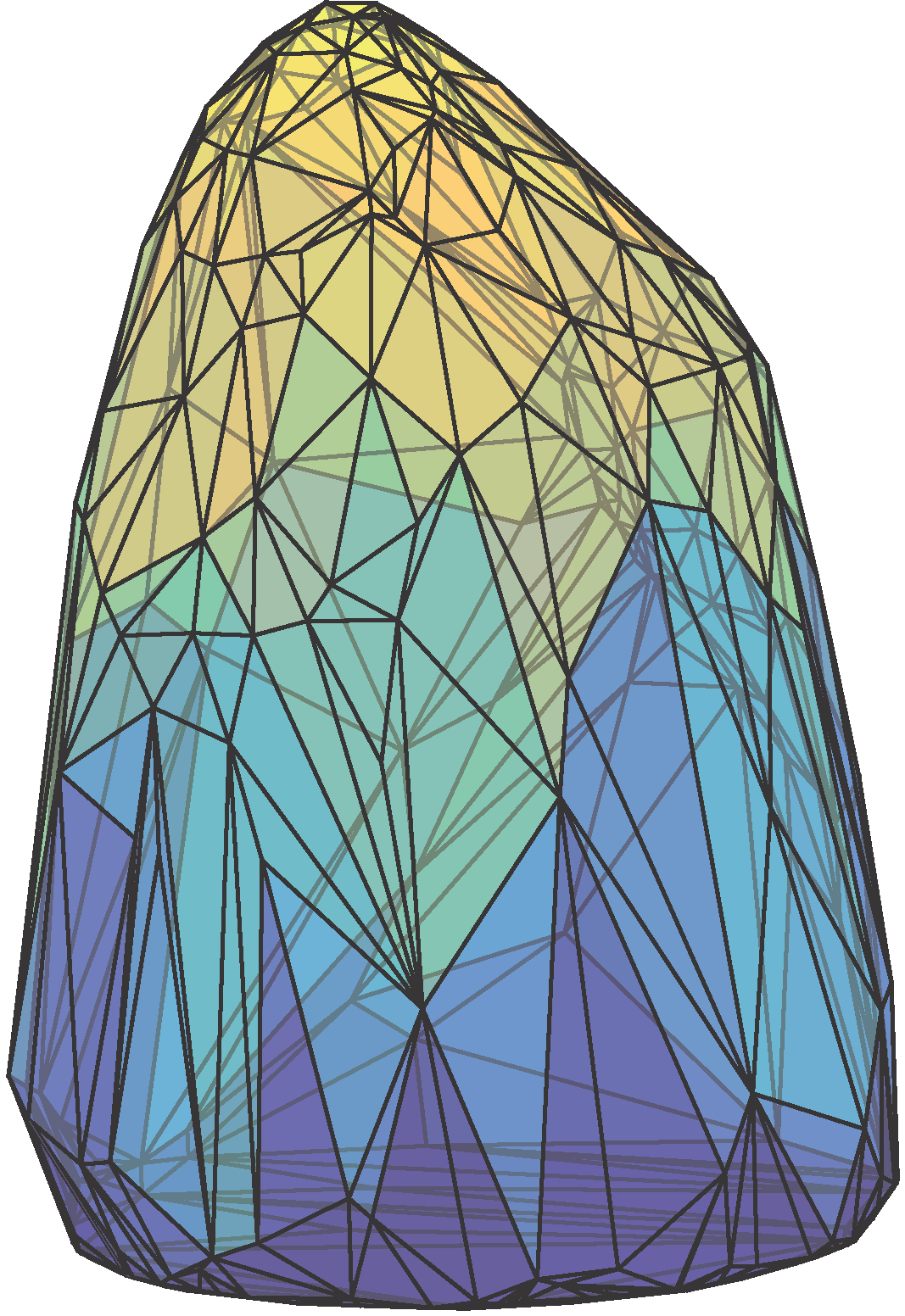}
		\end{subfigure}%
		\caption{Reconstructions of the convex mesh of a human lung from $300$ noiseless support function measurements as the projection of $\fs^{6}$ (our approach, left), and the LSE (right).}
	\end{subfigure}%
	\caption{Comparison between our approach and the LSE.}
	\label{fig:intro_comparison}
\end{figure}

\subsection{Related Work}


\subsubsection{Consistency of Convex Set Regression}
There is a well-developed body of prior work on the consistency of convex set regression \eqref{eq:sc_intro_lse}.  Gardner et al. \cite{GarKidMil:06} prove that the (polyhedral) estimates $\hat{\K}^{\mathrm{LSE}}_n$ converge almost surely to the underlying set $\K^\star$ in the Hausdorff metric as $n \rightarrow \infty$ provided the directions $\{u^{(i)}\}_{i=1}^{n}$ cover the sphere in a suitably uniform manner.  Guntuboyina \cite{Guntuboyina:12} analyzes rates of convergence in minimax settings, and also notes that constraining the growth of the number of vertices in the reconstruction as the number of measurements increases provides a form of robustness.  Cai et. al. \cite{CGW:18} study the impact of choosing the directions $\{u^{(i)}\}_{i=1}^{n}$ adaptively in estimating planar convex sets.
In contrast the consistency result in the present paper corresponding to the constrained estimator \eqref{eq:sc_intro_constrainedlse} is qualitatively different.  On the one hand, for a given compact convex set $\C \subset \R^q$, we prove that the sequence of estimates $\{\hat{\K}_n^{\C}\}_{n=1}^{\infty}$ converges to that linear image of $\C$ which is closest to the underlying set $\K^{\star}$; in particular, $\{\hat{\K}_n^{\C}\}_{n=1}^{\infty}$ only converges to $\K^\star$ if $\K^\star$ can be represented as a linear image of $\C$.  On the other hand, there are several advantages to the framework presented in this paper in comparison with prior work.  First, the constrained estimator \eqref{eq:sc_intro_constrainedlse} lends itself to a precise asymptotic distributional characterization that is unavailable in the unconstrained case \eqref{eq:sc_intro_lse}.  Second, under appropriate conditions, the constrained estimator \eqref{eq:sc_intro_constrainedlse} recovers the facial structure of the underlying set $\K^\star$ unlike $\hat{\K}_{\mathrm{LSE}}$.  More significantly, beyond these technical distinctions, the constrained estimator \eqref{eq:sc_intro_constrainedlse} also yields concisely described non-polyhedral reconstructions (as well as associated consistency and asymptotic distributional characterizations) based on linear images of the spectraplex, in contrast to the usual choice of a polyhedral LSE in the previous literature.

\subsubsection{Incorporating Prior Information and Fitting Smooth Boundaries}
The problem of integrating prior information about the underlying convex set has been considered in \cite{PriWil:91}, where the authors propose a method to incorporating certain structural or shape priors for fitting convex sets in two dimensions, with a particular focus on settings in which the underlying set is a disc or an ellipsoid.  However, the reconstructions produced by the method in \cite{PriWil:91} are still polyhedral, and the method assumes that the support function evaluations are available at angles that are equally spaced.
We are also aware of a line of work \cite{FHTW:97,HalTur:99} on fitting convex sets in two dimensions with smooth boundaries to support function measurements.  The first of these papers estimates a convex set with a smooth boundary without any vertices, while the second proposes a two-step method in which one initially estimates a set of vertices followed by a second step that connects these vertices via smooth boundaries.  In both cases, splines are used to interpolate between the support function evaluations with a subsequent smoothing procedure using the von Mises kernel.  The smoothing is done in a local fashion and the resulting reconstruction is increasingly complex to describe as the number of measurements available grows.  In contrast, our approach to producing non-polyhedral estimates based on fitting linear images of spectraplices is more global in nature, and we explicitly regularize for the complexity of our reconstruction based on the dimension of the spectraplex.  Further, the approaches proposed in \cite{FHTW:97,HalTur:99} estimate the singular and the smooth parts of the boundary separately, whereas our framework based on linear images of spectraplices estimates these features in a unified manner (for example, see the illustration in Figure \ref{fig:Exp_F6}).  Finally, the methods described in \cite{FHTW:97,HalTur:99,PriWil:91} are only applicable to two-dimensional reconstruction problems, while problems of a three-dimensional nature arise in many contexts (see Section \ref{sec:numexp_lung} for an example that involves the reconstruction of a human lung).

\subsubsection{Piecewise Affine Convex Regression} \label{sec:maxaffineconnection}
The formulation \eqref{eq:sc_intro_constrainedlse} when $\C = \simp^q$ may be viewed as a fitting a piecewise linear function (with at most $q$ pieces) to the given data.  This is a special case of the \emph{max-affine regression} problem in which one is interested in fitting a piecewise affine function (typically with a bound on the number of pieces) to data, which is a topic that has been studied previously \cite{MagBoy:09,HanDun:13,Bal:16}.  In particular, our algorithm in Section~\ref{sec:algo} when specialized to the setting $\C = \simp^q$ is analogous to the methods described in \cite{MagBoy:09}.  However, our framework and the algorithm in Section~\ref{sec:algo} may also be employed to fit more general convex functions that are not piecewise linear, but that can still be specified in a tractable manner via linear images of the spectraplex.

\subsection{Outline}

In Section \ref{sec:prelims} we discuss the geometric, algebraic, and analytic aspects of the optimization problem \eqref{eq:sc_intro_constrainedlse}; this section serves as the foundation for the subsequent statistical analysis in Section \ref{sec:stats}.  Throughout both of these sections, we give several examples that provide additional insight into our mathematical development.  We describe algorithms for solving \eqref{eq:sc_intro_constrainedlse} in Section \ref{sec:algo}, and we demonstrate the application of these methods in a range of numerical experiments in Section \ref{sec:numexp}.  We conclude with a discussion of future directions in Section \ref{sec:conc}.  

\emph{Notation}:  Given a convex set $\C\subset \R^q$, we denote the associated induced norm by $\| A \|_{\C,2}  := \sup_{\bx\in\C} \| A \bx \|_2$.  We denote the unit $\|\cdot\|$-ball centered at $\bx$ by $B_{\|\cdot\|}(\bx) := \{ \by ~|~ \|\by-\bx\| \leq 1 \}$, and we denote the Frobenius norm by $\|\cdot\|_F$.  Given a point $\ba \in \R^q$ and a subset $U \subseteq \R^q$, we define $\mathrm{dist} (\ba,U) := \inf_{\bb \in U} \| \ba - \bb \|$, where the norm $\|\cdot\|$ is the Euclidean norm.  Last, given any two subsets $U,V \subset \R^q$, the Hausdorff distance between $U$ and $V$ is denoted by $d_{H}(U,V) := \inf_{t \geq 0} \{t ~|~  U \subseteq V + t B_{\|\cdot\|_{2}}(\bzero), V \subseteq U + t B_{\|\cdot\|_{2}}(\bzero) \}$.


\section{Problem Setup and Other Preliminaries} \label{sec:prelims}

In this section, we begin with a preliminary discussion of the geometric, algebraic, and analytical aspects of our procedure \eqref{eq:sc_intro_constrainedlse}; these underpin our subsequent development in this paper.  We make the following assumptions about our problem setup for the remainder of the paper:
\begin{itemize}
	\item[(A1)] The set $\KS \subset \R^d$ is compact and convex.
	\item[(A2)] The set $\C \subset \R^q$ is compact and convex.
	\item[(A3)] \emph{Probabilistic Model for Support Function Measurements}:  We assume that we are given $n$ independent and identically distributed support function evaluations $\{(\bu^{(i)},y^{(i)})\}_{i=1}^n \subset \sph^{d-1} \times \R$ from the following probabilistic model:
	\begin{equation} \label{eq:probdist}
	\pk: \quad y = h_{\KS} (\bu) + \varepsilon.
	\end{equation}
	Here $\bu \in \sph^{d-1}$ is a vector distributed uniformly at random (u.a.r.) over the unit sphere, $\varepsilon$ is a centered random variable with variance $\sigma^2$ (i.e., $\mathbb{E}[\varepsilon] = 0$, and $\mathbb{E}[\varepsilon^2] = \sigma^2$), and $\bu$ and $\varepsilon$ are independent.
\end{itemize}

In our analysis, we quantify dissimilarity between convex sets in terms of a metric applied to their respective support functions.  Let $\K_1, \K_2$ be compact convex sets in $\R^d$, and let $h_{\K_1}(\cdot), h_{\K_2}(\cdot)$ be the corresponding support functions.  We define the $L_p$ metric to be
\begin{equation} \label{eq:sc_formulation_distance}
\rho_{p} (\K_1, \K_2) := \left(\int_{\sph^{d-1}}  | h_{\K_1}(\bu) - h_{\K_2}(\bu) |^p ~ \mu (d \bu) \right)^{1/p},  \quad 1\leq p < \infty,
\end{equation}
where the integral is with respect to the Lebesgue measure over $\sph^{d-1}$; as usual, we denote $\rho_{\infty} (\K_1, \K_2) = \max_{\bu} | h_{\K_1}(\bu) - h_{\K_2}(\bu) |$.  We prove our convergence guarantees in Section \ref{sec:stats_sc} in terms of the $\rho_p$-metric.  This metric represents an important class of distance measures over convex sets.  For instance, it features prominently in the literature on approximating convex sets as polytopes \cite{Bronstein08}.  In addition, the specific case of $p=\infty$ coincides with the Hausdorff distance \cite{Sch:93} [p.~66].

Due to the form of the estimator \eqref{eq:sc_intro_constrainedlse}, one may reparametrize the optimization problem in terms of the linear map $A$.  In particular, by noting that $h_{A(\C)}(\bu) = h_{\C}(A^\intercal \bu)$, the problem \eqref{eq:sc_intro_constrainedlse} can be reformulated as follows:
\begin{equation}\label{eq:sc_intro_constrainedlse_lmap}
\hat{A}_n \in \underset{A \in L(\R^{q},\R^d)}{\mathrm{argmin}} \frac{1}{n} \sum_{i=1}^{n} \left( y^{(i)} - h_{\C} (A^\intercal \bu^{(i)}) \right)^2.
\end{equation}
Based on this observation, we analyze the properties of the set of minimizers of \eqref{eq:sc_intro_constrainedlse} via an analysis of \eqref{eq:sc_intro_constrainedlse_lmap}.  (The reformulation \eqref{eq:sc_intro_constrainedlse_lmap} is also more conducive to the development of numerical algorithms for solving \eqref{eq:sc_intro_constrainedlse}.)  In turn, a basic strategy for investigating the asymptotic properties of the estimator \eqref{eq:sc_intro_constrainedlse_lmap} is to analyze the minimizers of the loss function at the \emph{population level}.  Concretely, for any probability measure $P$ over pairs $(\bu,y) \in \sph^{d-1} \times \R$, the loss function with respect to $P$ is defined as:
\begin{equation} \label{eq:populationloss}
\pc(A,P) := \E_{P} [ ( \lmax(A^{\intercal}\bu) - y )^2 ].
\end{equation}
Thus, the focus of our analysis is on studying the set of minimizers of the population loss function $\pc(\cdot,\pk)$:
\begin{equation}
\mc := \underset{A}{\mathrm{argmin}} ~ \pc(A,\pk). \label{eq:populationminimizers}
\end{equation}

\subsection{Geometric Aspects} \label{sec:prelims_geom}
In this subsection, we focus on the \emph{convex sets} defined by the elements of the set of minimizers $\mc$.  In the next subsection, we consider the elements of $\mc$ as \emph{linear maps}.  To begin with, we state a simple lemma on the continuity of $\pc(\cdot,P)$:

\begin{proposition}\label{thm:stats_sc_cont}
	Let $P$ be any probability distribution over measurement pairs $(\bu,y) \in \sph^{d-1} \times \R$ satisfying $\E_{P}[|y|]<\infty$.  Then the function $\pc(\cdot,P)$ is continuous, where $\pc(\cdot,P)$ is defined in \eqref{eq:populationloss}.
\end{proposition}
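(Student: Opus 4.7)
The plan is to establish continuity of $\pc(\cdot,P)$ by combining the Lipschitz property of the support function $\lmax$ with the integrability assumption $\E_P[|y|]<\infty$. Fix $A \in L(\R^q,\R^d)$ and let $A' \to A$; the goal is to show $\pc(A',P) \to \pc(A,P)$.

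First I would record that since $\C$ is compact (A2), the support function $\lmax : \R^q \to \R$ is globally Lipschitz with constant $L := \sup_{\bx \in \C} \|\bx\|_2 < \infty$. Consequently, for every $\bu \in \sph^{d-1}$,
\begin{equation*}
|\lmax(A^\intercal \bu) - \lmax(A'^{\intercal} \bu)| \leq L \,\|A^\intercal \bu - A'^{\intercal}\bu\|_2 \leq L \,\|A - A'\|_{\mathrm{op}},
\end{equation*}
and similarly $|\lmax(A^\intercal \bu)| \leq L \,\|A\|_{\mathrm{op}}$. Both bounds are uniform in $\bu$.

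Next I would expand the pointwise difference of integrands and exploit the cancellation of the $y^2$ term:
\begin{equation*}
(\lmax(A^\intercal \bu) - y)^2 - (\lmax(A'^{\intercal}\bu) - y)^2 = \bigl[\lmax(A^\intercal\bu)^2 - \lmax(A'^{\intercal}\bu)^2\bigr] - 2y\bigl[\lmax(A^\intercal\bu) - \lmax(A'^{\intercal}\bu)\bigr].
\end{equation*}
The first bracket factors as $(\lmax(A^\intercal\bu)-\lmax(A'^{\intercal}\bu))(\lmax(A^\intercal\bu)+\lmax(A'^{\intercal}\bu))$, which by the uniform bounds above is dominated in absolute value by $L^2 \|A-A'\|_{\mathrm{op}} (\|A\|_{\mathrm{op}} + \|A'\|_{\mathrm{op}})$, a deterministic quantity. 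The second bracket is dominated by $2|y| \, L\,\|A - A'\|_{\mathrm{op}}$, and $|y|$ is $P$-integrable by hypothesis.

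Taking expectations under $P$ and using these two bounds yields
\begin{equation*}
|\pc(A,P) - \pc(A',P)| \leq L^2 \|A - A'\|_{\mathrm{op}}\bigl(\|A\|_{\mathrm{op}} + \|A'\|_{\mathrm{op}}\bigr) + 2L \, \E_P[|y|] \,\|A-A'\|_{\mathrm{op}},
\end{equation*}
so the right-hand side vanishes as $A' \to A$, establishing continuity. There is essentially no obstacle; the only subtle point is that one must avoid invoking $\E_P[y^2]<\infty$, which is why the $y^2$ cancellation step above is important and why the stated hypothesis $\E_P[|y|]<\infty$ suffices.
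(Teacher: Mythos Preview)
Your proof is correct and follows essentially the same approach as the paper's: both exploit the Lipschitz property of $\lmax$ to bound $|\lmax(A^\intercal\bu)-\lmax(A'^{\intercal}\bu)|$ uniformly in $\bu$, then expand the difference of squares so that the $y^2$ term cancels and only a term linear in $y$ remains, which is controlled by the hypothesis $\E_P[|y|]<\infty$. The only cosmetic differences are that the paper uses the norm $\|\cdot\|_{\C,2}$ in place of your $L\|\cdot\|_{\mathrm{op}}$ and factors the difference as $(a-b)(a+b-2y)$ rather than your $(a^2-b^2)-2y(a-b)$; these are equivalent.
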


The proof uses a simple bound.  We state the result formally as we require it at a later part.
\begin{lemma}\label{thm:linmap_diff}
Given any pair of linear maps $A_1, A_2 \in L(\R^q,\R^d)$, any unit-norm vector $\bu$, and any scalar $y$, we have $||\lmax(A_1^{\intercal}\bu) - y |  - |\lmax(A_2^{\intercal}\bu) - y || \leq |\lmax(A_1^{\intercal}\bu) - \lmax(A_2^{\intercal}\bu)| \leq \|A_1 - A_2\|_{\C,2} $.
\end{lemma}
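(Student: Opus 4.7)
The plan is to handle the two inequalities separately, as the first is purely formal while the second uses the definition of the support function together with the induced norm $\|\cdot\|_{\C,2}$.

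For the left inequality, I would apply the reverse triangle inequality $||a-y|-|b-y|| \leq |a-b|$ with $a = \lmax(A_1^{\intercal}\bu)$ and $b = \lmax(A_2^{\intercal}\bu)$. This is immediate and requires no structure of $\lmax$ or $\C$.

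For the right inequality, I would work directly from the definition $\lmax(\bv) = h_{\C}(\bv) = \sup_{\bx \in \C}\langle \bx, \bv\rangle$. The standard trick is to pick $\bx^{\star} \in \C$ achieving the supremum for $A_1^{\intercal}\bu$ (which exists by compactness of $\C$, assumption (A2)), so that
\[
\lmax(A_1^{\intercal}\bu) - \lmax(A_2^{\intercal}\bu) \leq \langle \bx^{\star}, A_1^{\intercal}\bu\rangle - \langle \bx^{\star}, A_2^{\intercal}\bu\rangle = \langle (A_1-A_2)\bx^{\star}, \bu\rangle.
\]
Cauchy--Schwarz together with $\|\bu\|_2 = 1$ then yields the bound $\|(A_1-A_2)\bx^{\star}\|_2 \leq \sup_{\bx \in \C}\|(A_1-A_2)\bx\|_2 = \|A_1-A_2\|_{\C,2}$. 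Swapping the roles of $A_1$ and $A_2$ gives the reverse inequality, and hence the absolute value bound.

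There is no real obstacle here: the only subtlety is making sure one uses the definition of $\|\cdot\|_{\C,2}$ from the notation paragraph (the induced norm as a supremum over $\C$, not over a unit ball) so that the Cauchy--Schwarz estimate lands precisely on $\|A_1 - A_2\|_{\C,2}$. The proof is therefore two short displays plus a symmetry remark.
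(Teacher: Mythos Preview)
Your proposal is correct and essentially matches the paper's argument. The only cosmetic difference is that the paper invokes sublinearity of the support function, $h_{\C}(A_1^{\intercal}\bu) \leq h_{\C}(A_2^{\intercal}\bu) + h_{\C}((A_1-A_2)^{\intercal}\bu)$, and then bounds the last term by $\|A_1-A_2\|_{\C,2}$, whereas you unpack this by picking an explicit maximizer $\bx^{\star}\in\C$ and applying Cauchy--Schwarz; these are the same computation written two ways.
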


\begin{proof}[Proof of Lemma \ref{thm:linmap_diff}]
First note that $h_{\C}(A_1^{\intercal}u) \leq h_{\C}(A_2^{\intercal}u) + h_{\C}((A_1-A_2)^{\intercal}u)$.  Since $u$ is unit-norm, we have $h_{\C}(A_1^{\intercal}u) - h_{\C}(A_2^{\intercal}u) \leq h_{\C}((A_1-A_2)^{\intercal}u) \leq \| A_1 - A_2 \|_{\C,2}$.  Similarly we have $h_{\C}(A_2^{\intercal}u) - h_{\C}(A_1^{\intercal}u) \leq \| A_1 - A_2 \|_{\C,2}$.  It follows that $||\lmax(A_1^{\intercal}\bu) - y |  - |\lmax(A_2^{\intercal}\bu) - y || \leq |\lmax(A_1^{\intercal}\bu) - \lmax(A_2^{\intercal}\bu)| \leq \|A_1 - A_2\|_{\C,2} $.
\end{proof}


\begin{proof}[Proof of Proposition \ref{thm:stats_sc_cont}]  Let $\epsilon>0$ be arbitrary.  Let $r = 1+\| A \|_{\C,2}$, and pick $\delta =\min \{ \epsilon/(3 \E[r+|y|]), r\}$.  Then for any $A_0 $ satisfying $\| A - A_0 \|_{\C,2} < \delta $, we have $|\Phi(A,P) - \Phi (A_0,P)| = |\E_{P} [( \lmax(A^{\intercal}\bu) - y )^2 - ( \lmax(A_0^{\intercal}\bu) - y )^2]| \leq \E_{P} [ | \lmax(A^{\intercal}\bu) - \lmax(A_0^{\intercal}\bu) | |\lmax(A^{\intercal}\bu) + \lmax(A_0^{\intercal}\bu) -  2y | ] $.  We apply Lemma \ref{thm:linmap_diff} to obtain the bound $|\lmax(A^{\intercal}\bu) + \lmax(A_0^{\intercal}\bu) -  2y| \leq 2|\lmax(A^{\intercal}\bu)| + 2|y| + \|A_0-A\|_{\C,2} < 2r+2|y|+\delta$.  Combining this bound with our earlier expression, it follows that $|\Phi(A,P) - \Phi (A_0,P)| <  \E_{P} [\delta(2r+\delta+2|y|)] \leq \epsilon$.
\end{proof}

The following result gives a series of properties about the set $\mc$.  Crucially, it shows that $\mc$ characterizes the optimal approximations of $\KS$ as linear images of $\C$:
\begin{proposition} \label{thm:form_mcproperties}
	Suppose that the assumptions (A1), (A2), (A3) hold.  Then the set of minimizers $\mc$ defined in \eqref{eq:populationminimizers} is compact and non-empty.  Moreover, we have
	\begin{equation*}
		\hat{A} \in \mc \quad \Leftrightarrow \quad \hat{A} \in \underset{A \in \lqd}{\mathrm{argmin}} ~ \rho_2 ( A(\C), \KS  ).
	\end{equation*}
\end{proposition}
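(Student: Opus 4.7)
The plan is to prove the equivalence first and then deduce non-emptiness and compactness via continuity and coercivity. For the equivalence, I substitute the measurement model $y = h_{\KS}(\bu) + \varepsilon$ into the loss and use the identity $h_\C(A^\intercal \bu) = h_{A(\C)}(\bu)$ to obtain $\pc(A,\pk) = \E[(h_{A(\C)}(\bu) - h_{\KS}(\bu) - \varepsilon)^2]$. Expanding the square, the cross term $-2\E[(h_{A(\C)}(\bu) - h_{\KS}(\bu))\varepsilon]$ factors by independence of $\bu$ and $\varepsilon$ and vanishes because $\E[\varepsilon]=0$ under (A3), so
\begin{equation*}
\pc(A, \pk) = \E_{\bu} \bigl[ (h_{A(\C)}(\bu) - h_{\KS}(\bu))^2 \bigr] + \sigma^2.
\end{equation*}
Since $\bu$ is uniform on $\sph^{d-1}$, the first term is a positive constant multiple of $\rho_2(A(\C), \KS)^2$, and the additive $\sigma^2$ is independent of $A$. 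This yields the claimed equivalence between $\mc$ and $\mathrm{argmin}_{A \in \lqd} \rho_2(A(\C), \KS)$.

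For non-emptiness, continuity of $\pc(\cdot,\pk)$ follows from Proposition \ref{thm:stats_sc_cont}, applied under (A1) and (A3) which give $\E_{\pk}[|y|] \leq \max_{\bu} |h_{\KS}(\bu)| + \E|\varepsilon| < \infty$. What remains is a coercivity bound, for which I plan the following spherical-cap argument. For any $x^\star \in \C$ with $A x^\star \neq 0$, $h_{A(\C)}(\bu) \geq \langle \bu, A x^\star \rangle$; on the cap $\{\bu : \langle \bu, A x^\star/\|A x^\star\|\rangle \geq 1/\sqrt{2}\}$, whose measure is bounded below by a constant $c_d > 0$ depending only on $d$, this yields $h_{A(\C)}(\bu)^2 \geq \|A x^\star\|^2/2$. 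Integrating and then taking the supremum over $x^\star \in \C$ gives $\|h_{A(\C)}\|_{L^2(\sph^{d-1})}^2 \geq (c_d/2) \|A\|_{\C,2}^2$, and the reverse $L^2$ triangle inequality $\rho_2(A(\C),\KS) \geq \|h_{A(\C)}\|_{L^2} - \|h_{\KS}\|_{L^2}$ then shows $\pc(A,\pk) \to \infty$ as $\|A\|_{\C,2} \to \infty$.

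Consequently, the sublevel sets of $\pc(\cdot,\pk)$ are bounded in the seminorm $\|\cdot\|_{\C,2}$; provided this seminorm is a genuine norm on $\lqd$ (equivalently $\mathrm{Span}(\C) = \R^q$, which holds for the simplex and the spectraplex), the sublevel sets are compact subsets of $\lqd$. Combined with closedness coming from the continuity of $\pc(\cdot,\pk)$, this shows that $\mc$ is non-empty, closed, and bounded, hence compact. The main obstacle I anticipate is the coercivity bound and, relatedly, the potential non-uniqueness of $A$ producing the same image $A(\C)$: when $\C$ is not full-dimensional in $\R^q$, $\|\cdot\|_{\C,2}$ is only a seminorm and $\mc$ can fail to be bounded in $\lqd$, so some care is needed in interpreting the compactness claim (for instance, by passing to a quotient by linear maps that agree on $\mathrm{Span}(\C)$).
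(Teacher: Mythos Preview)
Your argument is correct and follows the same high-level strategy as the paper: establish the decomposition $\pc(A,\pk)=\rho_2(A(\C),\KS)^2\cdot\mathrm{const}+\sigma^2$ via independence of $\bu$ and $\varepsilon$, and obtain existence/compactness from continuity plus a coercivity bound showing the minimizers lie in a $\|\cdot\|_{\C,2}$-ball. The difference is organizational. The paper proves coercivity \emph{directly on} $\pc(\cdot,\pk)$ by introducing the event $G_{r,\bv}=\{\langle \bv,\bu\rangle\ge 1/2,\ |y|\le r/4\}$ in $(\bu,y)$-space and comparing $\pc(A,\pk)$ to $\pc(0,\pk)$; only afterwards does it derive the equivalence with $\rho_2$. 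You instead establish the equivalence first and then argue coercivity purely on $\rho_2$, via the spherical-cap lower bound $\|h_{A(\C)}\|_{L^2}^2\ge (c_d/2)\|A\|_{\C,2}^2$ and the reverse triangle inequality. Your route is slightly cleaner, since once the $\sigma^2$ term is split off there is no need to track $y$ at all; the paper's $G_{r,\bv}$ argument has the mild advantage that it would generalize to measurement models where $\bu$ and $\varepsilon$ are not independent.

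Your caveat about $\|\cdot\|_{\C,2}$ being only a seminorm when $\spn(\C)\neq\R^q$ is well taken, and it applies equally to the paper's proof (which also only bounds $\mc$ in this seminorm). Under (A2) alone the compactness assertion should indeed be read modulo the kernel $\{A:A|_{\spn(\C)}=0\}$; for the simplex and spectraplex this kernel is trivial, so no issue arises in the cases of interest.
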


{\color{black}
\begin{proof}[Proof of Proposition \ref{thm:form_mcproperties}] 
	Define the event $\grv := \{ (\bu,y) ~|~ \langle \bv,\bu \rangle \geq 1/2, |y| \leq r/4 \}$ over $\bv \in \sph^{d-1}$.  In addition, define the function $s(\bv) := \prob [ (\bu,y) ~|~ \langle \bv,\bu\rangle \geq 1/2 ]$.  For every $r \geq 0$, consider the function $g_{r}(\bv) :=\prob [\grv]$.  By noting that $ g_{r} \leq g_{r^{\prime}}$ whenever $r \leq r^{\prime}$ (i.e. the sequence $\{g_{r}\}_{r\geq 0}$ is monotone increasing), $g_{r}(\cdot) \uparrow s(\cdot)$, and that $g_{r}(\cdot)$ is a continuous function over the compact domain $\sph^{d-1}$, we conclude that $g_{r}(\cdot)$ converges to $s(\cdot)$ uniformly.  Thus there exists $\hat{r}$ sufficiently large such that $(\hat{r}^2/16) \prob[\grhv] >  \pc(0,\pk)$ for all $\bv \in \sph^{d-1}$.
	
	Next, we show that $\mc \subseteq \hat{r} B_{\| \cdot \|_{\C,2}} (0)$.  Let $A \notin \hat{r} B_{\| \cdot \|_{\C,2}} (0)$.  Then, for such an $A$, there exists $\hat{\bx} \in \C$ such that $\|A\hat{\bx}\|_{2} > \hat{r}$.  Define $\hat{\bv} = A \hat{\bx} / \|A\hat{\bx}\|_{2}$.  We have
	\begin{equation*}
		\pc(A,\pk) \geq \E [ \textbf{1}(G_{\hat{r},\hat{\bv}}) (\lmax(A^{\intercal}\bu)-y)^2 ] \geq \prob [G_{\hat{r},\hat{\bv}}] \hat{r}^2 / 16 > \pc(0,\pk).
	\end{equation*}
	Here, $\textbf{1}(G_{\hat{r},\hat{\bv}})$ denotes the indicator function for the event $G_{\hat{r},\hat{\bv}}$.  As such, the above inequality implies that $A \notin \mc$.  Therefore $\mc \subseteq \hat{r} B_{\| \cdot \|_{\C,2}} (0)$, and hence $\mc$ is bounded.
	
	By Proposition \ref{thm:stats_sc_cont}, the function $A \mapsto \pc(A,\pk)$ is continuous.  As the minimizers of $\pc(\cdot,\pk)$, if they exist, must be contained in $ \hat{r} B_{\| \cdot \|_{\C,2}} (0)$, we can view $\mc$ as minimizers of a continuous function restricted to the compact set $ \hat{r} B_{\| \cdot \|_{\C,2}} (0)$, and hence is non-empty.  Moreover, since $\mc$ is the pre-image of a closed set under a continuous function, it is also closed and thus compact.
	
	By Fubini's theorem, we have $\E[ \varepsilon (\lmax (\KS) - \lmax(A^{\intercal}\bu)  )] = \E_{\bu} [ \E_{\varepsilon} [ \varepsilon (\lmax (\KS) - \lmax(A^{\intercal}\bu)  ) ]] =  0$.  Hence $\pc(A,\pk) = \E[ (\lmax (\KS) + \varepsilon - \lmax(A^{\intercal}\bu)  )^2]= \E[(\lmax (\KS) - \lmax(A^{\intercal}\bu) )^2 ] + \E[ \varepsilon^2 ]$, from which the last assertion follows.
\end{proof}
}

It follows from Proposition \ref{thm:form_mcproperties} that an optimal approximation of $\KS$ as the projection of $\C$ always exists.  In Section \ref{sec:stats_sc}, we show that the estimators obtained using our method converge to an optimal approximation of $\KS$ as a linear image of $\C$ if such an approximation is unique.  While this is often the case in applications one encounters in practice, the following examples demonstrate that the uniqueness condition need not always hold:

\begin{example}
	Suppose $\KS$ is the regular $q$-gon in $\R^2$, and $\C$ is the spectraplex $\fs^{2}$. Then $\mc$ uniquely specifies an $\ell_2$-ball.
\end{example}

\begin{example}
	Suppose $\KS$ is the unit $\ell_2$-ball in $\R^2$, and $\C$ is the simplex $\simp^{q}$. Then the sets specified by the elements $\mc$ are not unique; they all correspond to a centered regular $q$-gon, but with an unspecified rotation.
\end{example}

A natural question then is to identify settings in which $\mc$ defines a unique set.  Unfortunately, obtaining a complete characterization of this uniqueness property appears to be difficult due to the interplay between the invariances underlying the sets $\KS$ and $\C$.  However, based on Proposition \ref{thm:form_mcproperties}, we can provide a simple sufficient condition under which $\mc$ defines a unique set:

\begin{corollary} \label{thm:form_mcastar}
	Assume that the conditions of Proposition~\ref{thm:form_mcproperties} hold.  Suppose further that we have $\KS = A^{\star} (\C)$ for some $A^{\star} \in \lqd$.  Then the set of minimizers $\mc$ described in \eqref{eq:populationminimizers} uniquely defines $\KS$; i.e., $\KS = A (\C)$ for all $A \in \mc$.	
\end{corollary}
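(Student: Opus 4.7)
The plan is to combine the equivalence provided by Proposition \ref{thm:form_mcproperties} with the basic fact that the support function uniquely determines a compact convex set. By Proposition \ref{thm:form_mcproperties}, every $A \in \mc$ is a minimizer of $A \mapsto \rho_2(A(\C),\KS)$. Since $A^{\star} \in \lqd$ realizes $A^{\star}(\C) = \KS$, we immediately have $\rho_2(A^{\star}(\C),\KS) = 0$, so the minimum value of this objective is $0$. Consequently, any $A \in \mc$ must satisfy $\rho_2(A(\C),\KS) = 0$ as well.

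The remaining task is therefore to show that for compact convex sets $\K_1, \K_2 \subset \R^d$, the identity $\rho_2(\K_1,\K_2) = 0$ implies $\K_1 = \K_2$. First I would unpack the definition in \eqref{eq:sc_formulation_distance}: the vanishing of $\rho_2$ means that the nonnegative continuous function $\bu \mapsto | h_{\K_1}(\bu) - h_{\K_2}(\bu) |^2$ has integral zero against the Lebesgue surface measure on $\sph^{d-1}$. Because this integrand is continuous (support functions of compact convex sets are continuous on $\sph^{d-1}$) and nonnegative, it must vanish identically on the sphere. Extending by positive homogeneity then gives $h_{\K_1}(\bu) = h_{\K_2}(\bu)$ for every $\bu \in \R^d$.

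Finally, I would invoke the standard uniqueness result that a compact convex set is determined by its support function (see, e.g., \cite{Sch:93}): the equality $h_{\K_1} = h_{\K_2}$ of support functions forces $\K_1 = \K_2$. Applying this with $\K_1 = A(\C)$ and $\K_2 = \KS$ yields $A(\C) = \KS$ for every $A \in \mc$, which is the desired conclusion.

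I do not expect any step to pose a genuine obstacle, since the proof essentially amounts to chaining together Proposition \ref{thm:form_mcproperties}, continuity of support functions, and the injectivity of the map from compact convex sets to their support functions. The only minor point to be careful about is the passage from ``$\rho_2 = 0$'' to pointwise equality of support functions, which uses continuity on the sphere rather than any deeper property of $\C$ or $\KS$.
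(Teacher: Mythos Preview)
Your proposal is correct and follows essentially the same approach as the paper: both use Proposition~\ref{thm:form_mcproperties} to reduce to the $\rho_2$-minimization, observe that $A^\star$ achieves $\rho_2 = 0$, invoke continuity of support functions on $\sph^{d-1}$ to pass from vanishing integral to pointwise equality, and then appeal to the fact that a compact convex set is determined by its support function. Your write-up is in fact slightly more explicit than the paper's about the continuity step.
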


{\color{black}
\begin{proof}[Proof of Corollary \ref{thm:form_mcastar}]
	It is clear that $A^{\star} \in \mc$.  Note that $\lmax(A^{\intercal}\bu)$ is a continuous function of $\bu$ over a compact domain for every $A \in \lqd$.  Hence it follows that $\hat{A}\in \mc$ if and only if $\lmax (A^{\star\intercal}\bu) = \lmax(\hat{A}^{\intercal}\bu)$ everywhere.  By applying Proposition \ref{thm:form_mcproperties} and using the fact that a pair of compact convex sets that have the same support function must be equal, it follows that $\KS=\hat{A}(\C)$ for all $\hat{A} \in \mc$.
\end{proof}
}

\subsection{Algebraic Aspects of Our Method} \label{sec:prelims_algebraic}

While the preceding subsection focused on conditions under which the set of minimizers $\mc$ specifies a unique convex set, the aim of the present section is to obtain a more refined picture of the collection of linear maps in $\mc$.  We begin by discussing the identifiability issues that arise in reconstructing a convex set by estimating a linear map via \eqref{eq:sc_intro_constrainedlse_lmap}.  Given a compact convex set $\C$, let $g$ be a linear transformation that preserves $\C$; i.e., $g(\C)= \C$.  Then the linear map defined by $Ag$ specifies the same convex set as $A$ because $Ag (\C) = A (g(\C)) = A (\C)$.  As such, every linear map $A\in \lqd$ is a member of the equivalence class defined by:
\begin{equation} \label{eq:sc_formulation_equivrelation}
A \sim A g , \quad g \in \aut(\C).
\end{equation}
Here $\aut(\C)$ denotes the subset of linear transformations that preserve $\C$.  When $\C$ is non-degenerate, the elements of $\aut(\C)$ are invertible matrices and form a subgroup of $\mathrm{GL}(q,\R)$.  As a result, the equivalence class $A \orb := \{ Ag ~|~ g \in \aut(\C) \}$ specified by \eqref{eq:sc_formulation_equivrelation} can be viewed as the orbit of $A \in \lqd$ under (right) action of the group $\aut(\C)$.  In the sequel, we focus our attention on convex sets $\C$ for which the associated automorphism group $\aut(\C)$ consists of isometries:
\begin{itemize}
	\item[(A4)] The automorphism group of $\C$ is a subgroup of the orthogonal group, i.e., $\aut(\C) \lhd O (q,\R)$.
\end{itemize}
This assumption leads to structural consequences that are useful in our analysis.  In particular, as $\aut(\C)$ can be viewed as a compact matrix Lie group, the orbit $A \orb$ inherits structure as a smooth manifold of the ambient space $\lqd$.  The assumption (A4) is satisfied for the choices of $\C$ that are primarily considered in this paper -- the automorphism group of the simplex is the set of permutation matrices, and the automorphism group of the spectraplex is the set of linear operators specified as conjugation by an orthogonal matrix.

Based on this discussion, it follows that the space of linear maps $\lqd$ can be partitioned into orbits $A \orb$.  Further, the population loss $\pc(\cdot,\pk)$ is also invariant over orbits of $A$: for every $g \in \aut(\C)$, we have that $\lmax(A^{\intercal}\bu) = \lmax((Ag)^{\intercal}\bu)$.  Thus, the set of minimizers $\mc$ can also be partitioned into a union of orbits.  Consequently, in our analysis in Section \ref{sec:stats} we view the problem \eqref{eq:sc_intro_constrainedlse_lmap} as one of recovering an orbit rather than a particular linear map.  The convergence results we obtain in Section \ref{sec:stats} depend on the number of orbits in $\mc$, with sharper asymptotic distributional characterizations available when $\mc$ consists of a single orbit.

When $\mc$ specifies multiple convex sets, then $\mc$ clearly consists of multiple orbits; as an illustration, in the example in the previous subsection in which $\KS$ is the unit $\ell_2$-ball in $\R^2$ and $\C = \simp^{q}$, the corresponding set $\mc$ is a union of multiple orbits in which each orbit specifies a unique rotation of the centered regular $q$-gon.  However, even when $\mc$ specifies a unique convex set, it may still be the case that it consists of more than one orbit:
\begin{example}
	Suppose $\KS$ is the interval $[-1,1] \subset \R$ and $\C = \simp^{3}$.  Then $\mc$ is a union of orbits, with an orbit specified as the set of all permutations of the vector $(-1,1,\epsilon)$ for each $\epsilon \in [-1, 1]$.  Nonetheless, $\mc$ specifies a unique convex set, namely, $\KS$.
\end{example}

More generally, it is straightforward to check that $\mc$ consists of a single orbit if $\KS$ is a polytope with $q$ extreme points and $\C = \simp^{q}$.  The situation for linear images of non-polyhedral sets such as the spectraplices is much more delicate.  One simple instance in which $\mc$ consists of a single orbit is when $\KS$ is the image under a bijective linear map $A$ of $\fs^q$.  Our next result states an extension to convex sets that are representable as linear images of an appropriate slice of the outer product of cones of positive semidefinite matrices.  

\begin{proposition} \label{thm:whensingleorbit}  Let $\C = \{ X_1 \times \ldots \times X_k ~|~ X_i \in \Sym^{q_i}, X_i \succeq 0, \sum_{i=1}^{k} \mathrm{trace}(X_i) = 1 \}$, and let $\KS = A^{\star}(\C) \subset \R^d$.  Suppose that there is a collection of disjoint exposed faces $F_i \subseteq \KS$ such that (i) $(A^{\star })^{-1}(F_i) \cap \C$ is the $i$-th block $\{ 0 \times \ldots \times 0 \times X_i \times 0 \times \ldots \times 0 ~|~ X_i \in \Sym^{q_i}, X_i \succeq 0, \mathrm{trace}(X_i) = 1 \} \subset \C$, and (ii) $\dim (F_i) = \dim (\fs^{q_i})$.  Then $\mc$ consists of a single orbit.
\end{proposition}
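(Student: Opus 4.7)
The plan is to show that an arbitrary $A \in \mc$ lies in the $\aut(\C)$-orbit of $A^{\star}$; by Corollary \ref{thm:form_mcastar} we already know $A(\C) = \KS = A^{\star}(\C)$, so it suffices to produce $g \in \aut(\C)$ with $A = A^{\star} g$. For each $i$ I introduce the exposed face $G_i := A^{-1}(F_i) \cap \C$ of $\C$ (exposed because $F_i$ is exposed in $\KS$), alongside the $i$th block sub-spectraplex $C_i := (A^{\star})^{-1}(F_i) \cap \C$ singled out by the hypothesis. The main combinatorial goal is to identify $G_i$ with some $C_{\pi(i)}$ for a permutation $\pi \in S_k$ with $q_{\pi(i)} = q_i$; once this is in place, matching $A$ to $A^{\star} g$ reduces to a blockwise identification.

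To extract the block correspondence, I use that $\C$ is a spectraplex inside the block-diagonal symmetric matrices, so its exposed faces are parametrized by subspace tuples $(V_1, \ldots, V_k)$ with $V_j \subseteq \R^{q_j}$, the corresponding face consisting of block-diagonal PSD matrices with $j$th block supported on $V_j$ and traces summing to one, with dimension $\sum_j \binom{\dim V_j + 1}{2} - 1$. Two such faces are disjoint iff $V_j \cap W_j = \{0\}$ for every $j$, so writing $G_i = F(V_\bullet^{(i)})$ and exploiting the disjointness of the $F_i$ yields $\sum_i \dim V_j^{(i)} \leq q_j$ for each $j$. Since $A$ maps $G_i$ onto $F_i$, we also have $\dim G_i \geq \dim F_i = \binom{q_i + 1}{2} - 1$. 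Summing this lower bound over $i$ and pairing with the sharp elementary inequality
\[
\sum_i \binom{d_i + 1}{2} \leq \binom{\textstyle\sum_i d_i + 1}{2} \leq \binom{q_j + 1}{2} \qquad (d_i \geq 0,\ \textstyle\sum_i d_i \leq q_j),
\]
produces a double inequality whose outer terms coincide, forcing equality throughout. Inspecting the equality cases (the first inequality is tight only when at most one $d_i$ is nonzero, the second only when $\sum_i d_i = q_j$) pins down, for each $j$, a unique $i$ with $V_j^{(i)} = \R^{q_j}$ and $V_j^{(i')} = \{0\}$ for $i' \neq i$. Reading this assignment as a map $\pi$ and invoking the per-$i$ equality $\dim G_i = \dim F_i$ forces $G_i$ to be concentrated in a single block, forces $\pi$ to be a permutation, and forces $q_{\pi(i)} = q_i$, so $G_i = C_{\pi(i)}$.

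With the block correspondence secured, the equality $\dim G_i = \dim F_i$ makes $A|_{C_{\pi(i)}}$ an affine isomorphism onto $F_i$, as is $A^{\star}|_{C_i}$; the composition $(A^{\star}|_{C_i})^{-1} \circ A|_{C_{\pi(i)}}: C_{\pi(i)} \to C_i$ is then an affine isomorphism between two copies of $\fs^{q_i}$, which by assumption (A4) is realized by orthogonal conjugation composed with the natural identification of the two blocks. Bundling these blockwise maps together with the block permutation $\pi^{-1}$ (legitimate in $\aut(\C)$ precisely because $\pi$ permutes only blocks of equal size) produces $g \in \aut(\C)$ such that $A$ and $A^{\star} g$ agree on every $C_i$. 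Since $\C = \mathrm{conv}\bigl(\bigcup_i C_i\bigr)$ linearly spans its ambient space, the two linear maps agree on all of $\R^q$, giving $A = A^{\star} g$ and hence $\mc = A^{\star} \orb$.

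The main obstacle is the combinatorial step forcing each $G_i$ to be exactly one block $C_{\pi(i)}$: a priori $G_i$ could be a face of $\C$ whose subspace tuple is spread across several blocks at smaller dimensions while still meeting the dimension lower bound. The sharp convex inequality above, fueled by the disjointness of the $F_i$'s and the exact dimension hypothesis $\dim F_i = \dim \fs^{q_i}$, is precisely what rules this out.
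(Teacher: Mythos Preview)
Your argument has a genuine gap at the step where you deduce $\sum_i \dim V_j^{(i)} \leq q_j$ from the pairwise disjointness of the $G_i$. You correctly note that two faces $F(V_\bullet)$ and $F(W_\bullet)$ of $\C$ are disjoint iff $V_j \cap W_j = \{0\}$ for every $j$, but pairwise trivial intersection of subspaces does \emph{not} force their dimensions to sum to at most the ambient dimension once three or more subspaces are in play. Concretely, take three distinct lines in $\R^2$: the corresponding rank-one faces of $\fs^2$ are three distinct points, hence pairwise disjoint, yet $1+1+1 = 3 > 2$. In the setting $k=3$, $q_1=q_2=q_3=2$, one can write down three pairwise disjoint faces $G_i$ of $\C$, each of dimension $2$, whose subspace tuples consist of a distinct line in every block; your inequality fails for each $j$. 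The double-counting argument therefore collapses, and with it the conclusion that each $G_i$ is a single full block.

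This is exactly the obstacle the paper's proof is designed to overcome, and it does so with a genuinely non-elementary input: Proposition~\ref{thm:singleorbit_impossibility}, which says $\fs^q$ cannot be realized as a linear image of a product of spectraplices all strictly smaller than $q$. Applied to the restriction of $A$ to $G_i$ (which maps onto $F_i \cong \fs^{q_i}$), this forces $G_i$ to contain a sub-spectraplex of size at least $q_i$ inside some single block. A top-down induction on block sizes, combined with disjointness, then pins each $G_i$ to a unique block and produces the permutation $\pi$. Your endgame --- the blockwise affine isomorphism and the assembly of $g \in \aut(\C)$ --- is essentially correct and mirrors the paper's use of Lemma~\ref{thm:singleorbit_uptoaut}. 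So the architecture of your proof is sound; what is missing is precisely the structural impossibility result, and your elementary counting is not strong enough to replace it.
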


\begin{example}
	By expressing $\C = \simp^{q} = \{ X_1 \times \ldots \times X_q ~|~ X_i \in \Sym^{1}, X_i \succeq 0, \sum_{i=1}^{q} \mathrm{trace}(X_i) = 1 \}$ and by considering $\KS$ to be a polytope with $q$ extreme points, Proposition \ref{thm:whensingleorbit} simplifies to our earlier remark noting that $\mc$ consists of a single orbit.
\end{example}

\begin{example}
	The nuclear norm ball $B_{\mathrm{nuc.}}:=\{X ~|~ X \in \Sym^2, \|X\|_{\mathrm{nuc.}} \leq 1 \} $ is expressible as the linear image of $\fs^{2} \times \fs^{2}$.  The extreme points of $B_{\mathrm{nuc.}}$ comprise two connected components of unit-norm rank-one matrices specified by $\{ U^{\intercal} E_{11} U ~|~ U \in SO(2,\R) \} $ and $\{ - U^{\intercal} E_{11} U ~|~ U \in SO(2,\R) \} $, where $E_{11}$ is the $2\times 2$ matrix with $(1,1)$-entry equal to one and other entries equal to zero.  Furthermore, each connected component is isomorphic to $\fs^{2}$.  It is straightforward to verify that the conditions of Proposition \ref{thm:whensingleorbit} hold for this instance, and thus $M_{B_{\mathrm{nuc.}},\fs^{2} \times \fs^{2} }$ consists of a single orbit.
\end{example}

The proof of Proposition \ref{thm:whensingleorbit} requires an impossibility result showing that a spectraplex cannot be expressed as the linear image of the outer product of finitely many smaller-sized spectraplices.  The result follows as a consequence of a related result stated in terms of the cone of positive semidefinite matrices \cite{Ave:18,Sau:19}.  In the following, $\Sym^{q}_{+}$ denotes the cone of $q\times q$ dimensional positive semidefinite matrices.

\begin{proposition}\label{thm:singleorbit_impossibility_psdcone}
	Suppose that $\Sym^{q}_{+} = A( \Sym^{q_1}_{+} \times \ldots \Sym^{q_k}_{+} \cap L)$ for some linear map $A$ and some affine subspace $L$.  Then $q \leq \max q_i$.
\end{proposition}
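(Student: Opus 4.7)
The plan is to deduce the inequality $q \leq \max_i q_i$ directly from known lower bounds on the size of semidefinite lifts of the positive semidefinite cone itself, rather than constructing an argument from scratch. The hypothesis of the proposition asserts precisely that $\Sym^q_+$ admits a lift of the form $A\bigl((\Sym^{q_1}_+ \times \cdots \times \Sym^{q_k}_+) \cap L\bigr)$, i.e., $\Sym^q_+$ is the image under a linear map $A$ of an affine slice of the product cone $\Sym^{q_1}_+ \times \cdots \times \Sym^{q_k}_+$. The cited results of Averkov \cite{Ave:18} and Saunderson \cite{Sau:19} provide lower bounds on the sizes of the factor cones appearing in any such representation of a given convex cone, expressed in terms of intrinsic geometric data of the cone being represented.

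The concrete step is to specialize these results to the case where the target cone is itself a PSD cone. In this setting the bound says exactly that if $\Sym^q_+$ can be written as a linear image of an affine slice of $\Sym^{q_1}_+ \times \cdots \times \Sym^{q_k}_+$, then $\max_i q_i \geq q$. To invoke the cited theorem I would verify that its hypotheses are satisfied in our situation: $\Sym^q_+$ is a proper (closed, pointed, full-dimensional) convex cone with nonempty interior, and the equality $A(\cdots) = \Sym^q_+$ automatically forces $A$ and $L$ to cooperate in giving a full-dimensional image. Once these are checked, the conclusion $q \leq \max_i q_i$ is immediate.

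The main obstacle is therefore not substantive mathematics but a careful bookkeeping step: confirming that the extension-complexity lower bound in \cite{Ave:18,Sau:19} is stated for exactly the kind of representation (linear image of an affine slice of a product of PSD cones) that appears in the hypothesis of Proposition \ref{thm:singleorbit_impossibility_psdcone}, and that no further nondegeneracy hypotheses on $A$ or on the affine subspace $L$ are imposed there beyond what is forced by the equality with $\Sym^q_+$. Assuming this compatibility check goes through, the entire proof of the proposition reduces to a single application of the cited theorem.
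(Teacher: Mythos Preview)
Your proposal is correct and matches the paper's own treatment: the paper does not give an independent proof of this proposition but simply states that it ``follows as a consequence of a related result stated in terms of the cone of positive semidefinite matrices \cite{Ave:18,Sau:19}.'' Your plan to invoke those cited lower bounds directly is exactly what the paper does.
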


\begin{proposition}\label{thm:singleorbit_impossibility}
	Let $\C = \{ X_1 \times \ldots \times X_k ~|~ X_i \in \Sym^{q_i}_{+}, ~ \sum \mathrm{trace}(X_i) = 1 \}$.  Suppose $\fs^{q} = A( \C )$ for some $A$.  Then $q \leq \max q_i$.
\end{proposition}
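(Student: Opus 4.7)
The plan is to reduce Proposition \ref{thm:singleorbit_impossibility} to the already-cited Proposition \ref{thm:singleorbit_impossibility_psdcone} by passing from the trace-normalized spectraplex setting to the associated full PSD cones via the conic hull operation. The underlying point is that $\fs^{q}$ and $\C$ are merely affine slices of $\Sym^{q}_{+}$ and $\Sym^{q_1}_{+} \times \ldots \times \Sym^{q_k}_{+}$ respectively, so taking conic hulls strips off the normalizing trace constraints while preserving a linear-image relation between the two cones.

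The first step is to record two elementary identities. On the one hand, $\mathrm{cone}(\fs^{q}) = \Sym^{q}_{+}$, since any nonzero $Y \in \Sym^{q}_{+}$ can be written as $\mathrm{trace}(Y) \cdot (Y/\mathrm{trace}(Y))$ with the second factor in $\fs^{q}$, and $Y = 0$ lies in the conic hull trivially. On the other hand, $\mathrm{cone}(\C) = \Sym^{q_1}_{+} \times \ldots \times \Sym^{q_k}_{+}$, by an analogous rescaling where the scalar $\sum_i \mathrm{trace}(X_i)$ is absorbed into the conic-hull multiplier.

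Next, I would invoke the general fact that linear maps commute with the conic hull operation, i.e., $A(\mathrm{cone}(S)) = \mathrm{cone}(A(S))$ for any set $S \subseteq \R^n$ and any linear map $A$; this is immediate from the definition of the conic hull and linearity of $A$. Applying this to the hypothesis $\fs^{q} = A(\C)$ yields
\begin{equation*}
\Sym^{q}_{+} \;=\; \mathrm{cone}(\fs^{q}) \;=\; \mathrm{cone}(A(\C)) \;=\; A(\mathrm{cone}(\C)) \;=\; A\bigl(\Sym^{q_1}_{+} \times \ldots \times \Sym^{q_k}_{+}\bigr).
\end{equation*}
This is precisely of the form in the hypothesis of Proposition \ref{thm:singleorbit_impossibility_psdcone}, with $L$ taken to be the entire ambient space so that the intersection with $L$ is vacuous. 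The conclusion $q \leq \max_i q_i$ then follows directly.

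There is no substantial obstacle here: all the geometric content is carried by Proposition \ref{thm:singleorbit_impossibility_psdcone}, and the reduction to it amounts to a short verification that taking the conic hull interacts well with both the trace normalization defining $\fs^{q}$ and $\C$, and with the linear image appearing in the hypothesis. The only mild care required is in checking the first identity at the zero matrix (where the trace vanishes), which is handled by allowing scalar zero in the conic hull.
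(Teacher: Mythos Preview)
Your proof is correct and follows the same underlying idea as the paper --- reduce to Proposition~\ref{thm:singleorbit_impossibility_psdcone} by passing from the trace-normalized sets to the full PSD cones. Your route via the conic-hull identities $\mathrm{cone}(\fs^{q})=\Sym^{q}_{+}$ and $\mathrm{cone}(\C)=\Sym^{q_1}_{+}\times\cdots\times\Sym^{q_k}_{+}$, together with $A(\mathrm{cone}(\C))=\mathrm{cone}(A(\C))$, is somewhat more direct than the paper's: the paper instead introduces an auxiliary scalar coordinate $t\in\Sym^{1}_{+}$ with the linear constraint $\sum_i\mathrm{trace}(X_i)=t$ (playing the role of $L$ in Proposition~\ref{thm:singleorbit_impossibility_psdcone}) and then projects $t$ out, which amounts to the same cone computation but packaged differently. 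Your version invokes Proposition~\ref{thm:singleorbit_impossibility_psdcone} with $L$ equal to the whole ambient space and avoids the extra $\Sym^{1}_{+}$ factor; the paper's version carries that factor along but it is harmless since $\max(q_1,\dots,q_k,1)=\max_i q_i$.
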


\begin{proof}[Proof of Proposition \ref{thm:singleorbit_impossibility}]  Express $\Sym^{q}_{+}$ as the following
	\begin{equation*}
		A \circ \Pi \left( \left\{ X_1 \times \ldots \times X_k \times t  ~\bigg|~ X_i \in \Sym^{q_i}_{+}, t \in \Sym^{1}_{+}, \sum_{i=1}^{k} \mathrm{trace}(X_i) = t \right\} \right),
	\end{equation*}
	where $\Pi$ is a map that projects out the coordinate $t$.  The result follows from Proposition \ref{thm:singleorbit_impossibility_psdcone}.
\end{proof}

\begin{lemma}\label{thm:singleorbit_uptoaut}
	Let $\K = A(\C) \subset \R^d$ where $\C \subset \R^q$ is compact convex, and suppose that $\dim(\K) = \dim(\C)$.  If  $\K = \tilde{A}(\C)$ for some $\tilde{A} \in \lqd$, then $\tilde{A} = A g$ for some $g \in \mathrm{Aut}(\C)$.
\end{lemma}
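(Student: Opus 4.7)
The plan is to build $g$ by inverting $A$ along the affine hull of $\C$ and composing with $\tilde A$. Let $L := \aff(\C) - \aff(\C)$ denote the linear subspace parallel to $\aff(\C)$. The hypothesis $\dim(\K) = \dim(\C)$ forces both $A|_L$ and $\tilde A|_L$ to be injective, since otherwise $A(\C)$ or $\tilde A(\C)$ would have strictly smaller affine dimension than $\C$. Passing to affine spans in the identity $A(\C) = \tilde A(\C)$ yields $A(L) = \tilde A(L) =: W$, so $A|_L$ and $\tilde A|_L$ are both linear isomorphisms $L \to W$. This lets me define $h := (A|_L)^{-1} \tilde A|_L : L \to L$, a linear isomorphism satisfying $A(h(\bv)) = \tilde A(\bv)$ for every $\bv \in L$.

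Next I would extend $h$ to a linear map $g$ on all of $\R^q$ that preserves $\C$. Fix any base point $\bx_0 \in \C$; since $\tilde A(\bx_0) \in \tilde A(\C) = A(\C)$, there exists $\bx_0^{\star} \in \C$ with $A(\bx_0^{\star}) = \tilde A(\bx_0)$. Decomposing $\spn(\C) = L + \R\bx_0$, set $g|_L := h$ and $g(\bx_0) := \bx_0^{\star}$, and extend linearly to $\spn(\C)$; when $0 \in \aff(\C)$ so that this sum collapses to $L$, the two specifications are consistent because $A(h(\bx_0)) = \tilde A(\bx_0) = A(\bx_0^{\star})$ together with injectivity of $A|_L$ forces $h(\bx_0) = \bx_0^{\star}$. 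If $\spn(\C) = \R^q$ -- as in the paper's primary examples (simplex, spectraplex, and outer products thereof) -- then $g$ is now defined on $\R^q$; otherwise one extends by any linear map on a complement of $\spn(\C)$, noting that $Ag = \tilde A$ on such a complement requires $\tilde A$ to map into $A(\R^q)$, an automatic consequence of $\spn(\C) = \R^q$.

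The verification that $g \in \aut(\C)$ and $Ag = \tilde A$ proceeds as follows. For any $\bc \in \C$, write $\bc = \bx_0 + \bv$ with $\bv = \bc - \bx_0 \in L$; then $g(\bc) = \bx_0^{\star} + h(\bv) \in \aff(\C)$ and
\[
A(g(\bc)) = A(\bx_0^{\star}) + A(h(\bv)) = \tilde A(\bx_0) + \tilde A(\bv) = \tilde A(\bc) \in A(\C).
\]
Picking $\bc' \in \C$ with $A(\bc') = \tilde A(\bc)$, the difference $g(\bc) - \bc'$ lies in $L$ and is annihilated by $A$, so injectivity of $A|_L$ forces $g(\bc) = \bc' \in \C$. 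Hence $g(\C) \subseteq \C$, and swapping the roles of $A$ and $\tilde A$ in the construction gives the reverse inclusion, yielding $g(\C) = \C$. The identity $Ag = \tilde A$ then holds on $\spn(\C)$ directly from the display above and extends to $\R^q$ under $\spn(\C) = \R^q$.

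The main obstacle is the bookkeeping around the distinction between $L$, $\spn(\C)$, and $\R^q$: when $\C$ is not full-dimensional, $\spn(\C)$ can strictly contain $L$, and one must extend the canonical isomorphism $h$ on $L$ to the extra direction $\R\bx_0$ in a way compatible with both preserving $\C$ and satisfying $Ag = \tilde A$. The choice $g(\bx_0) = \bx_0^{\star} \in \C$, made possible by the identity $A(\C) = \tilde A(\C)$, is precisely what achieves both requirements at once, while injectivity of $A|_L$ is what uniquely pins down the value $g(\bc) \in \C$ in the verification above.
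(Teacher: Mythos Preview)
Your argument is correct and follows essentially the same idea as the paper's: invert $A$ on the appropriate subspace and compose with $\tilde A$ to produce $g$. Your coordinate-free treatment via the direction space $L = \aff(\C) - \aff(\C)$ is somewhat cleaner than the paper's rotate-and-block-decompose argument, and you rightly flag that the identity $\tilde A = Ag$ on all of $\R^q$ genuinely needs $\spn(\C) = \R^q$ (which holds for the simplex, spectraplex, and their products used in the paper), since otherwise $\tilde A$ is unconstrained off $\spn(\C)$ and the conclusion can fail.
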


\begin{proof}[Proof of Lemma \ref{thm:singleorbit_uptoaut}]
	Suppose that $\bzero \notin \aff(\K)$.  By applying a suitable rotation, we may assume that $\K$ is contained in the first $\dim(\K)$ dimensions.  Then the maps $A$ and $\tilde{A}$ are of the form
	\begin{equation*}
		A = \left(\begin{array}{c}
			A_1 \\ 0
		\end{array} \right), \quad 
		\tilde{A} = \left(\begin{array}{c}
			\tilde{A}_1 \\ 0
		\end{array} \right),
		\quad A_1,\tilde{A}_1 \in L(\R^q,\R^q).
	\end{equation*}
	Since $\dim(\K) = \dim(\C)$, the map $A_1$ is invertible.  Subsequently $A_1^{-1} A_1 \in \aut(\C)$, and thus $\tilde{A}_1 = A_1 g$ for some $g \in \aut(\C)$. 
	
	The proof is similar for the case where $\bzero \in \aff(\K)$.  The only necessary modification is that we embed $\K$ into $\R^{d+1}$ via the set $\tilde{\K}:=\{ (\bx,1) ~|~ \bx \in \K \}$, and we repeat the same sequence of steps with $\tilde{\K}$ in place of $\K$.  We omit the necessary details as they follow in a straightforward fashion from the previous case.
\end{proof}

\begin{proof}[Proof of Proposition \ref{thm:whensingleorbit}]
	Let $\tilde{A} \in \mc$.  We show that $\tilde{A}$ defines a one-to-one correspondence between the collection of faces $\{F_i\}_{i=1}^{k}$ and the collection of blocks $\{\{ 0 \times \ldots \times X_j \times \ldots \times 0 ~|~ X_j \in \fs^{q_j} \} \}_{j=1}^{k}$ subject to the condition $\dim(F_i) = \dim(\fs^{q_j})$.  We prove such a correspondence via an inductive argument beginning with the faces of largest dimensions.
	
	We assume (without loss of generality) that $\dim(F_1) = \ldots = \dim(F_{k^{\prime}})  > \ldots$ and that $\dim(\fs^{q_1}) = \ldots = \dim(\fs^{q_{k^{\prime}}}) > \ldots $.  We further denote $q = q_1 = \ldots q_{k^{\prime}}$.  As $F_i$ is an exposed face, the pre-image $\tilde{A}^{-1}(F_i) \cap \C$ must be an exposed face of $\C$, and thus is of the form $U_{i,1} X_{i,1} U_{i,1}^{\prime} \times \ldots \times U_{i,k} X_{i,k} U_{i,k}^{\prime}$, where $X_{i,j} \in \fs^{q_{i,j}}$ for some $q_{i,j} \leq q_j$, and where $U_{i,j} \in \R^{q_{i,j} \times q_j}$ are partial orthogonal matrices.  By Proposition \ref{thm:singleorbit_impossibility}, we have $\max_j q_{i,j} \geq q_i = q$.  Subsequently, by noting that there are $k^{\prime}$ blocks with dimensions $q \times q$, that there are also $k^{\prime}$ faces $F_i$ with $\dim(F_i) = q$, and that the faces $F_i$ are disjoint, we conclude that each block in the collection $\{\{ 0 \times \ldots \times X_j \times \ldots \times 0 ~|~ X_j \in \fs^{q_j} \} \}_{j=1}^{k}$ lies in the pre-image of a unique face $F_i$, $1\leq i \leq k^{\prime}$.  By repeating the same sequence of arguments for the remaining faces of smaller dimensions, we establish a one-to-one correspondence between faces and blocks.  Finally, we apply Lemma \ref{thm:singleorbit_uptoaut} to each face-block pair to conclude that, after accounting for permutations among blocks of the same size, the maps $A_{i}^{\star}$ and $\tilde{A}_{i}$ are equivalent up to conjugation by an orthogonal matrix.  The final assertion that $\mc$ consists of a single orbit is straightforward to establish.
\end{proof}

\subsection{Analytical Aspects of Our Method} \label{sec:prelims_analytic}

In this third subsection, we describe some of the derivative computations that repeatedly play a role in our paper in our analysis, examples, and numerical experiments.  Given a compact convex set $\C$, the support function $\lmax(\cdot)$ is differentiable at $\bu$ if and only if $\mathrm{argmax}_{\bx \in \C} \langle \bx, \bu \rangle$ is a singleton; the derivative in these cases is given by $\mathrm{argmax}_{\bx \in \C} \langle \bx, \bu \rangle$ (see \cite{Sch:93} [p.~47]).  We denote the derivative of $\lmax$ at a differentiable $\bu$ by $\emax(\bu) := \nabla_{\bu} (\lmax(\bu))$.

\begin{example}
	Suppose $\C = \simp^{q} \subset \R^q$ is the simplex.  The function $\lmax(\cdot)$ is the maximum entry of the input vector, and it is differentiable at this point if and only if the maximum is unique with the derivative $\emax(\cdot)$ equal to the corresponding standard basis vector.
\end{example}

\begin{example}
	Suppose $\C = \fs^{p} \subset \Sym^p$ is the spectraplex.  The function $\lmax(\cdot)$ is the largest eigenvalue of the input matrix, and it is differentiable at this point if and only if the largest eigenvalue has multiplicity one with the derivative $\emax(\cdot)$ equal to the projector onto the corresponding one-dimensional eigenspace.
\end{example}

The following result gives a formula for the derivative of $\pc(\cdot,\pk)$. 

\begin{proposition} \label{thm:formulation_diffble} Let $P$ be a probability distribution over the measurement pairs $(\bu,y)$, and suppose that $\E_{P}[y^2] < \infty$.  Let $A \in \lqd$ be a linear map such that $\lmax(\cdot)$ is differentiable at $A^{\intercal}\bu$ for $P$-a.e. $\bu$.  Then the function $\pc(\cdot,P)$ is differentiable with derivative $2 \E_{P} [(\lmax (A^{\intercal}\bu) - y) \bu \otimes \emax (A^{\intercal}\bu) ]$.
\end{proposition}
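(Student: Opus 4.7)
The plan is to compute the directional derivative of $\pc(\cdot,P)$ at $A$ along an arbitrary $H \in \lqd$ by passing the derivative inside the expectation via dominated convergence, using Lemma \ref{thm:linmap_diff} to supply an integrable envelope. Since the resulting derivative is a continuous linear functional of $H$ and $\lqd$ is finite dimensional, this upgrades to Fr\'echet differentiability with the stated gradient.

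First, I would factor the pointwise difference as
\begin{equation*}
(\lmax((A+tH)^{\intercal}\bu) - y)^2 - (\lmax(A^{\intercal}\bu) - y)^2 = [\lmax((A+tH)^{\intercal}\bu) - \lmax(A^{\intercal}\bu)] \cdot [\lmax((A+tH)^{\intercal}\bu) + \lmax(A^{\intercal}\bu) - 2y].
\end{equation*}
For $P$-a.e.\ $\bu$, differentiability of $\lmax$ at $A^{\intercal}\bu$ and the chain rule yield
\begin{equation*}
\lim_{t \to 0} \frac{\lmax((A+tH)^{\intercal}\bu) - \lmax(A^{\intercal}\bu)}{t} = \langle \emax(A^{\intercal}\bu), H^{\intercal}\bu \rangle = \langle \bu \otimes \emax(A^{\intercal}\bu), H \rangle,
\end{equation*}
while continuity of $\lmax$ sends the second factor to $2(\lmax(A^{\intercal}\bu) - y)$. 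Thus the pointwise limit of the difference quotient equals $2(\lmax(A^{\intercal}\bu) - y)\langle \bu \otimes \emax(A^{\intercal}\bu), H\rangle$.

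Next, Lemma \ref{thm:linmap_diff} bounds the first factor by $|t|\|H\|_{\C,2}$ and the second by $2\|A\|_{\C,2} + |t|\|H\|_{\C,2} + 2|y|$, so for $|t| \leq 1$ the difference quotient is dominated (as a function of $(\bu,y)$) by $\|H\|_{\C,2}(2\|A\|_{\C,2} + \|H\|_{\C,2} + 2|y|)$, which is $P$-integrable since $\E_P[|y|] \leq \sqrt{\E_P[y^2]} < \infty$ by Cauchy--Schwarz. Dominated convergence then gives
\begin{equation*}
\lim_{t\to 0} \frac{\pc(A+tH,P) - \pc(A,P)}{t} = 2\E_P\bigl[(\lmax(A^{\intercal}\bu) - y)\langle \bu \otimes \emax(A^{\intercal}\bu), H \rangle\bigr],
\end{equation*}
which identifies the directional derivative as the linear functional associated with the matrix $2\E_P[(\lmax(A^{\intercal}\bu) - y)\,\bu \otimes \emax(A^{\intercal}\bu)]$. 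A further dominated convergence argument (controlled by the same envelope) shows this gradient is continuous in $A$, which together with finite dimensionality of $\lqd$ delivers Fr\'echet differentiability with the claimed formula.

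The principal obstacle is the interchange of limit and expectation, which is precisely what the Lipschitz bound in Lemma \ref{thm:linmap_diff} is tailored to resolve; beyond that the argument is a routine application of the chain rule and dominated convergence. A minor subtlety is that $\emax$ need not be defined at points where $\lmax$ is non-differentiable, but the hypothesis that $\lmax$ is differentiable at $A^{\intercal}\bu$ for $P$-a.e.\ $\bu$ guarantees that the integrand in the derivative formula is well-defined $P$-almost everywhere.
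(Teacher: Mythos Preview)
Your core mechanism---dominated convergence with Lemma~\ref{thm:linmap_diff} supplying the envelope---matches the paper's, and your factorization and pointwise limit are correct. However, what you have actually established is only G\^ateaux differentiability at $A$: for each fixed direction $H$, the difference quotient converges. Your two attempts to upgrade to Fr\'echet differentiability both have gaps. First, ``the directional derivative is linear in $H$ and $\lqd$ is finite dimensional'' does \emph{not} imply Fr\'echet differentiability; there are standard counterexamples (e.g.\ $f(x,y)=x^3y/(x^4+y^2)$ at the origin) of continuous functions with linear G\^ateaux derivative that fail to be Fr\'echet differentiable. Second, your appeal to continuity of the gradient in $A$ presupposes that the gradient exists in a neighborhood of $A$, but the hypothesis only guarantees that $\lmax$ is differentiable at $A^{\intercal}\bu$ for $P$-a.e.\ $\bu$---it says nothing about nearby linear maps $A'$, so $\emax((A')^{\intercal}\bu)$ may fail to be defined on a set of positive measure.

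The paper closes this gap by working directly with the Fr\'echet remainder: it writes $(\lmax((A+D)^{\intercal}\bu)-y)^2 = (\lmax(A^{\intercal}\bu)-y)^2 + \langle \nabla, D\rangle + \rem(\bu,y,A,D)\|D\|_{\C,2}$, observes that $\rem \to 0$ as $D \to 0$ for $P$-a.e.\ $(\bu,y)$ (since the \emph{pointwise} integrand is Fr\'echet differentiable at $A$ by the chain rule), and then applies DCT with the same envelope $c(1+|y|)$ to conclude $\E_P[\rem] \to 0$. Your argument can be repaired in exactly this way: instead of fixing a direction $H$ and sending $t \to 0$, bound the full remainder $(\text{difference} - \langle\nabla,D\rangle)/\|D\|$ by your envelope (which already does not depend on the direction of $D$) and apply DCT along an arbitrary sequence $D_n \to 0$.
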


{\color{black}
\begin{proof}[Proof of Proposition \ref{thm:formulation_diffble}]  Let $\rem(\cdot,A,D)$ denote the remainder term satisfying 
	\begin{equation*}
	(\lmax((A+D)^{\intercal}\bu) - y)^2 = (\lmax(A^{\intercal}\bu) - y)^2 + \langle \nabla_{A} ((\lmax(A^{\intercal}\bu) - y)^2) , D \rangle + \rem(\cdot,A,D) \| D \|_{\C,2}.
	\end{equation*}
	Since the function $\lmax(\cdot)$ is differentiable at $A^{\intercal}\bu$ for $P$-a.e. $\bu$, we have $\rem(\cdot,A,D) \rightarrow 0$ as $\|D\|_{\C,2} \rightarrow 0$ for $P$-a.e.  Suppose $D$ is in a bounded set.  First, we can bound $|\lmax((A+D)^{\intercal}\bu) + \lmax(A^{\intercal}\bu) -2y | \leq c_1 (1 + |y|)$ for some constant $c_1$.  Second, by Lemma \ref{thm:linmap_diff}, we have the inequality $| \lmax((A+D)^{\intercal}\bu) - \lmax(A^{\intercal}\bu)| \leq \|D\|_{\C,2}$.  Third, by noting that $|\lmax (A^{\intercal}\bu) - y|$ can be bounded by $c_2 (1 + |y|)$ for some constant $c_2$, and by noting that the entries of the linear map $ \bu \otimes \emax (A^{\intercal}\bu)$ are uniformly bounded, we may bound $\| (\lmax (A^{\intercal}\bu) - y) \bu \otimes \emax (A^{\intercal}\bu)\|_{\C,2}$ by a function of the form $c_3 (1 + |y|)$ for some constant $c_3$.  Subsequently we may bound
	\begin{align*}
	|\rem(\cdot,A,D)| & \leq \|2 (\lmax (A^{\intercal}\bu) - y) \bu \otimes \emax (A^{\intercal}\bu)\|_{\C,2} \\
	& \quad \quad + | \lmax((A+D)^{\intercal}\bu) - \lmax(A^{\intercal}\bu)| |\lmax((A+D)^{\intercal}\bu) + \lmax(A^{\intercal}\bu) -2y | / \|D\|_{\C,2} \\
	& \leq c(1+|y|)
	\end{align*}
	for some constant $c$.  Since $\E_{P} [y^2]<\infty$, we have $\rem(\cdot,A,D) \in \mathcal{L}^2(P)$, and hence $\rem(\cdot,A,D) \in \mathcal{L}^1(P)$.  The result follows from an application of the Dominated Convergence Theorem.
\end{proof}
}

It turns out to be considerably more difficult to compute an explicit expression of the second derivative of $\pc(\cdot,\pk)$.  For this reason, our next result applies in a much more restrictive setting in comparison to Proposition \ref{thm:formulation_diffble}.

\begin{proposition} \label{thm:stats_specialcasev}
	Suppose that the underlying set $\KS = A^{\star}(\C)$ for some $A^{\star} \in \lqd$.  In addition, suppose that the function $\lmax(\cdot)$ is continuously differentiable at $A^{\star\intercal}\bu$ for $\pk$-a.e. $\bu$.  Then the function $A \mapsto \pc(A,\pk)$ is twice differentiable at $A^{\star}$, and whose second derivative is the operator $\Gamma \in L(\lqd,L^*(\R^q,\R^d))$ defined by
	\begin{equation} \label{eq:secondderivformula}
	\Gamma (D) = 2 \E \left[ \langle \bu \otimes \emax(A^{\star\intercal}\bu) , D \rangle  \bu \otimes \emax(A^{\star\intercal}\bu) \right].
	\end{equation}
\end{proposition}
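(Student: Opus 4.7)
The plan is to compute the Fr\'echet derivative at $A^{\star}$ of the gradient map $A \mapsto \nabla_A \pc(A, \pk)$, whose explicit form is provided by Proposition~\ref{thm:formulation_diffble}. Writing $\phi(D) := \nabla_A \pc(A^{\star} + D, \pk)$, I will verify that $\phi(0) = 0$ and that $\phi(D) = \Gamma(D) + o(\|D\|_{\C,2})$ as $D \to 0$; this identifies $\Gamma$ as the second derivative of $\pc(\cdot,\pk)$ at $A^{\star}$.

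First I exploit $\KS = A^{\star}(\C)$ to substitute $y = \lmax(A^{\star\intercal}\bu) + \varepsilon$ into the gradient formula and split:
\[
\phi(D) = 2\,\E\bigl[(\lmax((A^{\star}+D)^{\intercal}\bu) - \lmax(A^{\star\intercal}\bu))\,\bu\otimes\emax((A^{\star}+D)^{\intercal}\bu)\bigr] - 2\,\E\bigl[\varepsilon\cdot\bu\otimes\emax((A^{\star}+D)^{\intercal}\bu)\bigr].
\]
The noise term vanishes identically: $\varepsilon$ is mean-zero and independent of $\bu$, while $\bu\otimes\emax(\cdot)$ has entries uniformly bounded in terms of $\mathrm{diam}(\C)$, so Fubini applies and the expectation factors to zero; in particular $\phi(0)=0$. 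Next, using the hypothesis that $\lmax(\cdot)$ is continuously differentiable at $A^{\star\intercal}\bu$ for $\pk$-a.e.\ $\bu$, I write the two pointwise expansions $\lmax((A^{\star}+D)^{\intercal}\bu) - \lmax(A^{\star\intercal}\bu) = \langle \bu\otimes\emax(A^{\star\intercal}\bu), D\rangle + r_1(\bu,D)$ and $\emax((A^{\star}+D)^{\intercal}\bu) = \emax(A^{\star\intercal}\bu) + r_2(\bu,D)$, where $r_1(\bu,D)/\|D\|_{\C,2}\to 0$ and $r_2(\bu,D)\to 0$ pointwise a.e.\ as $D\to 0$; the pointwise convergence of $r_2$ is precisely what continuous differentiability buys beyond what Proposition~\ref{thm:formulation_diffble} uses. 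Multiplying out the product and subtracting $\Gamma(D)$ leaves three cross-terms, each bilinear in the small pieces $\{r_1, r_2, \text{linear-in-}D\}$.

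Finally, each cross-term is $o(\|D\|_{\C,2})$ by the Dominated Convergence Theorem: the integrands divided by $\|D\|_{\C,2}$ tend to zero pointwise a.e., and are uniformly bounded by a constant depending only on $\mathrm{diam}(\C)$ (since $\|\bu\|_{2}=1$ and the subgradient values of $\lmax$ lie in $\C$). The main obstacle is the term involving $r_2$: continuity of $\emax$ is delicate and, for natural choices such as the spectraplex, fails on the set where the top eigenvalue has multiplicity greater than one. This is exactly why the proposition is stated under the strong hypothesis of continuous differentiability at $A^{\star\intercal}\bu$ for a.e.\ $\bu$, rather than mere differentiability as in Proposition~\ref{thm:formulation_diffble}. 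A minor bookkeeping point is justifying that $\phi(D)$ is well-defined for all small $D$, i.e.\ that $\lmax$ remains differentiable at $(A^{\star}+D)^{\intercal}\bu$ for a.e.\ $\bu$; this follows since $C^{1}$-regularity at a point is an open condition, so the failure set is a countable union of null sets.
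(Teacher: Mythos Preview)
Your proposal is correct and follows essentially the same route as the paper: eliminate the noise term via Fubini and independence, then control the difference quotient of the gradient by combining the first-order expansion of $\lmax$ with the continuity of $\emax$ (the reason for the $C^1$ hypothesis), closing each remainder via dominated convergence using the uniform boundedness of $\bu\otimes\emax(\cdot)$. The only difference is cosmetic bookkeeping: you expand both factors simultaneously and track cross-terms, whereas the paper first keeps $\emax((A^\star+D)^\intercal\bu)$ intact and then passes to $\emax(A^{\star\intercal}\bu)$ in a separate step.
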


{\color{black}
\begin{proof}[Proof of Proposition \ref{thm:stats_specialcasev}]  To simplify notation, we denote the operator norm $\|\cdot\|_{\C,2}$ by $\|\cdot\|$ in the remainder of the proof.  By Proposition \ref{thm:formulation_diffble}, the map $A \mapsto \Phi(A,P)$ is differentiable in an open neighborhood around $A^{\star}$ with derivative $2 (\lmax (A^{\intercal}\bu) - y) \bu \otimes \emax (A^{\intercal}\bu)$.  Hence to show that the map is twice differentiable with second derivative $\Gamma$, it suffices to show that
	\begin{align*} \label{eq:stats_specialcasev0}
		& \lim_{\|D\| \rightarrow 0} \frac{1}{\|D\|} \bigl\| \E[ 2 (\lmax ((A^{\star}+D)^{\intercal}\bu) - y) \bu \otimes \emax ((A^{\star}+D)^{\intercal}\bu)] \\
		& \quad \quad \quad \quad \quad \quad \quad \quad - \E[ 2 (\lmax (A^{\star\intercal}\bu) - y) \bu \otimes \emax (A^{\star\intercal}\bu)] - \Gamma (D) \bigr\| = 0.
	\end{align*}
	
	First we note that every component of $\varepsilon(\bu) \bu \otimes \emax((A^{\star}+D)^{\intercal}\bu)$ is integrable because $\mathbb{E}[\varepsilon(\bu)^2] < \infty$, and $\bu \otimes \emax((A^{\star}+D)^{\intercal}\bu)$ is uniformly bounded.  Hence by Fubini's Theorem we have
	\begin{equation} \label{eq:sc_formulation_2ndderiveq1}
	\E[ (\lmax(A^{\star\intercal}\bu) - y) \bu \otimes \emax ( (A^{\star}+D)^{\intercal}\bu ) ]
	= \mathbb{E}_{\bu}[ \mathbb{E}_{\varepsilon(\bu)} [ - \varepsilon(\bu) \bu \otimes \emax ( (A^{\star}+D)^{\intercal}\bu ) ] ] = 0.
	\end{equation}
	Similarly
	\begin{equation} \label{eq:sc_formulation_2ndderiveq2}
	\E[(\lmax(A^{\star\intercal}\bu) - y) \bu \otimes \emax ( A^{\star \intercal}\bu )] = 0.
	\end{equation}
	
	Second by differentiability of the map $A\mapsto \Phi(A,P)$ at $A^{\star}$ we have
	\begin{equation*}
		\lim_{\|D\| \rightarrow 0}  \frac{1}{\|D\|} \bigl\| \E\left[ ( 2(\lmax((A^{\star}+D)^{\intercal}\bu)-y) - 2(\lmax(A^{\star\intercal}\bu)-y) - 2 \langle \bu \otimes \emax(A^{\star\intercal}\bu), D \rangle ) \right] \bigr\| = 0.
	\end{equation*}
	By noting that every component of $\bu \otimes \emax((A^{\star}+D)^{\intercal}\bu)$ is uniformly bounded, and an application of the Dominated Convergence Theorem, we have
	\begin{align} \label{eq:sc_formulation_2ndderiveq3}
		& \lim_{\|D\| \rightarrow 0}  \frac{1}{\|D\|} \bigl\| \E \bigl[ \bigl( 2(\lmax((A^{\star}+D)^{\intercal}\bu)-y) \nonumber \\
		& \quad \quad \quad \quad \quad \quad \quad \quad - 2(\lmax(A^{\star\intercal}\bu)-y) - 2 \langle \bu \otimes \emax(A^{\star\intercal}\bu), D \rangle \bigr) \bu \otimes \emax((A^{\star}+D)^{\intercal}\bu) \bigr] \bigr\| = 0
	\end{align}
	
	Third since $\lmax(\cdot)$ is continuously differentiable at $A^{\star\intercal}\bu$ for $P$-a.e. $\bu$, we have $\emax((A^{\star}+D)^{\intercal}\bu) \rightarrow \emax(A^{\star\intercal}\bu)$ as $\|D\| \rightarrow 0$, for $P$-a.e. $\bu$.  By the Dominated Convergence Theorem we have $\E[\emax((A^{\star}+D)^{\intercal}\bu)] \rightarrow \E[ \emax(A^{\star\intercal}\bu)] $ as $\|D\| \rightarrow 0$.  It follows that
	\begin{align} \label{eq:sc_formulation_2ndderiveq4}
		& \lim_{\|D\| \rightarrow 0} \frac{1}{\|D\|} \bigl\| 2 \E\left[  \langle \bu \otimes \emax(A^{\star\intercal}\bu), D \rangle  \bu \otimes \emax( (A^{\star}+D)^{\intercal}\bu) \right] \nonumber \\
		& \quad \quad \quad \quad \quad \quad \quad \quad  - 2 \E\left[  \langle \bu \otimes \emax(A^{\star\intercal}\bu), D \rangle  \bu \otimes \emax( A^{\star\intercal}\bu ) \right] \bigr\| = 0.
	\end{align}
	
	The result follows by summing the contributions from \eqref{eq:sc_formulation_2ndderiveq3} and \eqref{eq:sc_formulation_2ndderiveq4}, as well as noting that the expressions in \eqref{eq:sc_formulation_2ndderiveq1} and \eqref{eq:sc_formulation_2ndderiveq2} vanish.
\end{proof}
}

\section{Main Results} \label{sec:stats}

In this section, we investigate the statistical aspects of minimizers of the optimization problem \eqref{eq:sc_intro_constrainedlse_lmap}.  Our objective in this section is to relate a sequence of minimizers $\{\hat{A}_n\}_{n=1}^\infty$ of $\pc (\cdot,\pnk)$ to minimizers of $\pc (\cdot,\pk)$.  Based on this analysis, we draw conclusions about properties of sequences of minimizers $\{\KHN\}_{n=1}^{\infty}$ of the problem \eqref{eq:sc_intro_constrainedlse}.  In establishing various convergence results, we rely on an important property of the Hausdorff distance, namely that it defines a metric over collections of non-empty compact sets; therefore, the collection of all orbits $\{A \orb ~|~ A \in \lqd\}$ endowed with the Hausdorff distance defines a metric space.

Our results provide progressively sharper recovery guarantees based on increasingly stronger assumptions.  Specifically, Section \ref{sec:stats_sc} focuses on conditions under which a sequence of minimizers $\{\KHN\}_{n=1}^{\infty}$ of \eqref{eq:sc_intro_constrainedlse} converges to $\KS$; this result only relies on the fact that the optimal approximation of $\KS$ by a convex set specified by an element of $\mc$ is unique (see Section \ref{sec:prelims_geom} for the relevant discussion).  Next, Section \ref{sec:stats_asymnormal} gives a limiting distributional characterization of the sequence $\{\KHN\}_{n=1}^{\infty}$ based on an asymptotic normality analysis of the sequence $\{\hat{A}_n\}_{n=1}^\infty$; among other assumptions, this analysis relies on the stronger requirement that $\mc$ consists of a single orbit.  Finally, based on additional conditions on the facial structure of $\KS$, we describe in Section \ref{sec:faces} how the sequence $\{\KHN\}_{n=1}^{\infty}$ preserves various attributes of the face structure of $\KS$.

\subsection{Strong Consistency} \label{sec:stats_sc}

We describe conditions for convergence of a sequence of minimizers $\{\KHN\}_{n=1}^{\infty}$ of \eqref{eq:sc_intro_constrainedlse}.  Our main result essentially states that such a sequence converges to an optimal $\rho_2$ approximation of $\KS$ as a linear image of $\C$, provided that such an approximation is unique:

\begin{theorem} \label{thm:stats_setconvergence}  Suppose that the assumptions (A1), (A2), (A3), (A4) hold.  Let $\{ \hat{A}_n \}_{n=1}^{\infty}$ be a sequence of minimizers of the empirical loss function $\pc (\cdot,\pnk)$ with the corresponding reconstructions given by $\KHN = \hat{A}_n(\C)$.  We have that $\mathrm{dist}(\hat{A}_n,\mc) \rightarrow 0$ a.s. and $\inf_{A \in \mc} \allowbreak d_{H}(\hat{A}_n \orb, \allowbreak A \orb) \rightarrow 0$ a.s.  As a consequence, if $\mc$ specifies a unique set -- there exists $\KH \subset \R^d$ such that $\KH = A(\C)$ for all $A \in \mc$ -- then $\rho_{p} ( \KHN , \KH ) \rightarrow 0$ a.s. for $1 \leq p \leq \infty$.  Further, if $\KS = A^{\star}(\C)$ for some linear map $A^{\star} \in L(\R^q,\R^d)$, then $\rho_{p} (\KHN,\KS) \rightarrow 0$ a.s. for $1 \leq p \leq \infty$.
\end{theorem}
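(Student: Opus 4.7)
The plan is to follow the standard blueprint for proving consistency of an $M$-estimator: first establish uniform convergence of the empirical loss $\pc(\cdot,\pnk)$ to the population loss $\pc(\cdot,\pk)$ on a large enough compact set of linear maps, then combine this with continuity and compactness of $\mc$ (Proposition \ref{thm:form_mcproperties}) to deduce that any sequence of empirical minimizers is eventually close to $\mc$, and finally translate distance-to-$\mc$ statements into statements about orbits, sets, and $\rho_p$-distances.

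First I would argue that the sequence $\{\hat{A}_n\}$ is almost surely eventually contained in a single ball $B := \hat{r} B_{\|\cdot\|_{\C,2}}(0)$ for some deterministic $\hat{r}$. The argument mirrors the one used in Proposition \ref{thm:form_mcproperties}: using the event $G_{r,\bv}$, one picks $\hat{r}$ large enough that $(\hat{r}^2/16)\,\prob[G_{\hat{r},\bv}] > 2\pc(0,\pk)$ uniformly in $\bv \in \sph^{d-1}$. By the strong law of large numbers applied to the indicator variables defining $G_{\hat{r},\bv}$ (combined with a simple net argument on $\sph^{d-1}$) and to the empirical value $\pc(0,\pnk)$, any $A$ outside $B$ eventually gives $\pc(A,\pnk) > \pc(0,\pnk)$, so no minimizer can lie outside $B$. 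Next I would establish the uniform convergence
\begin{equation*}
\sup_{A \in B} |\pc(A,\pnk) - \pc(A,\pk)| \xrightarrow{\text{a.s.}} 0.
\end{equation*}
For each fixed $A$ the usual strong law gives pointwise convergence (using $\E_P[y^2] < \infty$, which follows from (A3)), and Lemma \ref{thm:linmap_diff} combined with the pointwise boundedness of $\lmax(\cdot)$ on $B$ yields a uniform Lipschitz bound of the form $|\pc(A,\pnk) - \pc(A',\pnk)| \le L_n \|A - A'\|_{\C,2}$ where $L_n$ is controlled by $\frac{1}{n}\sum_i |y^{(i)}|$, which converges a.s. to $\E|y|$. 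A standard chaining-on-an-$\epsilon$-net argument on the compact set $B$ then upgrades pointwise to uniform convergence.

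With these two ingredients in hand, I would invoke the argmin continuous mapping theorem. Given any subsequence of $\{\hat{A}_n\}$, boundedness extracts a further convergent subsequence with limit $A_\infty \in B$; uniform convergence and continuity of $\pc(\cdot,\pk)$ (Proposition \ref{thm:stats_sc_cont}) give $\pc(A_\infty, \pk) \le \pc(A, \pk)$ for every $A$ (minimality transfers to the limit), so $A_\infty \in \mc$. Hence $\mathrm{dist}(\hat{A}_n,\mc) \to 0$ a.s. Because $\mc$ is a union of orbits under the $\aut(\C)$-action and $\aut(\C)$ acts by isometries under (A4), distance to $\mc$ equals the infimum over $A \in \mc$ of the distance to the orbit $A\orb$; since orbits are compact and right-multiplication by $g \in \aut(\C)$ is a $\|\cdot\|_{\C,2}$-isometry, $\|\hat{A}_n - A\|_{\C,2}$ small forces $d_H(\hat{A}_n\orb, A\orb)$ small, yielding the orbit statement.

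For the remaining conclusions, note that the map $A \mapsto h_{A(\C)}(\cdot) = h_\C(A^\intercal \cdot)$ is continuous into $C(\sph^{d-1})$ by Lemma \ref{thm:linmap_diff}, and continuous maps of sequences converging to $\mc$ converge (in Hausdorff sense over the sphere) to the image of $\mc$. When $\mc$ specifies a unique set $\KH$, every $A \in \mc$ gives the same support function, hence $\rho_\infty(\KHN,\KH) \to 0$ a.s., which via the bound $\rho_p \le c_{p,d}\,\rho_\infty$ on the sphere handles all $p \in [1,\infty]$. The last claim follows by combining Corollary \ref{thm:form_mcastar} with the previous one. The main obstacle is the uniform convergence step: one must verify that the class $\{(\bu,y) \mapsto (h_\C(A^\intercal \bu) - y)^2 : A \in B\}$ is Glivenko–Cantelli despite involving the unbounded random variable $y$; the key observation enabling a clean proof is that although each function is not uniformly bounded in $y$, the Lipschitz-in-$A$ property from Lemma \ref{thm:linmap_diff} reduces uniform control to controlling $\frac{1}{n}\sum_i(1+|y^{(i)}|)$, which is handled by the scalar SLLN.
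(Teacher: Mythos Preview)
Your proposal is correct and follows essentially the same approach as the paper: both first confine $\hat{A}_n$ to a fixed $\|\cdot\|_{\C,2}$-ball via the $G_{r,\bv}$ events from Proposition~\ref{thm:form_mcproperties}, then establish a uniform SLLN for $\{(h_\C(A^\intercal\bu)-y)^2 : A \in B\}$ using a finite cover together with the Lipschitz bound from Lemma~\ref{thm:linmap_diff} and control of $\tfrac1n\sum_i(1+|y^{(i)}|)$, and finally transfer minimality to deduce $\mathrm{dist}(\hat{A}_n,\mc)\to 0$ and the orbit and $\rho_p$ statements via the isometric $\aut(\C)$-action. The only cosmetic differences are that the paper packages the uniform SLLN as a separate lemma proved via Pollard's bracketing criterion and argues $\hat{A}_n \in B$ eventually by extracting a limit direction $\hat{\bv}$ rather than via a net on $\sph^{d-1}$, and it passes through $\|\cdot\|_F$ rather than $\|\cdot\|_{\C,2}$ for the orbit Hausdorff bound.
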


When $\mc$ defines multiple sets, our result does not imply convergence of the sequence $\{\KHN\}_{n=1}^{\infty}$.  Rather, we obtain the weaker consequence that the sequence $\{\KHN\}_{n=1}^{\infty}$ eventually becomes arbitrarily close to the collection $\{A(\C) ~|~ A \in \mc\}$.

\begin{example}
	Suppose $\KS$ is the unit $\ell_2$-ball in $\R^2$, and $\C= \simp^{q}$. The optimal $\rho_2$ approximation is the regular $q$-gon with an unspecified rotation.  The sequence $\{\KHN\}_{n=1}^{\infty}$ does not have a limit; rather, there is a sequence $\{g_n\}_{n=1}^{\infty} \subset SO(2,\R)$ such that $g_n \KHN$ converges to a centered regular $q$-gon (with fixed orientation) a.s.
\end{example}

The proof of Theorem \ref{thm:stats_setconvergence} comprises two parts.  First, we show that there exists a ball in $L(\R^q,\R^d)$ such that $\hat{A}_n$ belongs to this ball for all sufficiently large $n$ a.s.  Second, we appeal to the following uniform convergence result.  The structure of our proof is similar to that of a corresponding result for $K$-means clustering (see the main theorem in \cite{Pollard:81}).

\begin{lemma} \label{thm:stats_sc_uslln}
	Let $U \subset L(\R^q,\R^d)$ be bounded and suppose $\C \subset \R^q$ satisfies assumption (A3).  Let $P$ be a probability distribution over the measurement pairs $(\bu,y) \subset \sph^{d-1} \times \R$ satisfying $\E_{P}[y^2] < \infty$, and let $P_n$ be the empirical measure corresponding to drawing $n$ i.i.d. observations from the distribution $P$.  Consider the collection of functions $\mathcal{G} := \{ (\lmax(A^{\intercal}\bu) - y)^2 ~|~ A \in U \}$ in the variables $(\bu,y)$.  Then $\sup_{g \in \mathcal{G}} \left| \E_{P_n}[g] - \E_{P}[g] \right| \rightarrow 0 \quad \text{as} \quad n \rightarrow \infty$ a.s.
\end{lemma}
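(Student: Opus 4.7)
The plan is to use a standard compactness / $\varepsilon$-net argument in the style of Pollard's USLLN for $K$-means. Since $L(\R^q,\R^d)$ is finite-dimensional, the bounded set $U$ is totally bounded in the norm $\|\cdot\|_{\C,2}$, so for every $\delta>0$ we can choose a finite $\delta$-net $A_1,\ldots,A_N \in U$. Writing $g_A(\bu,y) := (\lmax(A^\intercal \bu)-y)^2$ and letting $A_i$ be a nearest point of the net to a given $A \in U$, I would split
\begin{equation*}
|\E_{P_n}[g_A] - \E_{P}[g_A]| \leq |\E_{P_n}[g_A]-\E_{P_n}[g_{A_i}]| + |\E_{P_n}[g_{A_i}]-\E_{P}[g_{A_i}]| + |\E_{P}[g_{A_i}]-\E_{P}[g_A]|.
\end{equation*}

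The middle term is controlled by the ordinary strong law of large numbers applied individually at each of the finitely many centers $A_1,\ldots,A_N$: since $\lmax(A_i^\intercal \bu)$ is bounded on $\sph^{d-1}$ and $\E_P[y^2]<\infty$, we have $\E_P[g_{A_i}]<\infty$, so $\E_{P_n}[g_{A_i}] \to \E_P[g_{A_i}]$ a.s., and taking the maximum over $i \leq N$ preserves a.s.\ convergence.

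For the first and third terms I would invoke Lemma \ref{thm:linmap_diff}. Setting $R := \sup_{A\in U}\|A\|_{\C,2} < \infty$, for any $A,A' \in U$ with $\|A-A'\|_{\C,2}<\delta$,
\begin{equation*}
|g_A(\bu,y)-g_{A'}(\bu,y)| \leq |\lmax(A^\intercal \bu)-\lmax(A'^\intercal\bu)|\cdot|\lmax(A^\intercal \bu)+\lmax(A'^\intercal\bu)-2y| \leq \delta\,(2R+2|y|).
\end{equation*}
Hence both the first and third terms are bounded by $\delta\,(2R+2\E_{P_n}[|y|])$ and $\delta\,(2R+2\E_P[|y|])$ respectively. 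The second of these is a fixed constant multiple of $\delta$ (since $\E_P[y^2]<\infty$ implies $\E_P[|y|]<\infty$), and the first converges a.s.\ to the second by the ordinary SLLN applied to $|y|$, so both are eventually at most $c\delta$ for a deterministic $c$.

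Combining these three estimates and taking $\limsup_n$ yields $\limsup_n \sup_{g\in\mathcal{G}}|\E_{P_n}[g]-\E_P[g]| \leq 2c\delta$ a.s. Since $\delta$ was arbitrary, the supremum converges to $0$ a.s. The main obstacle — and it is a mild one — is ensuring the envelope in the Lipschitz estimate has finite mean; this is precisely what the assumption $\E_P[y^2]<\infty$ buys us, and it is also what legitimizes applying the scalar SLLN to $|y|$ inside the net-approximation step. No heavier tools (such as VC or bracketing arguments) are needed here because $U$ lives in a finite-dimensional space.
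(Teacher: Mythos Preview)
Your proof is correct and rests on the same two ingredients as the paper's: (i) a finite $\delta$-net in the bounded set $U\subset L(\R^q,\R^d)$, and (ii) the Lipschitz estimate $|h_\C(A^\intercal\bu)-h_\C(A'^\intercal\bu)|\le \|A-A'\|_{\C,2}$ from Lemma~\ref{thm:linmap_diff}, together with the integrable envelope furnished by $\E_P[y^2]<\infty$.

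The only difference is in packaging. The paper invokes Pollard's bracketing form of the uniform SLLN (Theorem~\ref{thm:uslln}) and constructs explicit upper/lower brackets $\overline g=(|h_\C(A_0^\intercal\bu)-y|+\delta)^2$ and $\underline g=((|h_\C(A_0^\intercal\bu)-y|-\delta)_+)^2$ at each net center $A_0$, verifying $\E_P[\overline g-\underline g]\le 4\delta\,\E_P[r+|y|]$. You instead carry out the three-term triangle-inequality decomposition directly and apply the scalar SLLN at the net centers and to $|y|$. Both arguments are equivalent here because the function class is Lipschitz in a finite-dimensional parameter with an integrable modulus; your version is slightly more self-contained, while the paper's bracketing formulation makes the verification modular against a quotable theorem.
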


The proof of Lemma \ref{thm:stats_sc_uslln} follows from an application of the following uniform strong law of large numbers (SLLN) \cite{Pol:84} [Theorem~3,~p.~8].

\begin{theorem}[Uniform SLLN] \label{thm:uslln}  Let $Q$ be a probability measure, and let $Q_n$ be the corresponding empirical measure.  Let $\mathcal{G}$ be a collection of $Q$-integrable functions.  Suppose that for every $\epsilon>0$ there exists a finite collection of functions $\mathcal{G}_{\epsilon}$ such that for every $g \in \mathcal{G}$ there exists $\overline{g},\underline{g} \in \mathcal{G}_{\epsilon}$ satisfying (i) $\underline{g} \leq \overline{g}$, and (ii) $\E_{Q}[ \overline{g} - \underline{g} ] < \epsilon$.  Then $\sup_{g \in \mathcal{G}} | \E_{Q_n}[g] - \E[g] | \rightarrow 0  $ a.s..
\end{theorem}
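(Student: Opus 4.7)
The plan is to reduce this bracketing-based uniform SLLN to the classical pointwise SLLN applied to the finite family $\mathcal{G}_\epsilon$, and then to let the bracketing resolution $\epsilon$ shrink along a countable sequence. I read the bracketing condition in its standard sense, namely that (i) implicitly includes the sandwich $\underline{g} \leq g \leq \overline{g}$ in addition to $\underline{g} \leq \overline{g}$, since otherwise the brackets carry no information about $g$ and the statement cannot hold.

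First I would fix $\epsilon > 0$ and invoke the hypothesis to obtain the finite family $\mathcal{G}_\epsilon$. Each $h \in \mathcal{G}_\epsilon$ is $Q$-integrable, so the ordinary Kolmogorov SLLN gives $\E_{Q_n}[h] \to \E_{Q}[h]$ almost surely; as $\mathcal{G}_\epsilon$ is finite, intersecting the corresponding probability-one events yields a single event $\Omega_\epsilon$ of probability one on which $\max_{h \in \mathcal{G}_\epsilon} | \E_{Q_n}[h] - \E_{Q}[h] | \to 0$ as $n \to \infty$. For any $g \in \mathcal{G}$, I would pick a bracket $(\underline{g},\overline{g})$ from $\mathcal{G}_\epsilon \times \mathcal{G}_\epsilon$ with $\underline{g} \leq g \leq \overline{g}$ and $\E_{Q}[\overline{g} - \underline{g}] < \epsilon$. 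Since integration against either $Q$ or $Q_n$ preserves order, I would split
\begin{equation*}
    \E_{Q_n}[g] - \E_{Q}[g] \;\leq\; \E_{Q_n}[\overline{g}] - \E_{Q}[\underline{g}] \;=\; (\E_{Q_n}[\overline{g}] - \E_{Q}[\overline{g}]) + \E_{Q}[\overline{g} - \underline{g}],
\end{equation*}
and analogously bound $\E_{Q}[g] - \E_{Q_n}[g]$ from above by $(\E_{Q}[\underline{g}] - \E_{Q_n}[\underline{g}]) + \E_{Q}[\overline{g} - \underline{g}]$. Both leading terms are at most $\max_{h \in \mathcal{G}_\epsilon} | \E_{Q_n}[h] - \E_{Q}[h] |$, which vanishes on $\Omega_\epsilon$, and the remaining terms are bounded by $\epsilon$. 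Taking the supremum over $g \in \mathcal{G}$ therefore gives $\limsup_n \sup_{g \in \mathcal{G}} | \E_{Q_n}[g] - \E_{Q}[g] | \leq \epsilon$ on $\Omega_\epsilon$.

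Finally, to promote this $\epsilon$-dependent statement to a single almost-sure conclusion, I would run the argument along a countable sequence $\epsilon_k \downarrow 0$. The intersection $\Omega^{\star} := \bigcap_k \Omega_{\epsilon_k}$ still has probability one, and on $\Omega^{\star}$ the $\limsup$ is bounded by $\epsilon_k$ for every $k$, hence equals zero. The only real subtlety — the ``main obstacle'' such as it is — is precisely this measurability point: the null sets supplied by the pointwise SLLN depend on $\epsilon$, and one must aggregate them over a countable collection of $\epsilon$-values to extract a uniform a.s. conclusion. The sandwich inequality and the finite-family SLLN are otherwise routine.
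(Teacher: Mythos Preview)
Your argument is correct and is essentially the standard proof of this bracketing uniform SLLN. Note, however, that the paper does not actually prove this theorem: it is quoted as an external result from Pollard \cite{Pol:84} [Theorem~3,~p.~8], and the paper simply invokes it in the proof of Lemma~\ref{thm:stats_sc_uslln}. So there is no ``paper's own proof'' to compare against; your write-up supplies a self-contained justification where the paper defers to a reference.

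Two small remarks. First, your reading of condition (i) as including the sandwich $\underline{g} \leq g \leq \overline{g}$ is the correct (and standard) interpretation --- without it the brackets say nothing about $g$, as you observe. Second, you assert that each $h \in \mathcal{G}_\epsilon$ is $Q$-integrable; this is not stated in the hypotheses but does follow, since $0 \leq \overline{g} - g \leq \overline{g} - \underline{g}$ together with the integrability of $g$ and of $\overline{g}-\underline{g}$ forces $\overline{g}$ (and similarly $\underline{g}$) to be integrable. It would be worth making that deduction explicit.
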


\begin{proof}[Proof of Lemma \ref{thm:stats_sc_uslln}]
	Based on Theorem \ref{thm:uslln}, it suffices to construct the finite collection of functions $\mathcal{G}_{\epsilon}$.  Pick $r$ sufficiently big so that $U \subset r B_{\|\cdot\|_{\C,2}} (0)$. Let $\mathcal{D}_{\delta}$ be a $\delta$-cover for $U$ in the $\|\cdot\|_{\C,2}$-norm, where $\delta$ is chosen so that $4 \delta \E_{P}[ r + |y| ] \leq \epsilon$.  We define $\mathcal{G}_{\epsilon} :\{ ((|\lmax(A^{\intercal}\bu)-y| - \delta)_{+})^2 \}_{A \in \mathcal{D}_{\delta}} \cup \{ (|\lmax(A^{\intercal}\bu)-y| + \delta)^2 \}_{A \in \mathcal{D}_{\delta}} $.
	
	We proceed to verify (i) and (ii).  Let $g = (\lmax(A^{\intercal}\bu) - y)^2 \in \mathcal{G}$ be arbitrary.  Let $A_0 \in \mathcal{D}_{\delta}$ be such that $\|A - A_0 \|_{\C,2} \leq \delta$.  Define $\underline{g} = ((|\lmax(A^{\intercal}_0 \bu)-y| - \delta)_{+})^2$ and $\overline{g} = (|\lmax(A^{\intercal}_0 \bu)-y| + \delta)^2$.  It follows that $\underline{g} \leq g \leq \overline{g}$, which verifies (i).  Next, we have $\E [ \overline{g} - \underline{g} ] \leq 4 \delta \E [ | \lmax(A_0^{\intercal}\bu) - y| ] \leq  4 \delta \E [ r + |y| ] \leq \epsilon$,
	which verifies (ii).
\end{proof}

\begin{proof}[Proof of Theorem \ref{thm:stats_setconvergence}]  To simplify notation in the following proof, we denote $B := B_{\|\cdot\|_{\C,2}}(0)$.
	
	First, we recall the definition of the event $\grv$ and the function $s(\bv)$ from the proof of Proposition \ref{thm:form_mcproperties}.  Using a sequence of arguments identical to the proof of Proposition \ref{thm:form_mcproperties}, it follows that there exists $\hat{r}$ sufficiently large such that $(\hat{r}^2/16) \prob[\grhv] >  \pc(0,\pk)$ for all $\bv \in \sph^{d-1}$.
	We claim that $\hat{A}_{n} \in \hat{r} B$ eventually a.s.  We prove this assertion via contradiction.  Suppose on the contrary that $\hat{A}_{n} \notin \hat{r} B$ i.o.  For every $\hat{A}_n \notin \hat{r} B$, there exists $\hat{\bx}_{n} \in \C$ such that $\| \hat{A}_n \hat{\bx}_{n} \| > \hat{r}$.  The sequence of unit-norm vectors $\hat{A}_n \hat{\bx}_{n} / \|\hat{A}_n \hat{\bx}_{n} \|_{2}$, defined over the subset of indices $n$ such that $\hat{A}_n \notin \hat{r} B$, contains a convergent subsequence whose limit point is $\hat{\bv} \in \sph^{d-1}$.  Then
	\begin{equation*}\begin{aligned}
			\underset{n}{\limsup} ~ \pc(\hat{A}_n,\pnk) & = \underset{n}{\limsup} ~ \E_{\pnk} [(\lmax(\hat{A}_n^{\intercal}\bu)-y)^2] \\
			& \geq \underset{n}{\limsup} ~ \E_{\pnk}[ \textbf{1}(G_{\hat{r},\hat{\bv}}) (\lmax(\hat{A}_n^{\intercal}\bu)-y)^2 ] \\
			& \geq \underset{n}{\limsup} ~ \E_{\pnk}[\textbf{1}(G_{\hat{r},\hat{\bv}}) (r^2 / 16)] \\
			& \geq \underset{n}{\lim} ~ \prob_{\pnk}[G_{\hat{r},\hat{\bv}}] (r^2 / 16) = \prob_{\pk}[G_{\hat{r},\hat{\bv}}] (r^2 / 16) > \pc(0,\pk).
	\end{aligned}\end{equation*}
	Here, the last equality follows from the SLLN.  This implies $\pc(\hat{A}_n, \pnk) > \pc (0, \pnk)$ i.o., which contradicts the minimality of $\hat{A}_n$.  Hence $\hat{A}_{n} \in \hat{r} B$ eventually a.s.
	
	Second, we show that $\mathrm{dist}(\hat{A}_n,\mc) \rightarrow 0$ a.s.  It suffices to show that $\hat{A}_n \in U$ eventually a.s, where $U$ is any open set containing $\mc$.  Let $\hat{A} \in \mc$ be arbitrary.  By Proposition \ref{thm:stats_sc_cont}, the function $A \mapsto \pc(A,\pk)$ is continuous.  By noting that the set of minimizers of $\pc(\cdot,\pk)$ is compact from Proposition \ref{thm:form_mcproperties}, we can pick $\epsilon>0$ sufficiently small so that $\{ A ~|~ \pc(A,\pk) < \pc(\hat{A},\pk) + \epsilon \} \subset U$.  Next, since $\hat{A}_n$ is defined as the minimizer of an empirical sum, we have $\pc (\hat{A}_n,\pnk) \leq \pc(\hat{A},\pnk)$ for all $n$.  By applying Lemma \ref{thm:stats_sc_uslln} with the choice of $P=\pk$, we have $\pc(\hat{A}_n,\pnk) \rightarrow \pc(\hat{A}_n,\pk)$, and $\pc(\hat{A},\pnk) \rightarrow \pc (\hat{A},\pk)$, both uniformly and in the a.s. sense.  Subsequently, by combining the previous two conclusions, we have $\pc(\hat{A}_n,\pk) < \pc(\hat{A},\pk) + \epsilon$ eventually, for any $\epsilon>0$.  This proves that $\mathrm{dist}(\hat{A}_n,\mc) \rightarrow 0$ a.s.
	
	Third, we conclude that $\inf_{A \in \mc} d_{H}(\hat{A}_n \orb, A \orb) \rightarrow 0$ a.s.  Fix an integer $n$, and let $t_n = \mathrm{dist}(\hat{A}_n,\mc)$.  Since $\mc$ is compact, we may pick $\bar{A} \in \mc$ such that $\|\hat{A}_{n} - \bar{A} \|_{F} = t_n$.  Given $A \in \hat{A}_n \orb$, we have $A = \hat{A}_n g$ for some $g \in \mathrm{Aut}(\C)$.  Then $\bar{A} g \in \bar{A} \orb$, and since $g$ is an isometry by Assumption (A4), we have $\|\hat{A}_{n} g - \bar{A} g \|_{F} = \|\hat{A}_{n} - \bar{A} \|_{F} = t_n$.  This implies that $d_{H}(\hat{A}_n \orb,\bar{A} \orb) \leq t_n$.  Since $t_n \rightarrow 0$ as $n \rightarrow 0$, it follows that $\inf_{A \in \mc} d_{H}(\hat{A}_n \orb,A \orb) \rightarrow 0$ a.s.
\end{proof}


\subsection{Asymptotic Normality} \label{sec:stats_asymnormal}

In our second main result, we characterize the limiting distribution of a sequence of estimates $\{\KHN\}_{n=1}^{\infty}$ corresponding to minimizers of \eqref{eq:sc_intro_constrainedlse} by analyzing an associated sequence of minimizers of \eqref{eq:sc_intro_constrainedlse_lmap}.  Specifically, we show under suitable conditions that the estimation error in the sequence of minimizers of the empirical loss \eqref{eq:sc_intro_constrainedlse_lmap} is asymptotically normal.  After developing this theory, we illustrate in Section \ref{sec:stats_asymnormal_examples} through a series of examples the asymptotic behavior of the set $\KHN$, highlighting settings in which $\KHN$ converges as well as situations in which our asymptotic normality characterization fails due to the requisite assumptions not being satisfied.  Our result relies on two key ingredients, which we describe next.

The first set of requirements pertains to non-degeneracy of the function $\pc(\cdot,\pk)$.  First, we require that the minimizers of $\pc(\cdot,\pk)$ constitute a unique orbit under the action of the automorphism group of the set $\C$; this guarantees the existence of a convergent sequence of minimizers of the empirical losses $\pc(\cdot,\pnk)$, which is necessary to provide a Central Limit Theorem (CLT) type of characterization.  Second, we require the function $\pc(\cdot,\pk)$ to be twice differentiable at a minimizer with a positive definite Hessian (modulo invariances due to $\aut(\C)$); such a condition allows us to obtain a quadratic approximation of $\pc(\cdot,\pk)$ around a minimizer $\hat{A}$, and to subsequently compute first-order approximations of minimizers of the empirical losses $\Phi(\cdot,\pnk)$.  These conditions lead to a geometric characterization of confidence regions of the extreme points of the limit of the sequence $\{\KHN\}_{n=1}^\infty$.

Our second main assumption centers around the collection of sets that serves as the constraint in the optimization problem \eqref{eq:sc_intro_constrainedlse}.  Informally, we require that this collection is not ``overly complex,'' so that we can appeal to a suitable CLT.  The field of empirical processes provides the tools necessary to formalize matters. Concretely, as our estimates are obtained via minimization of an empirical process, we require that the following divided differences of the loss function are well-controlled:
\begin{equation*}
	d_{\C,A_1,A_2} (\bu,y) = \frac{1}{\|A_1-A_2\|_{F}} \left( (h_{\C} (A_1^{\intercal}\bu) - y)^2 - (h_{\C} (A_2^{\intercal}\bu) - y)^2 \right).
\end{equation*}
In particular, a natural assumption is that the \emph{graph} associated to these divided differences, indexed over a collection centered at $\hat{A}$, is of suitably ``bounded complexity'':
\begin{equation} \label{eq:check_vc}
\left\{ \left\{ (\bu,y,s) ~\big|~ d_{\C,A,\hat{A}} (\bu,y) \geq s \geq 0 \text{ or } d_{\C,A,\hat{A}} (\bu,y) \leq s \leq 0 \right\} ~\Big|~ A \in B_{\|\cdot\|_{F}}(\hat{A}) \backslash \{ \hat{A}\} \right\}.
\end{equation}
A powerful framework to quantify the `richness' of such collections is based on the notion of a \emph{Vapnik-Chervonenkis} (VC) class \cite{VapChe:68}.  VC classes describe collections of subsets with bounded complexity, and they feature prominently in the field of statistical learning theory, most notably in conditions under which generalization of a learning algorithm is possible.
\begin{definition}[Vapnik-Chervonenkis (VC) Class]
	Let $\mathcal{F}$ be a collection of subsets of a set $F$.  A finite set $D$ is said to be \emph{shattered} by $F$ if for every $A \subset D$ there exists $G \in \mathcal{F}$ such that $A = G \cap D$.  The collection $\mathcal{F}$ is said to be \emph{VC class} if there is a finite $k$ such that all sets with cardinality greater than $k$ cannot be shattered by $\mathcal{F}$.
\end{definition}

Whether or not the collection \eqref{eq:check_vc} is VC depends on the particular choice of $\C$.  If $\C$ is chosen to be either a simplex or a spectraplex then such a property is indeed satisfied -- see Section \ref{sec:stats_asymnorm_vcproof} for further details.  Our analysis relies on a result by Stengle and Yukich showing that certain collection of sets admitting semialgebraic representations are VC \cite{SteYuk:89}.  We are now in a position to state our main result of this section:

\begin{theorem}\label{thm:stats_norm} Suppose that the assumptions (A1), (A2), (A3), and (A4) hold.  Suppose that there exists $\hat{A} \in \lqd$ such that the set of minimizers $\mc =\hat{A} \orb$ (i.e., $\mc$ consists of a single orbit), the function $h_{\C}(\cdot)$ is differentiable at $\hat{A}^{\intercal}\bu$ for $\pk$-a.e. $\bu$, the function $\pc(\cdot,\pk)$ is twice differentiable at $\hat{A}$, and the associated Hessian $\Gamma$ at $\hat{A}$ is positive definite restricted to the subspace $\nspace$, i.e., $\Gamma|_{\nspace} \succ 0$ -- here, $T:=T_{\hat{A}} \mc$ denotes the tangent space of $\mc$ at $\hat{A}$.  In addition, suppose that the collection of sets specified by \eqref{eq:check_vc} forms a VC class.  Let $\tilde{A}_n \in \mathrm{argmin}_{A} \pc (A,\pnk), ~n \in \mathbb{N}$ be a sequence of minimizers of the empirical loss function, and let $\tilde{A}_n \in \mathrm{argmin}_{A \in \hat{A}_n \orb} \| \hat{A} - A \|_F, ~ n \in \mathbb{N}$ specify an equivalent sequence defined with respect to the minimizer $\hat{A}$ of the population loss.  Setting $\nabla = \nabla_{A} ((h_{\C}(A^{\intercal}\bu)-y)^2)$, we have that
	\begin{equation*}
		\sqrt{n} (\tilde{A}_n - \hat{A}) \overset{D}{\rightarrow} \mathcal{N}(0, (\Gamma|_{\nspace})^{-1} (\E_{\pk}[\nabla \otimes \nabla |_{A=\hat{A}}]|_{\nspace}) (\Gamma|_{\nspace})^{-1} ).
	\end{equation*}
\end{theorem}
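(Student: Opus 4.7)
The plan is to follow the classical M-estimator route to asymptotic normality (as in \cite{Pol:84}), suitably adapted to the orbit structure of the minimizers. Because $\mc = \hat{A} \orb$ and assumption (A4) holds, $\mc$ is a compact smooth submanifold of $\lqd$ whose tangent space at $\hat{A}$ is $T$; the Hessian $\Gamma$ vanishes on $T$ but is positive definite on $\nspace = T^{\perp}$. The orbit-projection $\tilde{A}_n := \mathrm{argmin}_{A \in \hat{A}_n \orb} \|A - \hat{A}\|_F$ ensures that $\tilde{A}_n - \hat{A}$ lies, to first order, in $\nspace$, so the task reduces to proving a Gaussian limit for the $\nspace$-component.

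First I would invoke Theorem \ref{thm:stats_setconvergence} together with the single-orbit hypothesis and compactness of $\hat{A} \orb$ to conclude $\tilde{A}_n \to \hat{A}$ a.s., which justifies localization near $\hat{A}$. Next, by Proposition \ref{thm:formulation_diffble} the first derivative of $\pc(\cdot,\pk)$ vanishes at the minimizer $\hat{A}$, so twice differentiability yields the population expansion
\begin{equation*}
\pc(A,\pk) - \pc(\hat{A},\pk) = \tfrac{1}{2} \langle \Gamma(A-\hat{A}), A-\hat{A}\rangle + o(\|A-\hat{A}\|_F^2),
\end{equation*}
which restricted to $\hat{A} + \nspace$ is a nondegenerate quadratic form.

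The heart of the argument is a stochastic-equicontinuity-based linearization of the empirical process. Writing $\ell_A(\bu,y) := (\lmax(A^{\intercal}\bu)-y)^2$ and $\mathbb{G}_n := \sqrt{n}(\pnk - \pk)$, the VC-class hypothesis on the divided-difference graphs \eqref{eq:check_vc} furnishes the entropy bound needed for a uniform central limit theorem on the class $\{\ell_A : A \in B_{\|\cdot\|_F}(\hat{A})\}$, via the Stengle--Yukich framework alluded to in the introduction. This produces a first-order expansion
\begin{equation*}
\mathbb{G}_n(\ell_{A} - \ell_{\hat{A}}) = \langle \mathbb{G}_n(\nabla), A-\hat{A}\rangle + o_P(\|A-\hat{A}\|_F) + o_P(n^{-1/2})
\end{equation*}
uniformly over an $O(n^{-1/2})$-ball around $\hat{A}$. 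Combining the population expansion with this stochastic linearization, the near-minimality of $\tilde{A}_n$ for $\pc(\cdot,\pnk)$ forces
\begin{equation*}
\sqrt{n}(\tilde{A}_n - \hat{A}) = -(\Gamma|_{\nspace})^{-1} \, \mathbb{G}_n(\nabla)|_{\nspace} + o_P(1),
\end{equation*}
and the finite-dimensional CLT applied to $\mathbb{G}_n(\nabla)$, whose covariance is $\E_{\pk}[\nabla \otimes \nabla]$, delivers the stated Gaussian limit by Slutsky.

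The main obstacle is the stochastic equicontinuity step: since $\lmax(\cdot)$ is not globally differentiable, one cannot perform a pathwise Taylor expansion of $\ell_A$ inside the empirical expectation, and the usual smoothness-based argmin theorems fail. The VC hypothesis on \eqref{eq:check_vc} is precisely what substitutes for this missing smoothness, by controlling the size of the class of divided differences and hence enabling a maximal inequality. A secondary technicality is verifying that $\tilde{A}_n - \hat{A}$ is in $\nspace$ up to $o_P(n^{-1/2})$; this is handled by a standard tangent-space argument exploiting the orthogonal-projection definition of $\tilde{A}_n$ and smoothness of the orbit $\hat{A}_n \orb$ near $\hat{A}$.
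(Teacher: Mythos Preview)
Your proposal is correct and follows essentially the same route as the paper: both reduce to Pollard's M-estimator framework, using consistency from Theorem~\ref{thm:stats_setconvergence} plus the single-orbit assumption to localize, the VC hypothesis on \eqref{eq:check_vc} to obtain stochastic equicontinuity of the divided differences, and the nondegenerate quadratic expansion on $\nspace$ to conclude. The paper packages the empirical-process machinery into a standalone Proposition~\ref{thm:vc_to_stochequi} and explicitly verifies an $\mathcal{L}^2$ envelope $\bar{d}(\bu,y) = c(1+|y|)$ for the divided-difference class (needed for Pollard's covering-number bound), a technical point you gloss over but which goes through routinely here; it also asserts $\tilde{A}_n - \hat{A} \in \nspace$ exactly from the first-order optimality condition, whereas your ``to first order'' is arguably the more careful statement since the orthogonality is to $T_{\tilde{A}_n}(\hat{A}_n\orb)$ rather than $T_{\hat{A}}\mc$.
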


The proof of this theorem relies on ideas from the literature of empirical processes, which we describe next in a general setting.

\begin{proposition}\label{thm:vc_to_stochequi}
	Suppose $P$ is a probability measure over $\R^{q}$, and $P_n$ is the empirical measure corresponding to $n$ i.i.d. draws from $P$.  Suppose $f(\cdot,\cdot) : \R^{q} \times \R^{p} \rightarrow \R$ is a Lebesgue-measurable function such that $f(\cdot,\bt) : \R^{q} \rightarrow \R$ is $P$-integrable for all $\bt \in \R^{p}$.  Denote the expectations $F(\bt) = \mathbb{E}_{P}[ f(\cdot,\bt)]$ and $F_n(\bt) = \mathbb{E}_{P_n}[f(\cdot,\bt)]$.  Let $\hat{\bt} \in \mathrm{argmin}_{\bt} \, F(\bt)$ be a minimizer of the population loss, let $\{ \hat{\bt}_n \}_{n=1}^{\infty}$, $\hat{\bt}_n \in \mathrm{argmin}_{\bt} \, F_n(\bt)$, be a sequence of empirical minimizers, and let $f(\cdot,\bt) = f(\cdot,\hat{\bt}) + \langle \bt - \hat{\bt}, \Delta(\cdot) \rangle + \| \bt - \hat{\bt} \|_{2} r(\cdot,\bt)$ denote the linearization of $f(\cdot,\bt)$ about $\hat{\bt}$ (we assume that $f(\cdot,\bt)$ is differentiable with respect to $\bt$ at $\hat{\bt}$ for $P$-a.e., with derivative denoted by $\Delta(\cdot)$).  Let $\mathcal{D} = \{ d_{\bt, \hat{\bt}}(\cdot) ~|~ \bt \in B_{\| \cdot\|_{2}} (\hat{\bt}) \backslash \{\hat{\bt}\} \}$, where $d_{\bt_1,\bt_2}(\cdot) = (f(\cdot,\bt_1) - f(\cdot,\bt_2)) / \|\bt_1 - \bt_2 \|_{2}$, denote the collection of divided differences.
	
	Suppose (i) $\hat{\bt}_n \rightarrow \hat{\bt}$ a.s., (ii) the Hessian $\nabla^{2}:=\nabla^{2}F(\bt)|_{\bt = \hat{\bt}}$ about $\hat{\bt}$ is positive definite, (iii) the collection of sets $\{\{ (\cdot, s) ~|~ d_{\bt,\hat{\bt}}(\cdot) \geq s \geq 0 \text{ or } d_{\bt,\hat{\bt}}(\cdot) \leq s \leq 0 \} ~|~ \bt \in B_{\| \cdot\|_{2}} (\hat{\bt}) \backslash \{ \hat{\bt} \} \}$ form a VC class, and (iv) there is a function $\bar{d}(\cdot): \R^q \rightarrow \R$ such that $|d(\cdot)|\leq \bar{d}(\cdot)$ for all $d \in \mathcal{D}$, $|\langle \Delta(\cdot), \be \rangle| \leq \bar{d}(\cdot)$ for all unit-norm vectors $\be$, $\bar{d}(\cdot)>0$, and $\bar{d}(\cdot)\in \mathcal{L}^{2}(P)$.  Then $\sqrt{n} (\hat{\bt}_n - \hat{\bt}) \overset{D}{\rightarrow} \mathcal{N}(0, (\nabla^{2})^{-1} [ (\mathbb{E}_{P}[\Delta \Delta^{\intercal}]) - (\mathbb{E}_{P}[\Delta])(\mathbb{E}_{P}[\Delta])^{\intercal} ] (\nabla^{2})^{-1} )$.
\end{proposition}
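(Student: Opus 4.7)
The plan is to follow the classical three-step recipe for M-estimator asymptotic normality: linearize the criterion around $\hat{\bt}$, use the Donsker property implied by the VC and envelope hypotheses to absorb the remainder, and close the argument by a quadratic argmin analysis.

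Taking expectations in the given linearization $f(\cdot,\bt) = f(\cdot,\hat{\bt}) + \langle \bt - \hat{\bt}, \Delta(\cdot)\rangle + \|\bt - \hat{\bt}\|_{2}\,r(\cdot,\bt)$ and subtracting, I would write
\begin{equation*}
F_n(\bt) - F_n(\hat{\bt}) - \bigl[F(\bt) - F(\hat{\bt})\bigr] = \langle \bt - \hat{\bt},\, (P_n - P)[\Delta]\rangle + \|\bt - \hat{\bt}\|_{2}\,(P_n - P)[r(\cdot,\bt)].
\end{equation*}
The envelope hypothesis $|\langle \Delta(\cdot),\be\rangle| \leq \bar{d}(\cdot) \in \mathcal{L}^{2}(P)$ places $\Delta$ componentwise in $\mathcal{L}^{2}(P)$, so dominated convergence makes $F$ differentiable at $\hat{\bt}$ with $\nabla F(\hat{\bt}) = \E_{P}[\Delta]$; the positive-definite Hessian $\nabla^{2}$ of assumption (ii) then furnishes the Taylor expansion $F(\bt) - F(\hat{\bt}) = \tfrac{1}{2}(\bt - \hat{\bt})^{\intercal}\nabla^{2}(\bt - \hat{\bt}) + o(\|\bt - \hat{\bt}\|_{2}^{2})$, while the multivariate CLT gives $\sqrt{n}\,(P_n - P)[\Delta] \overset{D}{\rightarrow} \mathcal{N}(0,\,\E_{P}[\Delta\Delta^{\intercal}] - \E_{P}[\Delta]\E_{P}[\Delta]^{\intercal})$.

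The crux is to show that $(P_n - P)[r(\cdot,\hat{\bt}_n)] = o_{P}(n^{-1/2})$. Assumption (iii) makes the divided-difference class $\mathcal{D}$ a VC class with a square-integrable envelope, which by the standard Pollard--Dudley entropy bound makes $\mathcal{D}$ $P$-Donsker; hence the empirical process $d \mapsto \sqrt{n}(P_n - P)[d]$ on $\mathcal{D}$ is asymptotically equicontinuous in the $L^{2}(P)$ seminorm. Decomposing
\begin{equation*}
d_{\bt,\hat{\bt}}(\cdot) = \bigl\langle (\bt - \hat{\bt})/\|\bt - \hat{\bt}\|_{2},\, \Delta(\cdot)\bigr\rangle + r(\cdot,\bt),
\end{equation*}
noting that $r(\cdot,\bt) \to 0$ $P$-almost everywhere as $\bt \to \hat{\bt}$, and using the pointwise bound $|r(\cdot,\bt)| \leq 2\bar{d}(\cdot)$, dominated convergence gives $\|r(\cdot,\bt)\|_{L^{2}(P)} \to 0$; combined with the a.s.\ consistency of $\hat{\bt}_n$ from (i), asymptotic equicontinuity delivers the required rate.

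Substituting these ingredients into the first identity yields
\begin{equation*}
F_n(\bt) - F_n(\hat{\bt}) = \tfrac{1}{2}(\bt - \hat{\bt})^{\intercal}\nabla^{2}(\bt - \hat{\bt}) + \langle \bt - \hat{\bt},\, (P_n - P)[\Delta]\rangle + o(\|\bt - \hat{\bt}\|_{2}^{2}) + o_{P}(n^{-1/2})\|\bt - \hat{\bt}\|_{2}.
\end{equation*}
Completing the square, the leading quadratic is uniquely minimized at $-(\nabla^{2})^{-1}(P_n - P)[\Delta] = O_{P}(n^{-1/2})$, and a standard argmin continuous-mapping argument (as in van der Vaart, \emph{Asymptotic Statistics}, Theorem 5.23) produces the Bahadur-type representation $\sqrt{n}(\hat{\bt}_n - \hat{\bt}) = -(\nabla^{2})^{-1}\sqrt{n}(P_n - P)[\Delta] + o_{P}(1)$; Slutsky together with the CLT of step one then gives the asserted Gaussian limit. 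The main obstacle is the equicontinuity step, because the divided differences $d_{\bt,\hat{\bt}}$ themselves do not vanish as $\bt \to \hat{\bt}$ -- only the remainder $r(\cdot,\bt)$ does -- so one must carefully strip off the linear-in-$\Delta$ part and localize uniformly over a shrinking $L^{2}(P)$-neighborhood of the zero function. This is where I would invest the most care, leaning on the entropy bounds for semialgebraic VC classes in Stengle--Yukich to run a chaining estimate on the $r$-subclass.
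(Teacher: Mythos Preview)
Your approach is essentially the same as the paper's: linearize the criterion, establish stochastic equicontinuity of the remainder process, and conclude via an argmin-type theorem. The paper cites Pollard, \emph{Convergence of Stochastic Processes}, Chapter~VII (Lemma~15 for equicontinuity from entropy, Theorem~5 for the final step), while you cite van der Vaart's Theorem~5.23; these are the same machinery.

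You have correctly located the one nontrivial point: the Donsker property of $\mathcal{D}$ alone does not control $\sqrt{n}(P_n-P)[r(\cdot,\hat{\bt}_n)]$, because $d_{\bt,\hat{\bt}}$ does not converge to a fixed limit in $L^2(P)$ as $\bt\to\hat{\bt}$---the unit direction $(\bt-\hat{\bt})/\|\bt-\hat{\bt}\|$ may wander. The paper resolves this not through Stengle--Yukich (that result is used elsewhere to verify the VC \emph{hypothesis} for specific choices of $C$, not in this abstract proposition) but by a simpler observation: the class $\mathcal{E}=\{\langle\Delta(\cdot),\be\rangle:\|\be\|=1\}$ sits inside a finite-dimensional vector space of functions and is therefore VC with envelope $\bar{d}$ on its own. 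Entropy bounds for $\mathcal{D}$ and $\mathcal{E}$ then combine (an $\epsilon/\sqrt{2}$-cover of each gives an $\epsilon$-cover of the difference class) to give polynomial $L^2(P_n)$-entropy for the remainder class $\mathcal{F}=\{r(\cdot,\bt)\}$ itself. Stochastic equicontinuity on $\mathcal{F}$, together with $\|r(\cdot,\bt)\|_{L^2(P)}\to 0$ (which you derive correctly from $|r|\le 2\bar d$ and dominated convergence), then gives $(P_n-P)[r(\cdot,\hat{\bt}_n)]=o_P(n^{-1/2})$. Once you replace the Stengle--Yukich reference by the finite-dimensionality of $\mathcal{E}$, your sketch matches the paper's argument.
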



The proof of Proposition \ref{thm:vc_to_stochequi} is based on a series of ideas developed in \cite{Pol:84} [Ch.~VII] (see in particular Example 19).  These rely on computing entropy numbers for certain function classes.  More formally, given a probability measure $P$ and a collection of functions $\mathcal{F}$ with each member being a mapping from $\R^q$ to $\R$, we define $\entr (\epsilon, P, \mathcal{F})$ to be the size of the smallest $\epsilon$-cover of $\mathcal{F}$ in the $\mathcal{L}^{2}(P)$-distance.  As these steps builds on substantial background material from \cite{Pol:84}, we state the key conceptual arguments in the following proof and refer the reader to specific pointers in the literature for further details.

\begin{proof}[Proof of Proposition \ref{thm:vc_to_stochequi}]  To simplify notation in the following proof, we denote $B_{\hat{\bt}} := B_{\|\cdot\|_{2}}(\hat{\bt}) \backslash \{ \hat{\bt} \}$.
	
	The graphs of $\mathcal{D} := \{ d_{\bt,\hat{\bt}}(\cdot) ~|~ \bt \in B_{\hat{\bt}} \}$ form a VC class by assumption.  By \cite{Pol:84} [Corollary~17,~p.~20], these graphs have polynomial discrimination (see \cite{Pol:84} [Ch.~II]).  In addition, the collection $\mathcal{D}$ have an enveloping function $\bar{d}$ by assumption.  Hence by \cite{Pol:84} [Lemma~36,~p.~34], there exists constants $\alpha_{\mathcal{D}}$,$\beta_{\mathcal{D}}$ such that $\entr (\delta ( \mathbb{E}_{P_n} [\bar{d}^2])^{1/2}, P_n, \mathcal{D}) \leq \alpha_{\mathcal{D}} (1/\delta)^{\beta_{\mathcal{D}}}$, for all $0 < \delta \leq 1$ and all $n$.
	
	We obtain a similar bound for graphs of the collection $\mathcal{E} := \{ \langle \Delta (\cdot) ,(\bt-\hat{\bt})/\|\bt-\hat{\bt}\|_2 \rangle ~|~ \bt \in B_{\hat{\bt}} \}$.  First, we note that $\mathcal{E}$ is a subset of a finite dimensonal vector space.  By \cite{Kos:08} [Lemma~9.6,~p.~159], the collection of subgraphs $\{ \{ (t,s)~|~f(t) \leq s \} ~|~ f \in \mathcal{E} \}$ forms a VC class.  Then, by noting that the singleton $\{ \{ (t,s)~|~s \leq 0 \} \}$ is a VC class, and that the collection of sets formed by taking intersections of members of two VC classes is also a VC class (see \cite{Kos:08} [Lemma~9.7,~p.~159]), we conclude that the collection of sets $\{ \{ (t,s)~|~f(t) \leq s \leq 0 \} ~|~ f \in \mathcal{E} \}$ is a VC class.  A similar sequence of steps shows that the collection of sets $\{ \{ (t,s)~|~f(t) \geq s \geq 0 \} ~|~ f \in \mathcal{E} \}$  also forms a VC class.  The collection of graphs of $\mathcal{E}$ is the union of the previous two collections.  Hence by \cite{Kos:08} [Lemma~9.7,~p.~159] it is also a VC class.	
	Subsequently, by applying the same sequence of steps as we did for $\mathcal{D}$, there exists constants $\alpha_{\mathcal{E}}$,$\beta_{\mathcal{E}}$ such that $\entr (\delta (\mathbb{E}_{P_n}[\bar{d}^2])^{1/2}, P_n, \mathcal{E}) \leq \alpha_{\mathcal{E}} (1/\delta)^{\beta_{\mathcal{E}}}$, for all $0 < \delta \leq 1$ and all $n$.
	
	Next, consider the collection of functions
	$\mathcal{F} := \{ r(\cdot,\bt) ~|~ \bt \in B_{\hat{\bt}} \}$.  We have
	$r(\cdot,\bt) = d_{\bt,\hat{\bt}}(\cdot) + \langle \Delta(\cdot), ( \bt - \hat{\bt} ) / \|\bt - \hat{\bt}\|_2 \rangle$, and hence every element in $\mathcal{F}$ is expressible as a sum of functions in $\mathcal{D}$ and $\mathcal{E}$ respectively.  Given $\delta/\sqrt{2}$-covers for $\mathcal{D}$ and $\mathcal{E}$ in any $\mathcal{L}^{2}$-distance, one can show via the AM-GM inequality that the union of these covers forms a $\delta$-cover for $\mathcal{F}$.  Subsequently, we conclude that there exists constants $\alpha$ and $\beta$ such that $\entr (\delta (\mathbb{E}_{P_n}[\bar{d}^2])^{1/2},P_n,\mathcal{F}) \leq \alpha (1/\delta)^{\beta}$.
	
	The remaining sequence of arguments is identical to those in of \cite{Pol:84} [Ch.~VII]. Our bound on the quantity $\entr (\delta (\mathbb{E}_{P_n}[\bar{d}^2])^{1/2},P_n,\mathcal{F})$ implies that there exists $\theta$ such that $\int_{0}^{\theta} ( \entr (t,P_n,\mathcal{F}) / t)^{1/2} dt < \epsilon$ for all $n$ and all $\epsilon>0$.  We apply \cite{Pol:84} [Lemma~15,~p.~150] to obtain the following \emph{stochastic equicontinuity} property: given $\gamma>0$ and $\epsilon>0$, there exists $\theta$ such that
	\begin{equation*}
	\underset{n}{\lim \sup} ~ \prob \left[ \underset{f_1,f_2 \in \mathcal{F} : \mathbb{E}_{P} [(f_1-f_2)^2] \leq \theta}{\sup} | \mathbb{E}_{E_n} [f_1-f_2]  | > \gamma \right] < \epsilon.
	\end{equation*}
	Here, $E_n$ denotes the signed measure $\sqrt{n}(P_n-P)$.  One can check measurability of the inner supremum using the notion of \emph{permissibility} -- see \cite{Pol:84} [Appendix~C] (in essence, we simply require $f$ to be measurable and the index $\bt$ to reside in an Euclidean space).  Finally we apply \cite{Pol:84} [Theorem~5,~p.~141] to conclude the result.
\end{proof}

\begin{proof}[Proof of Theorem \ref{thm:stats_norm}]
	The first step is to verify that the sequence $\{ \tilde{A}_n \}_{n=1}^{\infty}$ quotients out the appropriate equivalences.  Since $\tilde{A}_n \in \hat{A}_n \orb$, we have $\tilde{A}_n = \hat{A}_n g_n$ for an isometry $g_n$.  Subsequently we have $\|\tilde{A}_n - \hat{A} \|_{F} = \|\hat{A}_n g_n - \hat{A} \|_{F} = \|\hat{A}_n - \hat{A} g_n^{-1} \|_{F} \leq d_{H} (\hat{A}_n,\mc)$.  Following the conclusions of Theorem \ref{thm:stats_setconvergence}, we have $\tilde{A}_n \rightarrow \hat{A}$ a.s.  Furthermore, as a consequence of the optimality conditions in the definition of $\tilde{A}_n$, we also have $\tilde{A}_n - \hat{A} \in \nspace$.
	
	The second step is to apply Proposition \ref{thm:vc_to_stochequi} to the sequence $\{\tilde{A}_n\}_{n=1}^{\infty}$ with $(h_{\C} (A^{\intercal}\bu) - y)^2$ as the choice of loss function $f(\cdot,\bt)$, $\pk$ as the probability measure $P$, $(\bu,y)$ as the argument, and $A$ as the index $\bt$.  First, the measurability of the loss as a function in $A$ and $(\bu,y)$ is straightforward to establish.  Second, the differentiablity of the loss function at $\hat{A}$ for $\pk$-a.e. $(\bu,y)$ follows from the assumption that $h_{\C}(\cdot)$ is differentiable at $\hat{A}^{\intercal}\bu$ for $\pk$-a.e. $\bu$.  Third, we have shown that $\tilde{A}_n \rightarrow \hat{A}$ a.s. in the above.  Fourth, the Hessian $\Gamma|_{\nspace}$ is positive definite by assumption.  Fifth, the graphs of $\{ d_{\C,A,\hat{A}}(\bu,y) ~|~ A \in B_{\|\cdot\|_{F}}(\hat{A}) \backslash \{ \hat{A} \} \}$ form a VC class by assumption.  Sixth, we need to show the existence of an appropriate function $d(\cdot)$ to bound the divided differences $d_{\C,A,\hat{A}}(\cdot)$ and the inner products $\langle \nabla(\cdot) |_{A=\hat{A}}, E \rangle$, where $\|E\|_{F} = 1$.  In the former case, we note that $|(h_{\C} (A_1^{\intercal}\bu) - y) + (h_{\C} (A_2^{\intercal}\bu) - y)| \leq \|A_1 - A_2 \|_{\C,2} \leq c_1\|A_1-A_2\|_{F}$ for some $c_1>0$; here, the first inequality follows from Lemma \ref{thm:linmap_diff} and the second inequality follows from the equivalence of norms.  Then, by noting that $A \in B_{\|\cdot\|_{F}}(\hat{A}) \backslash{\hat{A}}$ is bounded, the expression $| (h_{\C} (A^{\intercal}\bu) - y) + (h_{\C} (\hat{A}^{\intercal}\bu) - y) |$ is bounded above by a function of the form $c_2(1+|y|)$.  By expanding the divided difference expression and by combining the previous two bounds, one can show that $|d_{\C,A,\hat{A}}(\cdot)| \leq c_3(1+|y|)$ for some $c_3 > 0$.  In the latter case, the derivative is given by $2(h_{\C} (\hat{A}^{\intercal}\bu) - y) \bu \otimes \emax(\hat{A}^{\intercal}\bu)$.  By noting that $\lmax(\hat{A}^{\intercal}\bu)$, $\bu$, and $\emax(\hat{A}^{\intercal}\bu)$ are uniformly bounded over $\bu \in \sph^{d-1}$, and by performing a sequence of elementary bounds, one can show that $\langle \nabla, E \rangle$ is bounded above by $c_4(1+|y|)$ uniformly over all unit-norm $E$.  We pick $\bar{d}(\cdot)$ to be $c(1+|y|)$, where $c = \max\{1,c_3,c_4\}$.  Then $\bar{d}(\cdot) > 0$ by construction, and furthermore, $\bar{d} \in \mathcal{L}^2(\pk)$ as $\E_{\pk}[ \varepsilon^2 ]<\infty$.
	
	Finally, the result follows from an application of Proposition \ref{thm:vc_to_stochequi}.
\end{proof}

This result gives an asymptotic normality characterization corresponding to a sequence of minimizers $\{\hat{A}_n\}_{n=1}^\infty$ of the empirical losses $\pc (\cdot,\pnk)$.  In the next result, we specialize this result to the setting in which the underlying set $\KS$ is in fact expressible as a projection of $\C$, i.e., $\KS = A^{\star}(\C)$ for some $A^\star \in \lqd$.  This specialization leads to a particularly simple formula for the asymptotic error covariance, and we demonstrate its utility in the examples in Section \ref{sec:stats_asymnormal_examples}.

\begin{corollary}\label{thm:simplifiedasymnorm}
	Suppose that the conditions of Theorem \ref{thm:stats_norm} and of Proposition \ref{thm:stats_specialcasev} hold.  Using the notation of Theorem \ref{thm:stats_norm}, we have that $\E [ \nabla \otimes \nabla|_{A=A^{\star}} ] = 2\sigma^2 \Gamma$ with $\Gamma$ given by \eqref{eq:secondderivformula}.  In particular, the conclusion of Theorem \ref{thm:stats_norm} simplifies to $\sqrt{n} (\tilde{A}_n - \hat{A}) \overset{D}{\rightarrow} \mathcal{N}(0, 2\sigma^2 (\Gamma|_{\nspace})^{-1})$.
\end{corollary}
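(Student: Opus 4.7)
The plan is to evaluate the gradient $\nabla = \nabla_A((h_\C(A^\intercal\bu)-y)^2)$ at $A = A^\star$ and show directly that the resulting outer-product expectation equals $2\sigma^2 \Gamma$; the conclusion of the corollary then reduces to algebraic simplification inside the sandwich formula of Theorem~\ref{thm:stats_norm}.

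First I would invoke Proposition~\ref{thm:formulation_diffble} to write
\begin{equation*}
\nabla|_{A=A^{\star}} = 2(\lmax(A^{\star\intercal}\bu) - y)\, \bu \otimes \emax(A^{\star\intercal}\bu).
\end{equation*}
The crucial observation is that when $\KS = A^\star(\C)$, we have $\lmax(A^{\star\intercal}\bu) = h_{\KS}(\bu)$, so under the probabilistic model (A3) the scalar factor collapses to $-\varepsilon$. Therefore
\begin{equation*}
\nabla|_{A=A^{\star}} = -2\varepsilon \,\bu \otimes \emax(A^{\star\intercal}\bu),
\end{equation*}
and taking the outer product yields
\begin{equation*}
\nabla \otimes \nabla|_{A=A^{\star}} = 4\varepsilon^{2}\, \bigl(\bu \otimes \emax(A^{\star\intercal}\bu)\bigr) \otimes \bigl(\bu \otimes \emax(A^{\star\intercal}\bu)\bigr).
\end{equation*}

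Second, I would take expectation. Using the independence of $\varepsilon$ and $\bu$ in model (A3) and $\E[\varepsilon^2] = \sigma^2$, Fubini gives
\begin{equation*}
\E_{\pk}[\nabla \otimes \nabla|_{A=A^{\star}}] = 4\sigma^{2}\, \E_{\bu}\bigl[(\bu \otimes \emax(A^{\star\intercal}\bu)) \otimes (\bu \otimes \emax(A^{\star\intercal}\bu))\bigr].
\end{equation*}
Comparing with formula \eqref{eq:secondderivformula} from Proposition~\ref{thm:stats_specialcasev}, the operator $\Gamma$ acts on $D$ as $2\langle \E[(\bu \otimes \emax)\otimes(\bu \otimes \emax)], D\rangle$, so the rank-two tensor in the display above is exactly $\Gamma/2$. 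This establishes the identity $\E_{\pk}[\nabla \otimes \nabla|_{A=A^{\star}}] = 2\sigma^{2}\Gamma$.

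Finally, I would substitute this identity into the asymptotic covariance from Theorem~\ref{thm:stats_norm}. Restricting to $\nspace$ commutes with scaling, so
\begin{equation*}
(\Gamma|_{\nspace})^{-1}\bigl(\E_{\pk}[\nabla \otimes \nabla|_{A=A^{\star}}]|_{\nspace}\bigr)(\Gamma|_{\nspace})^{-1} = (\Gamma|_{\nspace})^{-1}\bigl(2\sigma^{2}\Gamma|_{\nspace}\bigr)(\Gamma|_{\nspace})^{-1} = 2\sigma^{2}(\Gamma|_{\nspace})^{-1},
\end{equation*}
yielding the claimed simplification. The only non-routine step is the identification at $A=A^\star$ that reduces the residual $\lmax(A^{\star\intercal}\bu) - y$ to $-\varepsilon$; everything else is bookkeeping with the tensor formula for $\Gamma$. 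I expect no real obstacle beyond being careful that the tangent-space restrictions on both sides match, which is immediate since the identity $\E[\nabla \otimes \nabla] = 2\sigma^2\Gamma$ holds on all of $\lqd$ before any restriction.
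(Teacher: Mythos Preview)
Your proposal is correct and follows essentially the same route as the paper's proof: compute $\nabla|_{A=A^\star}$, use $\KS = A^\star(\C)$ to reduce the residual to $-\varepsilon$, take the outer product, apply independence to factor out $\sigma^2$, and identify the remaining tensor with $\Gamma/2$. Your version is in fact more explicit than the paper's (which omits the factor of $2$ in writing the gradient, though the final identity is stated correctly), and your final substitution into the sandwich formula is exactly what is needed.
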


\begin{proof}[Proof of Corollary \ref{thm:simplifiedasymnorm}]
	One can check that $\nabla_{A} ((\lmax (A^{\intercal}\bu) - y)^2) |_{A=A^{\star}} = - \varepsilon \bu \otimes \emax (A^{\star\intercal}\bu) $, from which we have that $\E [\nabla \otimes \nabla |_{A=A^{\star}} ] = 2\sigma^2 \Gamma$.  This concludes the result.
\end{proof}

\subsubsection{Examples} \label{sec:stats_asymnormal_examples}

Here we give examples that highlight various aspects of the theoretical results described previously.  In all of our examples, the noise $\{\varepsilon^{(i)}\}_{i=1}^{n}$ is given by i.i.d. centered Gaussian random variables with variance $\sigma^2$.  We begin with an illustration in which the assumptions of Theorem~\ref{thm:stats_norm} are not satisfied and the asymptotic normality characterization fails to hold:
\begin{example}
	Let $\KS:=\{0\} \subset \R$ be a singleton.  As $\sph^{0} \cong \{ -1,1\}$, the random variables $\bu^{(i)}$ are $\pm 1$ u.a.r.  Further, $h_{\KS}(\bu)=0$ for all $\bu$ and the support function measurements are simply $y^{(i)} = \varepsilon^{(i)}$ for all $i=1,\dots,n$.  For $\C$ being either a simplex or a spectraplex, the set $\mc =\{ 0 \} \subset L(\R^q,\R)$ is a singleton consisting only of the zero map.  First, we consider fitting $\KS$ with the choice of $\C = \simp^{1} \subset \R^1$.  Then we have $\hat{A}_n = \frac{1}{n} \sum_{i=1}^{n} \varepsilon^{(i)} \bu^{(i)}$, from which it follows that $\sqrt{n}(\hat{A}_n - 0)$ is normally distributed with mean zero and variance $\sigma^2$ -- this is in agreement with Theorem \ref{thm:stats_norm}.  Second, we consider fitting $\KS$ with the choice $\C = \simp^{2} \subset \R^2$.  Define the sets $U_{-} = \{ i | \bu_i = -1 \}$, and $U_{+} = \{ i | \bu_i = 1 \}$, and define
	\begin{equation*}
		\alpha_{-} = -\frac{1}{|U_{-}| }\sum_{i \in U_{-}} \varepsilon^{(i)} \quad \text{and} \quad \alpha_{+} = \frac{1}{|U_{+}| }\sum_{j \in U_{+}} \varepsilon^{(j)}.
	\end{equation*}
	Then $\KHN = \{ x | \alpha_{-} \leq x \leq \alpha_{+} \}$ if $\alpha_{-} \leq \alpha_{+}$ and $\KHN = \{  \frac{1}{n} \sum_{i=1}^{n} \varepsilon^{(i)} \bu^{(i)} \}$ otherwise.  Notice that $\alpha_{-}$ and $\alpha_{+}$ have the same distribution, and hence $\KHN$ is a closed interval with non-empty interior w.p. $1/2$, and is a singleton w.p. $1/2$.  Thus, one can see that the linear map $\hat{A}_n$ does not satisfy an asymptotic normality characterization.  The reason for this failure is that the function $\pc(\cdot,\pk)$ is twice differentiable everywhere excluding the line $\{ (c,c) | c \in \R \}$; in particular, it is not differentiable at the minimizer $(0,0)$.
\end{example}

The above example is an instance where the function $\pc(\cdot,\pk)$ is not twice differentiable at $\hat{A}$.  The manner in which an asymptotic characterization of $\{\KHN \}_{n=1}^{\infty}$ fails in instances where $\mc$ contains multiple orbits is also qualitatively similar.  Next, we consider a series of examples in which the conditions of Theorem \ref{thm:stats_norm} hold, thus enabling an asymptotic normality characterization behavior of $\KHN$.  To simplify our discussion, we describe settings in which the choices of $\C$ and $A^{\star}$ satisfy the conditions of Corollary \ref{thm:simplifiedasymnorm}, which leads to simple formulas for the second derivative $\Gamma$ of the map $A\mapsto \pc(A,\pk)$ at $A^{\star}$.

\subsubsection*{Polyhedral examples}

We present two examples in which $\KS$ is polyhedral, and we choose $\C = \simp^{q}$ where $q$ is the number of vertices of $\KS$.  With this choice, the set $\mc$ comprises linear maps $A \in \lqd$ whose columns are the extreme points of $\KS$.  

\begin{proposition} \label{thm:poly_gamma}
Let $\KS$ be a polytope with $q$ extreme points, and let $A^{\star}$ be the linear map whose columns are the extreme points of $\KS$ (in any order).  The second derivative of the function $\Phi_{\C}(\cdot,P)$ at $A^{\star}$ is given by $\Gamma (D) = \sum_{j=1}^{q} \int_{\bu \in H_{A^{\star},j} } \langle \bu \otimes \be_j , D \rangle \bu \otimes \be_j ~ d \bu$, where $H_{A,j} := \{ \bu \,|\, \bu \in \sph^{d-1}, \bu^{\intercal} A \be_j > \bu^{\intercal} A \be_k \text{ for all } k \neq j \}$.
\end{proposition}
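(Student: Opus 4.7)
The plan is to invoke Proposition \ref{thm:stats_specialcasev} and then specialize the abstract second-derivative formula by computing $\emax(A^{\star\intercal}\bu)$ explicitly when $\C = \simp^q$. The columns of $A^{\star}$ are by definition the $q$ extreme points of $\KS$, so $\KS = A^{\star}(\simp^q)$, placing us in the regime covered by Proposition \ref{thm:stats_specialcasev}. It only remains to verify its hypothesis (continuous differentiability of $\lmax$ at $A^{\star\intercal}\bu$ for $\pk$-a.e.\ $\bu$) and then identify $\emax$ on each cell $H_{A^{\star},j}$.

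For the hypothesis, recall that $\lmax(\bv) = \max_k v_k$ on the simplex, and this function is continuously differentiable precisely at those $\bv$ whose coordinate-maximum is attained uniquely. The set of $\bu$ where uniqueness fails is
\begin{equation*}
\bigcup_{j \neq k} \bigl\{\, \bu \in \sph^{d-1} \,:\, \langle A^{\star}(\be_j - \be_k),\, \bu\rangle = 0 \,\bigr\},
\end{equation*}
which is a finite union of great subspheres of $\sph^{d-1}$ and thus has surface measure zero. Since the $\bu$-marginal of $\pk$ is uniform on the sphere, the differentiability hypothesis holds. Applying Proposition \ref{thm:stats_specialcasev} yields
\begin{equation*}
\Gamma(D) \;=\; 2\,\E\bigl[\, \langle \bu \otimes \emax(A^{\star\intercal}\bu),\, D \rangle\, \bu \otimes \emax(A^{\star\intercal}\bu)\, \bigr].
\end{equation*}

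Now on $H_{A^{\star},j}$ the $j$-th coordinate of $A^{\star\intercal}\bu$ is the strict maximum, so by the simplex example preceding Proposition \ref{thm:formulation_diffble} we have $\emax(A^{\star\intercal}\bu) = \be_j$. The cells $\{H_{A^{\star},j}\}_{j=1}^q$ partition $\sph^{d-1}$ up to the same measure-zero boundary set identified above, so splitting the expectation over these cells and substituting $\emax = \be_j$ on the $j$-th cell reduces the formula to the claimed sum of integrals (with the factor $2$ and the uniform-measure normalization absorbed into the notation $d\bu$). The argument is essentially bookkeeping once Proposition \ref{thm:stats_specialcasev} is in hand; the only mild subtlety is checking that the boundaries between cells form a $\pk$-null set, which is exactly the null-set computation already performed to verify the differentiability hypothesis.
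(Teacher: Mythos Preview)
The proposal is correct and follows essentially the same route as the paper: verify that the non-differentiability locus of $h_{\simp^q}(A^{\star\intercal}\cdot)$ is a finite union of great subspheres (hence $\pk$-null), invoke Proposition~\ref{thm:stats_specialcasev}, and then specialize $\emax(A^{\star\intercal}\bu)=\be_j$ on each cell $H_{A^{\star},j}$. One small point worth making explicit is that the great subspheres are proper (and hence null) precisely because the columns of $A^{\star}$, being distinct extreme points of $\KS$, are pairwise distinct, so $A^{\star}(\be_j-\be_k)\neq 0$; the paper states this, and your argument uses it implicitly.
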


\begin{proof}
Let $A\in \lqd$ be a linear map whose columns are pairwise distinct.  Then the set $V_{A} := \{ \bu \,|\, \bu^{\intercal} A \be_i = \bu^{\intercal} A \be_j , 1 \leq i < j \leq d \} $ is a subset of $\sph^{d-1}$ with measure zero.  We note that the function $h_{\C}(\cdot)$ is differentiable at $A^{\intercal}\bu$ whenever $\bu \in \sph^{d-1} \backslash \{V_{A}\}$, and hence the function $h_{\C}(A^{\intercal}\bu)$ is differentiable with respect to $\bu$ for $P$-a.e..  By Proposition \ref{thm:formulation_diffble}, the derivative of $\Phi_{\C}(\cdot,P)$ at $A$ is $2 \E_{P} [(\lmax (A^{\intercal}\bu) - y) \bu \otimes \emax (A^{\intercal}\bu) ]$.

The subset of linear maps whose columns are pairwise distinct is open.  Hence the derivative of $\Phi_{\C}(\cdot,P)$ exists in a neighborhood of $A^{\star}$.  It is clear that the derivative is also continuous at $A^{\star}$.  Finally, we note that the collection $\{H_{A^{\star},j}\}_{j=1}^{q}$ forms a disjoint cover of $\sph^{d-1}$ up to a subset of measure zero, and apply Proposition \ref{thm:stats_specialcasev} to conclude the expression for $\Gamma$.
\end{proof}

We note that the operator $\Gamma$ has block diagonal structure since each integral $\int_{\bu \in H_{A^{\star},j} } \langle \bu \otimes \be_j , D \rangle \bu \otimes \be_j ~ d \bu$ is supported on a $d\times d$-dimensional sub-block.  Subsequently, by combining the conclusions of Theorem \ref{thm:stats_setconvergence} and Proposition \ref{thm:poly_gamma}, we can conclude the following about $\KHN$: (i) it is a polytope with $q$ extreme points, (ii) each vertex of $\KHN$ is close to a distinct vertex of $\KS$, (iii) the deviations (after scaling by a factor of $\sqrt{n}$) between every vertex-vertex pair are asymptotically normal with inverse covariance specified by a $d\times d$ block of $\Gamma$, and further these deviations are pairwise-independent.

\begin{example}
	Let $\KS$ be the regular $q$-gon in $\R^2$ with vertices $\bv_{k} : = (\cos (2k\pi/q), \sin (2k\pi/q) )^{\intercal}, ~ k=0,\dots,q-1$.  Let $\hat{\bv}_{n,k}$ be the vertex of $\KHN$ closest to $\bv_{k}$.  The deviation $\sqrt{n} (\hat{\bv}_{n,k} - \bv_{k})$ is asymptotically normal with covariance $2\sigma^2 M_{k,k}^{-1}$, where:
	\begin{equation*}
		M_{k,k} = \frac{1}{q} I + \frac{1}{2\pi} \sin (2\pi /q) \left( \begin{array}{cc}
			\cos (4k\pi/q) \ & \sin (4k\pi/q)  \\
			\sin (4k\pi/q) & - \cos (4k\pi/q)
		\end{array} \right).
	\end{equation*}
	The eigenvalues of $M_{k,k}$ are $1/q + (1/2\pi)\sin (2\pi/q)$ and $1/q - (1/2\pi)\sin (2\pi/q)$, and the corresponding eigenvectors are $(\cos(2k\pi/q), \sin (2k\pi/q) )^{\intercal}$ and $(\sin(2k\pi/q), - \cos (2k\pi/q) )^{\intercal}$ respectively.  Consequently, the deviation $\hat{\bv}_{n,k} - \bv_{k}$ has magnitude $\approx \sigma \sqrt{q/n}$ in the direction $\bv_{k}$, and has magnitude $\approx \sigma \sqrt{3q^3/\pi^2 n}$ in the direction $\bv_{k}^{\perp}$.  Figure \ref{fig:sec3_5gon} shows $\KS$ as well as the confidence intervals (ellipses) of the vertices of $\KHN$ for large $n$ for $q=5$.
	
	\begin{figure}[h]
		\centering
		\includegraphics[width=0.2\textwidth]{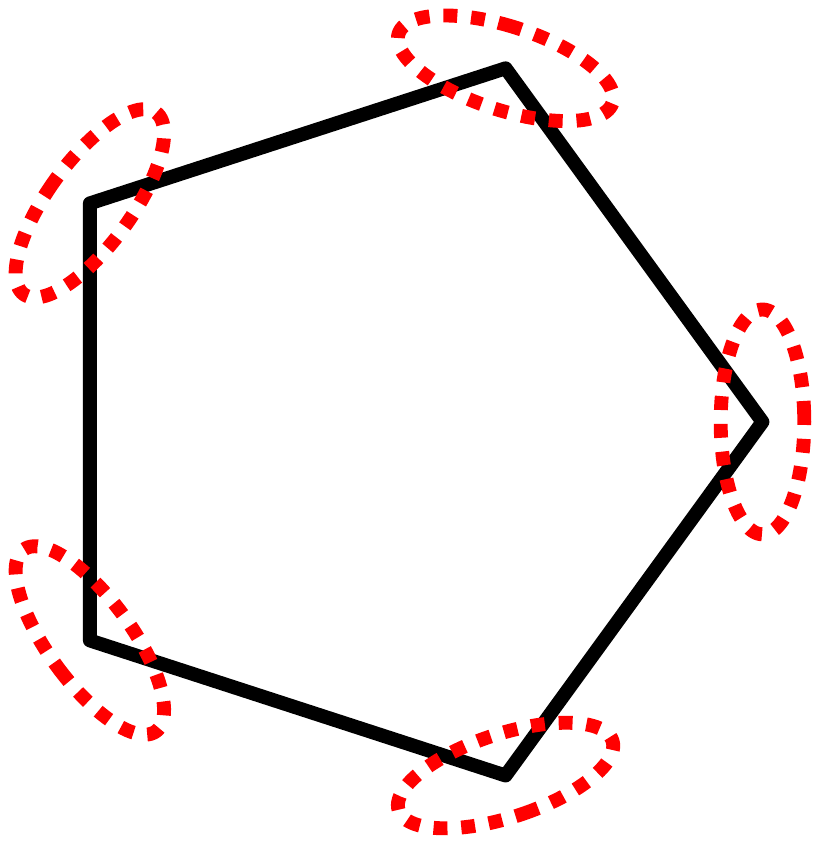}
		\caption{Estimating a regular $5$-gon as the projection of $\simp^{5}$.  In the large $n$ limit, the estimator $\KHN$ is a $5$-gon.  The typical deviation of the vertices of $\KHN$ from that of the $5$-gon (scaled by a factor of $\sqrt{n}$) is represented by the surrounding ellipses.}
		\label{fig:sec3_5gon}
	\end{figure}
\end{example}

\begin{example}
	Let $\KS$ be the $\ell_{\infty}$-ball in $\R^d$.  For any vertex $\bv$ of $\KS$, let $\hat{\bw}_{n,\bv}$ denote the vertex of $\KHN$ closest to $\bv$.  The deviation $\sqrt{n}(\hat{\bw}_{n,\bv} - \bv)$ is asymptotically normal with covariance $2\sigma^2 M_{\bv, \bv}^{-1}$, where:
	\begin{equation*}
		M_{\bv, \bv} = \frac{1}{2^d d}((1-2/\pi) I + (2/\pi) \bv \bv^{\intercal}).
	\end{equation*}
	Hence the deviation $\hat{\bw}_{n,\bv} - \bv$ has magnitude $\approx \sigma 2^{(d+1)/2}(2/\pi + (1-2/\pi)/d)^{-1/2} n^{-1/2}$ in the span of $\bv$ and magnitude $\approx \sigma  2^{(d+1)/2}(1-2/\pi)^{-1/2} \sqrt{d/n} $ in the orthogonal complement $\bv^\perp$.
\end{example}

\subsubsection*{Non-polyhedral examples}

Next we present two examples in which $\KS$ is non-polyhedral.  Unlike the previous polyhedral examples in which columns of the linear map $\tilde{A}_n$ map directly to vertices, our description of $\KHN$ requires a different interpretation of Corollary \ref{thm:simplifiedasymnorm} that is suitable for sets with infinitely many extreme points.  Specifically, we characterize the deviation $\sqrt{n} (\tilde{A}_n - A^\star)$ in terms of a perturbation to the set $\KS = A^{\star}(\C)$ by considering the image of $\C$ under the map $\sqrt{n} (\tilde{A}_n - A^\star)$.

\begin{example}  Suppose $\KS = B_{\| \cdot \|_2} (\bc)$ is unit $\ell_2$-ball in $\R^d$ with center $\bc$.  We consider $\C := \{ (1,\bv)^{\intercal} | \|\bv\|_2 \leq 1 \} \subset \R^{d+1}$, and $A^{\star}$ is any linear map of the form $[\bc ~Q] \in L(\R^{d+1},\R^d)$, where $Q \in O(d,\R)$ is any orthogonal matrix.  Then the restricted Hessian $\Gamma|_{\nspace}$ is a self-adjoint operator with rank $d + {d+1 \choose 2}$.  The eigenvectors of $\Gamma|_{\nspace}$ represent `modes of oscillations', which we describe in greater detail.  We begin with the case $d=2, \bc=0$.  The set resulting from the deviation $\sqrt{n} (\tilde{A}_n - A^\star)$ applied to $\C$ can be decomposed into $5$ different modes of perturbation (these exactly correspond to the eigenvectors of the operator $\Gamma|_{\nspace}$).  Parametrizing the extreme points of $A^{\star}(\C)$ by $\{(\cos (\theta) ~ \sin (\theta))^{\intercal} ~|~ \theta \in [0,2\pi)\}$, the contribution of each mode at the point $(\cos (\theta) ~ \sin (\theta))^{\intercal}$ is a small perturbation in the directions $(1~0)^{\intercal}$, $(0~1)^{\intercal}$, $(\cos(\theta)~\sin(\theta))^{\intercal}$, $( \cos(\theta) ~ -\sin (\theta))^{\intercal}$, and $(\sin (\theta) ~ \cos (\theta))^{\intercal}$ respectively -- Figure \ref{fig:Sec3_Circ} provides an illustration.  The first and second modes represent oscillations of $\KHN$ about $\bc$, the third mode represents dilation, and the fourth and fifth modes represent flattening.
	\begin{figure}[h]
		\begin{subfigure}{.2\textwidth}
			\centering
			\includegraphics[width=0.8\textwidth]{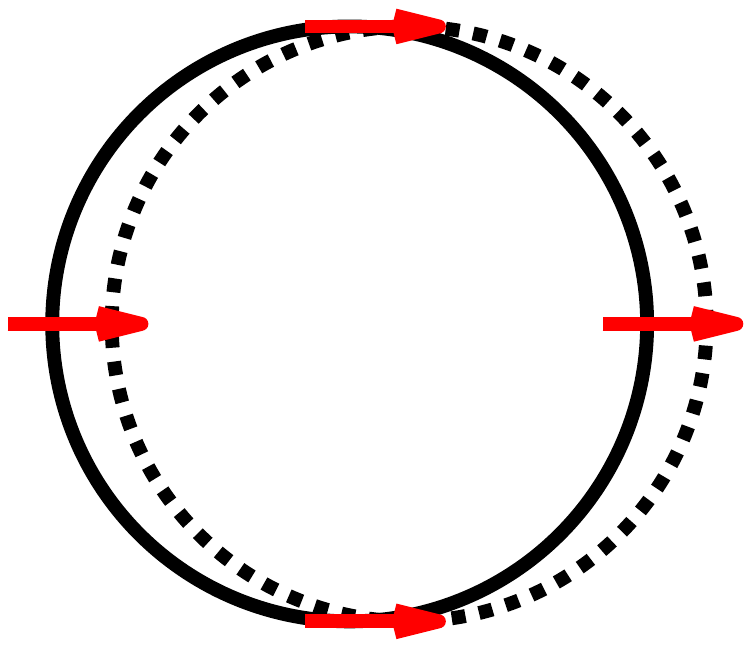}
			\caption{$(1~~0)^{\intercal}$}
		\end{subfigure}%
		\begin{subfigure}{.2\textwidth}
			\centering
			\includegraphics[width=0.8\textwidth]{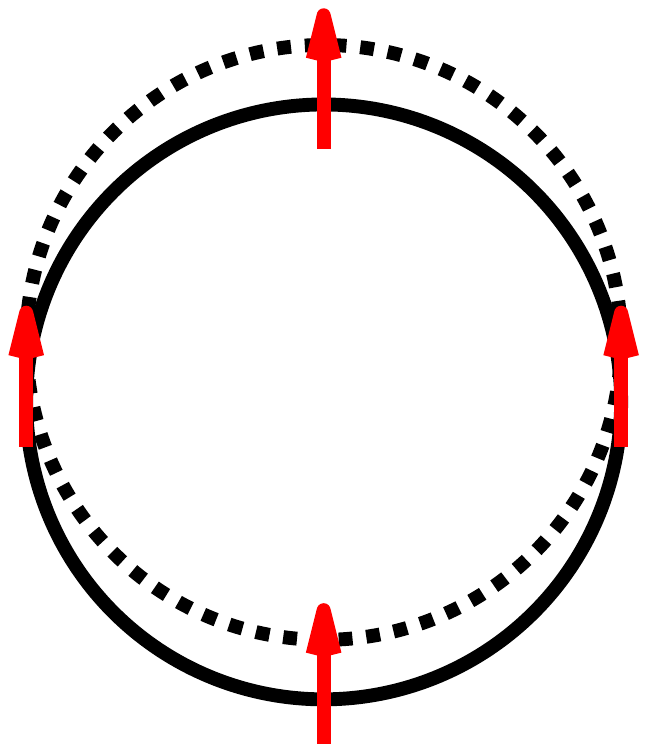}
			\caption{$(0~~1)^{\intercal}$}
		\end{subfigure}%
		\begin{subfigure}{.2\textwidth}
			\centering
			\includegraphics[width=0.8\textwidth]{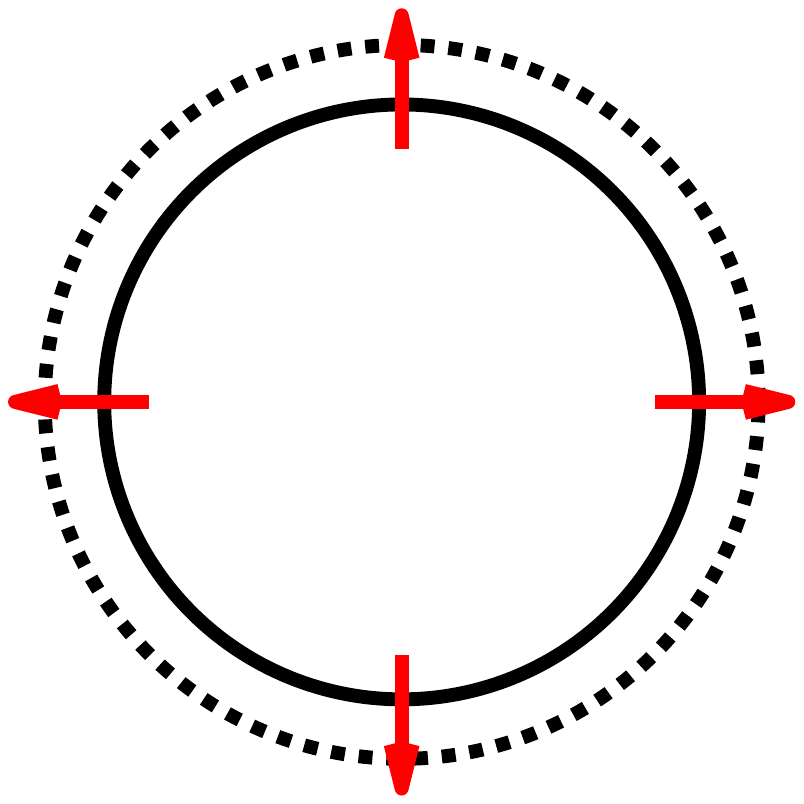}
			\caption{$(\cos(\theta)~\sin(\theta))^{\intercal}$}
		\end{subfigure}%
		\begin{subfigure}{.2\textwidth}
			\centering
			\includegraphics[width=0.8\textwidth]{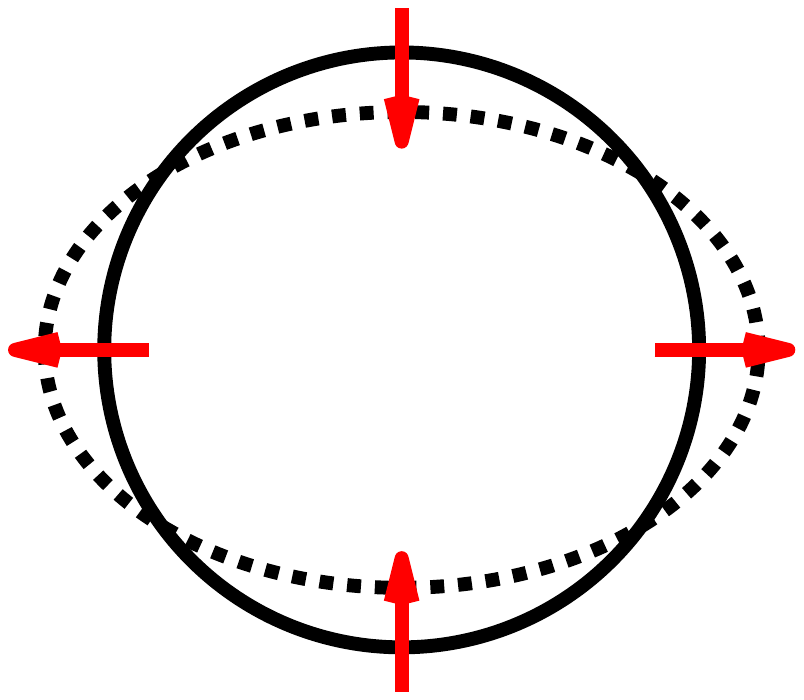}
			\caption{$( \cos(\theta) ~ -\sin (\theta))^{\intercal}$}
		\end{subfigure}%
		\begin{subfigure}{.2\textwidth}
			\centering
			\includegraphics[width=0.8\textwidth]{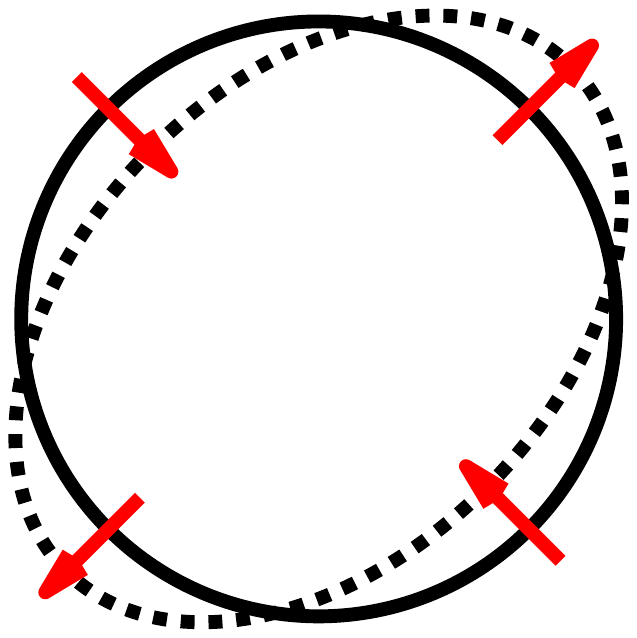}
			\caption{$(\sin (\theta) ~ \cos (\theta))^{\intercal}$}
		\end{subfigure}%
		\caption{Modes of oscillations for an estimate of the $\ell_2$-ball in $\R^2$.}
		\label{fig:Sec3_Circ}
	\end{figure}
	The analysis for a general $d$ is similar, with $d$ modes representing oscillations about $\bc$ and ${d+1\choose 2}$ modes whose contributions represent higher-dimensional analogs of flattening (of which dilation is a special case).
\end{example}

\begin{example}
	Let $\KS$ be the spectral norm ball in $\Sym^2$.  The extreme points of $\KS$ consist of three connected components: $\{I\}$, $\{-I\}$, and $\{ U D U^{\intercal} ~|~ U \in SO(2,\R) \}$ where $D$ is a diagonal matrix with entries $(1,-1)$.  To simplify our discussion, we apply a scaled isometry to $\KS$ so that $\{I\}$, $\{-I\}$, and $\{ U D U^{\intercal} ~|~ U \in SO(2,\R) \}$ are mapped to the points $\{(0,0,1)^{\intercal}\}$, $\{(0,0,-1)^{\intercal}\}$, and $\{(\cos(\theta),\sin(\theta),0)^{\intercal} ~|~ \theta \in [0,2\pi) \}$ in $\R^3$, respectively.  We choose
	\begin{equation} \label{eq:specnormcset}
	\begin{aligned}
	\C  ~ := & ~ \{ X | X \in \fs^{4}, X_{12} = X_{13} = X_{14} = X_{23} = X_{24} = X_{21} = X_{31} = X_{41} = X_{32} = X_{42} = 0 \} \\
	~ \cong & ~ \fs^1 \times \fs^1 \times \fs^2,
	\end{aligned}
	\end{equation}
	and $A^{\star}$ to be the map defined by $A^{\star}(X) =  ( \langle A_1,X \rangle, \langle A_2, X \rangle, \langle A_3, X \rangle )^{\intercal}$ where
	\begin{equation*} 
		\begin{aligned}
			A_1 = \left( \begin{array}{cccc}
				0 & 0 & 0 & 0 \\ 0 & 0 & 0 & 0 \\ 0 & 0 & 1 & 0 \\ 0 & 0 & 0 & -1
			\end{array} \right), \quad
			A_2 = \left( \begin{array}{cccc}
				0 & 0 & 0 & 0 \\ 0 & 0 & 0 & 0 \\ 0 & 0 & 0 & 1 \\ 0 & 0 & 1 & 0
			\end{array} \right), \quad
			A_3 = \left( \begin{array}{cccc}
				1 & 0 & 0 & 0 \\ 0 & -1 & 0 & 0 \\ 0 & 0 & 0 & 0 \\ 0 & 0 & 0 & 0
			\end{array} \right) .
		\end{aligned}
	\end{equation*}
	In the large $n$ limit, $\KHN$ is a convex set with extreme points $P_1$ and $P_2$ near $(0,0,1)^{\intercal}$ and $(0,0,-1)^{\intercal}$ respectively, and a set of extreme points specified by an ellipse $P_3$ near $\{(\cos(\theta),\sin(\theta),0)^{\intercal} ~|~ \theta \in [0,2\pi) \}$.	The operator $\Gamma|_\nspace$ is block diagonal with rank $14$ -- it comprises two $3$-dimensional blocks $\Gamma_1$ and $\Gamma_2$ associated with $P_1$ and $P_2$, and an $8$-dimensional block $\Gamma_3$ associated with $P_3$.  One conclusion is that the distributions of $P_1$, $P_2$, and $P_3$ are asymptotically independent.  Moreover, the deviations of $P_1$ and $P_2$ about $\{(0,0,1)^{\intercal}\}$ and $\{(0,0,-1)^{\intercal}\}$ are asymptotically normal with inverse covariance specified by $\Gamma_1$ and $\Gamma_2$ respectively.  We consider the behavior of $P_3$ in further detail.  The operator $\Gamma_3$ is the sum of an operator $\Gamma_{3,xy}$ with rank $5$ describing the variation of $P_3$ in the $xy$-plane, and another operator $\Gamma_{3,z}$ with rank $3$ describing the variation of $P_3$ in the direction of the $z$-axis.  The operator $\Gamma_{3,xy}$, when restricted to the appropriate subspace and suitably scaled, is equal to the operator we encountered in the previous example in the setting in which $\KS$ is the $\ell_2$-ball in $\R^2$.  The operator $\Gamma_{3,z}$ comprises a single mode representing oscillations of $P_{3}$ in the $z$ direction (see subfigure (b) in Figure \ref{fig:Sec3_Wobble}), and two modes representing ``wobbling'' of $P_3$ with respect to the $xy$-plane (see subfigures (c) and (d) in Figure \ref{fig:Sec3_Wobble}).  The set $\C$ we consider in this example is the intersection of the spectraplex $\fs^{4}$ with an appropriate subspace specified by \eqref{eq:specnormcset}.  A natural question is if the same analysis holds for $\C = \fs^{4}$.  Unfortunately, the introduction of additional dimensions introduces degeneracies (in the form of zero eigenvalues into $\Gamma|_{\nspace}$) which violates the requirements of Theorem \ref{thm:stats_norm}.
	\begin{figure}[h]
		\begin{subfigure}{.3\textwidth}
			\centering
			\includegraphics[width=0.6\textwidth]{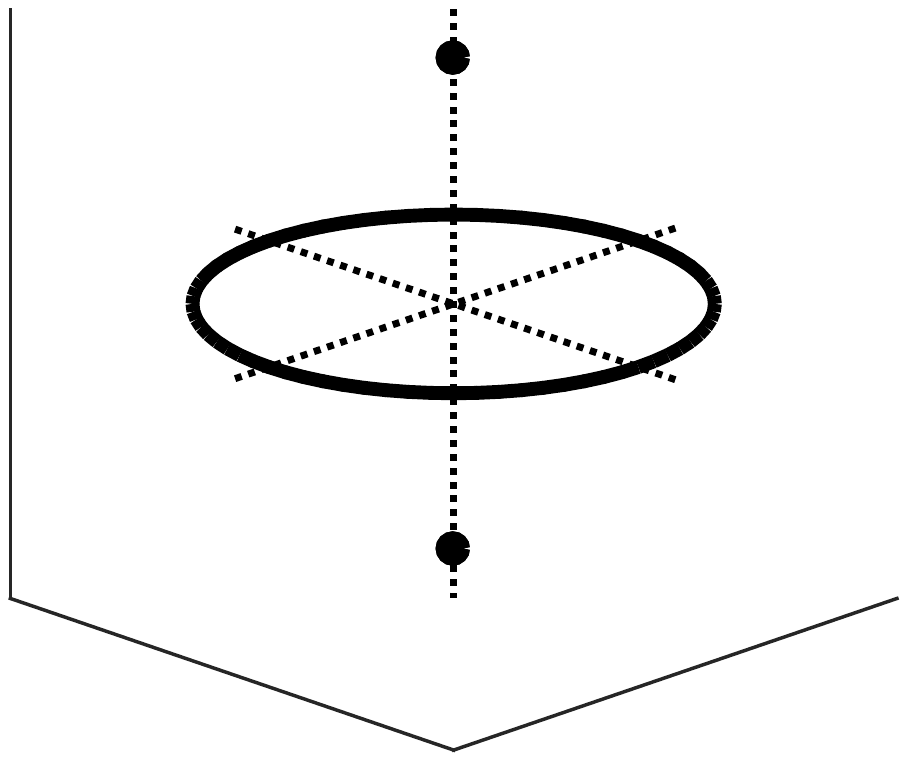}
			\caption{Extreme points of $\KS$}
		\end{subfigure}%
		\begin{subfigure}{.23\textwidth}
			\centering
			\includegraphics[width=0.8\textwidth]{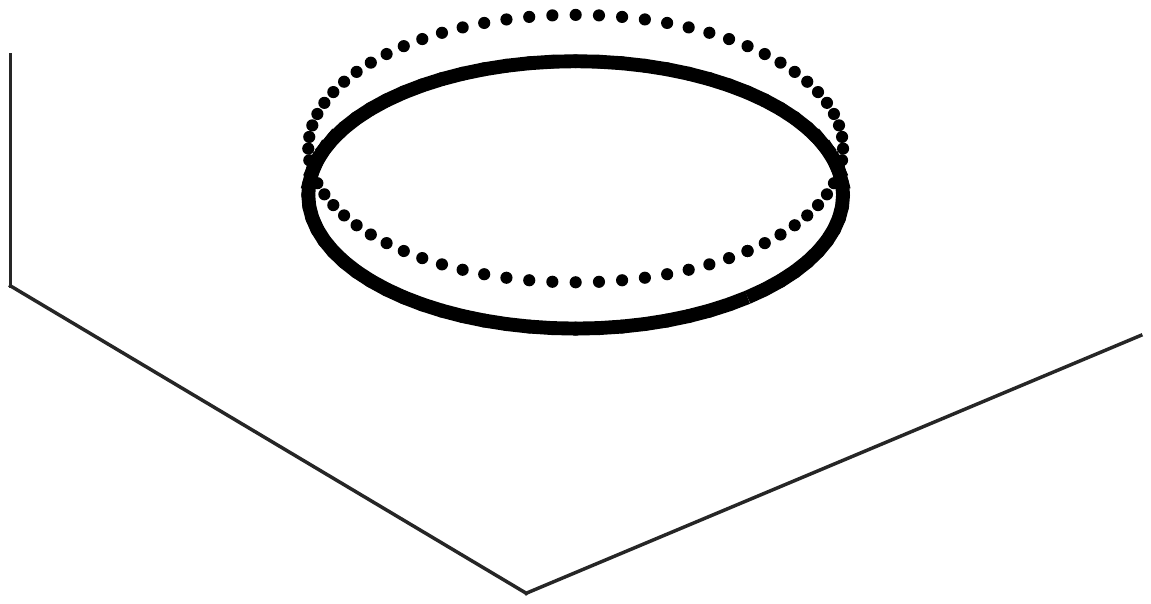}
			\caption{$(0,0,1)^{\intercal}$}
		\end{subfigure}%
		\begin{subfigure}{.23\textwidth}
			\centering
			\includegraphics[width=0.8\textwidth]{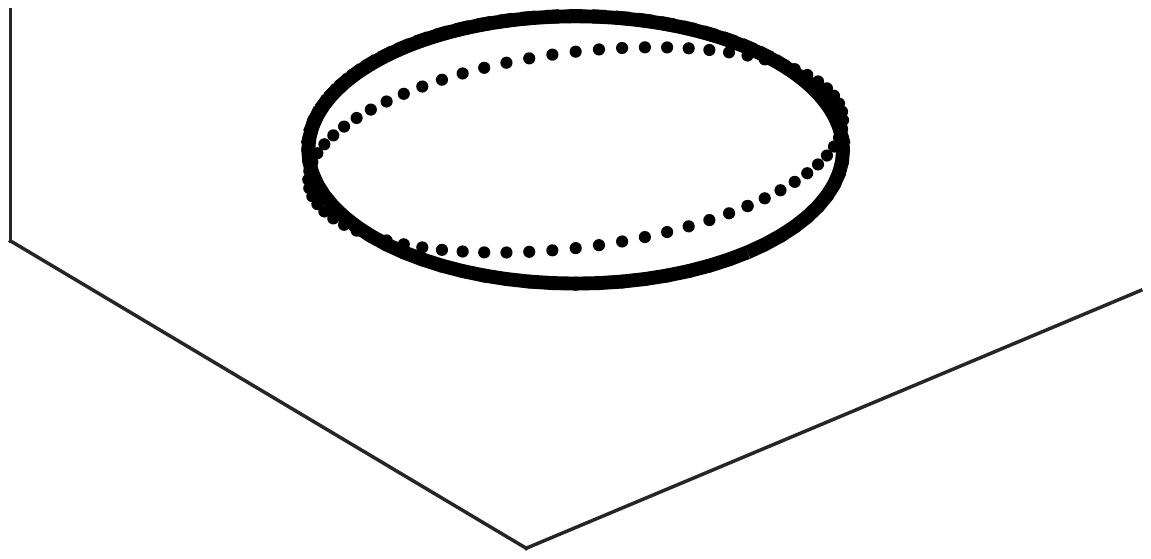}
			\caption{$(0,0,\cos(\theta))^{\intercal}$}
		\end{subfigure}%
		\begin{subfigure}{.23\textwidth}
			\centering
			\includegraphics[width=0.8\textwidth]{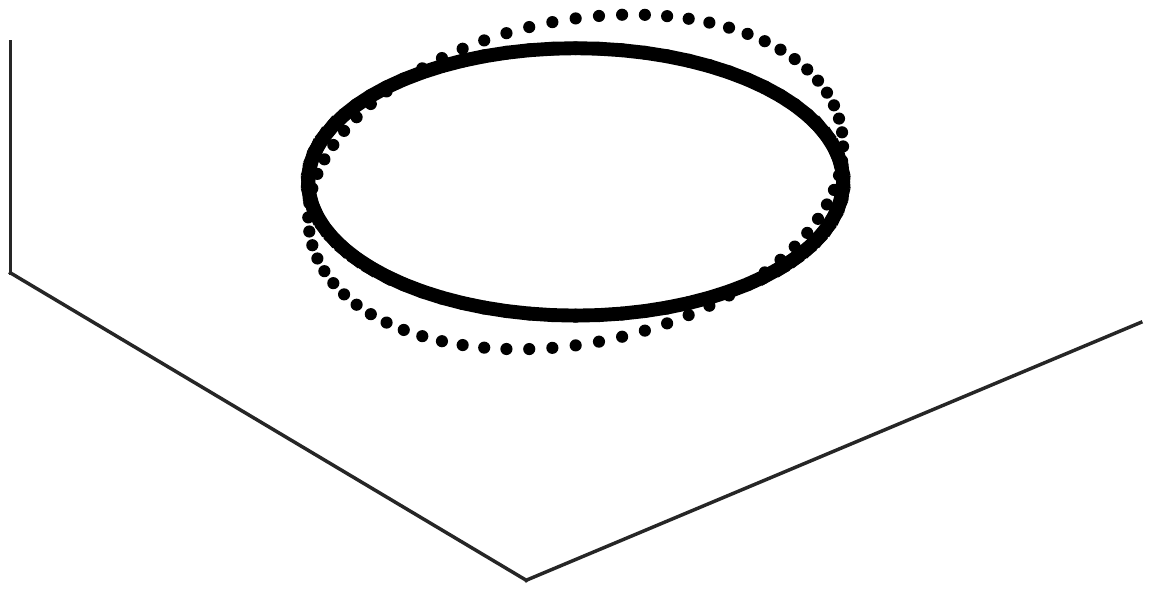}
			\caption{$(0,0,\sin(\theta))^{\intercal}$}
		\end{subfigure}%
		\caption{Estimating $\KS$ the spectral norm ball in $\mathbb{S}^2$ as the projection of the set $\C$ \eqref{eq:specnormcset}.  The above figure describes the modes of oscillation corresponding to the set of extreme points of $\KHN$ given by an ellipse (see the above accompanying discussion).  There are $8$ modes altogether -- $5$ occur in the $xy$-plane described in Figure \ref{fig:Sec3_Circ} and the remaining $3$ are shown in (b),(c), and (d).}
		\label{fig:Sec3_Wobble}
	\end{figure}
\end{example}

\subsubsection{Specialization of Theorem \ref{thm:stats_norm} to Linear Images of Free Spectrahedra}
\label{sec:stats_asymnorm_vcproof}

\begin{proposition} \label{thm:stats_norm_fsvc}
	Let $\C$ be the spectraplex.  Then the collection specified by \eqref{eq:check_vc} forms a VC class.
\end{proposition}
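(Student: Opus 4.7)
The plan is to exhibit a semialgebraic description of each set in \eqref{eq:check_vc} whose descriptive complexity (number of defining polynomials and their degrees) is bounded independent of $A$, and then to invoke the Stengle--Yukich theorem \cite{SteYuk:89}, which asserts that such a parameterized family of semialgebraic sets is a VC class.

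First I identify $\C = \fs^p$ with $\{X \in \Sym^p : X \succeq 0, \operatorname{trace}(X) = 1\}$ via the canonical linear isomorphism $M : \R^q \to \Sym^p$, where $q = \binom{p+1}{2}$, so that $\lmax(v) = \lambda_{\max}(M(v))$. The map $(A, \bu) \mapsto M(A^{\intercal} \bu)$ is bilinear, so each entry of $M(A^{\intercal}\bu)$ is a polynomial of total degree two in the combined variables $(A, \bu)$.

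The crucial step is to express the statement ``$r = \lambda_{\max}(M)$'' as a quantifier-free semialgebraic condition on $(r, M)$ whose complexity depends only on $p$. This holds via the conjunction $\det(rI - M) = 0$ (a polynomial of degree $p$ asserting that $r$ is an eigenvalue) together with $rI - M \succeq 0$ (asserting that $r$ is at least as large as every other eigenvalue); the latter, by the positive-semidefinite form of Sylvester's criterion, is equivalent to the nonnegativity of all $2^p - 1$ principal minors of $rI - M$, each of which is a polynomial in $(r, M)$ of degree at most $p$. Using this encoding, the set $S_A$ appearing in \eqref{eq:check_vc} can be described by introducing auxiliary variables $r$ and $\hat{r}$ standing for $\lmax(A^{\intercal}\bu)$ and $\lmax(\hat{A}^{\intercal}\bu)$ respectively, imposing the above semialgebraic conditions on each pair, and adjoining the sign constraints on $s$ together with $((r-y)^2 - (\hat{r}-y)^2)^2 \geq s^2 \|A - \hat{A}\|_F^2$ (after squaring to remove the square root implicit in $\|A-\hat{A}\|_F$, which is legitimate once one also tracks the sign of $(r-y)^2 - (\hat r - y)^2$). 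All the resulting polynomials are of bounded degree and count, with coefficients that are themselves polynomial in the entries of $A$.

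Projecting out the auxiliary variables $(r, \hat r)$ via the Tarski--Seidenberg theorem yields a quantifier-free semialgebraic description of $S_A$ in the original variables $(\bu, y, s)$ whose complexity is still bounded by a constant depending only on $p$, $q$, and $d$, with coefficients depending polynomially on $A$. The Stengle--Yukich theorem then yields that the family $\{S_A : A \in B_{\|\cdot\|_F}(\hat{A}) \setminus \{\hat{A}\}\}$ is a VC class. The main technical worry lies in preserving bounded complexity through the projection step, but this is guaranteed by effective forms of quantifier elimination, under which the resulting formula has complexity bounded by a function of the input complexity alone; alternatively, one may work directly in the lifted $(\bu, y, s, r, \hat r)$-space and verify that the additional deterministic coordinates do not affect VC-shattering counts.
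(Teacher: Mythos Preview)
Your argument is correct and relies on the same overarching tool as the paper, namely the Stengle--Yukich theorem that a uniformly parameterized family of semialgebraic sets is a VC class.  The difference lies in how you and the paper encode $\lambda_{\max}$ semialgebraically.  The paper uses the Rayleigh-quotient variational form, writing the support function of the spectraplex as a supremum of quadratic forms $\langle M, \be\be^{\intercal}\rangle$ over $\|\be\|\leq 1$ (and similarly handling $\|D\|_F$ via a bilinear sup), so that the entire condition becomes a single sup--inf of one polynomial; Stengle--Yukich is then applied directly, and the two pieces of the graph are combined via standard Boolean stability of VC classes.  Your route instead pins down $r=\lambda_{\max}(M)$ by the conjunction $\det(rI-M)=0$ and the principal-minor form of $rI-M\succeq 0$, introduces the scalar auxiliaries $(r,\hat r)$, and either eliminates them via Tarski--Seidenberg or, equivalently, feeds the existentially quantified formula straight into Stengle--Yukich.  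Both encodings work; yours is arguably more elementary (characteristic polynomial and Sylvester's criterion rather than a sup over a sphere), while the paper's avoids any discussion of quantifier elimination and keeps the polynomial count minimal.  One small caveat: your ``alternative'' of working in the lifted $(\bu,y,s,r,\hat r)$-space and arguing that deterministic extra coordinates preserve shattering counts is not quite right as stated, because the lifted coordinates $r=r(\bu,A)$ move with the parameter $A$, so the points being tested for shattering are not fixed in the lifted space.  This side remark is unnecessary anyway, since Stengle--Yukich already covers first-order formulas with existential quantifiers, so no separate projection argument is needed.
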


\begin{proof}[Proof of Proposition \ref{thm:stats_norm_fsvc}]  Define the polynomial $p(\bu,y,s,D,\be_1,\be_2,\be_3,\be_4):= \langle \hat{A}^{\intercal}\bu-y I, \be_1 \be_1^{\intercal} \rangle^2 - \langle (\hat{A}+D)^{\intercal}\bu-y I, \be_2 \be_2^{\intercal} \rangle^2 -s \langle \be_3 , D \be_4 \rangle $, where $\be_{1}, \be_{2}, \be_{4} \in \R^q$ and $\be_{3} \in \R^d$.  By Theorem 1 of \cite{SteYuk:89}, the following collection of sets forms a VC class
	\begin{equation*} \label{eq:thm:stats_norm_steq_p}
		\left\{ \left\{ (\bu,y,s) ~\Big|~ \sup_{\|\be_1\| \leq 1} \inf_{\|\be_2\|, \|\be_3\|, \|\be_4\| \leq 1} p (\bu,y,s,D,\be_1,\be_2,\be_3,\be_4) \geq 0 \right\} ~\bigg|~ D \in \lqd \right\}.
	\end{equation*}
	Similarly, by Theorem 1 of \cite{SteYuk:89}, the collection $\{\{ (\bu,y,s) ~|~ s \geq 0 \} ~|~ D \in \lqd \}$ also forms a VC class.  By \cite{Kos:08} [Lemma~9.7,~p.~159], the collection of sets formed by taking intersections of members of two VC classes is a VC class.  Hence it follows that the collection $\{ (\bu,y,s) ~|~ (\lmax((\hat{A}+D)^{\intercal}\bu)-y)^2 - (\lmax(\hat{A}^{\intercal}\bu)-y)^2\geq s \|D\|_{F}  \geq 0 \}$ is a VC class.  Subsequently, by setting $D = A - \hat{A}$ and by noting that a sub-collection of a VC class is still a VC class, the collection $\{\{ (\bu,y,s) ~|~ d_{\C,A,\hat{A}} (\bu,y) \geq s \geq 0 \} ~|~ A \in B_{\|\cdot\|_{F}}(\hat{A}) \backslash \{ \hat{A} \} \}$ forms a VC class.  A similar sequence of arguments shows that the collection $\{\{ (\bu,y,s) ~|~ d_{\C,A,\hat{A}} (\bu,y) \leq s \leq 0 \} ~|~ A \in B_{\|\cdot\|_{F}}(\hat{A}) \backslash \{ \hat{A} \} \}$ also forms a VC class.  Last, by \cite{Kos:08} [Lemma~9.7,~p.~159], a union of VC classes is a VC class, and from which our result follows.
\end{proof}


\subsection{Preservation of Facial Structure} \label{sec:faces}

Our third result describes conditions under which the constrained estimator \eqref{eq:sc_intro_constrainedlse} preserves the facial structure of the underlying set $\KS$.  We begin our discussion with some stylized numerical experiments that illustrate various aspects that inform our subsequent theoretical development.  First, we consider reconstruction of the $\ell_1$ ball in $\R^3$ from $200$ noisy support function evaluations with the choices $\C = \simp^6$ and $\C = \simp^{12}$.  Figure \ref{fig:Exp_FacialGeom} shows these reconstructions along with the LSE.  When $\C = \simp^6$, our results show a one-to-one correspondence between the faces of the reconstruction obtained using our method (second subfigure from the left) with those of the $\ell_1$ ball (leftmost subfigure); in contrast, we do not observe an analogous correspondence in the other two cases.
\begin{figure}[h]
	\centering
	\begin{subfigure}{.2\textwidth}
		\centering
		\includegraphics[width=0.75\textwidth]{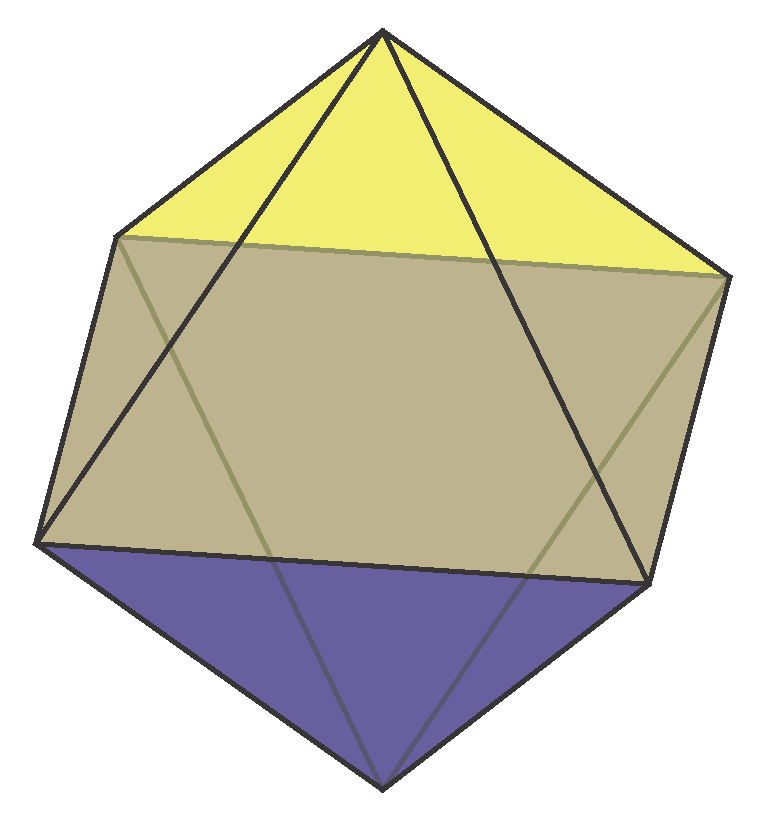}
	\end{subfigure}%
	\hspace{0.5cm}
	\begin{subfigure}{.2\textwidth}
		\centering
		\includegraphics[width=0.75\textwidth]{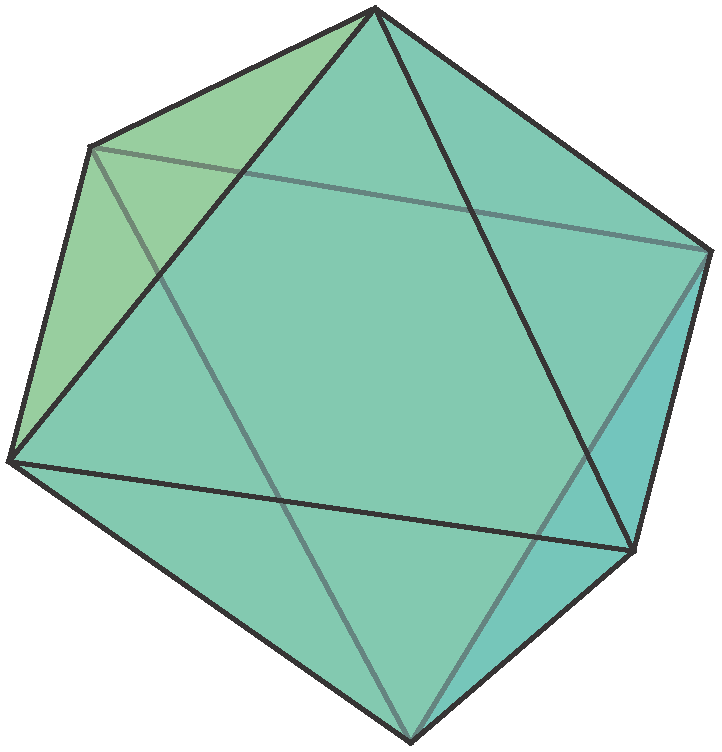}
	\end{subfigure}%
	\hspace{0.5cm}
	\begin{subfigure}{.2\textwidth}
		\centering
		\includegraphics[width=0.8\textwidth]{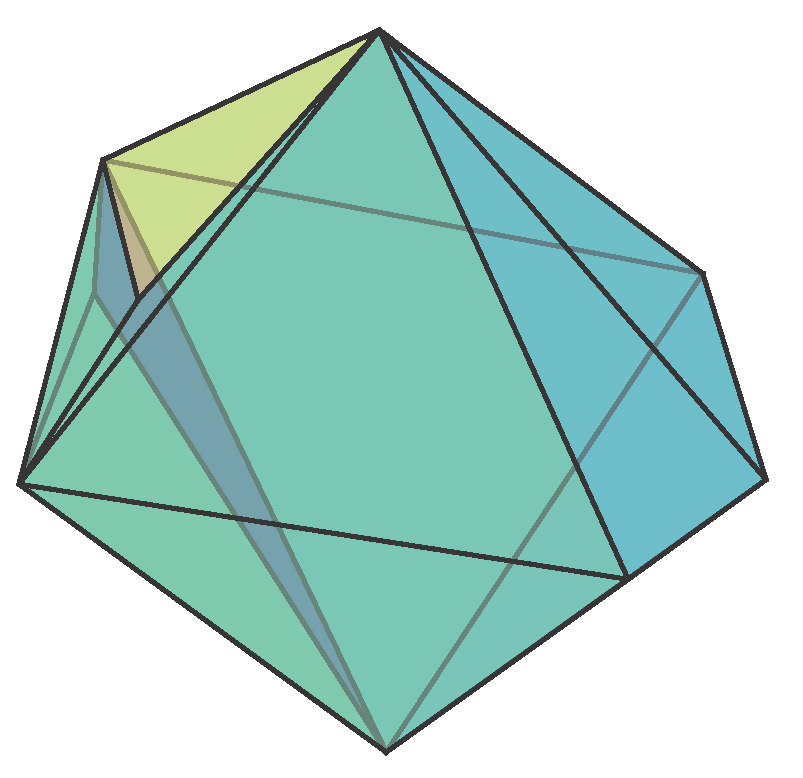}
	\end{subfigure}%
	\hspace{0.5cm}
	\begin{subfigure}{.2\textwidth}
		\centering
		\includegraphics[width=0.8\textwidth]{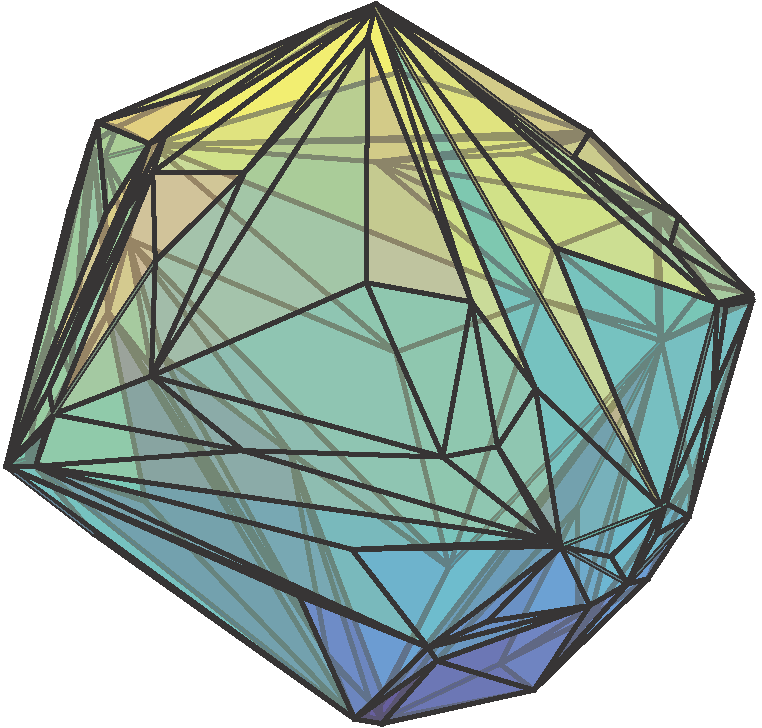}
	\end{subfigure}%
	\caption{Reconstructions of the unit $\ell_1$-ball (left) in $\R^3$ from $200$ noisy support function measurements using our method with $\C = \simp^6$ (second from left), and with $\C = \simp^{12}$ (third from left).  The LSE is the rightmost figure.}
	\label{fig:Exp_FacialGeom}
\end{figure}
Second, we consider reconstruction of the $\ell_\infty$ ball in $\R^3$ from $75$ noisy support function measurements with $\C = \simp^8$.  From Figure \ref{fig:Exp_FacialGeomHypercube} we see that both reconstructions (obtained from two different sets of $75$ measurements) break most of the faces of the $\ell_\infty$ ball.  In these examples the association between the faces of the underlying set and those of the reconstruction is somewhat transparent as the sets are polyhedral.
\begin{figure}[h]
	\centering
	\begin{subfigure}{.2\textwidth}
		\includegraphics[width=0.75\textwidth]{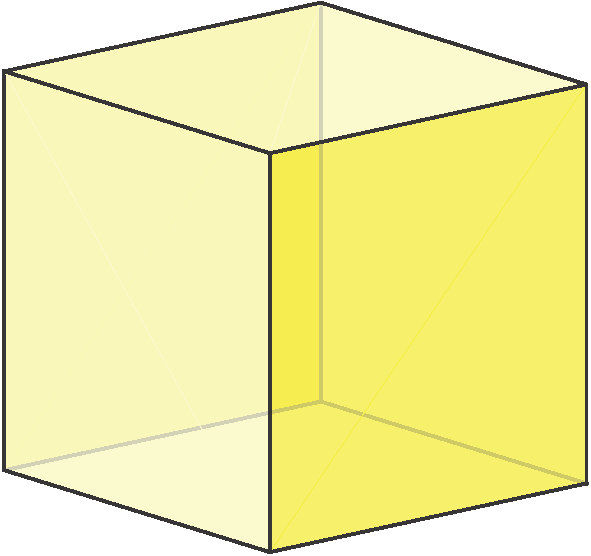}
	\end{subfigure}%
	\hspace{0.5cm}
	\begin{subfigure}{.2\textwidth}
		\includegraphics[width=0.75\textwidth]{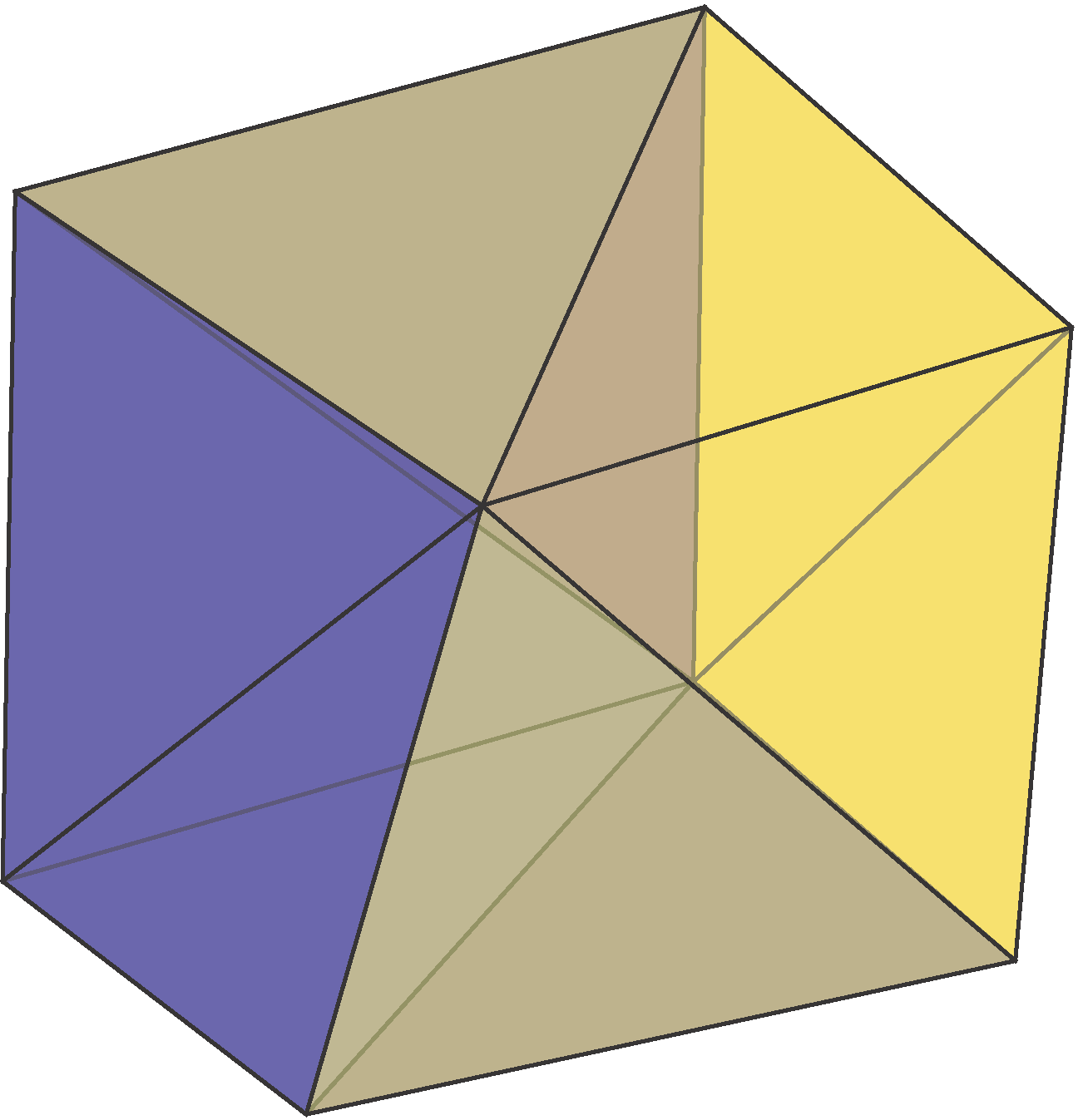}
	\end{subfigure}%
	\hspace{0.5cm}
	\begin{subfigure}{.2\textwidth}
		\includegraphics[width=0.75\textwidth]{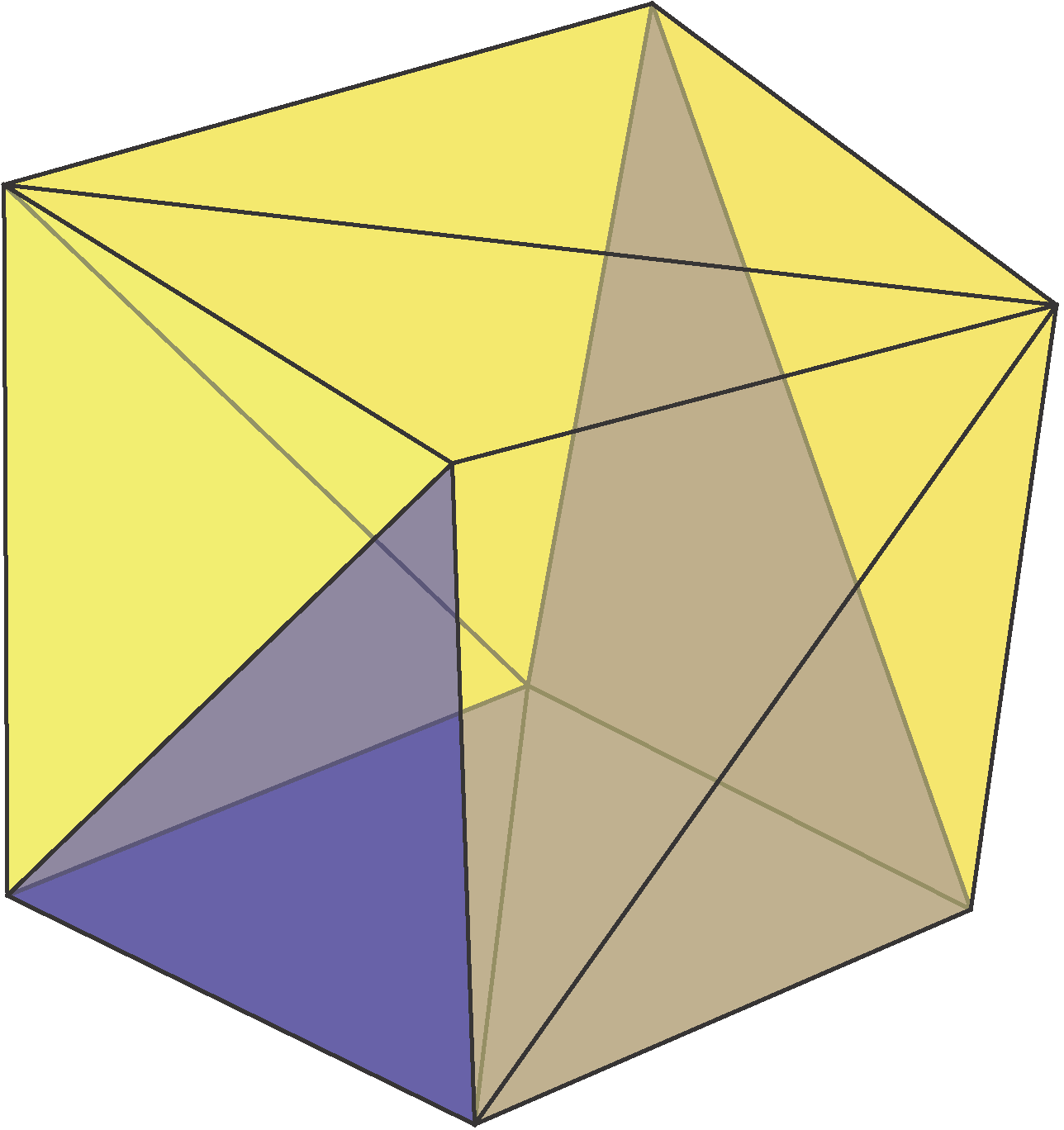}
	\end{subfigure}%
	\caption{Two reconstructions of the unit $\ell_{\infty}$ ball in $\R^3$ from $75$ noisy support function measurements using our method.  The choice of lifting set is $\C = \simp^8$.  The $\ell_{\infty}$ ball is the leftmost figure, and the reconstructions are the second and third figures from the left.}
	\label{fig:Exp_FacialGeomHypercube}
\end{figure}
The situation becomes more delicate with non-polyhedral sets.  We describe next a numerical experiment in which we estimate the \emph{Race Track} in $\R^2$ from $200$ noisy support function measurements with $\C = \fs^4$:
\begin{equation*}
	\text{Race Track} := \mathrm{conv}(\{ (x,y)^{\intercal} | \| (x,y)^{\intercal} - (-1,0)^{\intercal} \|_2 \leq 1 \text{ or } \| (x,y)^{\intercal} - (1,0)^{\intercal} \|_2 \leq 1 \}).
\end{equation*}
From this experiment, it appears that the exposed extreme points of the \emph{Race Track} are recovered, although the two one-dimensional edges are not recovered and seem to be distorted into curves.  However, the exact correspondence between faces of the \emph{Race Track} and those of the reconstruction seems less clear.

\begin{figure}[h]
	\centering
	\includegraphics[width=0.25\textwidth]{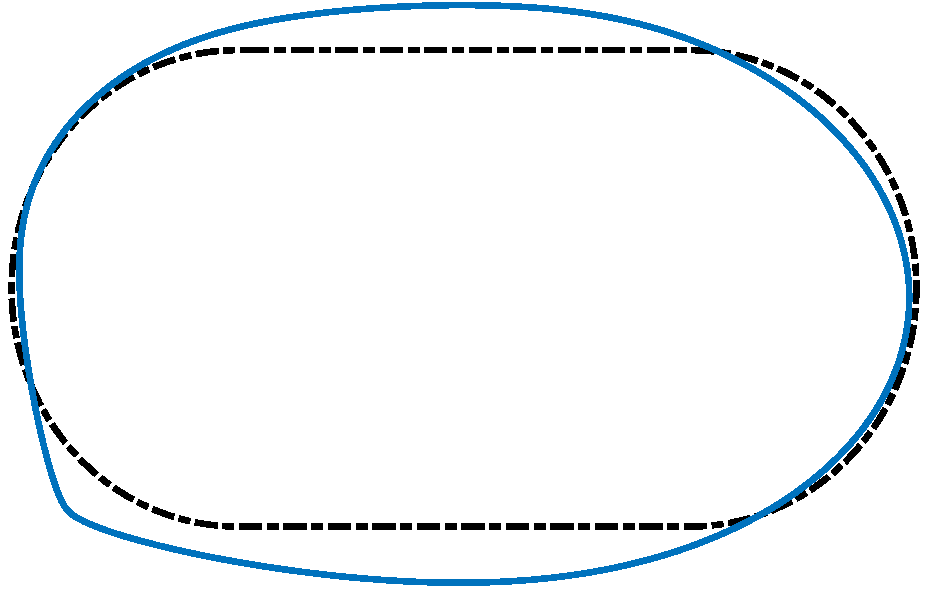}
	\hspace{1cm}
	\caption{Reconstructions of the \emph{Race Track} from $200$ noisy support function measurements using \eqref{eq:sc_intro_constrainedlse} with $\C = \fs^4$.}
	\label{fig:Exp_Facial_RaceTrack}
\end{figure}

Our first technical contribution in this subsection is to give a formal notion of `preservation of face structure'.  Motivated by the preceding example, the precise manner in which we do so is via the existence of an invertible affine transformation between the faces of the underlying set and the faces of the reconstruction.  For analytical tractability, our result focuses on exposed faces (faces that are expressible as the intersection of the underlying set with a hyperplane).
\begin{definition}  Let $\{\K_n \}_{n=1}^{\infty} \subset \R^d$ be a sequence of compact convex sets converging to some $\K \subset \R^d$.  Let $F \subset \K$ be an exposed face.  We say that $F$ is \emph{preserved} by the sequence $\{\K_n \}_{n=1}^{\infty}$ if there is a sequence $\{ F_n \}_{n=n_0}^{\infty}$, $F_n \subseteq \K_n$, satisfying
	\begin{enumerate}
		\item $F_n \rightarrow F$.
		\item $F_n$ are exposed faces of $\K_n$.
		\item There is an invertible affine transformation $b_n$ such that $F = b_n F_n $ and $F_n = b_n^{-1} F$.
	\end{enumerate}
\end{definition}

As our next contribution, we consider conditions under which exposed faces of $\KS$ are preserved.  To gain intuition for the types of assumptions that may be required, we review the results of the numerical experiments presented above.  In the setting with $\KS$ being the $\ell_1$ ball in $\R^3$, all the faces are simplicial and the reconstruction with $\C = \simp^6$ preserve all the faces.  In contrast, in the experiment with $\KS$ being the $\ell_\infty$ ball in $\R^3$ and $\C = \simp^8$, some of the faces of the $\ell_\infty$ ball are broken into smaller simplicial faces in the reconstruction.  These observations suggest that we should only expect preservation of simplicial faces, at least in the polyhedral context.  However, in attempting to reconstruct the $\ell_1$ ball with $\C = \simp^{12}$, some of the simplicial faces of the $\ell_1$ ball are broken into smaller simplicial faces in the reconstruction.  This is due to the overparametrization of the class of polytopes over which the regression \eqref{eq:sc_intro_constrainedlse} is performed (polytopes with at most $12$ vertices) relative to the complexity of the $\ell_1$ ball (a polytope with $6$ vertices).  We address this point in our theorem via the single-orbit condition discussed in Section \ref{sec:prelims_algebraic}, which also plays a role in Theorem \ref{thm:stats_norm}.  Finally, in the non-polyhedral setting with $\KS$ being the \emph{Race Track} and $\C = \fs^4$, the two one-dimensional faces deform to curves in the reconstruction.  This is due to the fact that (generic) small perturbations to the linear image of $\fs^4$ that gives $\KS$ lead to a deformation of the edges of $\KS$ to curves.  To ensure that faces remain robust to perturbations of the linear images, we require that the normal cones associated to faces that are to be preserved must be sufficiently large.

\begin{theorem}\label{thm:preserveface}
	Suppose that $\KS \subseteq \R^d$ is a compact convex set with non-empty interior.  Let $\C \subset \R^q$ be a compact convex set such that $\spn(\C) \cong \R^q$.  Suppose that there is a linear map $A^{\star} \in \lqd$ such that $\KS = A^{\star}(\C)$, and $\mc = A^{\star} \orb$.  Let $\{ \hat{A}_n \}_{n=1}^{\infty}$, $\hat{A}_n \in \mathrm{argmin}_{A} \pc (A,\pnk)$, be a sequence of minimizers of the empirical loss function, and let $\{ \KHN \}_{n=1}^{\infty}$, $\KHN = \hat{A}_n(\C)$, be the corresponding sequence of estimators of $\KS$.  Given an exposed face $F^{\star} \subset \KS$, let $G = \{ \bx | A^{\star} \bx \in F^{\star}\} \cap \C$ be its pre-image, and let $\nc_{\C} (G) := \{ v~|~\langle v, x \rangle \geq \langle v,y \rangle \forall x \in G, y \in \C \}$ be the normal cone of $G$ w.r.t. $\C$.  If
	\begin{enumerate}
		\item the linear map $A^{\star}$ is injective when restricted to $\mathrm{aff}(G)$, and
		\item $\mathrm{dim} (\spn(\nc_{\C} (G) ) ) > q - \mathrm{rank} (A^{\star})$,
	\end{enumerate}
	then $F^{\star}$ is preserved by the sequence $\{ \KHN \}_{n=1}^{\infty}$.
\end{theorem}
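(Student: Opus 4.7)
The plan is to construct, for each sufficiently large $n$, an exposed face $F_n \subseteq \KHN$ affinely equivalent to $F^{\star}$ and converging to it. Since $\mc = A^{\star} \orb$ by hypothesis, Theorem~\ref{thm:stats_setconvergence} allows me to choose $g_n \in \aut(\C)$ so that $\tilde{A}_n := \hat{A}_n g_n \to A^{\star}$ almost surely; since $\tilde{A}_n(\C) = \KHN$, I will work with this sequence. The natural candidate is $F_n := \tilde{A}_n(G)$ with the affine equivalence $b_n := A^{\star} \circ (\tilde{A}_n|_{\aff(G)})^{-1}$, extended arbitrarily to an invertible affine transformation of $\R^d$. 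Injectivity of $A^{\star}|_{\aff(G)}$ in condition~(1) is an open condition on the linear map, so $\tilde{A}_n|_{\aff(G)}$ is also injective for $n$ large, making $b_n$ a well-defined affine bijection between the affine hulls; equality of dimensions permits the invertible extension to $\R^d$. Hausdorff convergence $F_n \to F^{\star}$ is immediate from $\tilde{A}_n \to A^{\star}$ and the compactness of $G$.

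The crucial step is verifying that $F_n$ is an \emph{exposed} face of $\KHN$. Since linear images preserve exposedness, it suffices to exhibit $v_n \in \R^d$ with $\tilde{A}_n^{\intercal} v_n \in \mathrm{relint}(\nc_{\C}(G))$: such a $v_n$ exposes $F_n$ in $\KHN$. Exposedness of $F^{\star}$ in $\KS$ supplies $v^{\star}$ with $A^{\star\intercal} v^{\star} \in \mathrm{relint}(\nc_{\C}(G)) \subseteq V_2 := \spn(\nc_{\C}(G))$, and since this relative interior is relatively open in $V_2$, the problem reduces to producing $v_n \to v^{\star}$ with $\tilde{A}_n^{\intercal} v_n \in V_2$.

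Here condition~(2) enters. Writing $\pi_2^{\perp}$ for orthogonal projection onto $V_2^{\perp}$ and setting $L_n := \pi_2^{\perp} \tilde{A}_n^{\intercal} : \R^d \to V_2^{\perp}$, the constraint becomes $v_n \in \ker L_n$. Since $\KS$ has non-empty interior, $\mathrm{rank}(A^{\star}) = d$, so condition~(2) gives $\dim V_2^{\perp} = q - \dim V_2 < \mathrm{rank}(A^{\star}) = d$; hence $\mathrm{rank}(L_n) < d$ and $\ker L_n$ is always nontrivial. Since $v^{\star} \in \ker L_{A^{\star}}$ and $L_n \to L_{A^{\star}}$, we have $L_n(v^{\star}) = \pi_2^{\perp}((\tilde{A}_n - A^{\star})^{\intercal} v^{\star}) \to 0$, and the pseudoinverse-based candidate $v_n := v^{\star} - L_n^{\dagger} L_n(v^{\star})$ lies in $\ker L_n$ by construction.

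The main obstacle is controlling $\|v_n - v^{\star}\| \leq \|L_n^{\dagger}\|\|L_n(v^{\star})\|$ to guarantee convergence to zero. In the typical regime where $\mathrm{rank}(L_n)$ is eventually constant, the pseudoinverses converge in norm and the conclusion is immediate. The delicate case is when $\mathrm{rank}(L_n)$ jumps above $\mathrm{rank}(L_{A^{\star}})$: here $\|L_n^{\dagger}\|$ blows up, but the additional singular values vanish at the same rate as $\|\tilde{A}_n - A^{\star}\|$ while $L_n(v^{\star})$ is confined to these new singular directions with comparable magnitude, so a careful accounting still yields $v_n \to v^{\star}$. In the most degenerate configurations, condition~(1) combined with the standard orthogonality $\mathrm{lin}(G) \perp \nc_{\C}(G)$ forces $G$ to reduce to a single exposed point of $\C$; the residual gauge freedom in $g_n \in \aut(\C)$ (such as orthogonal conjugation in the spectraplex case) can then be used to align $\tilde{A}_n^{\intercal} v^{\star}$ directly with $V_2$. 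Once $v_n \to v^{\star}$ is in hand, $\tilde{A}_n^{\intercal} v_n \to A^{\star\intercal} v^{\star}$ stays within the open set $\mathrm{relint}(\nc_{\C}(G))$ for large $n$, exposing $F_n$ and verifying all three preservation conditions.
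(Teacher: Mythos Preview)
Your overall architecture matches the paper's: define $\tilde{A}_n \to A^\star$ via the orbit, set $F_n = \tilde{A}_n(G)$, and verify convergence, exposedness, and affine equivalence separately. The convergence and affine-equivalence pieces are fine and essentially identical to the paper's treatment.

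The gap is in your handling of the ``delicate case'' in the exposedness step. Your reduction to finding $v_n \to v^\star$ with $\tilde{A}_n^{\intercal} v_n \in V_2$ is a valid \emph{sufficient} condition, and the pseudoinverse construction $v_n = v^\star - L_n^{\dagger} L_n v^\star$ (which is just the orthogonal projection of $v^\star$ onto $\ker L_n$) works cleanly when $\mathrm{rank}(L_n) = \mathrm{rank}(L_{A^\star})$. But your claim that in the rank-jump case ``$L_n(v^\star)$ is confined to these new singular directions'' is not correct. Take the extreme case $L_{A^\star} = 0$: then $L_n = \epsilon_n a_n b_n^{\intercal}$ for some unit vectors, $\ker L_n = b_n^{\perp}$, and your $v_n$ is the projection of $v^\star$ onto $b_n^{\perp}$. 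Nothing forces $\langle b_n, v^\star\rangle \to 0$, so $v_n \not\to v^\star$ in general. Your fallback to ``residual gauge freedom in $g_n$'' is too vague to repair this, and the assertion that condition~(1) forces $G$ to be a single point in the degenerate configuration is not justified.

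The paper avoids this trap by not insisting on $v_n \to v^\star$: it only asserts that $\mathrm{im}(\tilde{A}_n^{\intercal})$ meets $\mathrm{relint}(N_C(G))$ eventually, citing the dimension inequality $\dim(\spn(N_C(G))) + \mathrm{rank}(A^\star) > q$ together with the fact that $\mathrm{im}(A^{\star\intercal})$ already meets this relative interior. The intended mechanism is a transversality argument: when $\mathrm{im}(A^{\star\intercal}) + V_2 = \R^q$, the intersection $\mathrm{im}(\tilde{A}_n^{\intercal}) \cap V_2$ varies continuously and stays inside the open set $\mathrm{relint}(N_C(G))$. For the simplex this transversality is in fact \emph{forced} by condition~(1), because there $V_2^{\perp} = \mathrm{lin}(G)$ and injectivity of $A^\star$ on $\aff(G)$ gives $\ker(A^\star) \cap V_2^{\perp} = \{0\}$; so your pseudoinverse argument already suffices in that case without any delicate analysis. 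If you want a complete argument, prove and invoke this transversality directly rather than attempting to control $\|L_n^{\dagger}\|$.
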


Before giving a proof of this result, we remark next on some of the consequences.

\begin{remark}  Suppose $\KS$ is a full-dimensional polytope with $q$ extreme points and we choose $\C = \simp^{q}$.  It is easy to see that there is a linear map $A^{\star}$ such that $\KS = A^{\star}(\simp^{q})$ and that $M_{\KS,\simp^{q}} = A^{\star} \cdot \aut(\simp^{q})$.  Let $F^{\star} \subseteq \KS$ be any face (note that all faces of a polytope are exposed), and let $G$ be its pre-image in $\simp^{q}$.  Note that $G$ is an exposed face, and hence $G$ is of the form $\{ \Pi \bx | \bx \geq 0, \langle \bx, 1 \rangle = 1, \bx_{s+1} = \ldots \bx_{q} = 0 \}$ for some $\Pi \in \aut(\simp^{q})$ and some $s \leq q$.  The map $A^{\star}$ being injective on $\aff(G)$ implies that the image of $G$ under $A^{\star}$ is isomorphic to $G$; i.e., $F^{\star}$ is simplicial.  The normal cone $\nc_{\simp^q}(G)$ is given by $ \{ \Pi \bz | \bz \leq 0, \bz_{1} = \ldots \bz_{s} = 0 \}$, and the requirement $\mathrm{dim} (\aff(\nc_{\simp^q}(G)) ) > q - \mathrm{rank} (A^{\star})$ holds precisely when $s<d$; i.e., the face $F^{\star}$ is \emph{proper}.  Thus, Theorem \ref{thm:preserveface} implies that all \emph{proper simplicial} faces of $\KS$ are preserved in the reconstruction.
\end{remark}

\begin{remark} Suppose $\KS$ is the image under $A^\star$ of the spectraplex $\C = \fs^{p}$ and that $\mc =  A^{\star} \orb$.  Let $F^{\star}$ be an exposed face and let $G$ be its pre-image in $\fs^{p}$.  Then $G$ is a face of $\fs^{p}$, and is of the form
	\begin{equation*}
		G = \left\{ U D U^{\intercal} ~ |  ~ D =
		\left(\begin{array}{cc}
			D_{11} & 0 \\ 0 & 0
		\end{array}\right), ~ D_{11} \in \fs^{r} \right\},
	\end{equation*}
	for some $U\in O(p,\R)$ and some $r \leq p$.  Note that
	\begin{equation*}
		\nc_{\fs^p}(G) = \left\{ U D U^{\intercal} ~ | ~  D =
		\left(\begin{array}{cc}
			0 & 0 \\ 0 & - D_{22}
		\end{array}\right), ~ D_{22} \in \Sym^{p-r}, ~ D_{22} \succeq 0 \right\},
	\end{equation*}
	Thus, the requirement that $\mathrm{dim} (\aff(\nc_{\fs^p}(G)) ) >{p + 1 \choose 2} -  \mathrm{rank} (A^{\star})$ holds precisely when $d > pr - {r-1 \choose 2}$.  We consider this result in the context of our earlier example involving the \emph{Race Track}.  Specifically, one can represent the \emph{Race Track} as a linear image of $\fs^4$ given by the following linear map:
	\begin{equation*}
		A^{\star}(X) =
		\left(\begin{array}{c}
			\langle A_1 , X \rangle \\ \langle A_2 , X \rangle
		\end{array}\right),
		\quad A_1 =
		\left(\begin{array}{cccc}
			-1 & 1 & & \\ 1 & -1 & & \\ & & 1 & 1 \\ & & 1 & 1
		\end{array}\right)
		\quad A_2 =
		\left(\begin{array}{cccc}
			1 & & & \\ & -1 & & \\ & & 1 & \\ & & & -1
		\end{array}\right).
	\end{equation*}
	It is clear that $\mathrm{rank} (A^{\star}) = 2$.  Let $F^{\star}$ be the face connecting $(-1,0)^{\intercal}$ and $(1,0)^{\intercal}$, and let $G_{\fs^4}$ be the pre-image of $F^{\star}$ in $\fs^4$.  One can check that
	\begin{equation*}
		G_{\fs^4} = \left\{
		\left(\begin{array}{cccc}
			x & 0 & 0 & 0 \\ 0 & 0 & 0 & 0 \\ 0 & 0 & y & 0 \\ 0 & 0 & 0 & 0
		\end{array}\right)~\Bigg|~ x,y \geq 0, x+y \leq 1
		\right\},
		\nc_{\fs^4}(G_{\fs^4}) \left\{ Z~\Bigg|~Z =
		\left(\begin{array}{cccc}
			0 & 0 & 0 & 0 \\ 0 & * & 0 & * \\ 0 & 0 & 0 & 0 \\ 0 & * & 0 & *
		\end{array}\right), Z \preceq 0
		\right\}.
	\end{equation*}
	It follows that $\mathrm{dim} (\aff(\nc_{\fs^4}(G_{\fs^4}))) = 3$.  As the dimension of $\fs^4$ is $10$, our requirement on $\mathrm{dim}(\aff(\nc_{\fs^4}(G_{\fs^4})))$ is not satisfied.
\end{remark}

\begin{proof}[Proof of Theorem \ref{thm:preserveface}]
	As we noted in the above, define $\tilde{A}_n \in \mathrm{argmin}_{A \in \hat{A}_n \orb} \| \hat{A}_n - A \|_F$, and denote $F_n = \tilde{A}_n (G)$.
	
	[$F_n \rightarrow F$]:  Since $\mc = A^{\star} \orb$, it follows from Theorem \ref{thm:stats_setconvergence} that $\tilde{A}_n \rightarrow A^{\star}$, from which we have $F_n \rightarrow F^{\star}$.
	
	[$F_n$ are faces of $\K_n$]:  Since $F^{\star}$ is an exposed face of $\KS$, there exists $\by \in \R^d$ and $c \in \R$ such that $\langle \by,\bx \rangle = c$ for all $\bx \in F^{\star}$, and $\langle \by,\bx \rangle > c$ for all $\bx \in \KS \backslash F^{\star}$.  This implies that $\langle A^{\star\intercal} \by,\tilde{\bx} \rangle = c$ for all $\tilde{\bx} \in G$, and $\langle A^{\star\intercal} \by ,\tilde{\bx} \rangle > c$ for all $\tilde{\bx} \in \C \backslash G$.  In particular, it implies that the row space of $A^{\star}$ intersects the relative interior of $\nc_{\C}(G)$ in the direction $A^{\star}\by$.
	
	By combining the earlier conclusion that $\tilde{A}_n \rightarrow A^{\star}$ a.s., and that $\mathrm{dim} (\spn(\nc_{\C}(G)) ) + \mathrm{rank} (A^{\star}) > q$, we conclude that the row spaces of the maps $\tilde{A}_n$ eventually intersect the relative interior of $\nc_{\C}(G)$ a.s.  That is to say, there is exists an integer $n_0$ and sequences $\{ \by_n \}_{n=n_0}^{\infty} \subset \R^d$, $\{c_n\}_{n=n_0}^{\infty} \subset \R $ such that $\langle \by_n, \bx \rangle = c_n$ for all $\bx \in F_n$, and $\langle \by_n, \bx \rangle > c_n$ for all $\bx \in \KHN \backslash F_n$, $n \geq n_0$,  a.s.  In other words, the sets $F_n$ are exposed faces of $\KHN$ eventually a.s.
	
	[One-to-one affine correspondence]:  To establish a one-to-one affine correspondence between $F_n$ and $F$ we need to treat the case where $0 \in \aff (G)$ and the case where $0 \notin \aff (G)$ separately.
	
	First suppose that $0 \in \aff (G)$.  Let $\hh_{F} = \aff(F)$ and $\hh_{G} = \aff(G)$.  Since $0 \in \hh_{G}$, it follows that $\hh_{F}$ and $\hh_{G}$ are subspaces.  Moreover given that $A^{\star}$ is injective restricted to $\hh_{G} = \aff(G)$, it follows that $\hh_{F}$ and $\hh_{G}$ have equal dimensions.  Hence the map $t$ defined as the restriction of $A^{\star}$ onto $L(\hh_{G},\hh_{F})$ is square and invertible.  Next let $\hh_{F_n} = \aff(F_n)$, and let $t_n$ denote the restriction of $\tilde{A}_n$ to $L(\hh_{G},\hh_{F_n})$.  Given that $\tilde{A}_n \rightarrow A^{\star}$, the maps $\{t_n\}_{n=1}^{\infty}$ are also square and invertible eventually a.s.  It follows that one can define a linear map $b_n \in L(\R^d,\R^d)$ that coincides with $t \circ t_n^{-1}$ restricted to $L(\hh_{F_n},\hh_{F})$, is permitted to be any square invertible map on $L(\hh_{F_n}^{\perp},\hh_{F}^{\perp})$, and is zero everywhere else.  Notice that $b_n$ is invertible by construction.  It straightforward to check that $F = b_n F_n$ and $F_n = b_n^{-1} F$.	
	
	Next suppose that $0 \notin \aff (G)$.  The treatment in this case is largely similar as in the previous case.  Let $\hh_{F}$ be the smallest subspace containing $\{ (\bx, 1)~|~\bx \in F \} \subseteq \R^{d+1}$, where the set $F$ is embedded in the first $d$ coordinates.  Let $\hh_{F_n}$ be similarly defined.  Let $\hh_{G} = \aff(G \cup \{0\})$ -- note that this defines a subspace.  Since $0 \notin \aff (G)$, there is a nonzero $\bz \in \R^q$ such that $\langle \bz, \bx \rangle = 1$ for all $\bx \in G$ (i.e. there exists a hyperplane containing $G$).  Define the linear map $t \in L(\hh_{G},\hh_{F})$ as
	\begin{equation*}
		t = \Pi_{\hh_{F}} \left[ \left(\begin{array}{c}
			A^{\star} \\ \bz^{\intercal}
		\end{array} \right)\bigg|_{\hh_{G}} \right]
	\end{equation*}
	where $\Pi_{\hh_{F}}$ is the projection map onto the subspace $\hh_{F}$.  Since $A^{\star}$ is injective on $G$, it follows that $\hh_{F}$ and $\hh_{G}$ have the same dimensions, and that $t$ is square and invertible.  One can define a square invertible map $t_n$ analogously.  The remainder of the proof proceeds in a similar fashion to the previous case, and we omit the details.  Here, note that a linear invertible map operating on the lifted space $\R^{d+1}$ defines an affine linear invertible map in the embedded space $\R^d$.
\end{proof} 


\section{Algorithm} \label{sec:algo}

We describe a procedure based on alternating updates for solving the optimization problem \eqref{eq:sc_intro_constrainedlse}.  In terms of the linear map $A$, the task of solving \eqref{eq:sc_intro_constrainedlse} can be reformulated as follows:
\begin{equation} \label{eq:sec_algo_eq}
\underset{A \in L(\R^q,\R^d)}{\mathrm{argmin}} ~~~ \Phi (A,P_n) := \frac{1}{n} \sum_{i=1}^{n} \left( y^{(i)} - \lmax(A^{\intercal}\bu^{(i)}) \right)^2.
\end{equation}
As described previously, the problem \eqref{eq:sec_algo_eq} is nonconvex as formulated; consequently, our approach is not guaranteed to return a globally optimal solution.  However, we demonstrate the effectiveness of these methods with random initialization in numerical experiments in Section \ref{sec:numexp}.

We describe our method as follows.  For a fixed $A$, we compute $\be^{(i)} \in \C, ~ i=1,\dots,n,$ so that $\langle \be^{(i)}, A^{\intercal}\bu^{(i)} \rangle = \lmax(A^{\intercal}\bu^{(i)})$, i.e., $\be^{(i)} = \emax(A^{\intercal}\bu^{(i)})$.  With these $\be^{(i)}$'s fixed, we update $A$ by solving the following least squares problem:
\begin{equation} \label{eq:leastsquaresTik}
\underset{A \in L(\R^q,\R^d)}{\mathrm{argmin}} ~~~ \frac{1}{n} \sum_{i=1}^{n} \left( y^{(i)} - \langle \be^{(i)}, A^{\intercal} \bu^{(i)} \rangle \right)^2.
\end{equation}
This least squares problem can sometimes be ill-conditioned, in which case we employ Tikhonov regularization (with debiasing); see Algorithm \ref{alg:am}.

When specialized to the choice $\C = \simp^{q}$, our procedure reduces to the algorithm proposed by Magnani and Boyd \cite{MagBoy:09} for max-affine regression.  More broadly, as noted in \cite{MagBoy:09}, for $\C = \simp^{q}$ our method is akin to Lloyd's algorithm for $K$-means clustering \cite{Lloyd:79}.  Specifically, Lloyd's algorithm begins with an initialization of $q$ centers, and it alternates between (i) assigning data-points to centers based proximity (keeping the centers fixed), and (ii) updating the location of cluster centers to minimize the squared-loss error.  In our context, suppose we express the linear map $A = [\ba_1|\ldots|\ba_q] \in \R^{d \times q}$ in terms of its columns.  The algorithm begins with an initialization of the $q$ columns, and it alternates between (i) assigning measurement pairs $(\bu^{(i)},y^{(i)})$, $1\leq i \leq n$, to the respective columns $\{\ba_j\}_{1\leq j \leq q}$ such that the inner product $\langle \bu^{(i)}, \ba_j \rangle$ is maximized (keeping the columns fixed), and (ii) updating the columns $\{\ba_j\}_{1\leq j \leq q}$ to minimize the squared-loss error.

\begin{algorithm}
	\caption{Convex Regression via Alternating Minimization}
	\label{alg:am}
	\textbf{Input}: A collection $\{(\bu^{(i)},y^{(i)})\}_{i=1}^n \subset \R^d \times \R$ of support function evaluations; a compact convex set $\C \subset \R^q$; an initialization $A \in L(\R^{q},\R^d)$; a choice of regularization parameter $\gamma > 0$ \\
	\textbf{Algorithm}: Repeat until convergence \\
	\textbf{1.}[Update optimizers of support function] $\be^{(i)} \leftarrow \emax(A^{\intercal}\bu^{(i)})$ \\
	\textbf{2.}[Update $A$ by solving \eqref{eq:leastsquaresTik} via Tikhonov regularization (with debiasing)] $A \leftarrow (V \otimes V + \gamma I)^{-1} (V Y + \gamma A )$ where $V \leftarrow \left( \bu^{(1)} \otimes \be^{(1)} | \ldots | \bu^{(n)} \otimes \be^{(n)} \right), \quad Y \leftarrow \left( y^{(1)}, \ldots,  y^{(n)} \right)^{\intercal}$ \\
	\textbf{Output}: Final iterate $A$
\end{algorithm} 


\section{Numerical Experiments} \label{sec:numexp}
In this section we describe the results of numerical experiments on fitting convex sets to support function evaluations in which we contrast our framework based on solving \eqref{eq:sc_intro_constrainedlse} to previous methods based on solving \eqref{eq:sc_intro_lse}.  The first few experiments are on synthetically generated data, while the final experiment is on a reconstruction problem with real data obtained from the Computed Tomography (CT) scan of a human lung.  For each experiment, we apply Algorithm \ref{alg:am} described in Section \ref{sec:algo} with multiple random initializations, and we select the solution that minimizes the least squared error.  
The (polyhedral) LSE reconstructions in our experiments are based on the algorithm proposed in \cite[Section $4$]{GarKid:09}.

\subsection{Reconstructing the $\ell_1$-ball and the $\ell_2$-ball}
We consider reconstructing the $\ell_1$-ball $\{ \bg ~|~ \|\bg\|_1 \leq 1 \} \subset \mathbb{R}^3$ and the $\ell_2$-ball $\{ \bg ~|~ \|\bg\|_2 \leq 1 \} \subset \mathbb{R}^3$ from noiseless and noisy support function evaluations based on the model \eqref{eq:probdist}.  In particular, we evaluate the performance of our framework relative to the reconstructions provided by the LSE for $n=20,50,200$ measurements.  For both the $\ell_1$-ball and the $\ell_2$-ball in the respective noisy cases, the measurements are corrupted with additive Gaussian noise of variance $\sigma^2 = 0.1$.  The reconstructions based on our framework \eqref{eq:sc_intro_constrainedlse} of the $\ell_1$-ball employ the choice $\C = \simp^6$, while those of the $\ell_2$-ball use $\C = \fs^3$.  Figure~\ref{fig:Exp_F1a_00} and Figure~\ref{fig:Exp_F1b_00} give the results corresponding to the $\ell_1$-ball and the $\ell_2$-ball, respectively.

Considering first a setting with noiseless measurements, we observe that our approach gives an exact reconstruction for both the $\ell_1$-ball and the $\ell_2$-ball.  For the $\ell_1$-ball this occurs when we have $n=200$ measurements, while the LSE provides a reconstruction with substantially more complicated facial structure that doesn't reflect that of the $\ell_1$-ball.  Indeed, the LSE only approaches the $\ell_1$-ball with respect to the Hausdorff metric, but despite being the best solution in terms of minimizing the least-squares criterion, the reconstruction offered by this method provides little information about the facial geometry of the $\ell_1$-ball.  Further, even with $n=20,50$ measurements, our reconstructions bear far closer resemblance to the $\ell_1$-ball, while the LSE in these cases looks very different from the $\ell_1$-ball.  For the $\ell_2$-ball, our approach provides an exact reconstruction with just $n=20$ measurements, while the LSE only begins to resemble the $\ell_2$-ball with $n=200$ measurements (and even then, the reconstruction is a polyhedral approximation).

Turning our attention next to the noisy case, the contrast between the results obtained using our framework and those of the LSE approach is even more stark.  For both the $\ell_1$-ball and the $\ell_2$-ball, the LSE reconstructions bear little resemblance to the underlying convex set, unlike the estimates produced using our method.  Notice that the reconstructions of the $\ell_2$-ball using our algorithm are not even ellipsoidal when the number of measurements is small (e.g., when $n=20$), as linear images of the spectraplex $\fs^3$ may be non-ellipsoidal in general and need not even consist of smooth boundaries.  Nonetheless, as the number of measurements available to our algorithm increases, the estimates improve in quality and offer improved reconstructions -- with smooth boundaries -- of the $\ell_2$-ball.

In summary, these synthetic examples demonstrate that our framework is much more effective than the LSE in terms of robustness to noise, accuracy of reconstruction given a small number of measurements, and in settings in which the underlying set is non-polyhedral.

\begin{figure}
	\centering
	\begin{subfigure}{.30\textwidth}
		\centering
		\begin{subfigure}{.5\textwidth}
			\centering
			\includegraphics[width=0.8\textwidth]{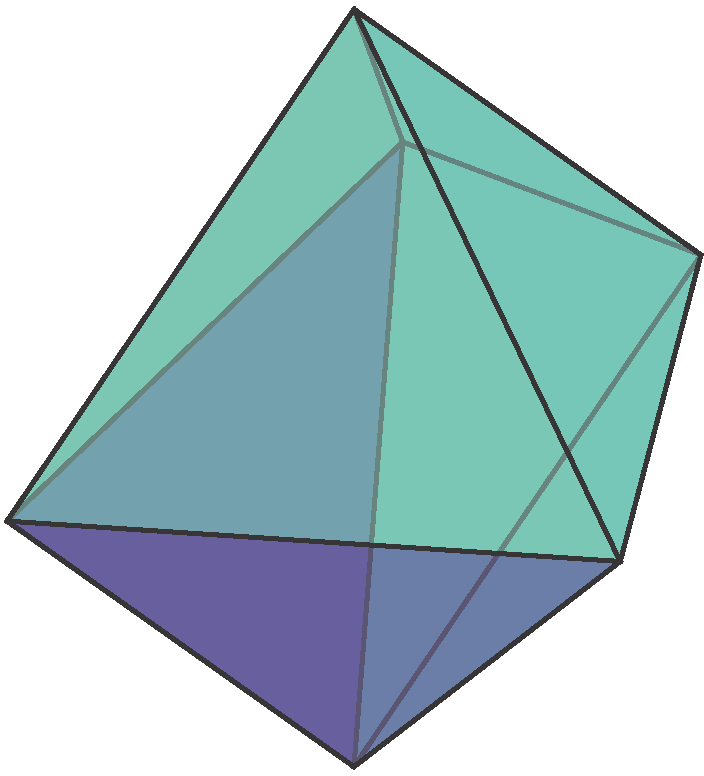}
		\end{subfigure}%
		\begin{subfigure}{.5\textwidth}
			\centering
			\includegraphics[width=0.8\textwidth]{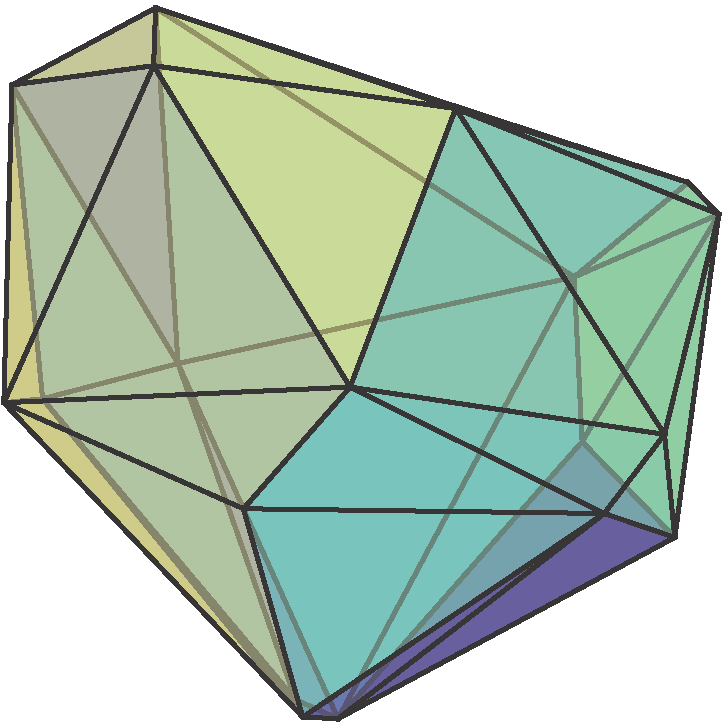}
		\end{subfigure}%
		\caption{$20$ noiseless measurements}
	\end{subfigure}%
	\hspace{0.5cm}
	\begin{subfigure}{.30\textwidth}
		\centering
		\begin{subfigure}{.5\textwidth}
			\centering
			\includegraphics[width=0.8\textwidth]{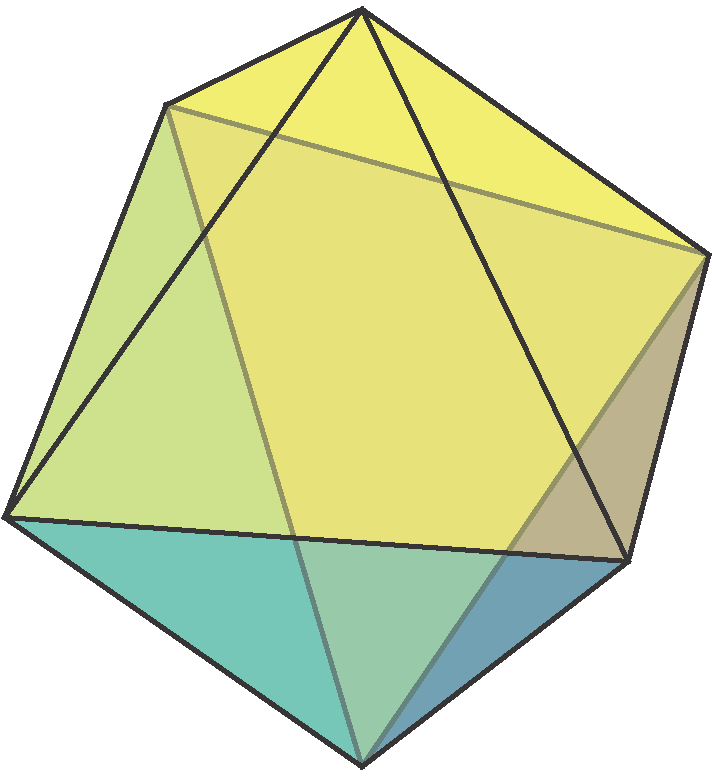}
		\end{subfigure}%
		\begin{subfigure}{.5\textwidth}
			\centering
			\includegraphics[width=0.8\textwidth]{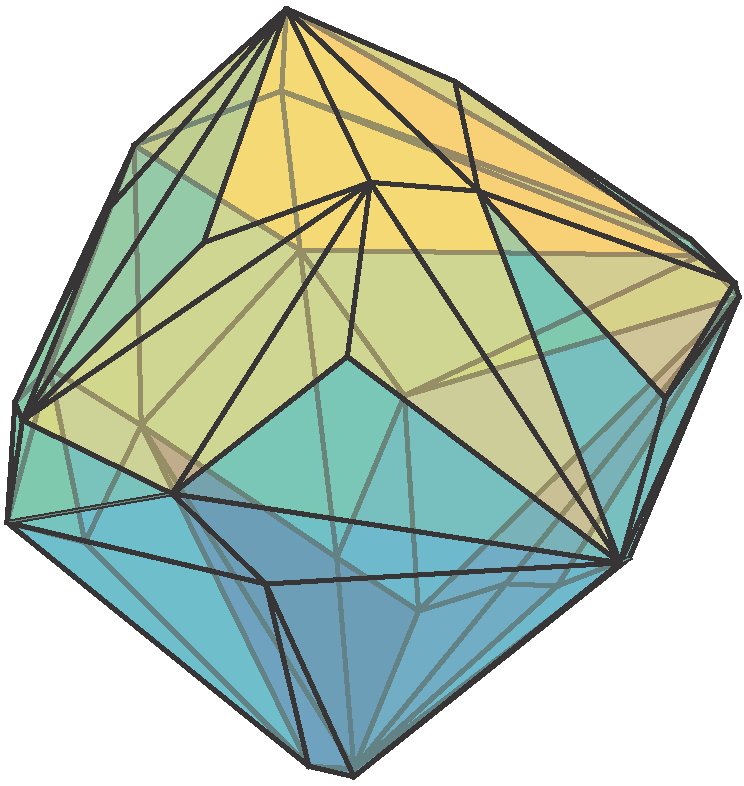}
		\end{subfigure}%
		\caption{$50$ noiseless measurements}
	\end{subfigure}%
	\hspace{0.5cm}
	\begin{subfigure}{.30\textwidth}
		\centering
		\begin{subfigure}{.5\textwidth}
			\centering
			\includegraphics[width=0.8\textwidth]{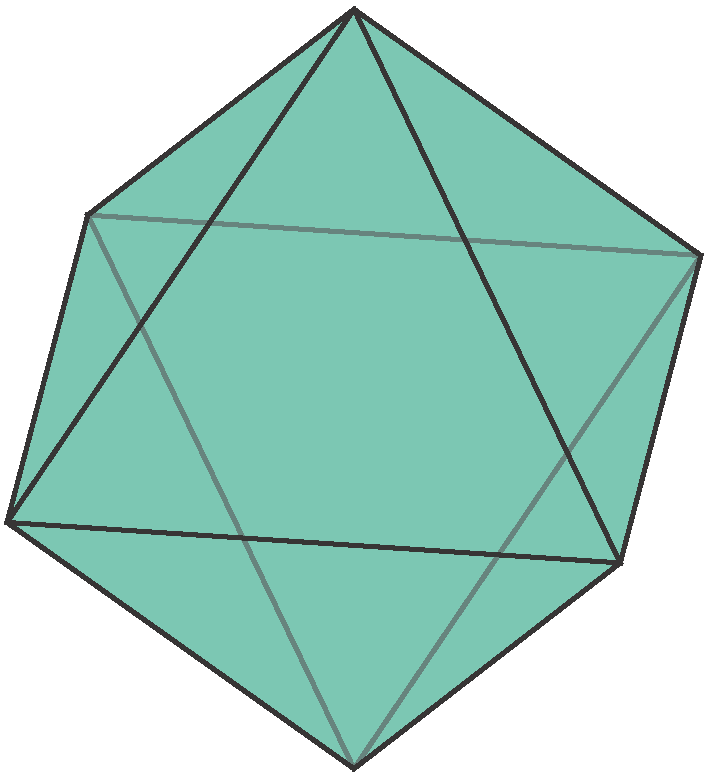}
		\end{subfigure}%
		\begin{subfigure}{.5\textwidth}
			\centering
			\includegraphics[width=0.8\textwidth]{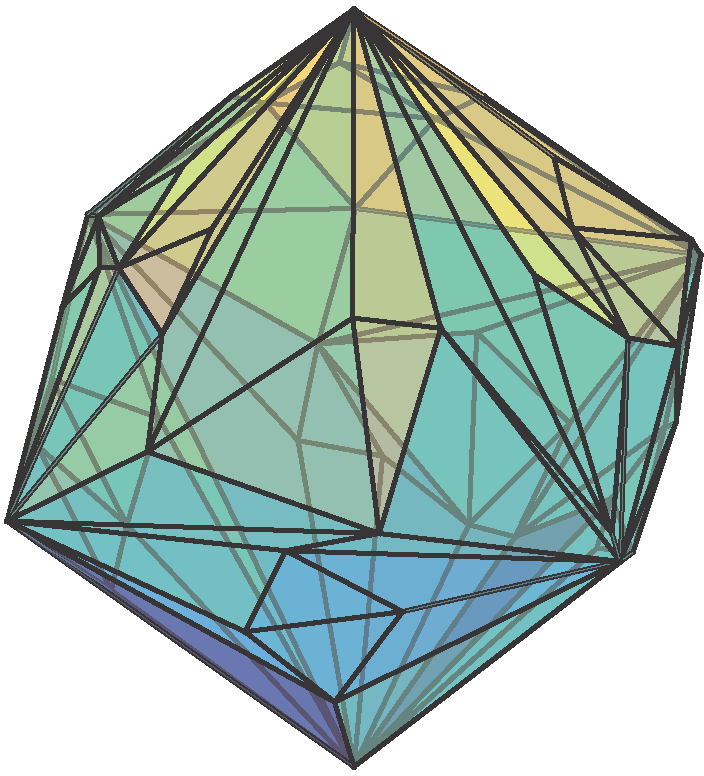}
		\end{subfigure}%
		\caption{$200$ noiseless measurements}
	\end{subfigure}
	
	\vspace{1cm}
	
	\begin{subfigure}{.30\textwidth}
		\centering
		\begin{subfigure}{.5\textwidth}
			\centering
			\includegraphics[width=0.9\textwidth]{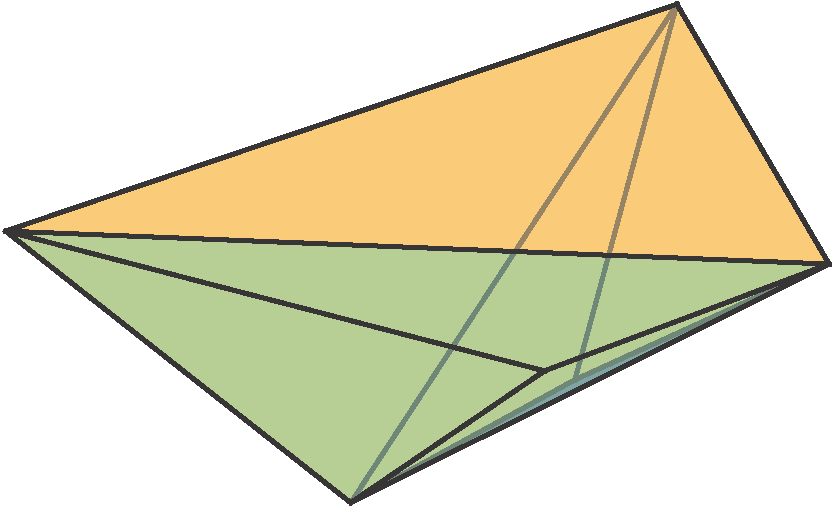}
		\end{subfigure}%
		\begin{subfigure}{.5\textwidth}
			\centering
			\includegraphics[width=0.8\textwidth]{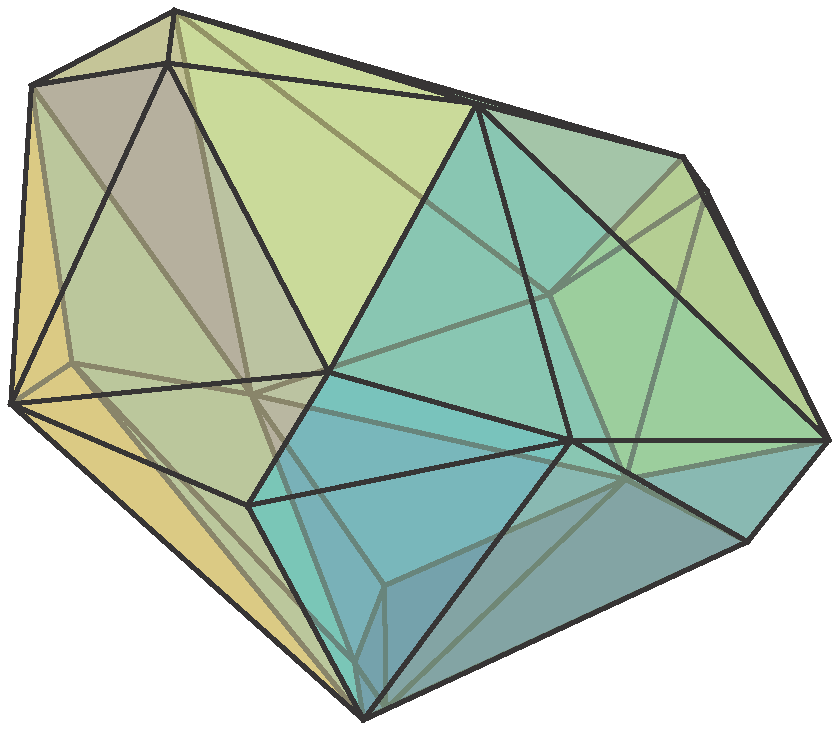}
		\end{subfigure}%
		\caption{$20$ noisy measurements}
	\end{subfigure}%
	\hspace{0.5cm}
	\begin{subfigure}{.30\textwidth}
		\centering
		\begin{subfigure}{.5\textwidth}
			\centering
			\includegraphics[width=0.75\textwidth]{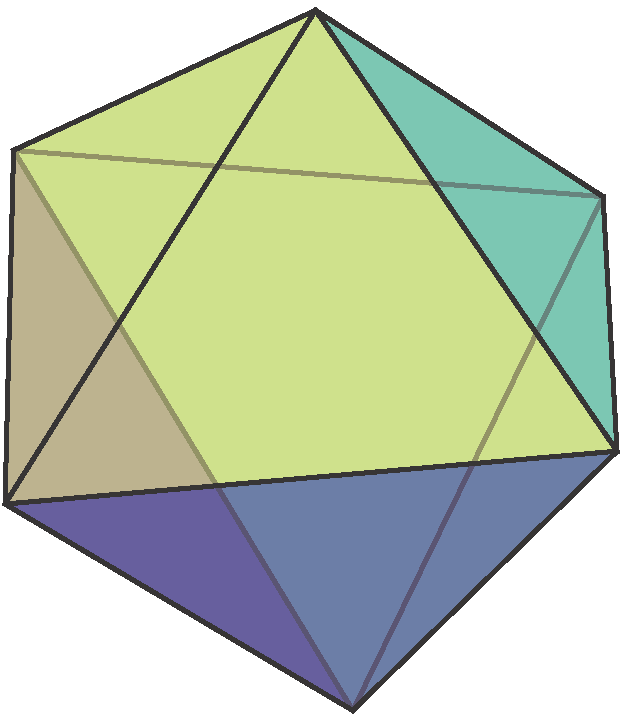}
		\end{subfigure}%
		\begin{subfigure}{.5\textwidth}
			\centering
			\includegraphics[width=0.8\textwidth]{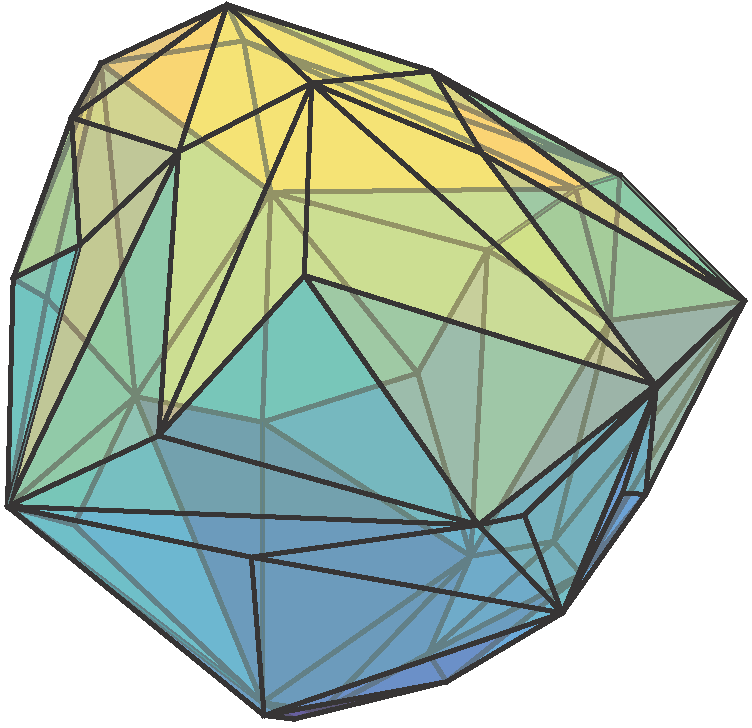}
		\end{subfigure}%
		\caption{$50$ noisy measurements}
	\end{subfigure}%
	\hspace{0.5cm}
	\begin{subfigure}{.30\textwidth}
		\centering
		\begin{subfigure}{.5\textwidth}
			\centering
			\includegraphics[width=0.8\textwidth]{Exp_F1a_s01_AMpoly_n200}
		\end{subfigure}%
		\begin{subfigure}{.5\textwidth}
			\centering
			\includegraphics[width=0.8\textwidth]{Exp_F1a_s01_LS_n200}
		\end{subfigure}%
		\caption{$200$ noisy measurements}
	\end{subfigure}%
	\caption{Reconstruction of the unit $\ell_1$-ball in $\R^3$ from noiseless (first row) and noisy (second row) support function measurements.  The reconstructions obtained using our method (with $\C = \simp^6$ in \eqref{eq:sc_intro_constrainedlse}) are on the left of every subfigure, while the LSE reconstructions are on the right of every subfigure.}
	\label{fig:Exp_F1a_00}
\end{figure}


\begin{figure}
	\centering
	\begin{subfigure}{.30\textwidth}
		\centering
		\begin{subfigure}{.5\textwidth}
			\centering
			\includegraphics[width=0.8\textwidth]{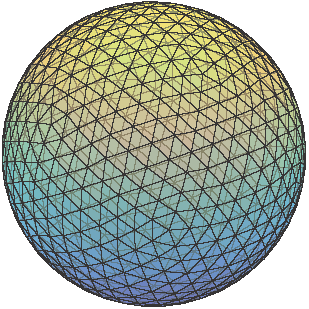}
		\end{subfigure}%
		\begin{subfigure}{.5\textwidth}
			\centering
			\includegraphics[width=0.7\textwidth]{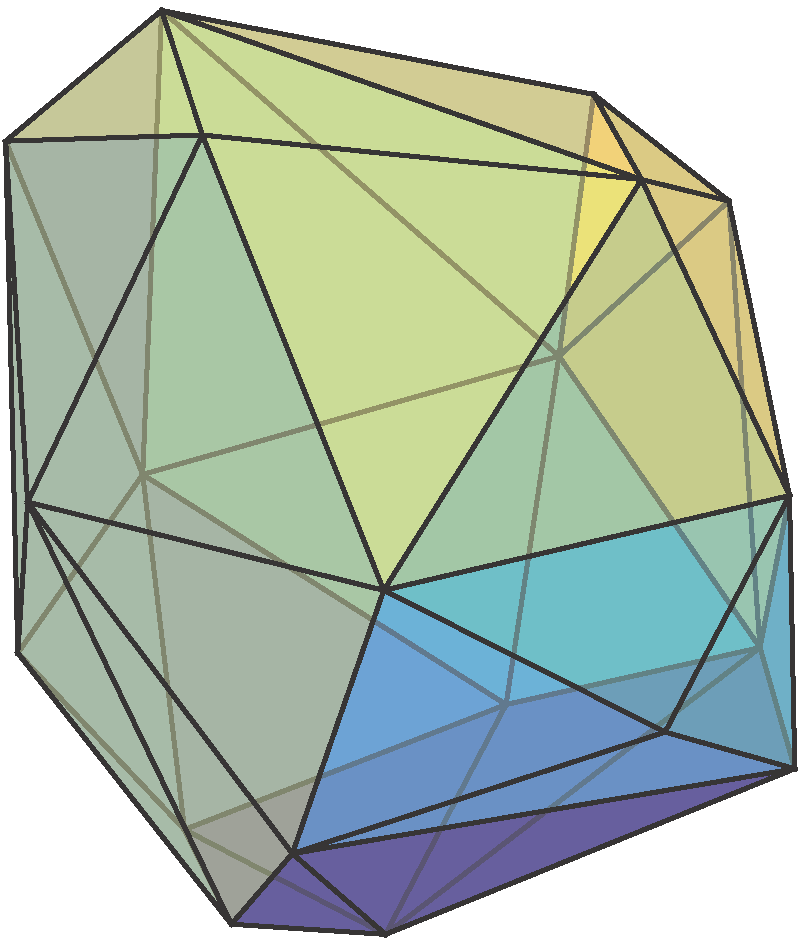}
		\end{subfigure}%
		\caption{$20$ noiseless measurements}
	\end{subfigure}%
	\hspace{0.5cm}
	\begin{subfigure}{.30\textwidth}
		\centering
		\begin{subfigure}{.5\textwidth}
			\centering
			\includegraphics[width=0.8\textwidth]{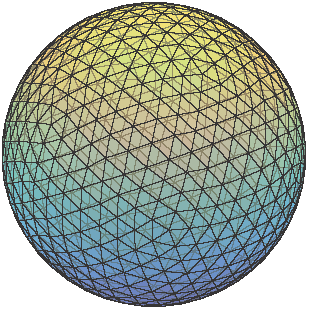}
		\end{subfigure}%
		\begin{subfigure}{.5\textwidth}
			\centering
			\includegraphics[width=0.8\textwidth]{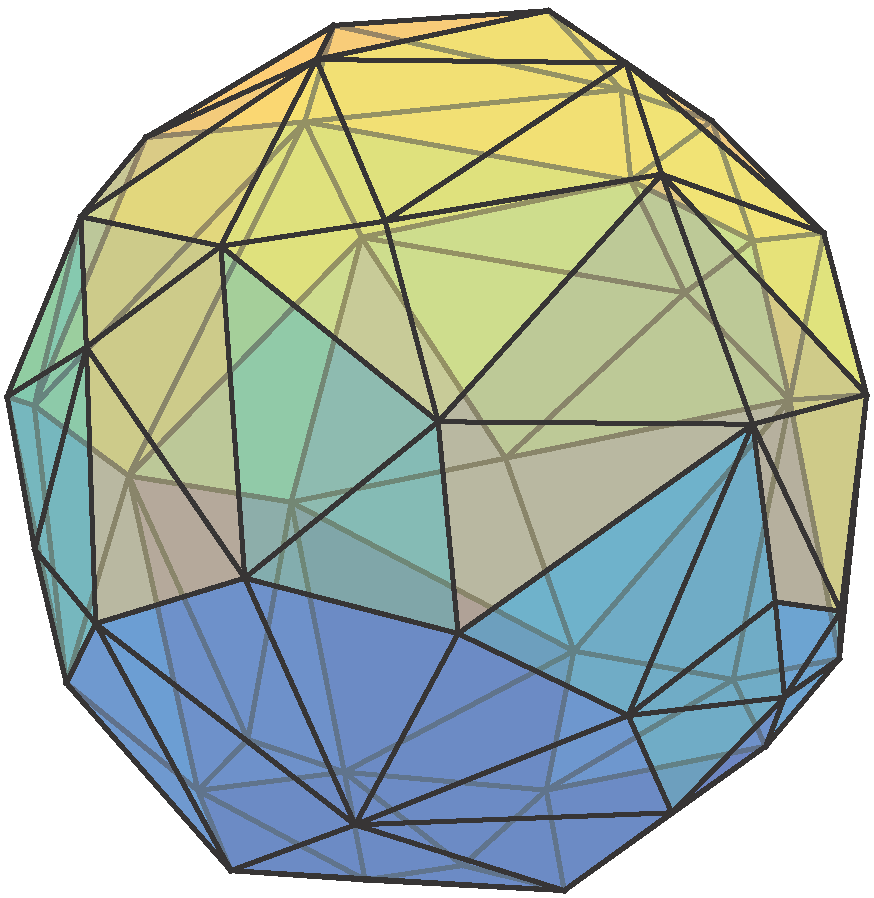}
		\end{subfigure}%
		\caption{$50$ noiseless measurements}
	\end{subfigure}%
	\hspace{0.5cm}
	\begin{subfigure}{.30\textwidth}
		\centering
		\begin{subfigure}{.5\textwidth}
			\centering
			\includegraphics[width=0.8\textwidth]{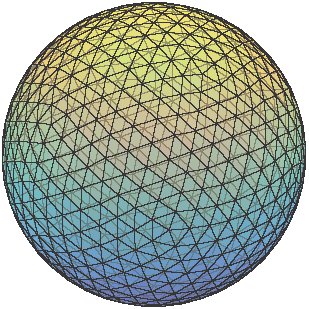}
		\end{subfigure}%
		\begin{subfigure}{.5\textwidth}
			\centering
			\includegraphics[width=0.8\textwidth]{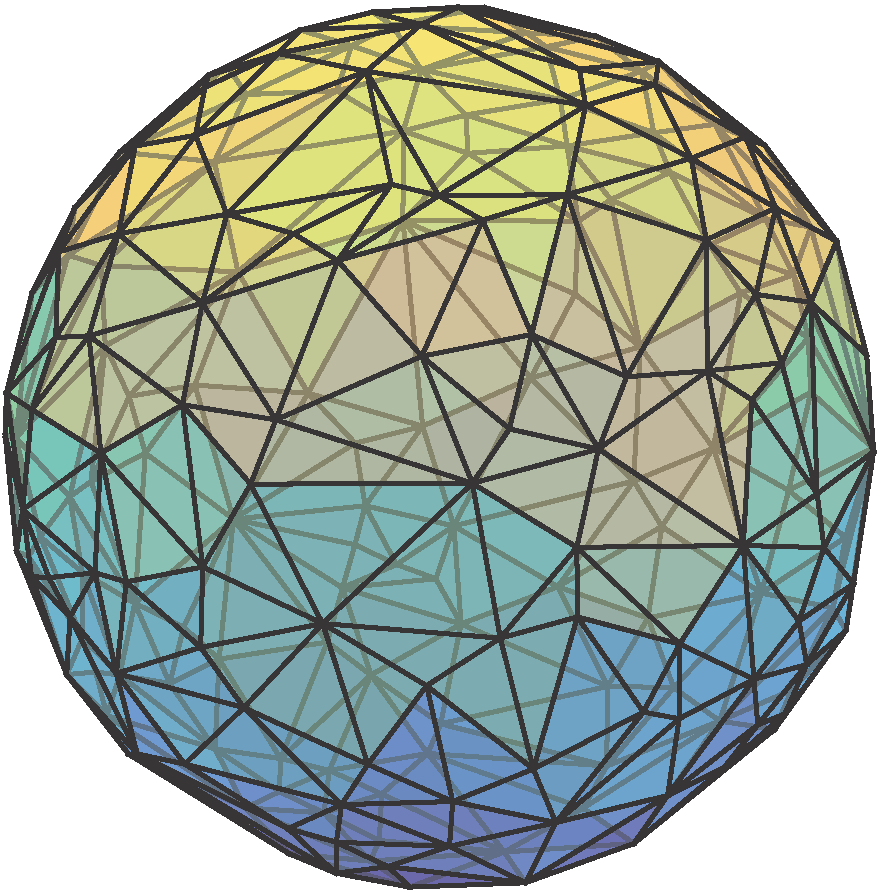}
		\end{subfigure}%
		\caption{$200$ noiseless measurements}
	\end{subfigure}%
	
	\vspace{1cm}
	
	\begin{subfigure}{.30\textwidth}
		\centering
		\begin{subfigure}{.5\textwidth}
			\centering
			\includegraphics[width=0.8\textwidth]{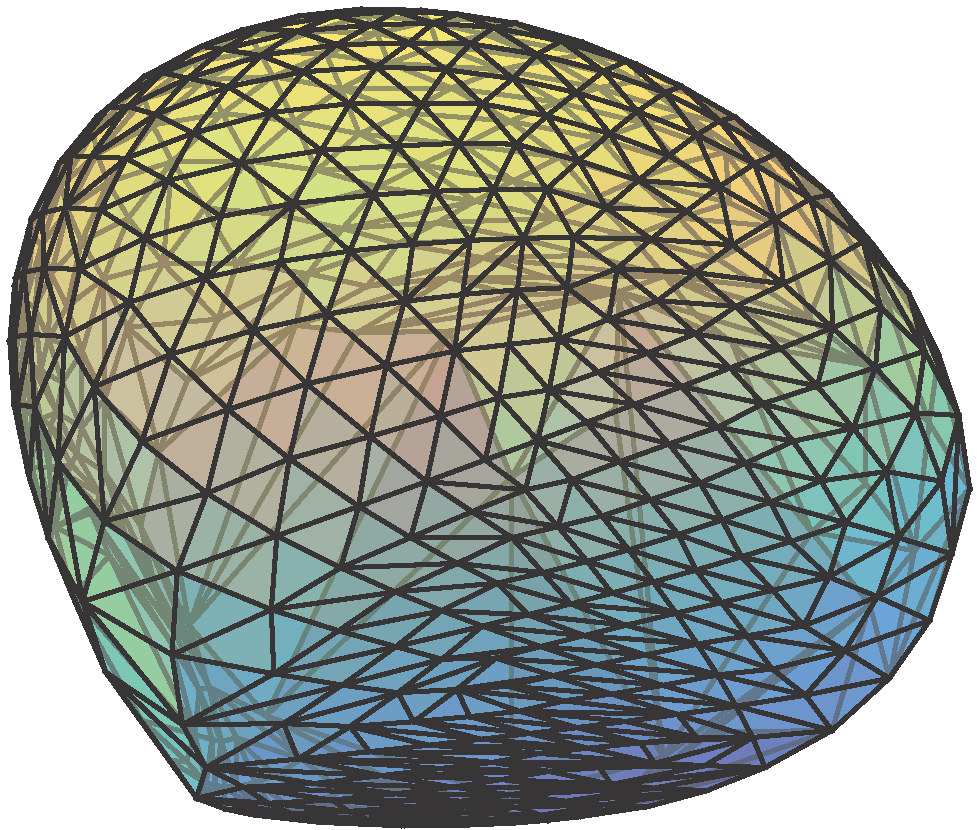}
		\end{subfigure}%
		\begin{subfigure}{.5\textwidth}
			\centering
			\includegraphics[width=0.8\textwidth]{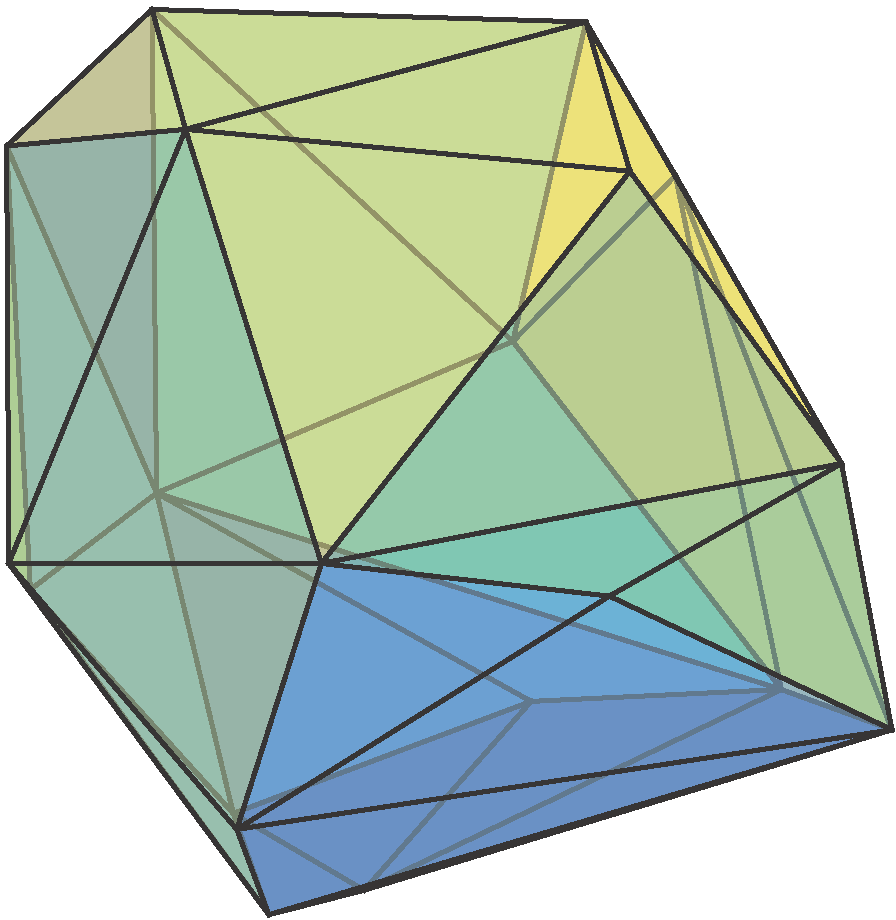}
		\end{subfigure}%
		\caption{$20$ noisy measurements}
	\end{subfigure}%
	\hspace{0.5cm}
	\begin{subfigure}{.30\textwidth}
		\centering
		\begin{subfigure}{.5\textwidth}
			\centering
			\includegraphics[width=0.8\textwidth]{Exp_F1b_s01_AMsd_n50}
		\end{subfigure}%
		\begin{subfigure}{.5\textwidth}
			\centering
			\includegraphics[width=0.8\textwidth]{Exp_F1b_s01_LS_n50}
		\end{subfigure}%
		\caption{$50$ noisy measurements}
	\end{subfigure}%
	\hspace{0.5cm}
	\begin{subfigure}{.30\textwidth}
		\centering
		\begin{subfigure}{.5\textwidth}
			\centering
			\includegraphics[width=0.8\textwidth]{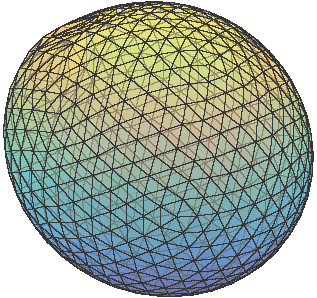}
		\end{subfigure}%
		\begin{subfigure}{.5\textwidth}
			\centering
			\includegraphics[width=0.8\textwidth]{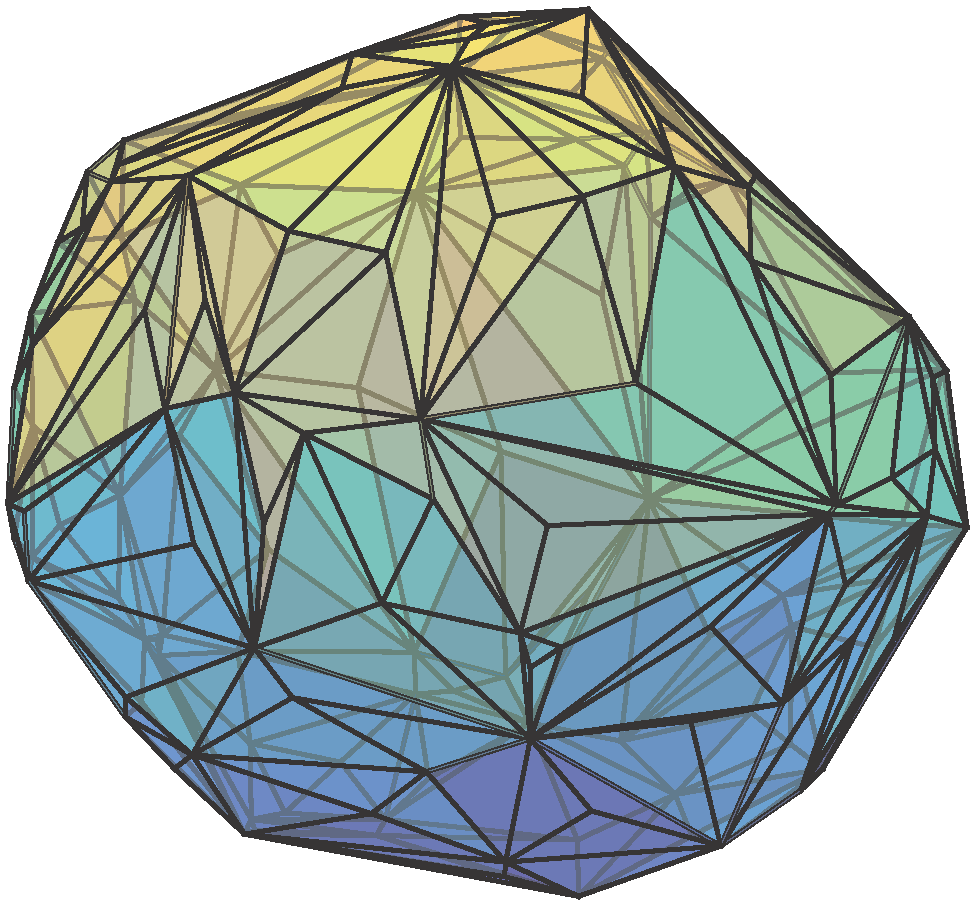}
		\end{subfigure}%
		\caption{$200$ noisy measurements}
	\end{subfigure}%
	\caption{Reconstruction of the unit $\ell_2$-ball in $\R^3$ from noiseless (first row) and noisy (second row) support function measurements.  The reconstructions obtained using our method (with $\C = \fs^3$ in \eqref{eq:sc_intro_constrainedlse}) are on the left of every subfigure, while the LSE reconstructions are on the right of every subfigure.}
	\label{fig:Exp_F1b_00}
\end{figure}


\subsection{Reconstruction via Linear Images of the Free Spectrahedron}

In the next series of synthetic experiments, we consider reconstructions of convex sets with both smooth and non-smooth features on the boundary via linear images of the spectraplex.  In these illustrations, we consider sets in $\R^2$ and in $\R^3$ for which noiseless support function evaluations are obtained and supplied as input to the problem \eqref{eq:sc_intro_constrainedlse}, with $\C$ equal to a spectraplex $\fs^p$ for different choices of $p$.  For the examples in $\R^2$, the support function evaluations are obtained at $1000$ equally spaced points on the unit circle $\sph^1$.  For the examples in $\R^3$, the support function evaluations are obtained at $2562$ regularly spaced points on the unit sphere $\sph^2$ based on an icosphere discretization.

We consider reconstruction of the $\ell_1$-ball in $\R^2$ and in $\R^3$.  Figure \ref{fig:Exp_F5_2d} shows the output from our algorithm when $d=2$ for $p \in \{ 2,3,4 \}$, and the reconstruction is exact for $p=4$.  Figure \ref{fig:Exp_F5_3d} shows the output from our algorithm when $d=3$ for $p \in \{ 3,4,5,6 \}$.  Interestingly, when $d=3$ the computed solution for $p=5$ does not contain any isolated extreme point (i.e., vertices) even though such features are expressible as projections of the spectraplex $\fs^5$.

\begin{figure}
	\centering
	\begin{subfigure}{.3\textwidth}
		\centering
		\includegraphics[width=0.4\textwidth]{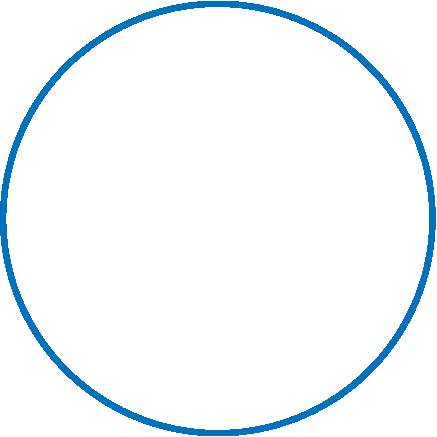}
		\label{fig:Exp_F5_q2}
	\end{subfigure}%
	\begin{subfigure}{.3\textwidth}
		\centering
		\includegraphics[width=0.4\textwidth]{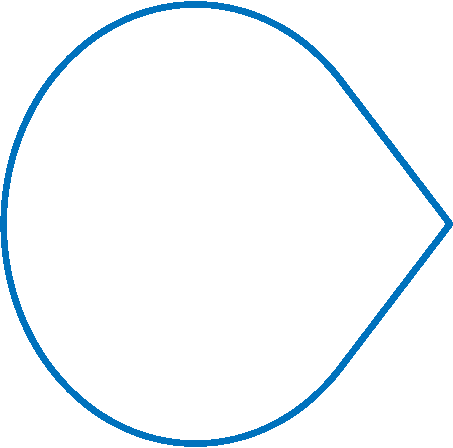}
		\label{fig:Exp_F5_q3}
	\end{subfigure}%
	\begin{subfigure}{.4\textwidth}
		\centering
		\includegraphics[width=0.3\textwidth]{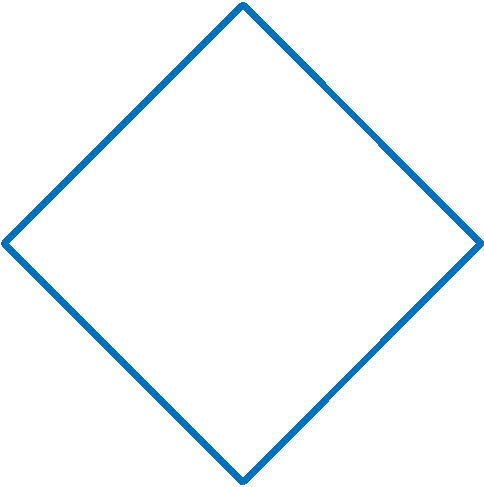}
		\label{fig:Exp_F5_q4}
	\end{subfigure}%
	\caption{Approximating the $\ell_1$-ball in $\mathbb{R}^2$ as a projection of the free-spectrahedra $\fs^2$ (left), $\fs^3$ (center), and $\fs^4$ (right).}
	\label{fig:Exp_F5_2d}
\end{figure}

\begin{figure}
	\centering
	\begin{subfigure}{.25\textwidth}
		\centering
		\includegraphics[width=0.45\textwidth]{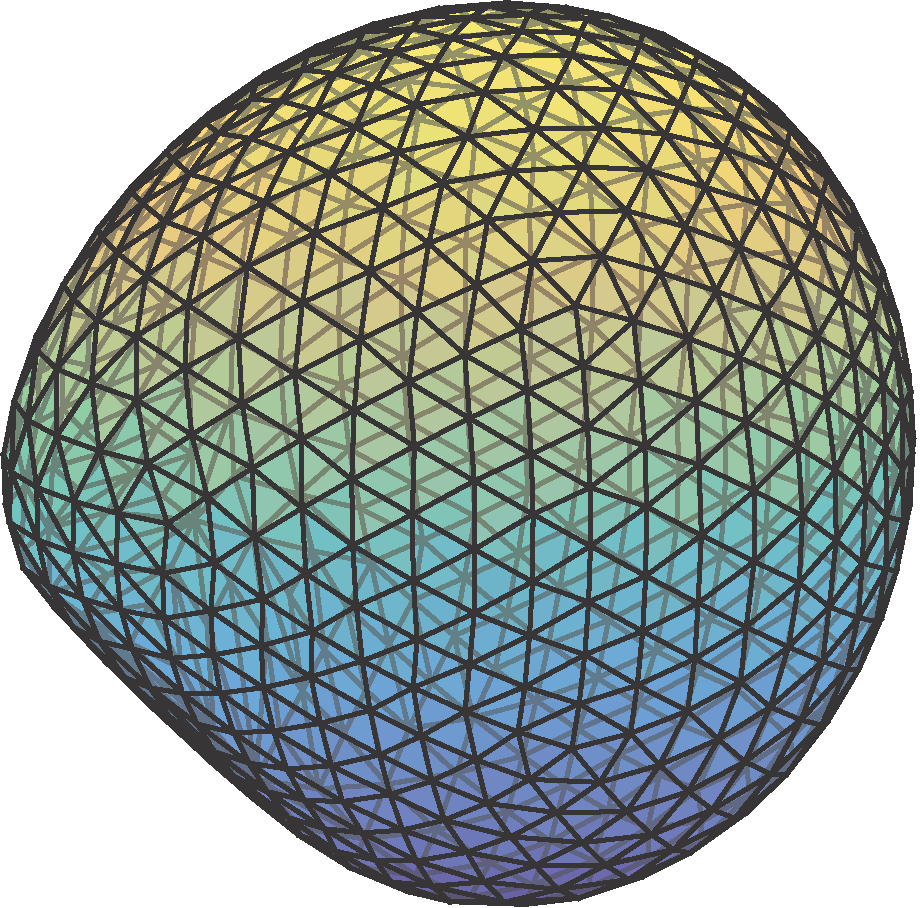}
		\label{fig:Exp_F5_3d_q3}
	\end{subfigure}%
	\begin{subfigure}{.25\textwidth}
		\centering
		\includegraphics[width=0.45\textwidth]{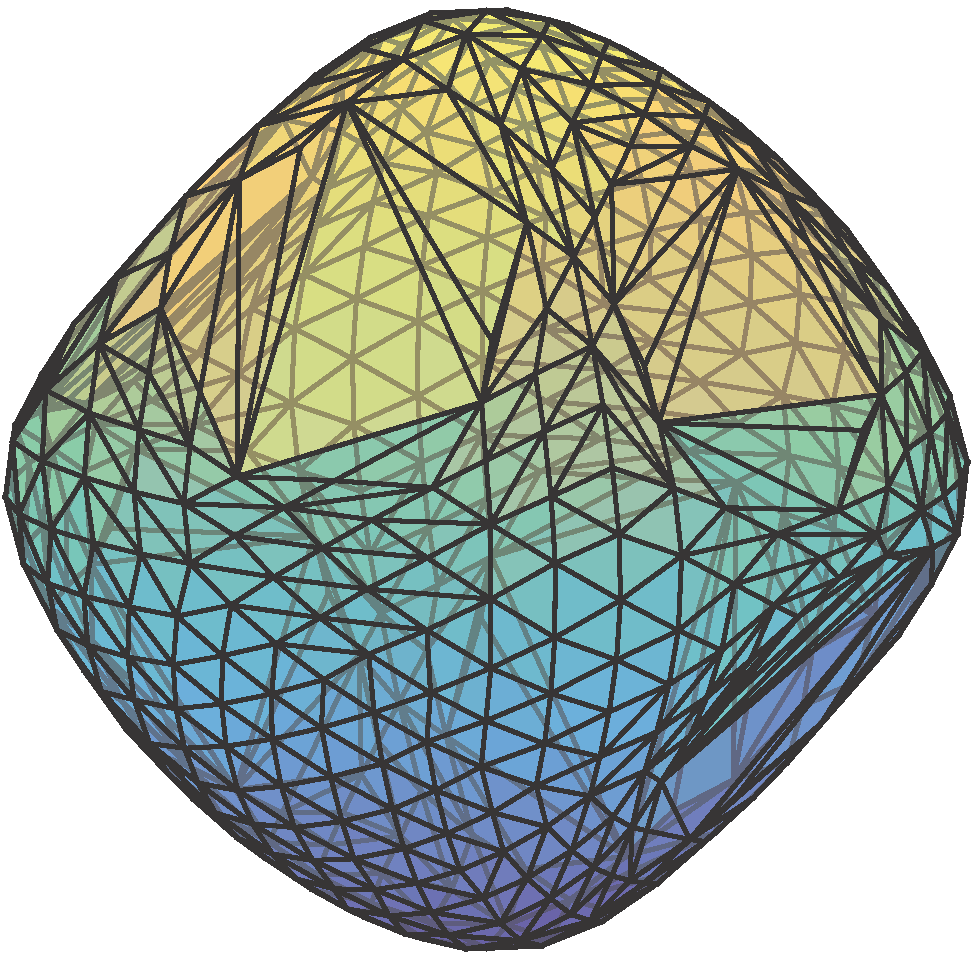}
		\label{fig:Exp_F5_3d_q4}
	\end{subfigure}%
	\begin{subfigure}{.25\textwidth}
		\centering
		\includegraphics[width=0.45\textwidth]{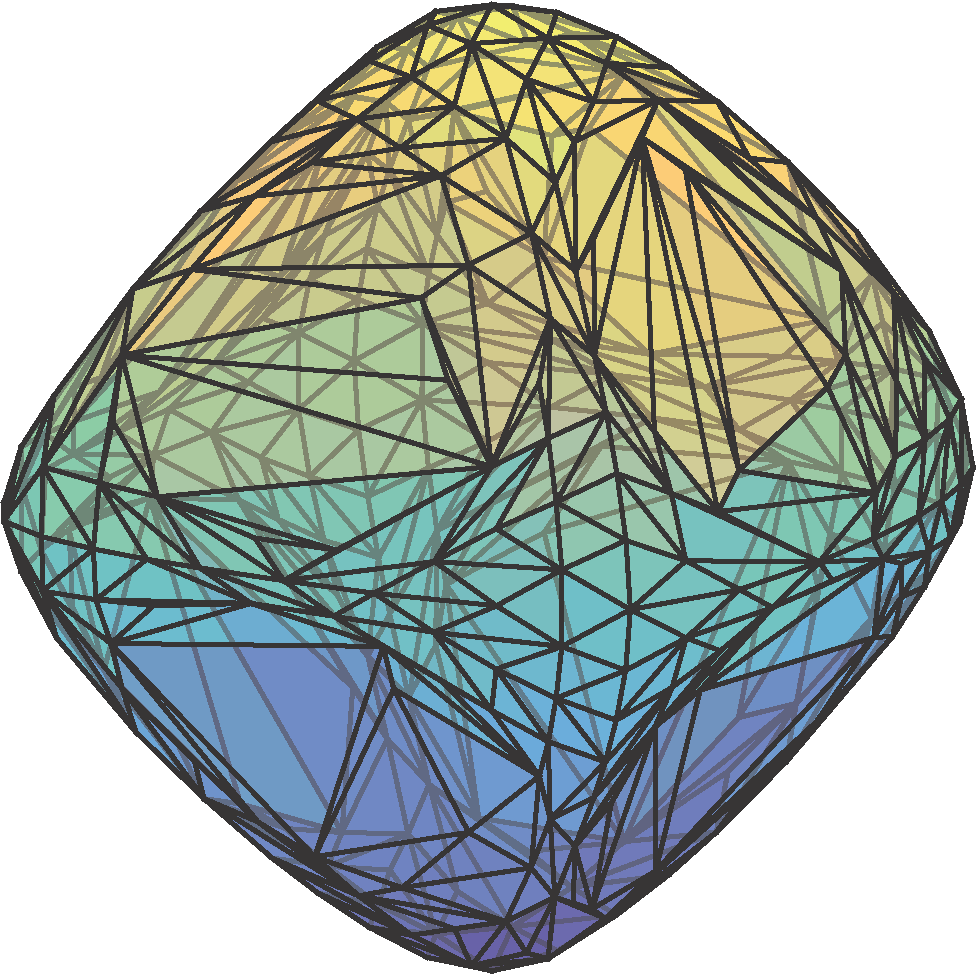}
		\label{fig:Exp_F5_3d_q5}
	\end{subfigure}%
	\begin{subfigure}{.25\textwidth}
		\centering
		\includegraphics[width=0.45\textwidth]{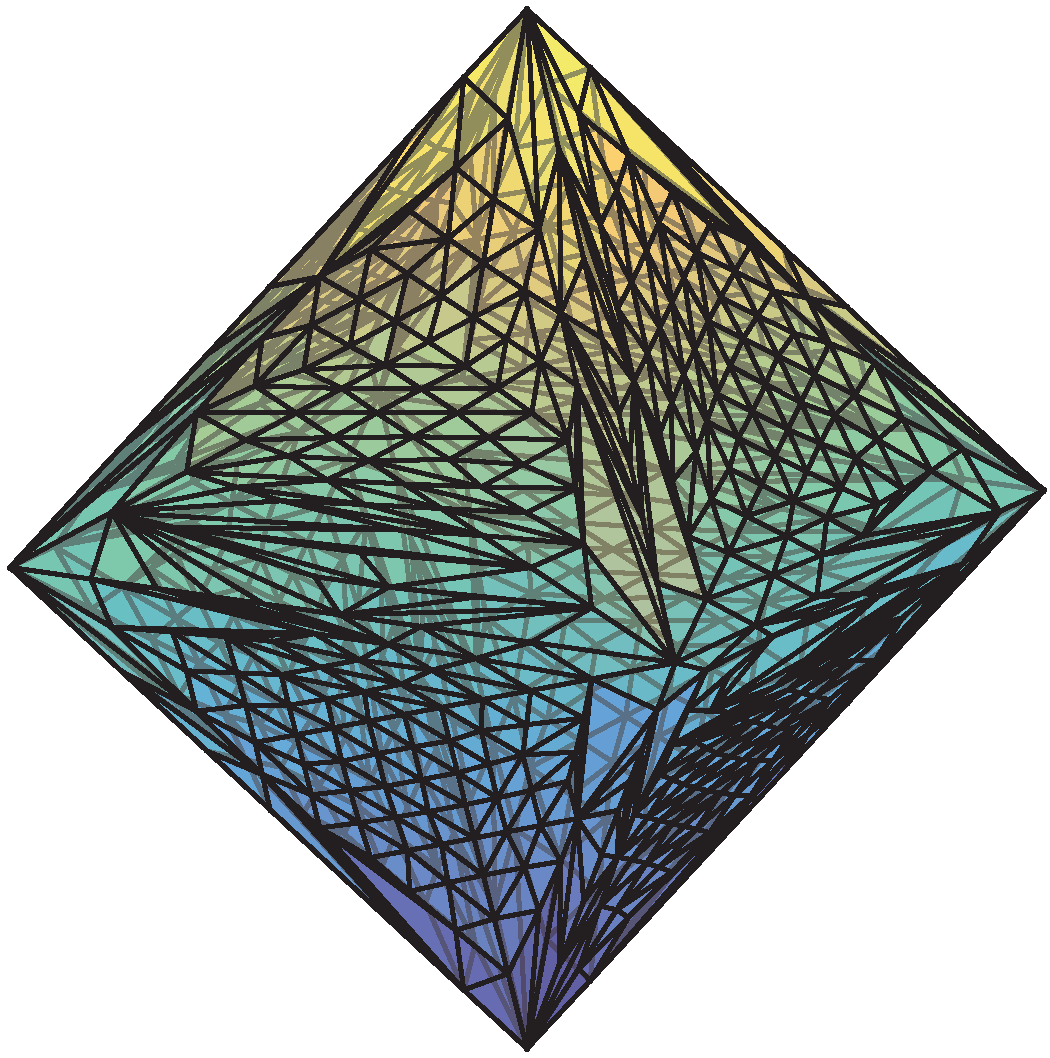}
		\label{fig:Exp_F5_3d_q6}
	\end{subfigure}%
	\caption{Approximating the $\ell_1$-ball in $\mathbb{R}^3$ as a projection of the spectraplices $\fs^3$, $\fs^4$, $\fs^5$, and $\fs^6$ (from left to right).}
	\label{fig:Exp_F5_3d}
\end{figure}

As our next illustration, we consider the following projection of $\fs^4$:
\begin{equation} \label{eq:uncomfortablepillow}
\mathrm{UPillow} = \left\{
( x, y, z )^{\intercal} ~|~ X \in \fs^4, X_{12} = X_{21} = x, X_{23} = X_{32} = y, X_{34} = X_{43} = z
\right\} \subset \R^3.
\end{equation}
We term this convex set the `uncomfortable pillow' and it contains both smooth and non-smooth features on its boundary.  Figure \ref{fig:Exp_F6} shows the reconstruction of $\mathrm{UPillow}$ as linear images of $\fs^3$ and $\fs^4$ computed using our algorithm.  The reconstruction based on $\fs^4$ is exact, while the reconstruction based on $\fs^3$ smoothens out some of the `pointy' features of $\mathrm{UPillow}$; see for example the reconstructions based on $\fs^3$ and on $\fs^4$ viewed in the $(0,1,0)$ direction in Figure \ref{fig:Exp_F6}).

\begin{figure}
	\centering
	\begin{subfigure}{.20\textwidth}
		\centering
		\ifgfx
		\includegraphics[width=0.58\textwidth]{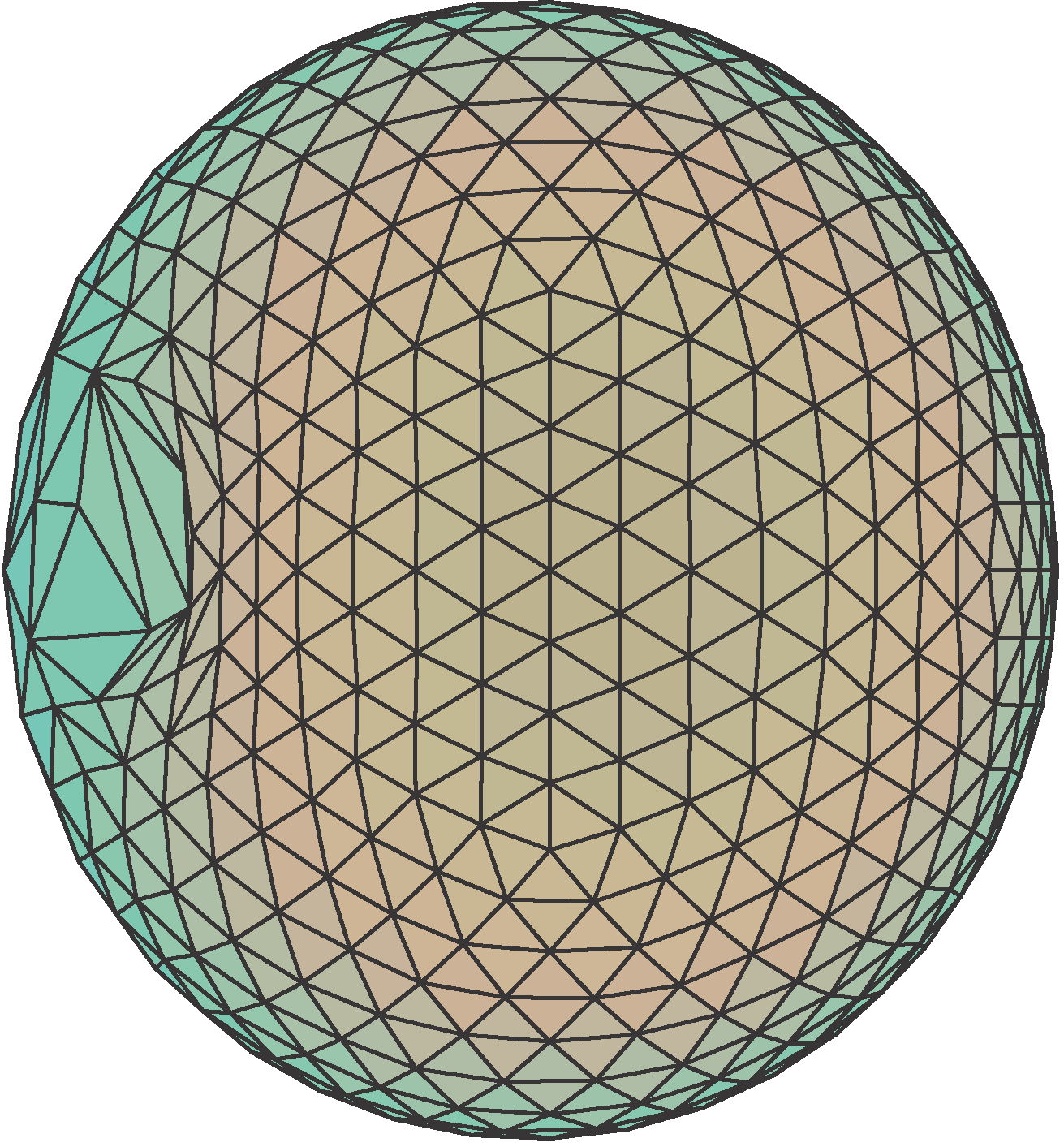}
		\fi
	\end{subfigure}%
	\begin{subfigure}{.20\textwidth}
		\centering
		\ifgfx
		\includegraphics[width=0.58\textwidth]{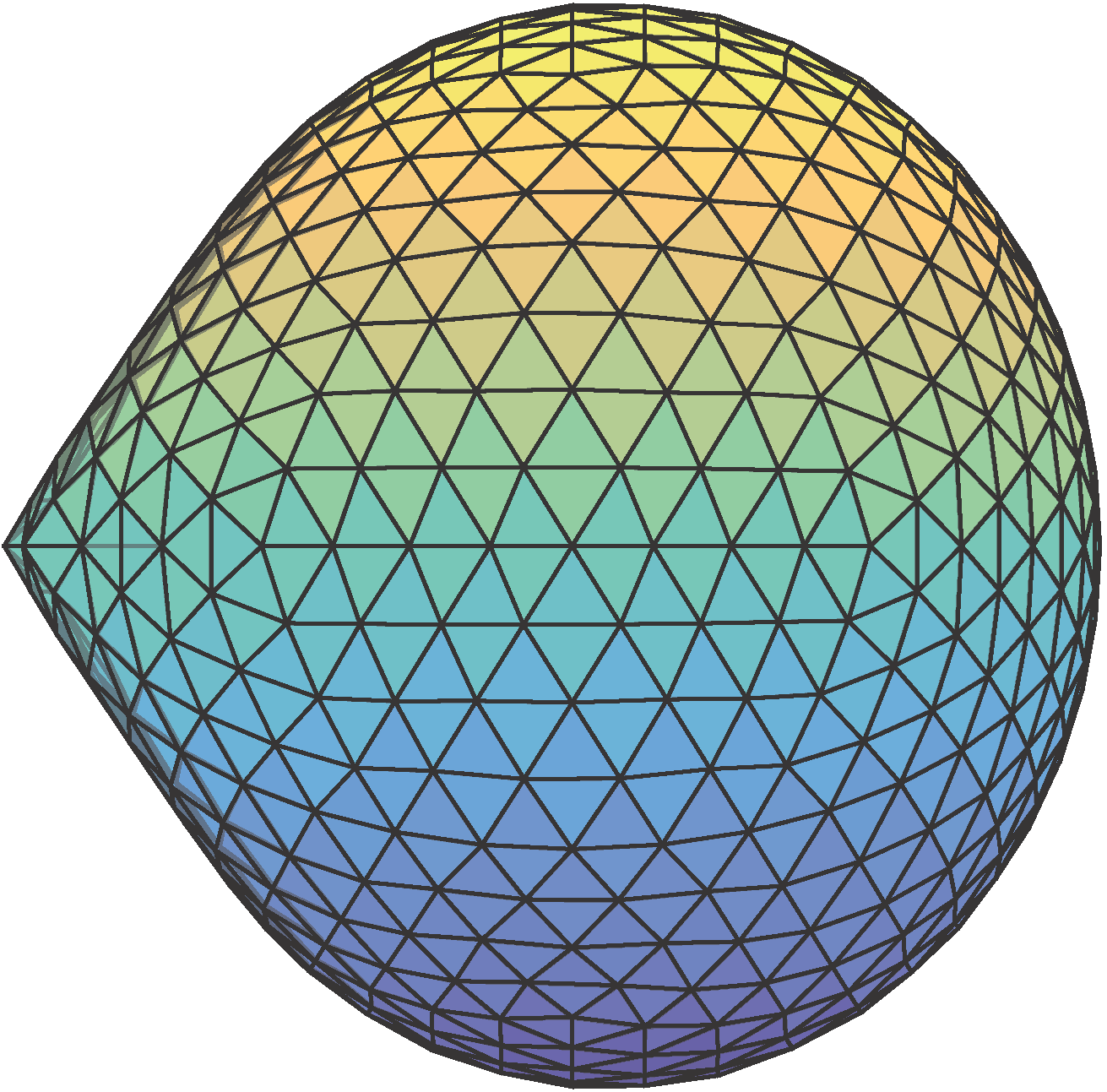}
		\fi
	\end{subfigure}%
	\begin{subfigure}{.20\textwidth}
		\centering
		\ifgfx
		\includegraphics[width=0.58\textwidth]{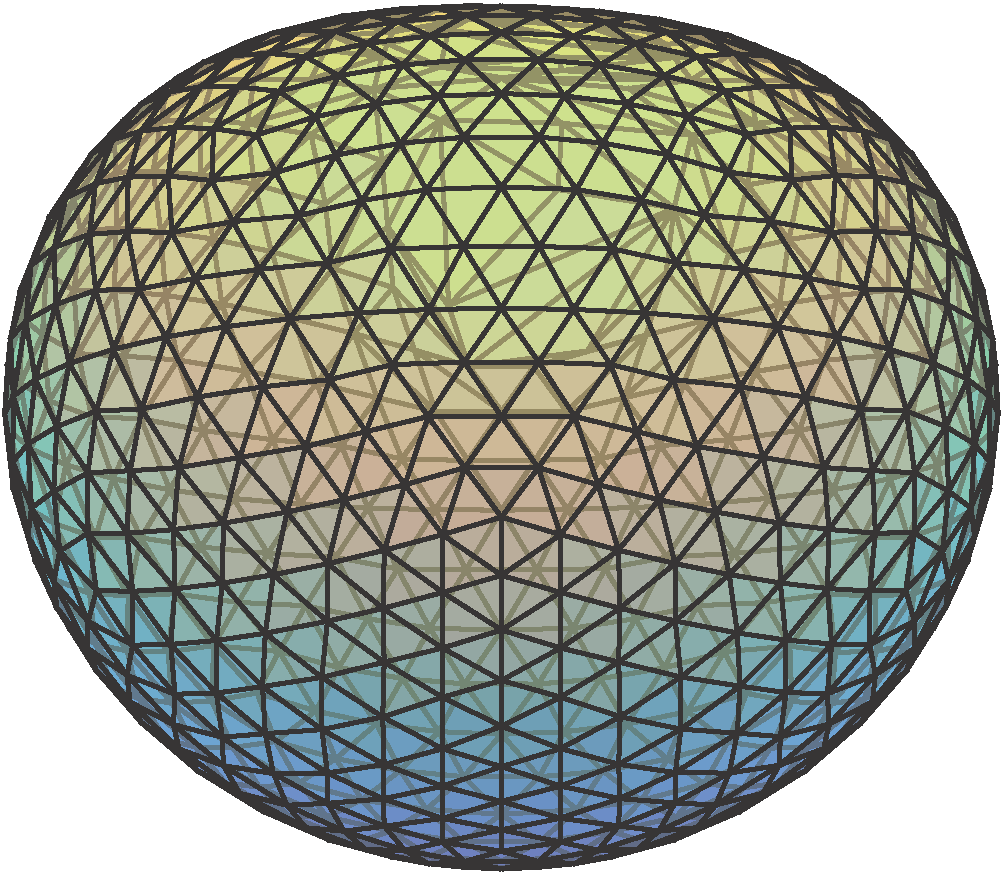}
		\fi
	\end{subfigure}%
	\begin{subfigure}{.20\textwidth}
		\centering
		\ifgfx
		\includegraphics[width=0.58\textwidth]{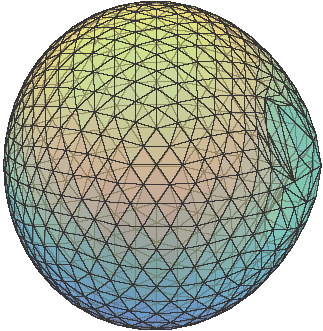}
		\fi
	\end{subfigure}%
	
	\bigskip
	
	\begin{subfigure}{.20\textwidth}
		\centering
		\ifgfx
		\includegraphics[width=0.6\textwidth]{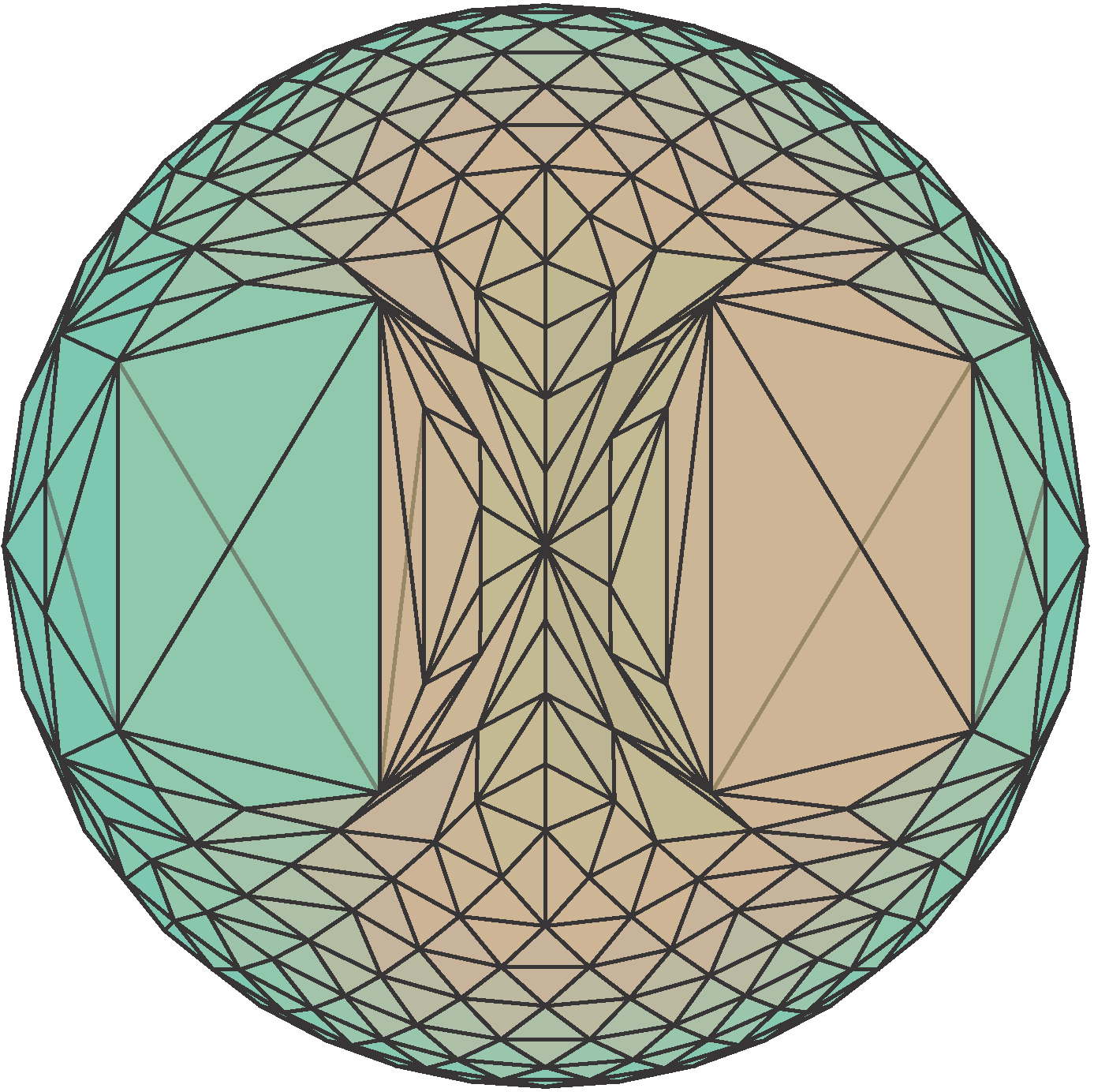}
		\fi
	\end{subfigure}%
	\begin{subfigure}{.20\textwidth}
		\centering
		\includegraphics[width=0.6\textwidth]{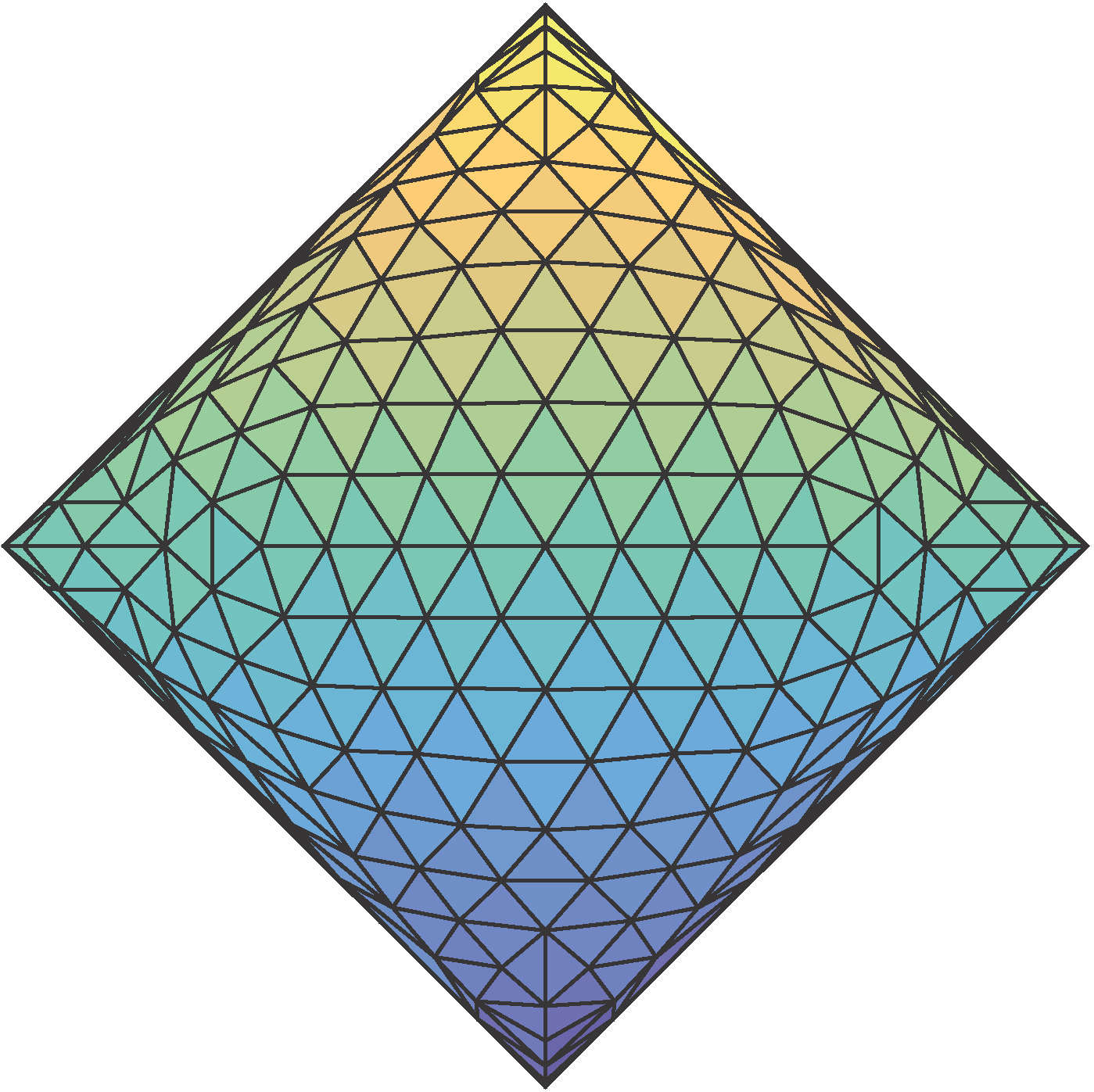}
	\end{subfigure}%
	\begin{subfigure}{.20\textwidth}
		\centering
		\includegraphics[width=0.6\textwidth]{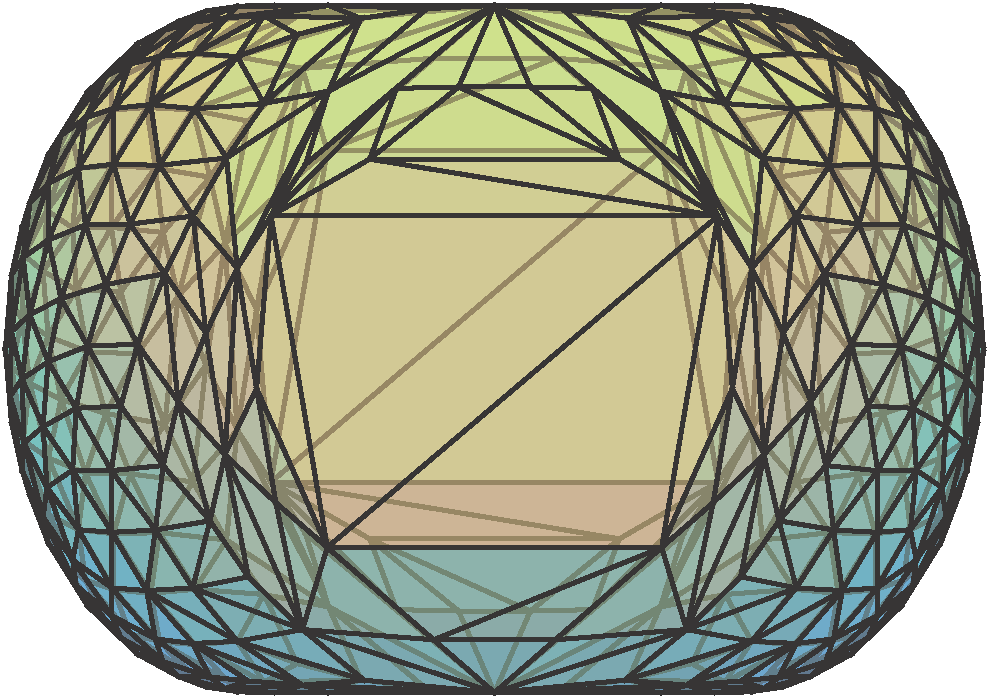}
	\end{subfigure}%
	\begin{subfigure}{.20\textwidth}
		\centering
		\includegraphics[width=0.6\textwidth]{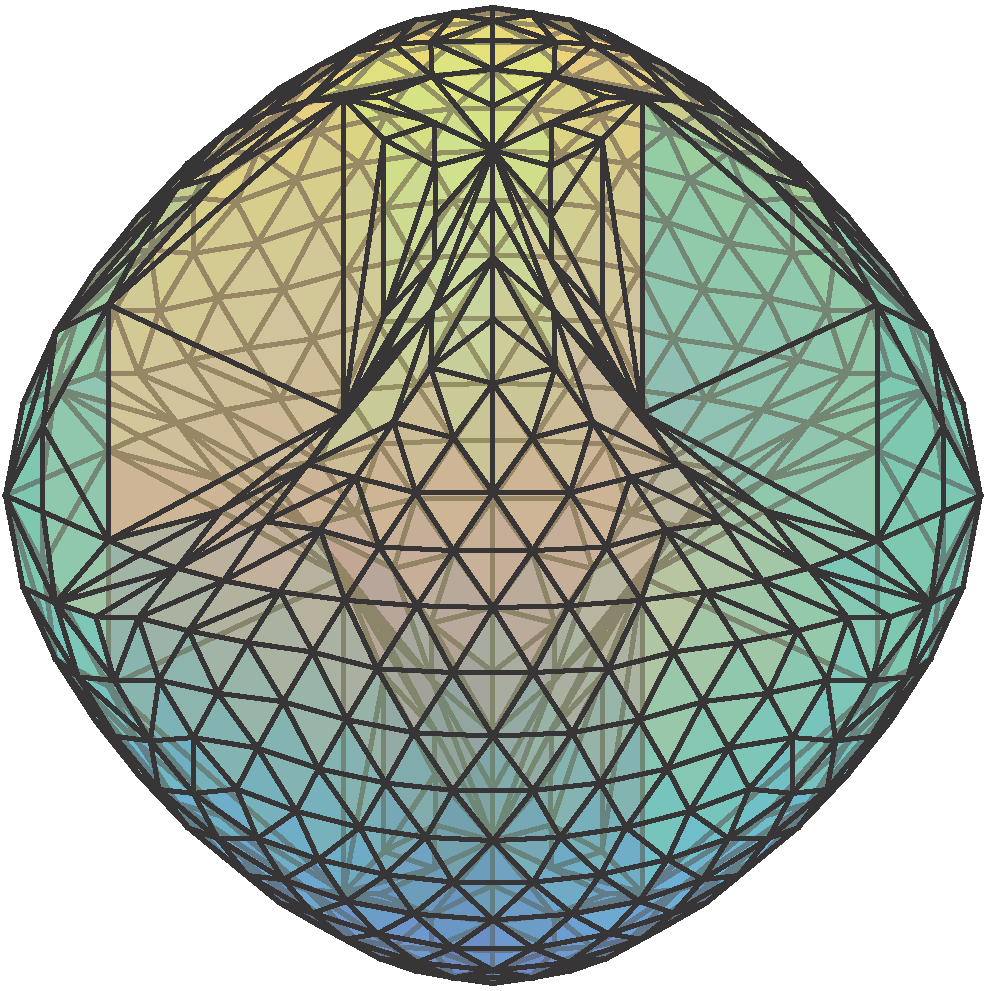}
	\end{subfigure}%
	\caption{Reconstructions of $\K^{\star}$ (defined in \eqref{eq:uncomfortablepillow}) as the projection of $\fs^3$ (top row) and $\fs^4$ (bottom row).  The figures in each row are different views of a single reconstruction, and are orientated in the $(0,0,1)$,$(0,1,0)$,$(1,0,1)$, and $(1,1,0)$ directions (from left to right) respectively.}
	\label{fig:Exp_F6}
\end{figure}


\subsection{Polyhedral Approximations of the $\ell_2$-ball and the Tammes Problem}


In the third set of synthetic experiments, we consider polyhedral approximations of the $\ell_2$-ball in $\R^3$.  This problem has been studied in many contexts under different guises.  For instance, the Tammes problem seeks the optimal placement of $q$ points on $\sph^2$ so as to maximize the minimum pairwise distance, and it is inspired by pattern formation in pollens \cite{Tammes:30}.\footnote{The Tammes problem is a special case of Thompson's problem as well as Smale's 7th problem \cite{Smale:98}.}  Another body of work studies the asymptotics of polyhedral approximations of general compact convex bodies in the (see, for example, \cite{Bronstein08}).  In the optimization literature, polyhedral approximations of the second-order cone have been investigated in \cite{BenNem:01} -- in particular, the approach in \cite{BenNem:01} leads to an approximation that is based on expressing the $\ell_2$-ball via a nested hierarchy of planar spherical constraints, and to subsequently approximate these constraints with regular polygons.

Our focus in the present series of experiments is to investigate polyhedral approximations of the Euclidean sphere from a \emph{computational} perspective by employing the algorithmic tools developed in this paper.  The experimental setup is similar to that of the previous subsection: we supply $2562$ regularly-spaced points in $\sph^2$ (with corresponding support function values equal to one) based on an icosphere discretization as input to \eqref{eq:sc_intro_constrainedlse}, and we select $\C$ to be the simplex $\simp^{q}$ for a range of values of $q$.  Figure \ref{fig:Exp_F4_3d} shows the optimal solutions computed using our method for $q \in \{ 4,5,\ldots,12\}$.  It turns out that the results obtained using our approach are closely related for certain values of $q$ to optimal configurations of the Tammes problem \cite{SchWae:51,Danzer:86}:
\begin{equation} \label{eq:tammes}
\underset{\{\ba_j\}_{j=1}^{q} \subset \sph^{d-1}}{\mathrm{argmax}} ~ \underset{1 \leq k < l \leq q}{\min}  ~~ \mathrm{dist}(\ba_k,\ba_l) = \underset{\{\ba_j\}_{j=1}^{q} \subset \sph^{d-1}}{\mathrm{argmin}} ~ \underset{1 \leq k < l \leq q}{\max}  ~~ \langle \ba_k,\ba_l \rangle.
\end{equation}
Specifically, the face lattice of our solutions is isomorphic to that of the Tammes problem for $q \in \{ 4,5,6,7,12\}$, which suggests that these configurations are stable and optimal for a broader class of objectives.  We are currently not aware if the distinction between the solutions to the two sets of problems for $q\in \{8,9,10,11 \}$ is a result of our method recovering a locally optimal solution (in generating these results, we apply $500$ initializations for each instance of $q$), or if it is inherently due to the different objectives that the two problems seek to optimize.  For the case of $q=8$, the difference appears to be due to the latter reason as an initialization supplied to our algorithm based on a configuration that is isomorphic to the Tammes solution led to a suboptimal local minimum.

\begin{figure}
	\centering
	\begin{subfigure}{.11\textwidth}
		\centering
		\includegraphics[width=0.9\textwidth]{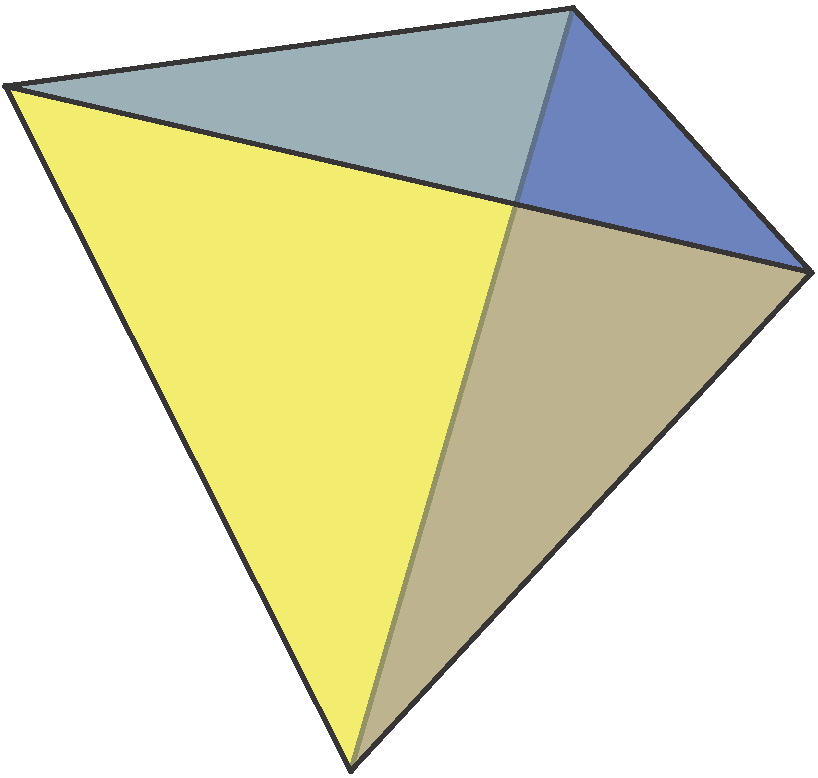}
	\end{subfigure}%
	\begin{subfigure}{.11\textwidth}
		\centering
		\includegraphics[width=0.9\textwidth]{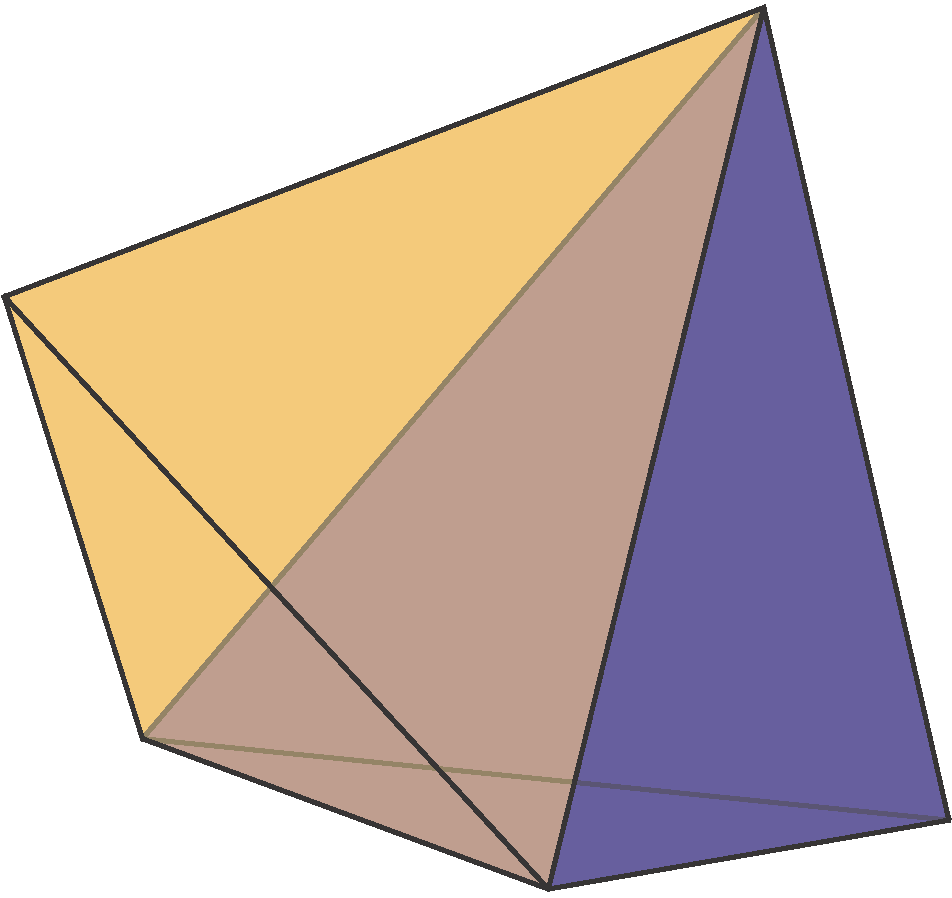}
	\end{subfigure}%
	\begin{subfigure}{.11\textwidth}
		\centering
		\includegraphics[width=0.9\textwidth]{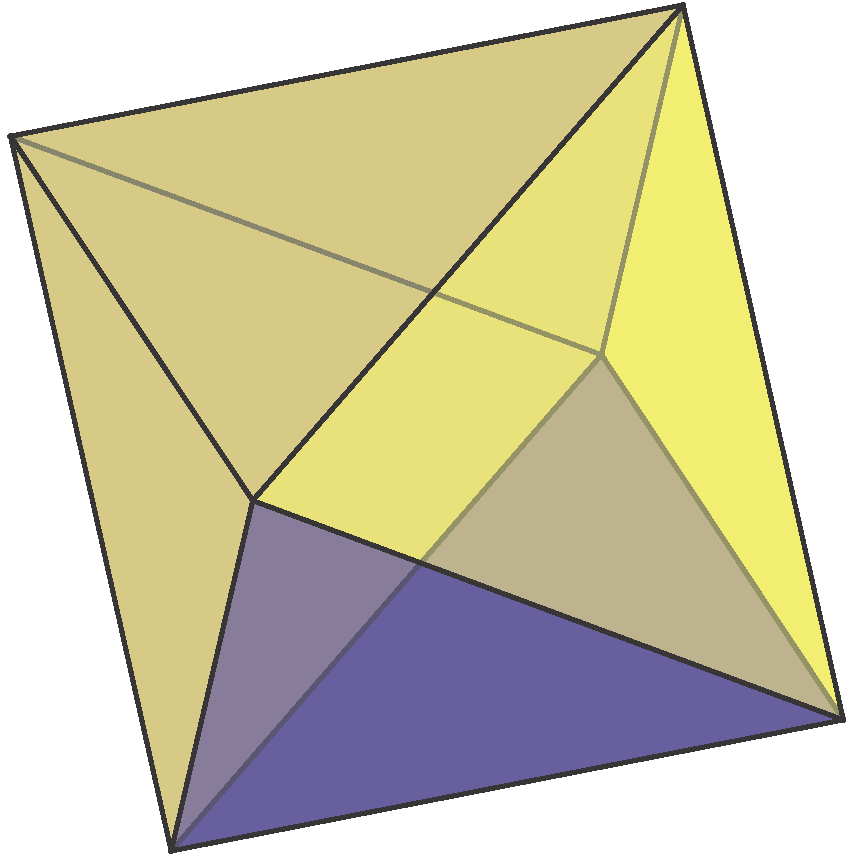}
	\end{subfigure}%
	\begin{subfigure}{.11\textwidth}
		\centering
		\includegraphics[width=0.9\textwidth]{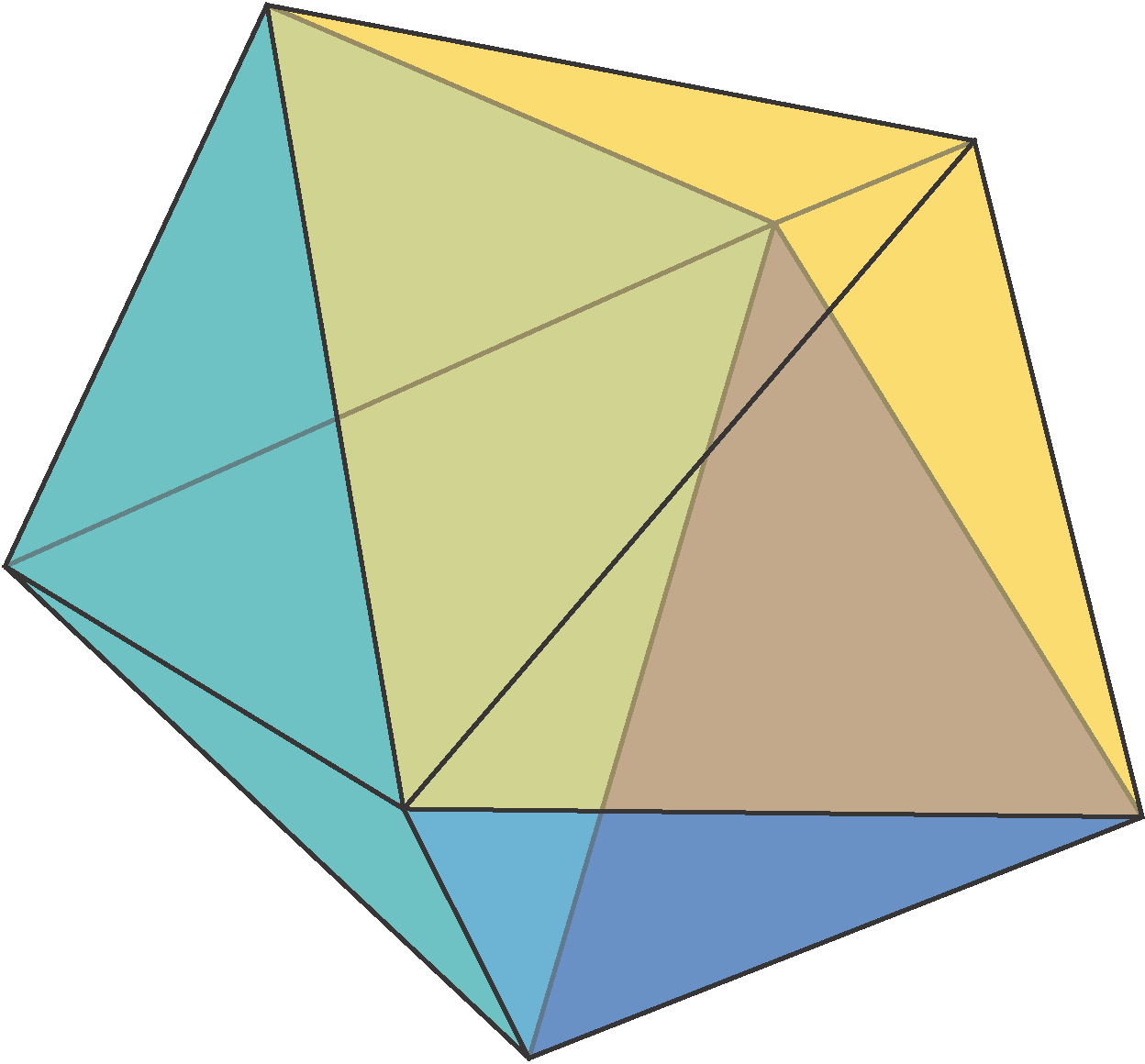}
	\end{subfigure}%
	\begin{subfigure}{.11\textwidth}
		\centering
		\includegraphics[width=0.9\textwidth]{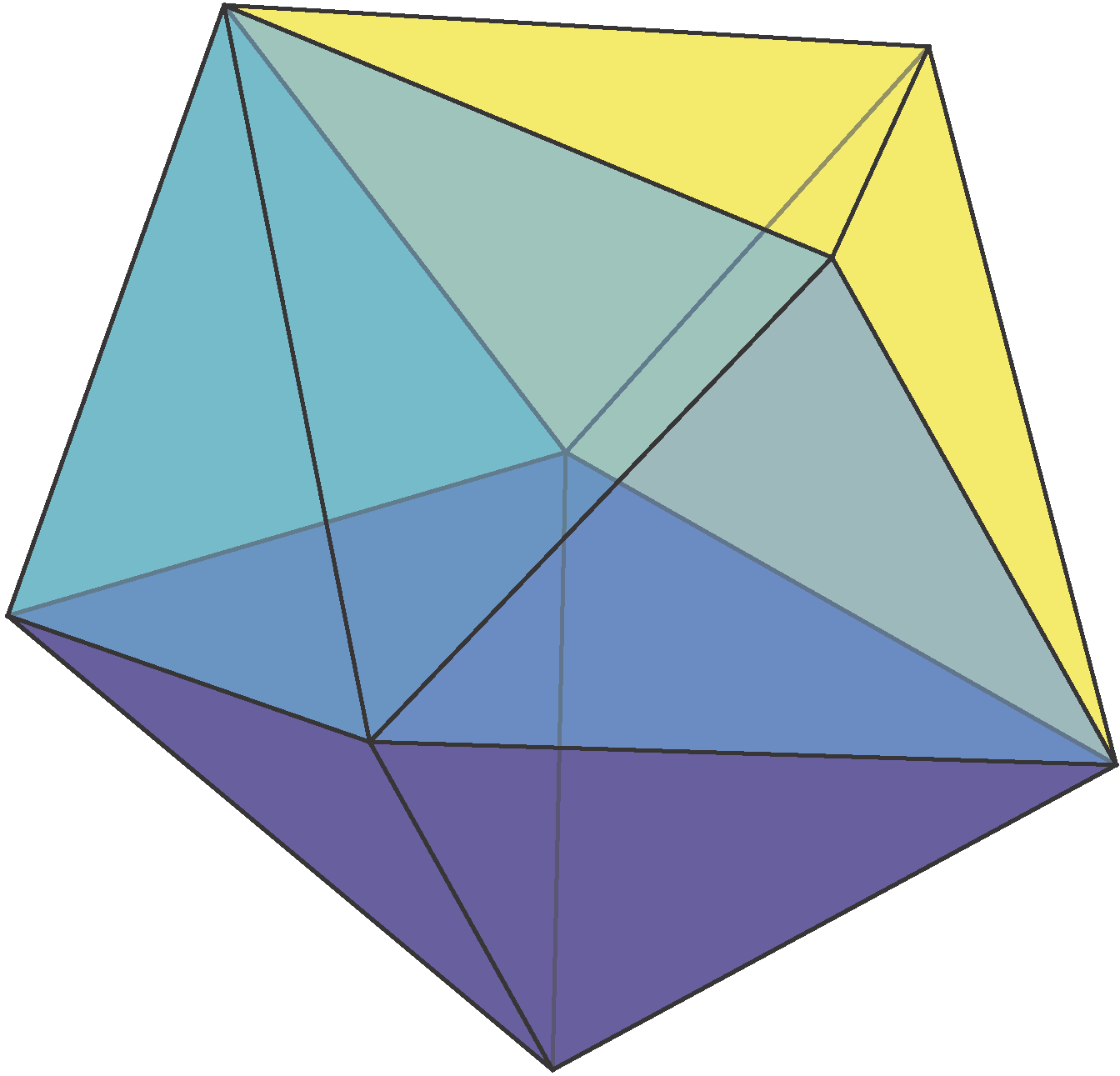}
	\end{subfigure}%
	\begin{subfigure}{.11\textwidth}
		\centering
		\includegraphics[width=0.9\textwidth]{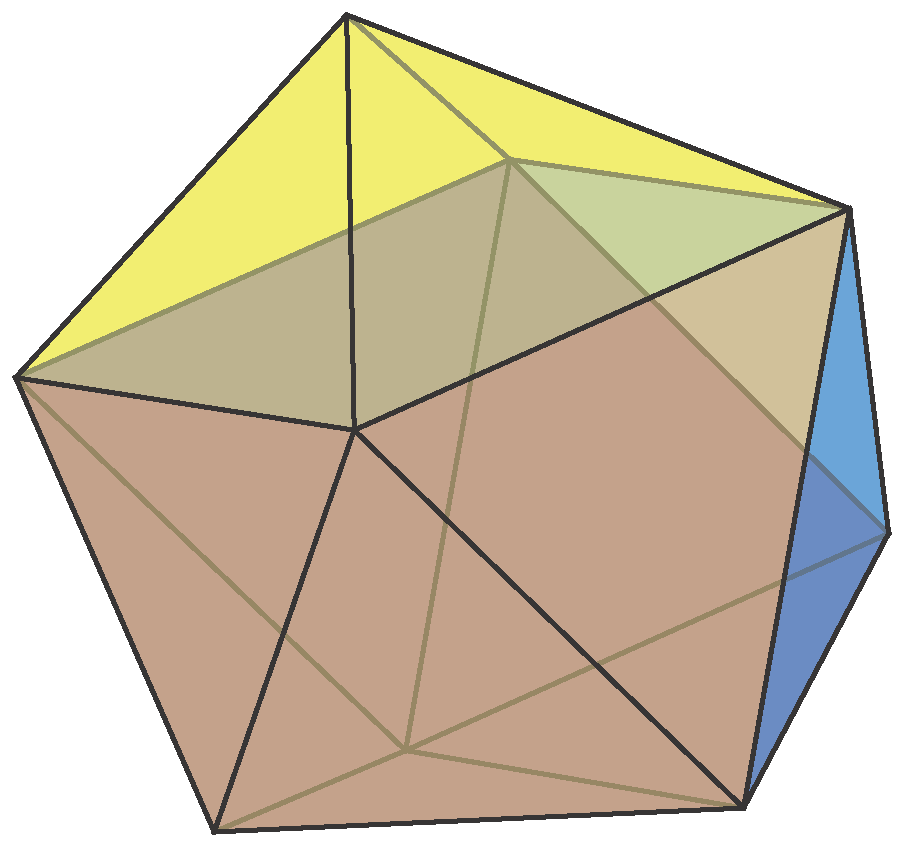}
	\end{subfigure}%
	\begin{subfigure}{.11\textwidth}
		\centering
		\includegraphics[width=0.9\textwidth]{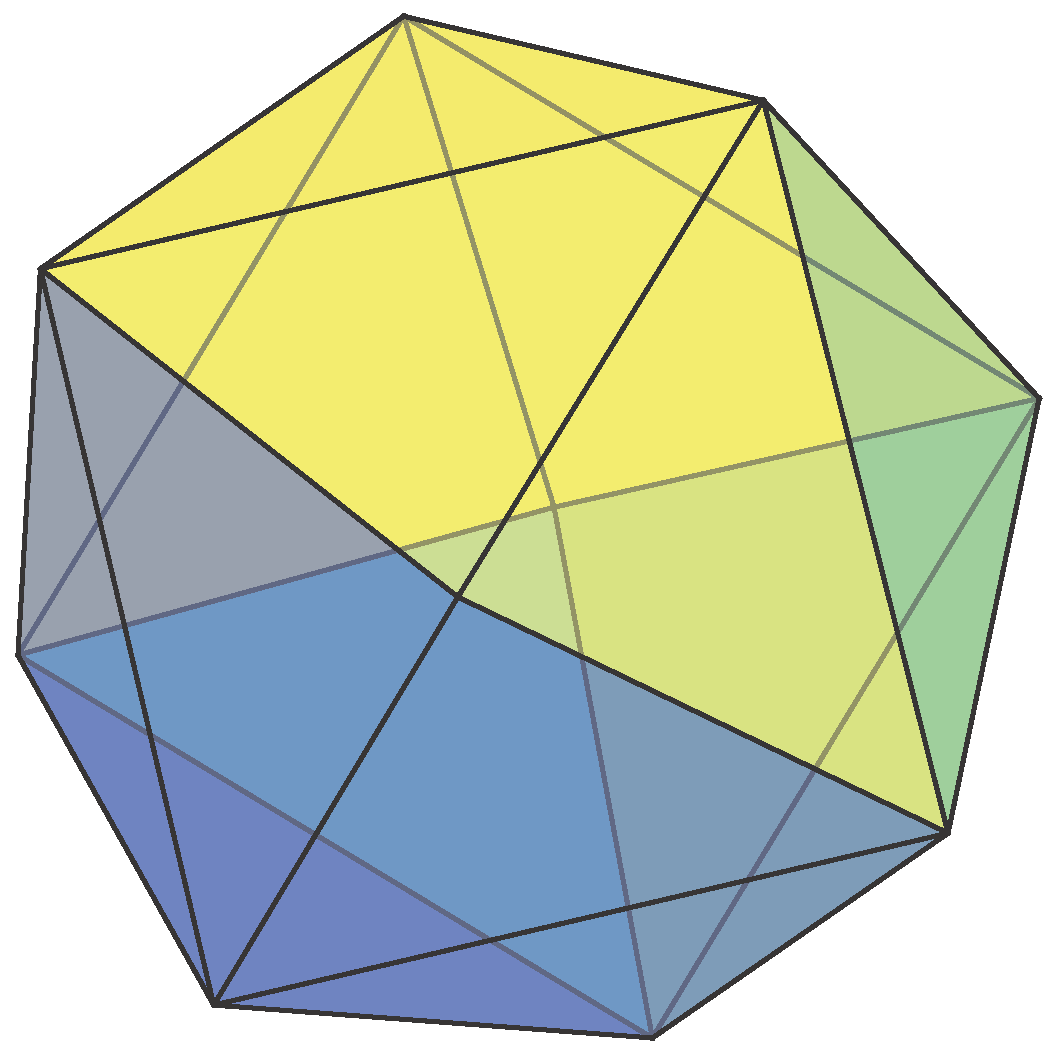}
	\end{subfigure}%
	\begin{subfigure}{.11\textwidth}
		\centering
		\includegraphics[width=0.9\textwidth]{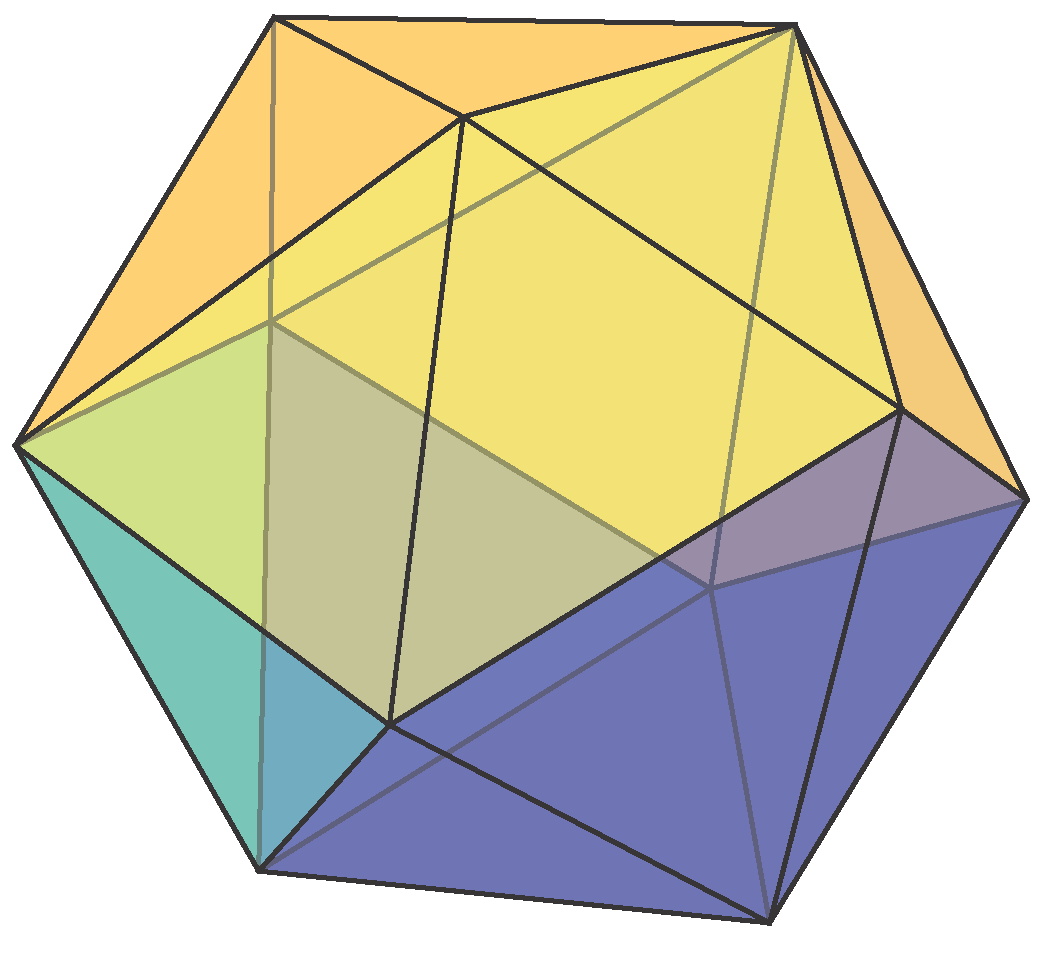}
	\end{subfigure}%
	\begin{subfigure}{.11\textwidth}
		\centering
		\includegraphics[width=0.9\textwidth]{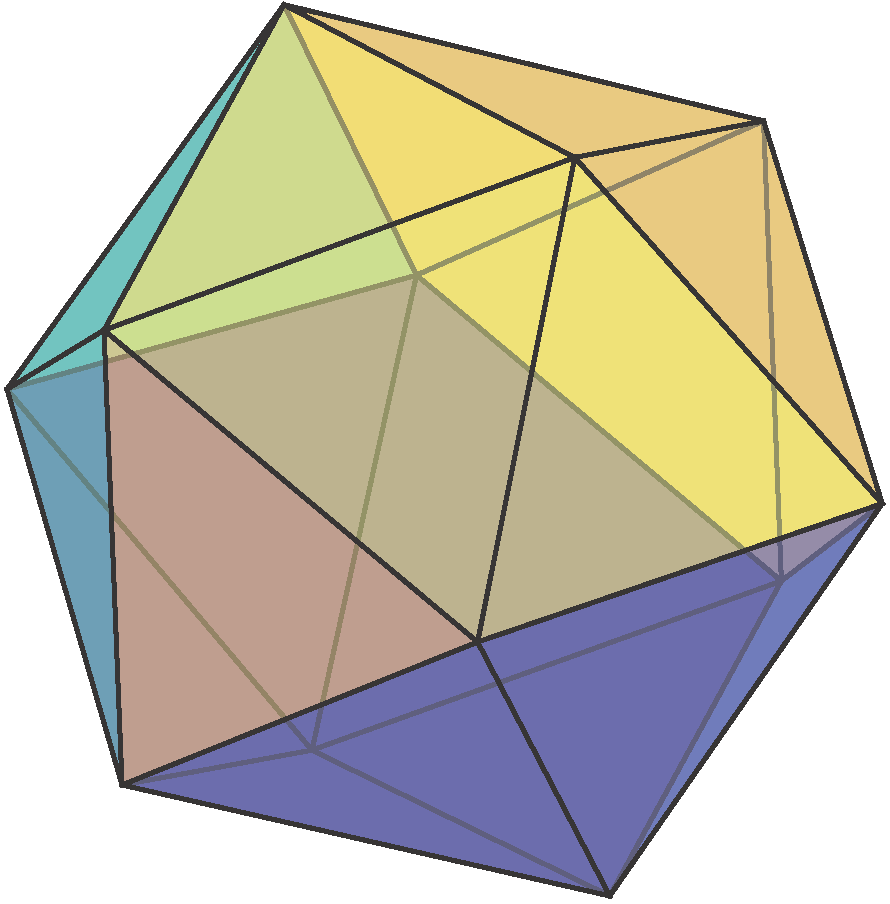}
	\end{subfigure}%
	\caption{Approximating the $\ell_2$-ball in $\mathbb{R}^3$ as the projection of $\simp^{q}$ for $q\in \{4,5,\ldots,12\}$ (from left to right).}
	\label{fig:Exp_F4_3d}
\end{figure}


\subsection{Reconstruction of a Human Lung} \label{sec:numexp_lung}
In the final set of experiments we apply our algorithm to reconstruct a convex mesh of a human lung.  The purpose of this experiment is to demonstrate the utility of our algorithm in a setting in which the underlying object is not convex.  Indeed, in many applications in practice of reconstruction from support function evaluations, the underlying set of interest is not convex; however, due to the nature of the measurements available, one seeks a reconstruction of the convex hull of the underlying set.  In the present example, the set of interest is obtained from the CT scan of the left lung of a healthy individual \cite{ChestCT}.  We note that a priori it is unclear whether the convex hull of the lung is well-approximated as a linear image of either a low-dimensional simplex or a low-dimensional spectraplex.

We first obtain $n=50$ noiseless support function evaluations of the lung (note that this object lies in $\R^3$) in directions that are generated uniformly at random over the sphere $\sph^2$.  In the top row of Figure \ref{fig:Exp_Lung_3d} we show the reconstructions as projections of $\fs^{q}$ for $q\in \{3,4,5,6\}$, and we contrast these with the LSE.  We repeat the same experiment with $n=300$ measurements, with the reconstructions shown in the bottom row of Figure \ref{fig:Exp_Lung_3d}.

To concretely compare the results obtained using our framework and those based on the LSE, we contrast the description complexity -- the number of parameters used to specify the reconstruction -- of the estimates obtained from both frameworks.  An estimator computed using our approach is specified by a projection map $A \in L(\R^q,\R^d)$, and hence it requires $dq$ parameters, while the LSE proposed by the algorithm in \cite{GarKid:09} assigns a vertex to every measurement, and hence it requires $d n$ parameters.  The LSE using $n=300$ measurements requires $3 \times 300$ parameters to specify whereas the estimates obtained using our framework that are specified as projections of $\fs^{5}$ and $\fs^{6}$ -- these estimates offer comparable quality to those of the LSE -- require $3 \times 15$ and $3 \times 21$ parameters, respectively.  This substantial discrepancy highlights the drawback of using polyhedral sets of growing complexity to approximate non-polyhedral objects in higher dimensions.

\begin{figure}
	\centering
	\begin{subfigure}{.20\textwidth}
		\centering
		\includegraphics[width=0.6\textwidth]{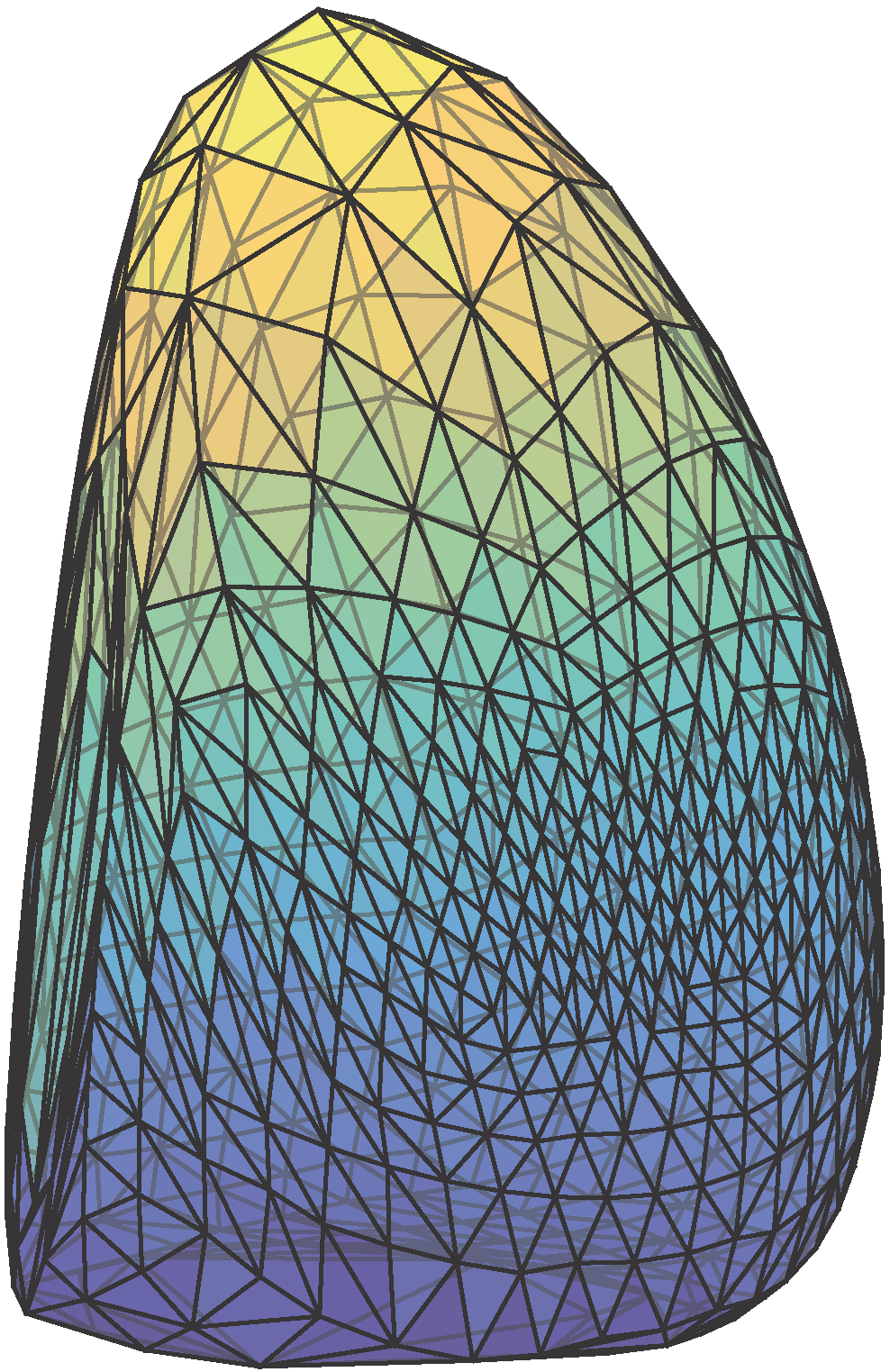}
		\caption{$\fs^{3}$}
	\end{subfigure}%
	\begin{subfigure}{.20\textwidth}
		\centering
		\includegraphics[width=0.6\textwidth]{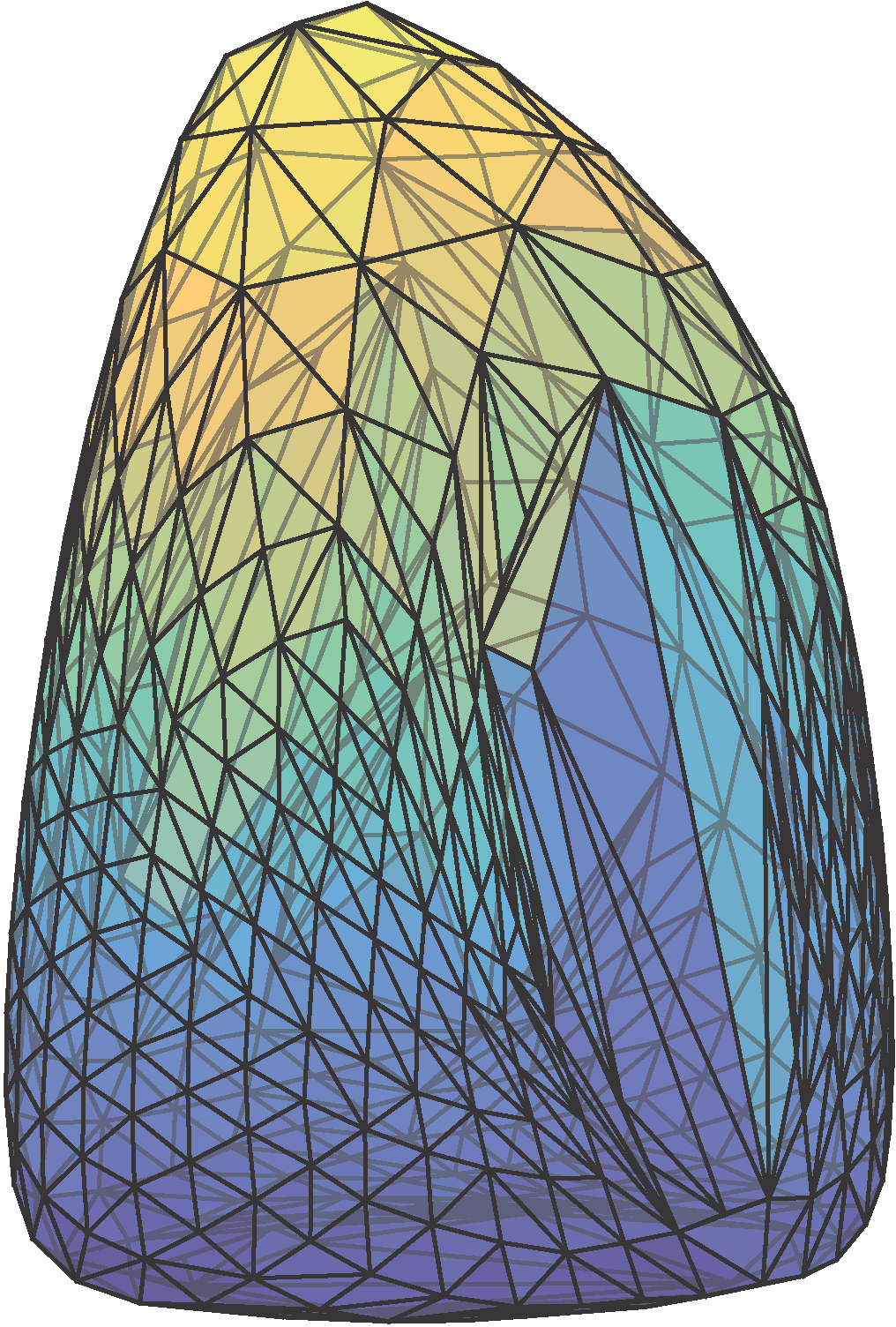}
		\caption{$\fs^{4}$}
	\end{subfigure}%
	\begin{subfigure}{.20\textwidth}
		\centering
		\includegraphics[width=0.6\textwidth]{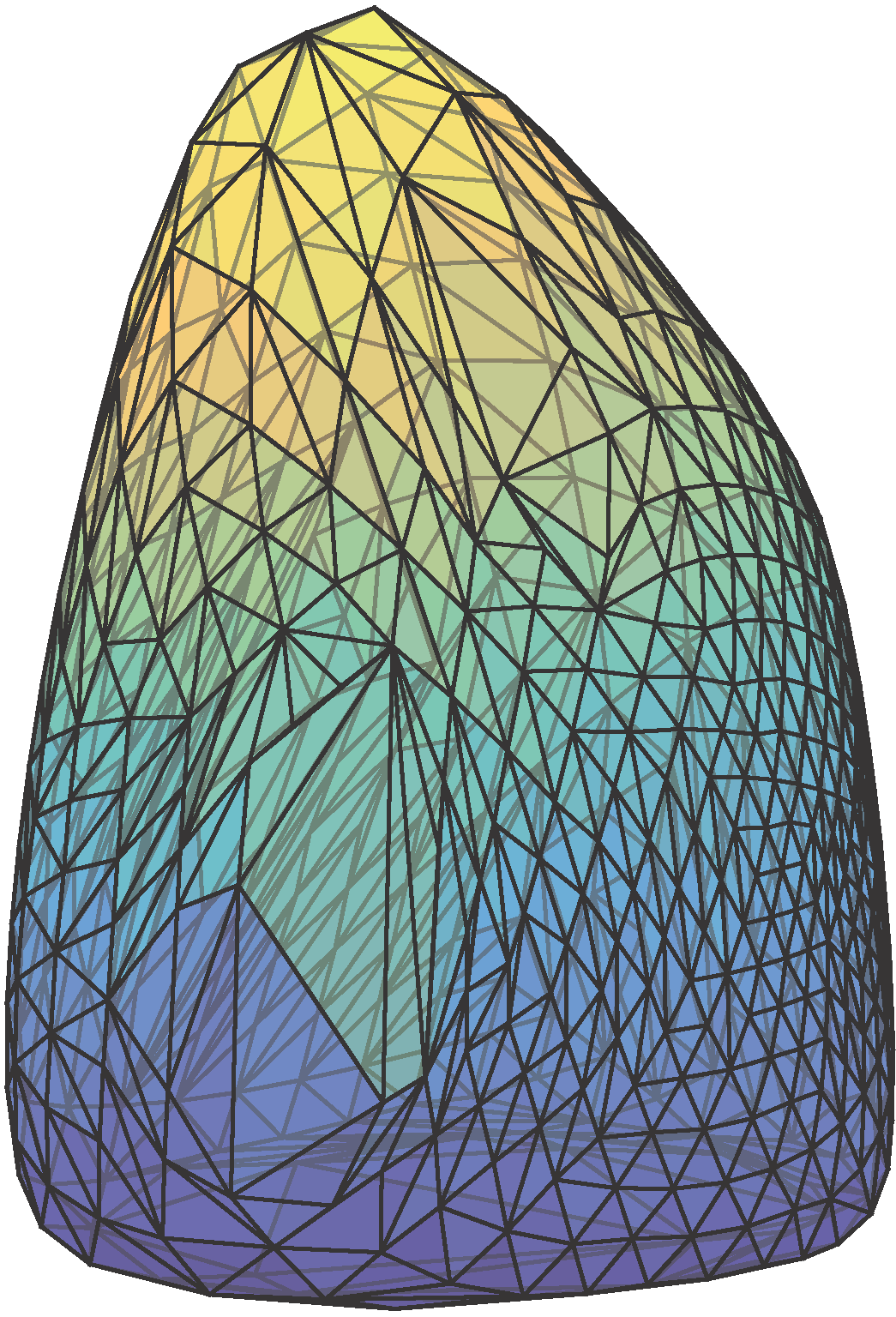}
		\caption{$\fs^{5}$}
	\end{subfigure}%
	\begin{subfigure}{.20\textwidth}
		\centering
		\includegraphics[width=0.6\textwidth]{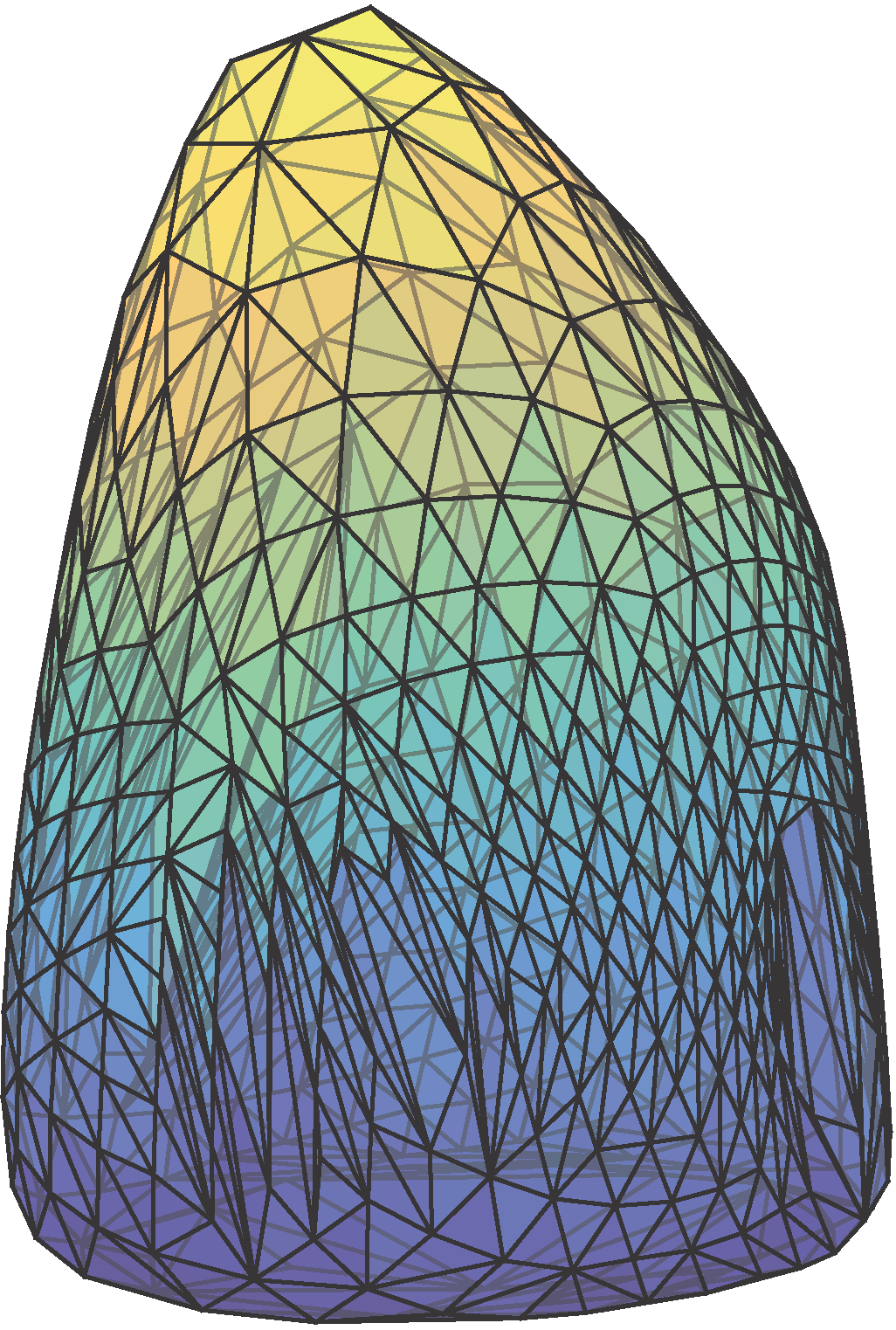}
		\caption{$\fs^{6}$}
	\end{subfigure}%
	\begin{subfigure}{.20\textwidth}
		\centering
		\includegraphics[width=0.6\textwidth]{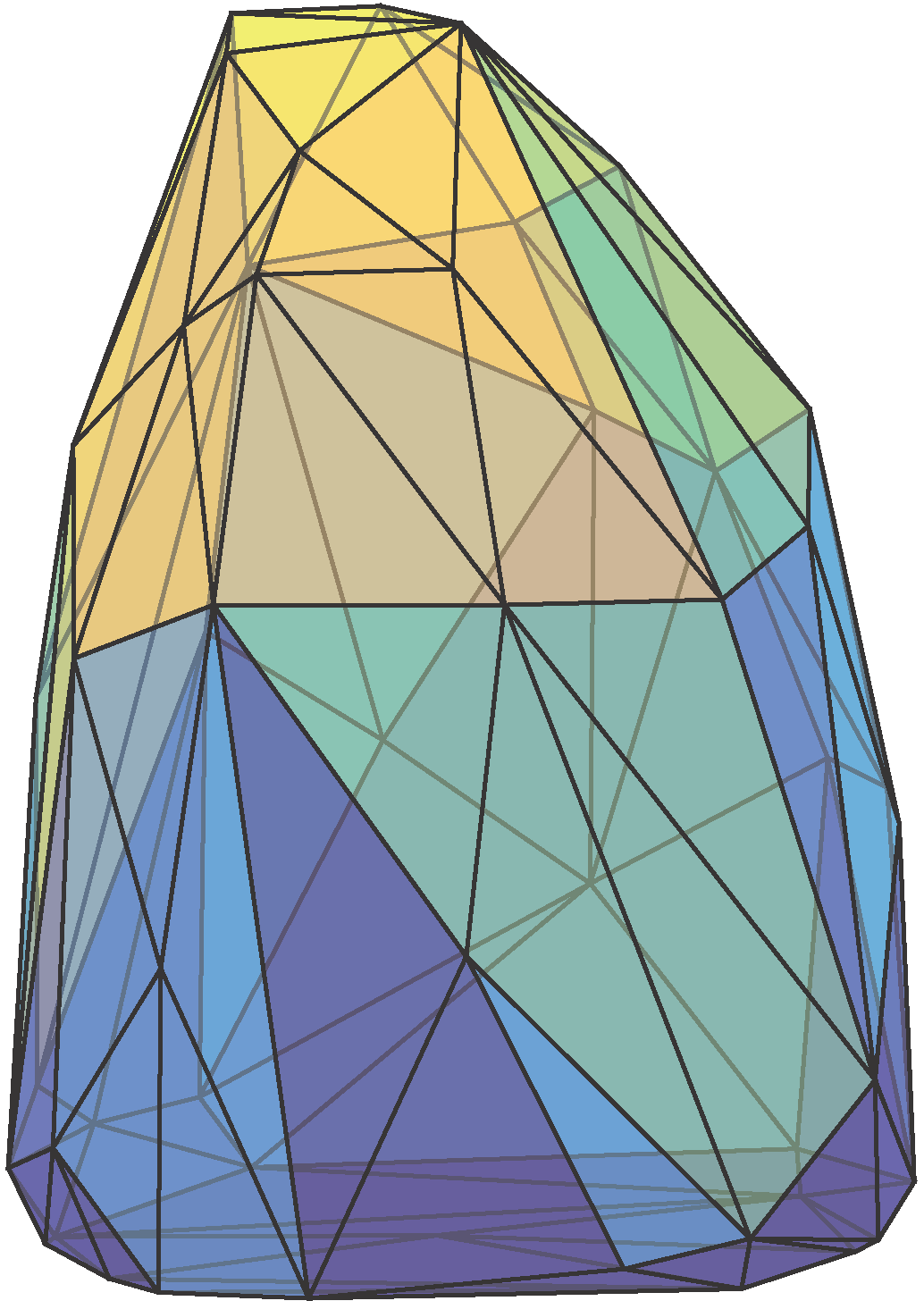}
		\caption{LSE}
	\end{subfigure}%
	
	\bigskip
	
	\begin{subfigure}{.20\textwidth}
		\centering
		\includegraphics[width=0.6\textwidth]{Exp_3D_Lung_q3_n300}
		\caption{$\fs^{3}$}
	\end{subfigure}%
	\begin{subfigure}{.20\textwidth}
		\centering
		\includegraphics[width=0.6\textwidth]{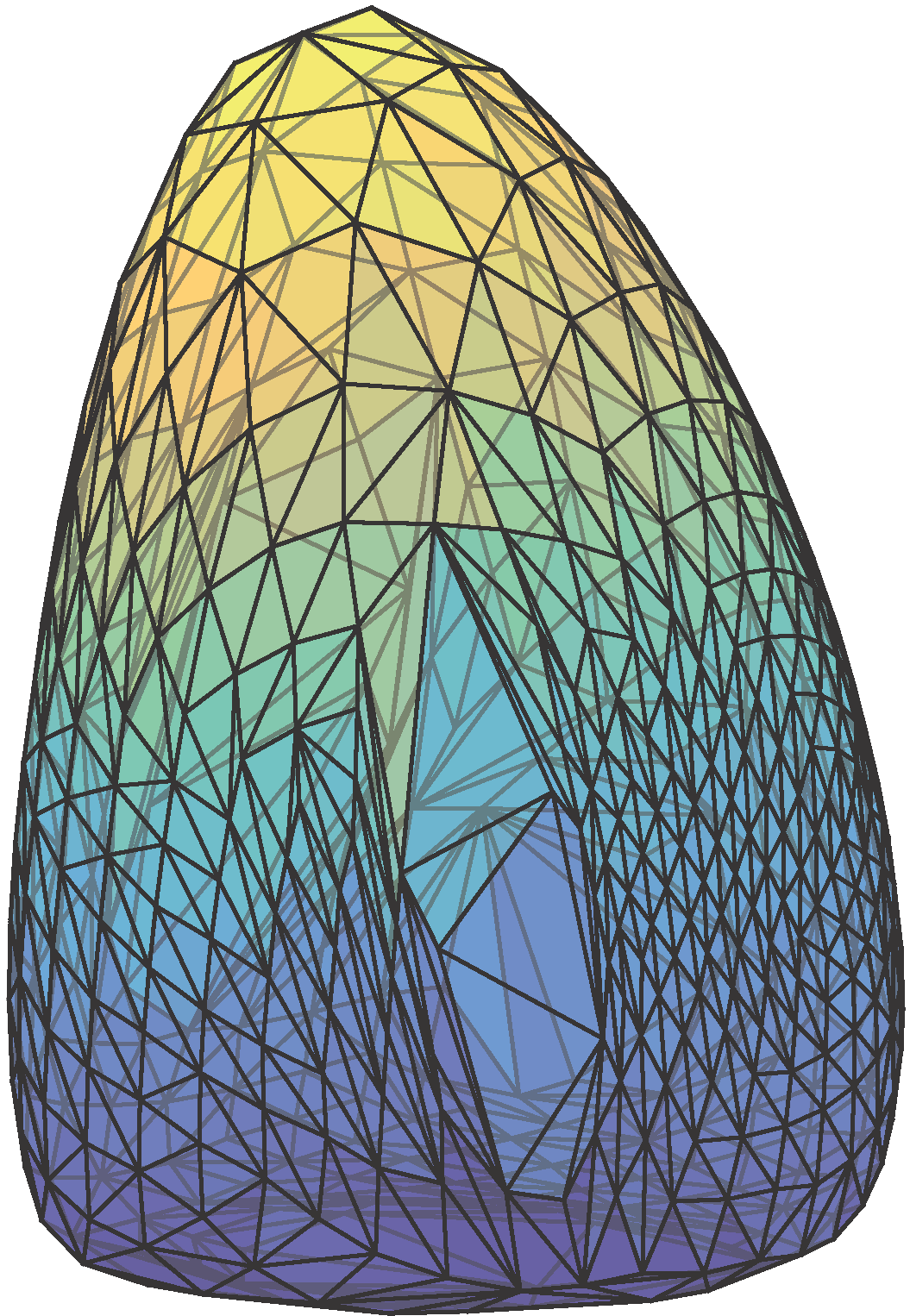}
		\caption{$\fs^{4}$}
	\end{subfigure}%
	\begin{subfigure}{.20\textwidth}
		\centering
		\includegraphics[width=0.6\textwidth]{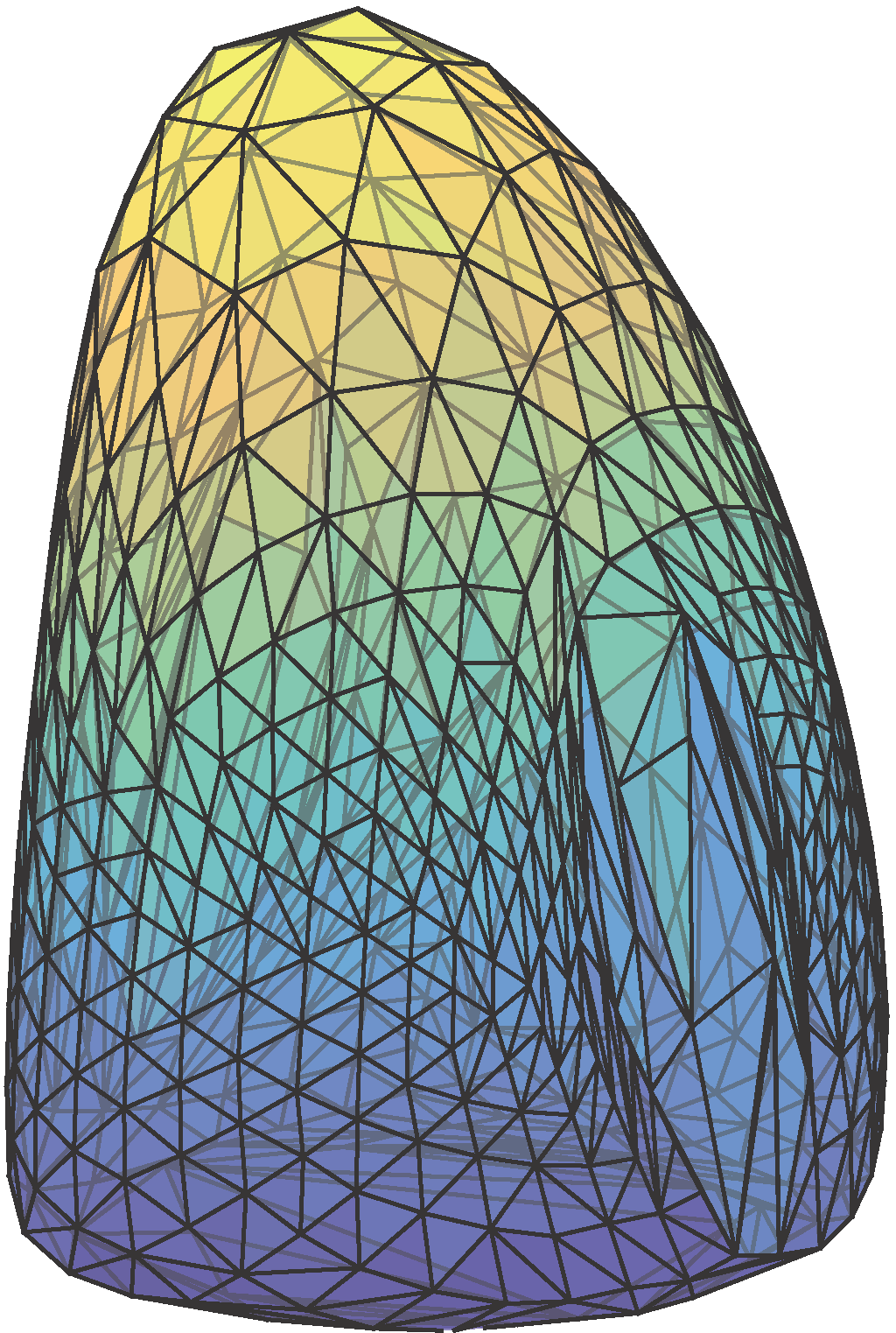}
		\caption{$\fs^{5}$}
	\end{subfigure}%
	\begin{subfigure}{.20\textwidth}
		\centering
		\includegraphics[width=0.6\textwidth]{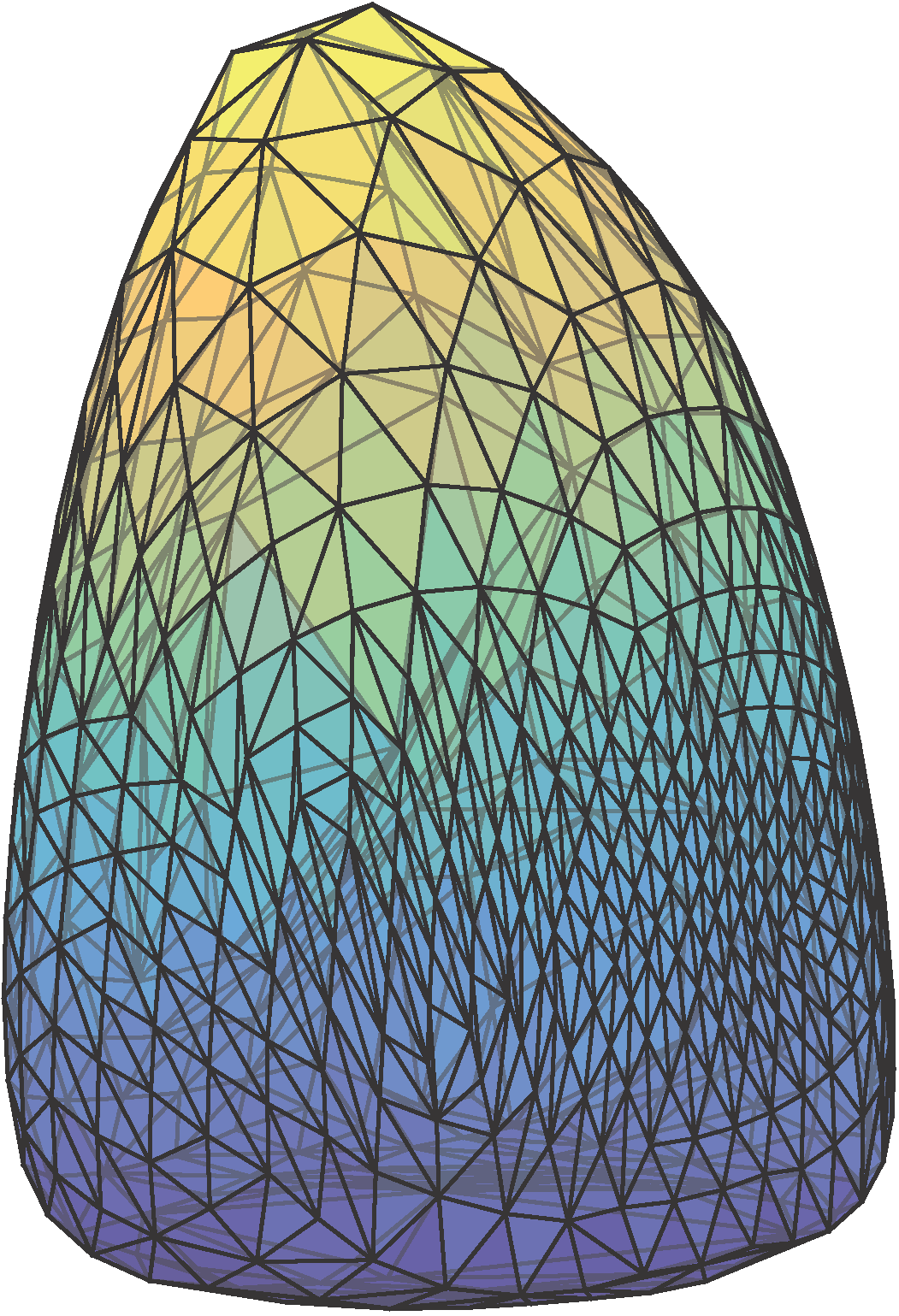}
		\caption{$\fs^{6}$}
	\end{subfigure}%
	\begin{subfigure}{.20\textwidth}
		\centering
		\includegraphics[width=0.6\textwidth]{Exp_3D_Lung_LS_n300}
		\caption{LSE}
	\end{subfigure}%
	\caption{Reconstructions of the left lung from $50$ support function measurements (top row) and $300$ support function measurements (bottom row). Subfigures (a)-(d) and (f)-(i) are projections of spectraplices with dimensions as indicated, and subfigures (e) and (j) are LSEs.}
	\label{fig:Exp_Lung_3d}
\end{figure}


\section{Conclusions and Future Directions} \label{sec:conc}

In this paper we describe a framework for fitting tractable convex sets to noisy support function evaluations.  Our approach provides many advantages in comparison to the previous LSE-based methods, most notably in settings in which the measurements available are noisy or small in number as well as those in which the underlying set to be reconstructed is non-polyhedral.  We discuss here some potential future directions:

\textbf{Informed selection of model complexity.} In practice, a suitable choice of the dimension of the simplex or the spectraplex to employ in \eqref{eq:sc_intro_constrainedlse} may not be available in advance. Lower-dimensional choices for $\C$ provide more concisely-described reconstructions but may not fit the data well, while higher-dimensional choices provide better fidelity to the data at the risk of overfitting.  Consequently, it is of practical relevance to develop methods to select $\C$ in a data-driven manner.  We describe next a stylized experiment to choose $\C$ via cross-validation.

In the first illustration, we are given $100$ support function measurements of the $\ell_1$-ball in $\R^3$ corrupted by Gaussian noise with standard deviation $\sigma = 0.1$.  We obtain $50$ random partitions of this data into two subsets of equal size.  For each partition, we solve \eqref{eq:sc_intro_constrainedlse} with $\C = \simp^q$ (with diffferent choices of $q$) on the first subset and evaluate the mean-squared error on the second subset.  The left subplot of Figure \ref{fig:Exp_F7} shows the average mean-squared error over the $50$ partitions.  We observe that initially the error decreases as $q$ increases as a more expressive model allows us to better fit the data, and subsequently, the error plateaus out.  Consequently, in this experiment an appropriate choice of $\C$ would be $\simp^6$.  In our second illustration, we are given $200$ support function measurements corrupted by Gaussian noise with standard deviation $\sigma = 0.05$ of a set $\K_{S3} = \mathrm{conv}(\sph_1 \cup \sph_2 \cup \sph_3) \subset \R^3$, where $\sph_1,\sph_2,\sph_3$ are defined as follows:
\begin{equation*}
	\text{For } j=1,2,3: ~~~ \sph_j = Q_j \left( \left\{ \left(
	\begin{array}{c}
		\cos \theta \\ 1  \\ \sin \theta
	\end{array} \right) : \theta \in \R
	\right\} \right), \quad Q_j =
	\left(\begin{array}{ccc}
		\cos (2 \pi j / 3) & -\sin (2 \pi j / 3) & 0 \\
		\sin (2 \pi j / 3) & \cos (2 \pi j / 3) & 0 \\
		0 & 0 & 1
	\end{array} \right).
\end{equation*}
In words, the sets $\sph_1,\sph_2,\sph_3$ are disjoint planar discs.  One can check that $\K_{S3}$ is representable as a linear image of $\fs^6$.  The other aspects remain the same as in the first illustration, and as we observe from the right subplot of Figure \ref{fig:Exp_F7}, an appropriate choice for $\C$ would be $\fs^6$.

\begin{figure}
	\centering
	\begin{subfigure}{.40\textwidth}
		\centering
		\includegraphics[width=0.8\textwidth]{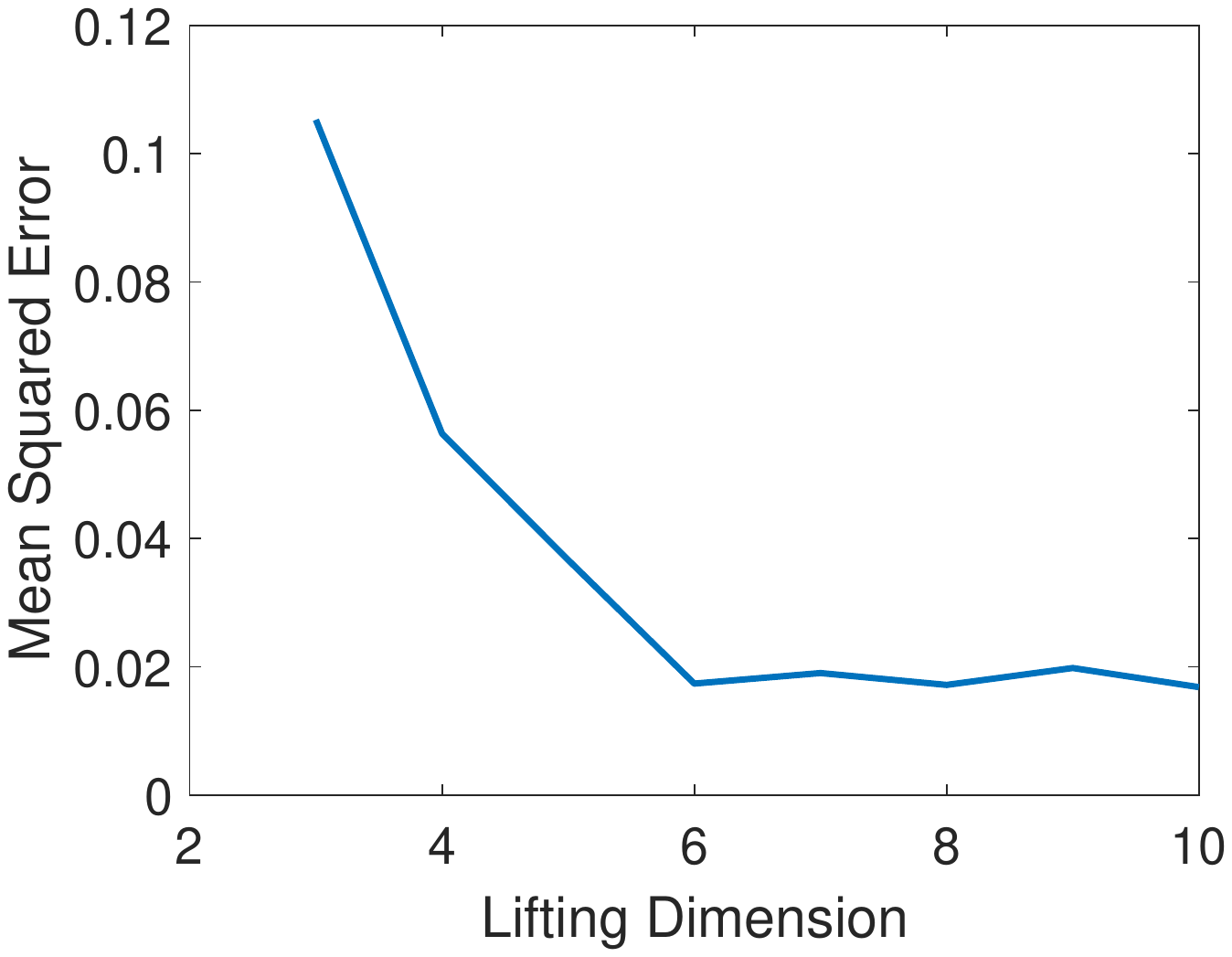}
	\end{subfigure}%
	\begin{subfigure}{.40\textwidth}
		\centering
		\includegraphics[width=0.8\textwidth]{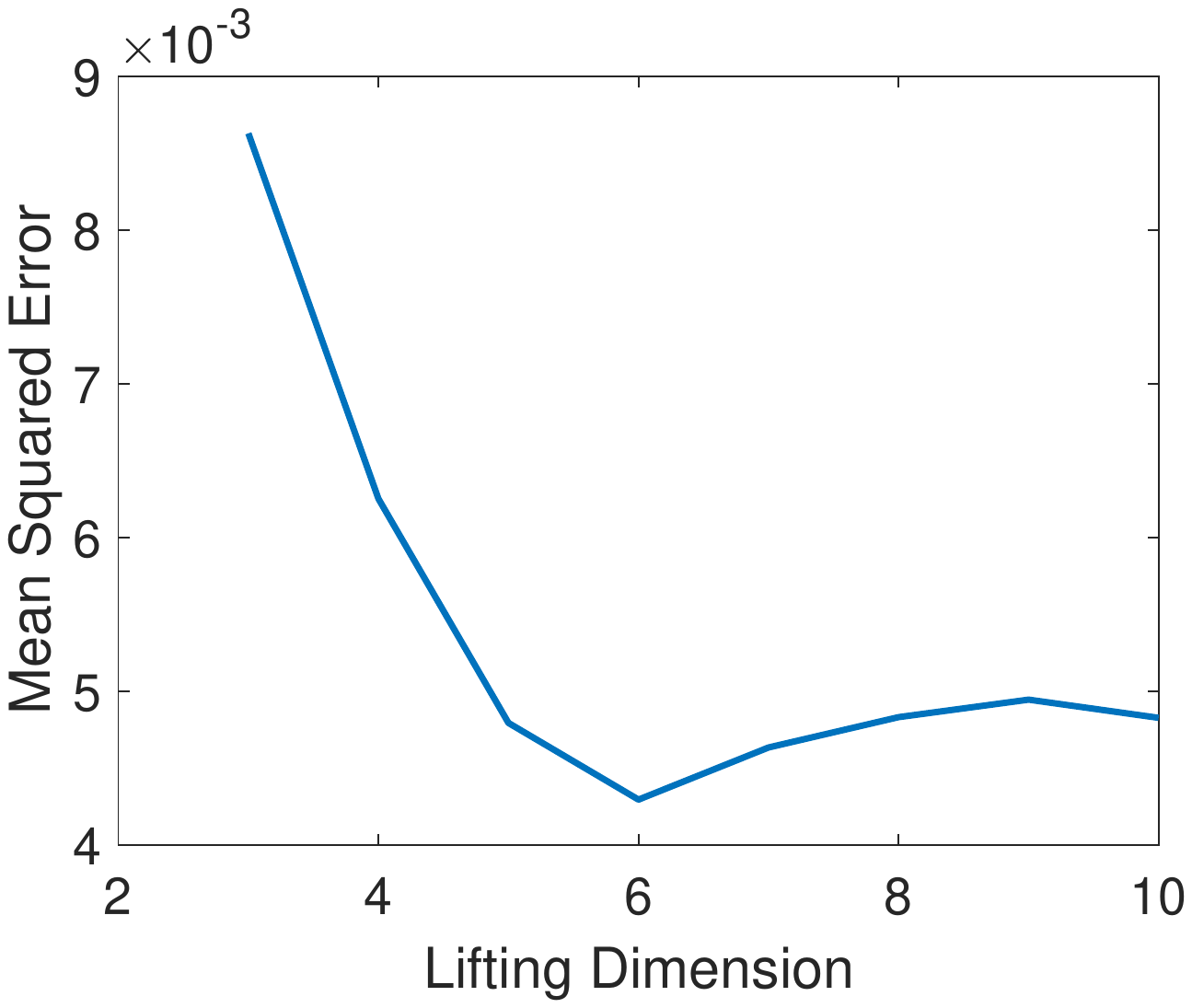}
	\end{subfigure}%
	\caption{Choosing the lifting dimension in a data-driven manner.  The left sub-plot shows the cross validation error of reconstructing the $\ell_1$-ball in $\R^3$ as the projection of $\simp^{q}$ over different choices of $q$, and the right sub-plot shows the same quantity for $\K_{S3} \subset \R^3$ (see accompanying text) as the projection of $\fs^{p}$ over different choices of $p$.}
	\label{fig:Exp_F7}
\end{figure}

\textbf{Approximation power of semidefinite descriptions.}  Theorem \ref{thm:stats_setconvergence} demonstrates that our estimator converges almost surely to the linear image of $\C$ that best approximates the underlying set $K^\star$.  Consequently, a natural question is to understand the quality of the approximation of general convex bodies provided by linear images of the simplex (i.e., polytopes) or the spectraplex.  There is a substantial body of prior work that studies approximations of convex bodies via polytopes (see, for instance \cite{Bronstein08}), but an analogous theory for approximations that are specified as linear images of the spectraplex as well as the more general collection of feasible regions of semidefinite programs is far more limited -- the closest piece of work of which we are aware is a result by Barvinok \cite{Bar:12}.  Progress on these fronts would be useful for understanding the full expressive power of our framework.  More generally, and as noted in \cite{Bar:12}, obtaining even a basic understanding about the approximation power of such descriptions has broad algorithmic implications concerning the use of semidefinite programs to approximate general convex programs.

\textbf{Richer families of tractable convex sets.} A restriction in the development in this paper is that we only consider reconstructions specified as linear images of a \emph{fixed} convex set $\C$; we typically choose $\C$ to be a simplex or a spectraplex, which are given by particular slices of the nonnegative orthant or the cone of positive semidefinite matrices.  As described in the introduction, optimizing over more general affine sections of these cones is likely to be intractable due to the lack of a compact description of the sensitivity of the optimal value of conic optimization problems with respect to perturbations of the affine section.  Consequently, it would be useful to identify broader yet structured families of sets than the ones we have considered in this paper for which such a sensitivity analysis is efficiently characterized.

%


\bibliography{bib_cvxreg}
\appendix

\end{document}